\documentclass[12pt,reqno]{amsart}
\usepackage{amsmath, amsthm, amsopn, amssymb, microtype}
\usepackage{mathrsfs} 
\usepackage{mathscinet}
\usepackage{bbm}
\usepackage{enumerate, tikz, etoolbox, intcalc, geometry, caption, subcaption, afterpage}
\usepackage[english]{babel}
\usepackage{enumitem}
\usepackage[colorlinks=true,urlcolor=blue,pdfborder={0 0 0}]{hyperref}
\hypersetup{linkcolor=[rgb]{0,0,0.6}}
\hypersetup{citecolor=[rgb]{0,0.6,0}}
\usepackage[nameinlink]{cleveref}
\crefname{theorem}{Theorem}{Theorems}
\crefname{thm}{Theorem}{Theorems}
\crefname{mainthm}{Theorem}{Theorems}
\crefname{conj}{Conjecture}{Theorems}
\crefname{lemma}{Lemma}{Lemmas}
\crefname{lem}{Lemma}{Lemmas}
\crefname{remark}{Remark}{Remarks}
\crefname{prop}{Proposition}{Propositions}
\crefname{defn}{Definition}{Definitions}
\crefname{corollary}{Corollary}{Corollaries}
\crefname{cor}{Corollary}{Corollaries}
\crefname{section}{Section}{Sections}
\crefname{figure}{Figure}{Figures}
\crefname{quest}{Question}{Questions} 


\theoremstyle{plain}
\newtheorem{theorem}{Theorem}[section]
\newtheorem{lemma}[theorem]{Lemma}
\newtheorem{corollary}[theorem]{Corollary}
\newtheorem{proposition}[theorem]{Proposition}
\newtheorem{prop}[theorem]{Proposition}

\theoremstyle{definition}
\newtheorem{definition}[theorem]{Definition}

\newcommand{\Z}{\mathbb{Z}}
\newcommand{\Sub}{\mathrm{Sub}}
\newcommand{\dist}{\mathrm{dist}}
\newcommand{\stab}{\mathit{stab}}

\newcommand{\Ker}{\mathit{Ker}}

\newcommand{\cL}{\mathcal{L}}

\newcommand{\PTiling}{\mathit{PTilings}}

\title{ An embedding theorem for  multidimensional subshifts}
\author{Tom Meyerovitch}
\address{Ben-Gurion University of the Negev.
	Department of Mathematics.
	Beer-Sheva, 8410501, Israel. 
	{\tt mtom@bgu.ac.il}
}

\begin{document}

\maketitle

\begin{abstract}
Krieger's embedding theorem  provides  necessary and sufficient conditions for an arbitrary subshift to embed in a given topologically mixing $\mathbb{Z}$-subshift of finite type. For certain families of $\mathbb{Z}^d$-subshifts  of finite type, Lightwood characterized  the \emph{aperiodic}  subsystems. In the current paper we prove a new embedding theorem 
for a class of subshifts of finite type over any  countable abelian group. 
Our theorem provides necessary and sufficient conditions for an arbitrary subshift $X$ to embed inside a given subshift of finite type $Y$ that satisfies a certain natural condition. For the particular case of $\Z$-subshifts, our new theorem coincides with Krieger's theorem.
Our result gives the first  complete characterization of the subsystems of the multidimensional  full shift $Y= \{0,1\}^{\mathbb{Z}^d}$. 

The natural condition on the target subshift $Y$, introduced explicitly for the first time in the current paper, is called the map extension property.
It was introduced implicitly by Mike Boyle in the early 1980's for $\mathbb{Z}$-subshifts, 
and is closely related to the notion of an absolute retract, introduced by Borsuk in the 1930's.
The class of subshifts over a countable abelian group that admit  the map extension property coincides with the class of contractible subshifts of finite type, a notion introduced in recent work of Leo Poirier and Ville Salo. A $\mathbb{Z}$-subshift has the map extension property if and only if it is a topologically mixing subshift of finite type. 
We show that various natural examples of $\mathbb{Z}^d$ subshifts  of finite type satisfy the map extension property, hence our embedding theorem applies for them. These include any subshift of finite type with a safe symbol and the $k$-colorings of $\mathbb{Z}^d$ with $k \ge 2d+1$. We also establish a new theorem regarding lower entropy factors of multidimensional subshifts, that extends Boyle's lower entropy factor theorem from the one-dimensional case. 
\end{abstract}

\section{Introduction and statement of the main results}

Krieger's embedding theorem   \cite{MR693975} provides simple necessary and sufficient conditions for an arbitrary subshift to embed in a given topologically mixing $\mathbb{Z}$-subshift of finite type.

Here is a formulation of Krieger's embedding theorem \cite{MR693975}:

\begin{theorem}[Krieger's embedding theorem \cite{MR693975}]\label{thm:krieger}
Let $X$ be an arbitrary $\Z$-subshift and let $Y$ be a topologically mixing $\Z$-shift of finite type. Then $X$ embeds in $Y$ if and only if either $X$ is topologically conjugate to $Y$ or $h(X) < h(Y)$ and for every $n \in \mathbb{N}$
the number of points in $X$ that have least period $n$ is less than or equal to the number of points in $Y$ that have least period $n$.
\end{theorem}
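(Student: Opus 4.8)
The plan is to prove the two implications separately; necessity is routine and sufficiency carries all the weight.

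\emph{Necessity.} If $\varphi\colon X\to Y$ is an embedding, then $\varphi$ maps points of least period $n$ injectively to points of least period $n$ (an embedding preserves the least period), which yields the period inequalities; and $\varphi(X)$ is a subshift of $Y$ with $h(\varphi(X))=h(X)\le h(Y)$. Since a topologically mixing, hence irreducible, SFT has no proper subshift of equal entropy, $h(X)=h(Y)$ forces $\varphi(X)=Y$, i.e.\ $X$ is conjugate to $Y$.

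\emph{Sufficiency.} Assume $X$ is not conjugate to $Y$, so $h(X)<h(Y)$ and $p_n(X)\le p_n(Y)$ for every $n$, where $p_n$ counts points of least period $n$; fix a large window $N$. Using $p_n(X)\le p_n(Y)$ for the finitely many small $n$, together with the exponentially growing number of periodic points of a mixing SFT for large $n$, choose an injective, least-period-preserving assignment of the finitely many $X$-orbits of least period $\le 2N$ to periodic orbits of $Y$. Next apply Krieger's marker lemma with window $N$: there is a clopen $F\subseteq X$ with $F,\sigma F,\dots,\sigma^{N-1}F$ pairwise disjoint, so that every $x\in X$ acquires a canonical marker decomposition into blocks of length $\ge N$ (typical blocks of length in $[N,2N]$, long ``nearly periodic'' blocks near the short periodic orbits). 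Now code: mixing of $Y$ gives a gluing word and, for all large $M$, at least $2^{M(h(Y)-\varepsilon)}$ distinct $Y$-blocks of length $M$; fix also a self-non-overlapping ``marker word'' $u$ in $Y$. Encode an $X$-block $w$ of length $\ell$ by the $Y$-block of the same length consisting of $u$, a self-delimiting description of $(\ell,w)$, and padding; since $h(X)<h(Y)$, for $N$ large this has enough room (roughly $2^{\ell(h(Y)-\varepsilon)}\gg|A|^{\ell}$, with $A$ the alphabet of $X$) and is injective, while nearly periodic $X$-blocks get coded by correspondingly nearly periodic $Y$-blocks. Concatenating the block codes along the marker structure defines the candidate $\varphi\colon X\to Y$; the marker word $u$ lets one recover the block boundaries and hence $x$, so $\varphi$ is continuous, $\sigma$-equivariant and injective.

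I expect the technical heart — and the main obstacle — to be the continuity and injectivity of $\varphi$ at points of $X$ that are limits of typical points but lie on, or converge onto, the short periodic orbits (equivalently, where block lengths blow up): there the block code must be arranged so that it limits onto the chosen periodic/nearly-periodic coding in $Y$ while the global map stays injective, and this is exactly where the mixing of $Y$ and the precise — not merely asymptotic — period inequalities are genuinely used. A route closer to the spirit of the present paper would instead deduce this one-dimensional statement from the general embedding theorem, by checking that a topologically mixing $\mathbb{Z}$-SFT has the map extension property.
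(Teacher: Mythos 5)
Your necessity argument is correct and is exactly the argument the paper itself uses (an equivariant injection satisfies $\stab(x)=\stab(\varphi(x))$, hence preserves least periods, and entropy minimality of a mixing SFT upgrades $h(X)=h(Y)$ to conjugacy; compare \Cref{sec:entropy_min_subshifts}). Note, however, that the paper does not reprove \Cref{thm:krieger} along the classical lines you sketch: it cites Krieger and recovers the statement as the $\Gamma=\Z$ case of \Cref{cor:main_embedding_theorem}, using \Cref{prop:mixing_Z_SFT_abs_retract} (via Boyle's extension lemma, \Cref{lem:boyle_extension_lemma}) to identify mixing $\Z$-SFTs with the subshifts having the map extension property. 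So the route you mention in your last sentence is in fact the paper's route.

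For the sufficiency direction as you propose it, there is a genuine gap, and it sits precisely where you flag it. The marker decomposition produced by Krieger's Marker Lemma (\Cref{lem:krieger_marker_lemma}) only exists on the clopen set of points with no small period, so points that are periodic, or that spend arbitrarily long stretches shadowing a short periodic orbit, are not covered by blocks of bounded length; for them you must (i) choose in advance an injective, least-period-preserving assignment of the low-period $X$-orbits to periodic orbits of $Y$ (this is where the exact inequalities $p_n(X)\le p_n(Y)$ enter, and where one must also ensure enough periodic $Y$-orbits remain after reserving the marker word $u$), and (ii) arrange the block code so that on a long nearly periodic stretch the output agrees, except near its two ends, with the chosen periodic $Y$-point, while transitions into and out of such stretches remain admissible in $Y$ and still determine $x$ uniquely. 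Your sketch asserts that ``nearly periodic $X$-blocks get coded by correspondingly nearly periodic $Y$-blocks'' but gives no construction and no verification of continuity, equivariance, or injectivity at these points; without it the map $\varphi$ is not even well defined on all of $X$. This is not a routine patch: it is the central difficulty of Krieger's proof, and it is the same difficulty (periodic points in the domain) that the present paper's entire apparatus of $X_{\Gamma_0}$, markers supported off periodic points, and the marker-pattern lemma of \Cref{sec:marker_patttern} is built to overcome. As written, then, your argument establishes necessity but not sufficiency; either the near-periodic coding must be carried out in detail, or one should follow the paper and invoke \Cref{cor:main_embedding_theorem} together with \Cref{prop:mixing_Z_SFT_abs_retract}.
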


Because Krieger's embedding theorem is a fundamental result in  symbolic dynamics, it was quite natural to seek a multidimensional version.
Lightwood  carried out  significant steps in this direction \cite{MR1972240, MR2085910} about a quarter of a century ago.
In particular, Lightwood characterized  the  \emph{aperiodic} $\Z^d$-subshifts  that can embed in a certain class of $\Z^d$-subshifts of finite type.
A subshift $X$ is aperiodic if the stabilzer of every point in $X$ is trivial.
Lightwood proved the following theorem: 

\begin{theorem}[Lightwood's  embedding theorem for square-filling mixing $\Z^2$ -SFTs \cite{MR1972240,MR2085910}]
    Let $X$ be an  aperiodic $\Z^2$ subshift and let $Y$ be a square-filling mixing $\Z^2$-SFT. Then $X$ embeds in $Y$ if and only if $h(X) < h(Y)$.
\end{theorem}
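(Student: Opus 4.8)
The plan is to prove the two implications separately, with essentially all the work in the ``if'' direction. For necessity, suppose $\phi\colon X\hookrightarrow Y$ is a topological embedding. Then $\phi(X)$ is a closed shift-invariant subset of $Y$, so $h(X)=h(\phi(X))\le h(Y)$. If $h(X)=h(Y)$, then $\phi(X)$ is a subsystem of $Y$ of full entropy; since a square-filling mixing SFT is entropy minimal (a consequence of the strong-irreducibility-type condition that square-filling entails, forcing every proper subsystem to have strictly smaller entropy), $\phi(X)=Y$, so $X$ is conjugate to $Y$. But a square-filling mixing $\mathbb{Z}^2$-SFT contains periodic points, contradicting the aperiodicity of $X$. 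Hence $h(X)<h(Y)$. The fact that the dichotomy is this clean, with no ``$X\cong Y$'' clause, is exactly what aperiodicity buys: the periodic-point obstruction of Krieger's theorem is vacuous here.

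For the ``if'' direction, assume $X$ is aperiodic and $h(X)<h(Y)$, and build a continuous shift-equivariant injection $\phi\colon X\to Y$ by the multidimensional analogue of the classical marker-and-filler construction. First, using aperiodicity, produce a scale $\ell\le L$, a clopen set $A\subseteq X$, and for each $x$ a ``net'' $\mathcal N(x)=\{v\in\mathbb{Z}^2:\sigma^v x\in A\}$ whose Voronoi-type cells are rectangles with all side lengths in $[\ell,L]$, depending on $x$ only through a bounded window and satisfying $\mathcal N(\sigma^v x)=\mathcal N(x)-v$. In dimension one this is the standard marker lemma; in dimension two aperiodicity no longer yields uniform recurrence, so one needs a genuinely multiscale construction (nested families of rectangles at several scales), which is the content of the first of Lightwood's two papers and which I would invoke.

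Next, for each rectangular cell shape $S$ occurring in the net, use the square-filling property of $Y$ to fix: a frame pattern on a collar of bounded width around $\partial S$, chosen so that frames of adjacent cells are mutually $Y$-admissible and so that the occurrence of a frame is detectable locally inside any point of $Y$; a sub-collar reserved to record the local geometry of the net, so that $\mathcal N(x)$ is reconstructible from $\phi(x)$; and the remaining interior, which by square-filling may be filled freely by a set $\mathcal F_S$ with $\log|\mathcal F_S|\ge (h(Y)-\varepsilon)\,\mathrm{area}(S)$. Since $h(X)<h(Y)$ and the collar's relative area tends to $0$ as $\ell\to\infty$, the number of $X$-patterns that can appear on a bounded enlargement of a cell is at most $|\mathcal F_S|$ for $\ell$ large, so we fix an injection from those $X$-patterns into $\mathcal F_S$. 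Now define $\phi(x)$ cell by cell: write along each cell boundary the frame and geometry code dictated by $\mathcal N(x)$, and in the free interior the image under the chosen injection of the (enlarged) $X$-pattern carried by that cell. Then $\phi(x)\in Y$ because adjacent frames are compatible and interiors are free; $\phi$ is continuous and equivariant because it is a local rule composed with the equivariant net; and $\phi$ is injective because from $\phi(x)$ one locates all frames, reconstructs $\mathcal N(x)$ and the cell decomposition, then reads each interior back through the fixed injections to recover every $X$-pattern, hence $x$.

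The main obstacle is the first step and its compatibility with the decoding: in a general aperiodic $\mathbb{Z}^2$-subshift there is no uniform return time, so the marker net must be organized hierarchically rather than periodically, and the frames must be designed to fit together and remain locally recognizable across cells of different scales while still carrying the geometric code — all without the collar consuming the entropy surplus. Once the net is in hand, the filling steps are a careful but essentially routine adaptation of Krieger's one-dimensional argument; a secondary point worth recording explicitly is the entropy minimality of square-filling mixing SFTs used in the necessity direction.
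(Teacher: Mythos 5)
You should first note that the paper does not prove this statement at all: it is quoted as background from Lightwood's two papers \cite{MR1972240,MR2085910}, and the author deliberately refrains even from defining ``square-filling mixing'' because the notion plays no role in the rest of the paper. So there is no internal proof to compare against, and your write-up has to stand on its own. As it stands it is a roadmap of Lightwood's strategy rather than a proof: the two pillars --- the continuous, equivariant, bounded-geometry cell structure on an aperiodic $\Z^2$-subshift, and the frame-and-fill use of the square-filling hypothesis --- are precisely the content of the cited papers, and you invoke both as black boxes (``which I would invoke'', ``use the square-filling property to fix a frame \dots''). Deferring the central constructions to the very papers whose theorem you are proving leaves the argument with no independent content.

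Beyond that, two concrete points. First, your net is asserted to have \emph{rectangular} cells with all side lengths in $[\ell,L]$. What Krieger's marker lemma plus a Voronoi construction actually yields (compare \Cref{sec:partial_tilings} of this paper, and Lightwood's own construction) is an equivariant partition into \emph{convex polygonal} cells of bounded inradius and circumradius, not rectangles; obtaining an equivariant partition of $\Z^2$ into rectangles of uniformly bounded shape is a genuinely different and nontrivial statement, and the raison d'\^etre of the square-filling hypothesis is exactly to fill and frame convex cells of varying polygonal shape and to make frames of adjacent, differently shaped cells compatible and locally recognizable. That compatibility is where the real work of Lightwood's proof lies and cannot be dismissed with ``adjacent frames are mutually $Y$-admissible''; likewise the entropy count $|\cL_{T^+}(X)| \le |\mathcal{F}_S|$ needs the collar to be small relative to each (polygonal) cell, which requires the invariance estimates for convex cells rather than a trivial area computation for rectangles. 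Second, your necessity argument rests on two unproved claims about square-filling mixing SFTs --- entropy minimality and the existence of periodic points (needed to exclude the case $X$ conjugate to $Y$, since the theorem has no conjugacy clause). Both are plausible and indeed needed for the theorem as stated to be true, but you assert them without argument, and they cannot even be checked here since the defining property of $Y$ is never pinned down. So the proposal contains genuine gaps in both directions, with the sufficiency direction essentially outsourced to the references.
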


We refrain from defining ``square-filling mixing'', as this notion does not play a role in the current paper. We mention however, that a $\mathbb{Z}$-SFT is square-filling mixing if and only if it is topologically mixing. For $\mathbb{Z}^2$-SFTs ``square-filling mixing'' is strictly stronger than topological mixing, and strictly weaker than being strongly  irreducible (see \Cref{def:SI}). 

The central missing ingredient in Lightwood's work is a method of handling points with non-trivial stabilizers in the domain $X$.
In his paper \cite{MR2085910} Lightwood announced: ``The techniques developed here play a central role in the proof of
an embedding theorem for general $\Z^2$ subshifts into square-filling mixing SFTs which will
be carried out in a subsequent paper''. 
To the best of our knowledge, no such result has been published so far. In this paper,
building on previous work,  including  Krieger's and Lightwood's, we obtain a complete characterization of the subsystems for a natural class of $\Z^d$-SFTs.

In order to defer some definitions, we first state a particular case of our main result.
Given a $\Z^2$ subshift $X$ and $v \in \Z^2$, let 
\[ X_v = \{ x\in X~:~ \sigma_v(x)=x \}.\]
Similarly, for a pair of linearly independent vectors $v,w \in \Z^2$ let $X_{v,w}$ denote the set of points in $x$ whose stablizer is equal to the integer linear combinations of $v$ and $w$. It is easily verified that for any $\Z^2$ subshift $X$ and any pair of linearly independent vectors $v,w \in \Z^2$, the space $X_{v,w}$ is  a finite set.

Recall that a vector $v \in \mathbb{Z}^2$ is \emph{primitive} if it is not a multiple of some other vector in $w \in \mathbb{Z}^2$.
If $v \in \mathbb{Z}^2$ is a primitive vector and $n \in \mathbb{N}$, then the group $\mathbb{Z}^2/\langle n v \rangle$ is isomorphic to $\mathbb{Z} \times \left( \mathbb{Z} / n \mathbb{Z}\right)$, and so  $X_v$ can naturally be viewed as a $\Z \times (\Z/n\Z)$-subshift.

\begin{theorem}\label{thm:embedding_thm_Z2_SFT}
 A $\mathbb{Z}^2$-subshift $X$ embeds in $A^{\mathbb{Z}^2}$ if and only if  either  $X$ is topologically conjugate to $A^{\mathbb{Z}^2}$ or $h(X) < \log |A|$ and both of the following conditions hold:
 \begin{enumerate}
     \item For every primitive vector $v \in \mathbb{Z}^2$  and every $n \in \mathbb{N}$ either  $X_{nv}$ is topologically conjugate to the full-shift $A^{\Z \times (\Z / n \Z)}$ or the topological entropy of  the $\Z \times (\Z/n\Z)$-action on  $X_{nv}$ is strictly less than $\log(A)$.
     \item For every pair of independent vectors $v,w \in \Z^2$ the number of points in $X_{v,w}$
     does not exceed the number of points in $A^{\Z^2}$ whose stabilizer is equal to the integer linear combinations of $v$ and $w$.
     
 \end{enumerate}
\end{theorem}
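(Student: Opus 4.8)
The plan is to obtain the stated characterization as the specialization of the general embedding theorem of this paper to the target $Y=A^{\mathbb{Z}^2}$. Two ingredients are needed. First, $A^{\mathbb{Z}^2}$ satisfies the hypothesis of that theorem, the map extension property: every symbol of a full shift is a safe symbol, and a subshift of finite type with a safe symbol has the map extension property. Second, one must translate the general hypotheses into (1) and (2). The general conditions compare, for each subgroup $\Gamma\le\mathbb{Z}^2$ of finite index or of rank one, the $\mathbb{Z}^2/\Gamma$-subshift of $\Gamma$-periodic points of $X$ with that of $Y$; for $Y=A^{\mathbb{Z}^2}$ the $\Gamma$-periodic points of $Y$ form the full shift over $\mathbb{Z}^2/\Gamma$. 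A rank-one $\Gamma$ is $\langle nv\rangle$ for $v$ primitive, with $\mathbb{Z}^2/\langle nv\rangle\cong\mathbb{Z}\times(\mathbb{Z}/n\mathbb{Z})$, and the comparison reduces to the entropy dichotomy (1); a finite-index $\Gamma$ is $\langle v,w\rangle$ for independent $v,w$, and here the periodic-point sets are finite and carry zero entropy, so the comparison is exactly the point count (2).

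For necessity I would argue directly. An embedding $\phi\colon X\hookrightarrow A^{\mathbb{Z}^2}$ gives $h(X)\le\log|A|$; if equality holds then $\phi(X)$ is a subsystem of maximal entropy, and since every proper subshift of $A^{\mathbb{Z}^2}$ forbids a finite pattern and a tiling-and-counting estimate then forces its entropy strictly below $\log|A|$, we conclude $\phi(X)=A^{\mathbb{Z}^2}$, so $X$ is conjugate to the full shift. If instead $h(X)<\log|A|$, then $\phi$ restricts to an embedding of the $\mathbb{Z}\times(\mathbb{Z}/n\mathbb{Z})$-subshift $X_{nv}$ into the set of $\sigma_{nv}$-fixed points of $A^{\mathbb{Z}^2}$, which is the full shift over $\mathbb{Z}\times(\mathbb{Z}/n\mathbb{Z})$; the same maximal-entropy dichotomy, valid over this amenable group, yields (1). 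Similarly $\phi$ restricts to an injection of the finite set $X_{v,w}$ into the finite set of points of $A^{\mathbb{Z}^2}$ with stabilizer $\langle v,w\rangle$, which is (2).

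Sufficiency is the heart of the matter and amounts to proving the general theorem for the full shift. Assuming $h(X)<\log|A|$ together with (1) and (2), I would build the embedding by induction along the partial order of point-stabilizers, from the most periodic points outward. First place the finitely many fully periodic points: by (2), for each independent pair $v,w$ the points of $X_{v,w}$ inject into the points of $A^{\mathbb{Z}^2}$ with stabilizer $\langle v,w\rangle$, and one fixes such matchings coherently across the various stabilizers, using a safe symbol to pad and to reconcile overlaps. Next, for each primitive $v$ and each $n$, extend over the rank-one stratum $X_{nv}$: this is a subshift over the essentially one-dimensional group $\mathbb{Z}\times(\mathbb{Z}/n\mathbb{Z})$, so by (1) and Krieger's embedding theorem in its form for that group (\Cref{thm:krieger}) it embeds into the corresponding full shift, and one arranges the embedding to agree with the values already assigned to the fully periodic points in its closure. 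Finally, extend the shift-equivariant continuous injection, now defined on the closed invariant set of all periodic points, over the aperiodic part: a marker construction as in Krieger and Lightwood, together with the entropy gap $h(X)<\log|A|$, leaves enough room to inject, and the map extension property of $A^{\mathbb{Z}^2}$ is precisely what allows extending the partial code off the periodic subsystem while keeping it injective.

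I expect the main obstacle to be the coherence of this stratification: assembling the embeddings of subsystems with different, nested stabilizer subgroups into a single globally defined continuous, shift-commuting, injective map. This is the ``central missing ingredient in Lightwood's work'' flagged above — points whose stabilizer is a rank-one subgroup accumulate onto fully periodic points, periodicities in incommensurable directions interact, and a naive gluing need be neither continuous nor injective. The intended remedy is to route every extension step through the map extension property, which is designed exactly to extend an equivariant map defined on a closed invariant subset, and to choose the markers compatibly with all the relevant periods, so that the period structure of $X$ is respected at every stage and the assembled map is genuinely an embedding.
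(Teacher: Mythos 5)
Your proposal is correct and follows essentially the same route as the paper: Theorem \ref{thm:embedding_thm_Z2_SFT} is obtained by specializing \Cref{cor:main_embedding_theorem} (hence \Cref{thm:relative_embedding_thm}) to $Y=A^{\mathbb{Z}^2}$, whose map extension property comes from the safe-symbol proposition, with the subgroup-by-subgroup conditions translating into the entropy dichotomy for rank-one stabilizers and the point count for finite-index ones (the kernel conditions being automatic), and necessity handled via entropy minimality exactly as in \Cref{sec:entropy_min_subshifts}. The sketched stratified construction for sufficiency is just an outline of the general proof the paper carries out in \Cref{sec:partial_tilings,sec:marker_patttern,sec:emb_fg}, so nothing genuinely different is being proposed.
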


The statement of \Cref{thm:embedding_thm_Z2_SFT} suggests that subshifts over more general abelian groups must be considered even when one is initially only interested in the characterization of subsystems of $\Z^d$-SFTs.

In this paper, by  %
a \emph{map} between  $\Gamma$-subshifts $X$ and $Y$ we mean a continuous, $\Gamma$-equivariant function from $X$ into $Y$. An \emph{embedding} of $X$ into $Y$ is an injective map from $X$ into $Y$. An \emph{isomorphism}  of $X$ and $Y$ is a bijective map between $X$ and $Y$. We
 use the notation $X \hookrightarrow Y$ to indicate that $X$ embeds continuously and $\Gamma$-equivariantly into $Y$.

The following is the main result of this paper:
\begin{theorem}\label{thm:relative_embedding_thm}
Let $\Gamma$ be a countable abelian group and 
let $X,Y$ be $\Gamma$-subshifts. Suppose that  $Y$ has the map extension property. 
Let $Z \subseteq X$ be a (possibly empty) $\Gamma$-subshift contained in $X$, 
and let $\hat \rho:Z \to Y$
 be an embedding. 
Then $\hat \rho$ extends to an embedding $\rho:X \to Y$ if and only if for every subgroup $\Gamma_0 \le  \Gamma$ one of the following conditions holds:
\begin{enumerate}
\item $\hat \rho\mid_{Z_{\Gamma_0}}:Z_{\Gamma_0} \to Y_{\Gamma_0}$ extends to an isomorphism $\rho_{\Gamma_0}:X_{\Gamma_0} \to Y_{\Gamma_0}$
\item $h(\overline{X}_{\Gamma_0}) < h(\overline{Y}_{\Gamma_0})$ and $\Ker(Y_{\Gamma_0})  \le \Ker(X_{\Gamma_0})$.
\end{enumerate}
\end{theorem}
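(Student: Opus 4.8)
The plan is to treat the two implications separately; necessity is routine and sufficiency carries all the weight.

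\textbf{Necessity.} Suppose $\rho\colon X\to Y$ is an embedding extending $\hat\rho$, and fix $\Gamma_0\le\Gamma$. Since $\rho$ is $\Gamma$-equivariant it maps $X_{\Gamma_0}$ into $Y_{\Gamma_0}$, so $\rho|_{X_{\Gamma_0}}$ is an embedding of $X_{\Gamma_0}$ into $Y_{\Gamma_0}$ that extends $\hat\rho|_{Z_{\Gamma_0}}$. If it is onto, it is an isomorphism and alternative (1) holds. Otherwise $\rho(X_{\Gamma_0})$ is a proper $\Gamma$-subshift of $Y_{\Gamma_0}$, and the entropy-minimality of subshifts with the map extension property (from the earlier sections) forces $h(\overline X_{\Gamma_0})=h(\overline{\rho(X_{\Gamma_0})})<h(\overline Y_{\Gamma_0})$; moreover any $\gamma$ fixing every point of $Y_{\Gamma_0}$ fixes every point of $\rho(X_{\Gamma_0})$ and hence, by injectivity of $\rho$, every point of $X_{\Gamma_0}$, so $\Ker(Y_{\Gamma_0})\le\Ker(X_{\Gamma_0})$. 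Thus alternative (2) holds.

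\textbf{Sufficiency, the periodic skeleton.} Now assume the dichotomy for every $\Gamma_0$. I would first build an embedding on the part of $X$ consisting of points with a nontrivial stabilizer, by an induction that processes the (countably many) relevant subgroups $\Gamma_0\neq\{e\}$ in an order refining reverse inclusion, so that each $\Gamma_0$ is handled only after every subgroup properly containing it. At the step for $\Gamma_0$ we already have an embedding of $Z$ together with $\bigcup_{\Gamma_1\supsetneq\Gamma_0}X_{\Gamma_1}$ (and the previously treated incomparable subgroups) into $Y$, and we must extend it over $X_{\Gamma_0}$. If alternative (1) holds, the required isomorphism is handed to us. If alternative (2) holds, one checks that $\overline Y_{\Gamma_0}$ inherits the map extension property as a subshift over $\Gamma/\Ker(Y_{\Gamma_0})$, that the kernel inequality makes the $\Gamma/\Ker(Y_{\Gamma_0})$-action on $X_{\Gamma_0}$ well defined, and that the strict entropy inequality provides room; one then applies the theorem itself recursively over the ``smaller'' group $\Gamma/\Ker(Y_{\Gamma_0})$ -- with the distinguished subsystem there taken to be the already-constructed part of the embedding intersected with $X_{\Gamma_0}$ -- to extend over $X_{\Gamma_0}$. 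Passing to the limit along infinite chains of subgroups is handled by compactness, by checking uniform convergence of the partial embeddings.

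\textbf{Sufficiency, the aperiodic step.} After all $\Gamma_0\neq\{e\}$ have been handled we have an embedding $\psi$ defined on a $\Gamma$-subshift $W$ with $Z\subseteq W\subseteq X$ containing every point of $X$ with nontrivial stabilizer; it remains to extend $\psi$ to an embedding of all of $X$, knowing only $h(X)<h(Y)$ (the equality case having been absorbed into alternative (1) for $\Gamma_0=\{e\}$, which is the case that $X$ is conjugate to $Y$). Since $Y$ has the map extension property, $\psi$ first extends to a -- not yet injective -- map $\phi_0\colon X\to Y$. One then perturbs $\phi_0$ off $W$ to an injective map, leaving it unchanged on $W$: using a multidimensional marker/tiling lemma one selects a syndetic family of well-separated marker regions that accumulate only far away from $W$ (so that a point all of whose translates stay near $W$ must itself lie in $W$, by compactness); inside each marker region one recodes $\phi_0$ so as to record a bounded amount of the local $X$-name, the strict entropy gap $h(X)<h(Y)$ supplying the coding capacity, and the map extension property is invoked repeatedly to splice the recoded regions back into genuine points of $Y$ without disturbing the values on $W$. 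Checking that the result is continuous, $\Gamma$-equivariant, injective, and still equal to $\psi$ on $W$ completes the construction.

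\textbf{The main obstacle.} The crux is this last step: a \emph{relative}, $\Gamma$-equivariant injective-coding lemma that upgrades Lightwood's aperiodic, two-dimensional marker techniques to an arbitrary countable abelian group $\Gamma$ and allows the coding to be carried out while a prescribed subshift $W$ -- carrying all of the periodic behaviour -- is held fixed. Organizing the subgroup induction above, and in particular controlling the limits over infinite chains of subgroups, is a secondary but genuine difficulty.
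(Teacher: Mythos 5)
Your necessity argument is essentially the paper's (entropy minimality of $\overline{Y}_{\Gamma_0}$ plus the kernel observation), and the broad shape of your aperiodic step (markers, tilings adapted from Lightwood, the entropy gap supplying coding capacity, the map extension property used to splice) is in the spirit of what the paper does. But the sufficiency argument has a genuine gap at its organizing principle: you propose to process \emph{all} nontrivial subgroups $\Gamma_0$ one at a time in reverse-inclusion order and to ``pass to the limit along infinite chains of subgroups by compactness, checking uniform convergence of the partial embeddings.'' There is no reason such a limit exists or is continuous: the union of the $X_{\Gamma_0}$ over nontrivial $\Gamma_0$ is in general not closed (its closure contains aperiodic points), each successive extension is produced by a sliding block code whose window you do not control, and nothing forces the windows to stay bounded along a chain, so the ``limit map'' need not be a map at all. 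Moreover your recursion ``apply the theorem over the smaller group $\Gamma/\Ker(Y_{\Gamma_0})$'' is not a well-founded induction for a general countable abelian $\Gamma$ (quotients are not smaller in any inductive sense), and you never reduce to the finitely generated case. The paper avoids both problems: it first proves the finitely generated case by a double induction on the rank and the torsion of $\Gamma$, and within that proof only \emph{finitely many} subgroups are treated recursively, namely those generated by subsets of two explicit finite sets ($F_0$, coming from the entropy approximations of \Cref{prop:subgroup_entropies_upperbound} and \Cref{prop:locally_correctable_entropy_subgroups}, and $P$, coming from the partial-tiling statement \Cref{prop:good_partial_tilings}). Every other periodic point has stabilizer avoiding $P$, so it is covered by large $(K,\epsilon)$-invariant Voronoi tiles and is coded by exactly the same marker/counting argument as the aperiodic points; no transfinite or countable exhaustion of subgroups, and no limit of partial embeddings, is ever taken. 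The general countable abelian case is then obtained by a separate reduction (\Cref{lem:reduction_to_fg}), using that a subshift with the map extension property satisfies $Y=Y^{[\Delta]}$ for a finitely generated $\Delta$ (contractibility) together with compactness of $\Sub(\Gamma)$ to transfer the hypotheses $E(X,Y,\hat\rho,\Gamma_0)$ to $X^{[\Delta]},Y^{[\Delta]}$.

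A secondary underspecification: in your aperiodic step, injectivity is asserted but not certified. The paper's mechanism is concrete: it first passes to a retract $S\subset Y$ avoiding a pattern $w^{(0)}$ with no self-overlaps (\Cref{lem:marker_pattern_RAR}), codes each tile $T$ injectively into $\cL_{T^-}(S)$ using the counting inequality made possible by convexity and $(K,\epsilon)$-invariance of the truncated Voronoi cells, and only afterwards implants $w^{(0)}$ at the tile centers via the map $\kappa$; since $w^{(0)}$ cannot occur in $S$, its occurrences in the image reconstruct $\alpha(x)$ and hence the tiling, which is what makes the decoding (and injectivity relative to $\hat\rho$ on the untiled region) work. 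Without some device of this kind, ``recode inside marker regions and splice back using the map extension property'' does not by itself yield an injective equivariant map, because the splicing can destroy the information you recorded.
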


The  \emph{map extension property} is defined and discussed in \Cref{sec:retracts}. 
A  $\Z$-subshift has the map extension property if and only if it is a mixing SFT (\Cref{prop:mixing_Z_SFT_abs_retract}).
All the other new notations that appear in the statement of \Cref{thm:relative_embedding_thm} are defined in \Cref{sec:defs}. 
Specifically, the notation $X_{\Gamma_0}$ is explained in \Cref{def:X_Gamma_0} and the notation $\overline{X}_{\Gamma_0}$ is explained in  \Cref{def:overline_X_Gamma_0}.

We emphasize the primary case $Z=\emptyset$ in the following corollary:
\begin{corollary}\label{cor:main_embedding_theorem}
Let $\Gamma$ be a countable abelian group and 
let $X,Y$ be $\Gamma$-subshifts. Suppose that  $Y$ has the map extension property. 
Then $X \hookrightarrow Y$ if and only if for every subgroup $\Gamma_0 \le  \Gamma$ one of the following conditions holds:
\begin{enumerate}
\item  $X_{\Gamma_0}$ is topologically conjugate to $Y_{\Gamma_0}$.
\item  $h(\overline{X}_{\Gamma_0}) < h(\overline{Y}_{\Gamma_0})$ and $\Ker(Y_{\Gamma_0}) \le \Ker(X_{\Gamma_0})$. 
\end{enumerate} 
\end{corollary}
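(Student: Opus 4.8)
The plan is to deduce \Cref{cor:main_embedding_theorem} immediately from \Cref{thm:relative_embedding_thm} by taking $Z$ to be the empty subshift. First I would observe that $\emptyset \subseteq X$ is a legitimate (closed, shift-invariant) $\Gamma$-subshift, and that the unique function $\hat\rho \colon \emptyset \to Y$ is vacuously a continuous, $\Gamma$-equivariant injection, hence an embedding in the sense of this paper; the hypotheses of \Cref{thm:relative_embedding_thm} explicitly permit this, since they allow ``a (possibly empty) $\Gamma$-subshift''. With this choice, an embedding $\rho \colon X \to Y$ extending $\hat\rho$ is simply an embedding $\rho \colon X \to Y$, so the left-hand side of the equivalence in \Cref{thm:relative_embedding_thm} becomes exactly the assertion $X \hookrightarrow Y$.

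It then remains to check that the two alternatives of \Cref{thm:relative_embedding_thm} specialize to the two alternatives in the corollary. Since $Z = \emptyset$, we have $Z_{\Gamma_0} = \emptyset$ for every subgroup $\Gamma_0 \le \Gamma$, and $\hat\rho\mid_{Z_{\Gamma_0}}$ is again the empty map. Condition (1) of \Cref{thm:relative_embedding_thm}---that the empty map $\emptyset \to Y_{\Gamma_0}$ extends to an isomorphism $\rho_{\Gamma_0} \colon X_{\Gamma_0} \to Y_{\Gamma_0}$---therefore holds precisely when some isomorphism $X_{\Gamma_0} \to Y_{\Gamma_0}$ exists, i.e.\ when $X_{\Gamma_0}$ is topologically conjugate to $Y_{\Gamma_0}$ (recall that a bijective map between subshifts is automatically a topological conjugacy, being a continuous bijection between compact metric spaces). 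Condition (2) is word-for-word identical in the two statements. Hence the two conditions match and the corollary follows.

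There is no genuine obstacle here: the entire content of the corollary is already contained in \Cref{thm:relative_embedding_thm}, and the only points requiring (minimal) care are the admissibility of the empty subshift and the empty map as inputs, and the identification of ``isomorphism of $\Gamma$-subshifts'' with ``topological conjugacy''.
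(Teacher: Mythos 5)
Your proposal is correct and is exactly the paper's intended derivation: the corollary is obtained from \Cref{thm:relative_embedding_thm} by taking $Z=\emptyset$ with the empty map $\hat\rho$, and the paper's remark after \Cref{def:E_X_Y} makes precisely your identification that in this case ``extends $\hat\rho$ to an embedding/isomorphism'' just means ``$X$ embeds in $Y$'' / ``$X_{\Gamma_0}$ is topologically conjugate to $Y_{\Gamma_0}$''. Nothing further is needed.
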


In the particular case that $\Gamma=\Z$, the statement of \Cref{cor:main_embedding_theorem} coincides with the statement of Krieger's embedding theorem, as stated in \Cref{thm:krieger}. In the case $\Gamma = \Z^2$, the statement of \Cref{cor:main_embedding_theorem} coincides with the statement of \Cref{thm:embedding_thm_Z2_SFT}.

\textbf{Structure of the paper:} In \Cref{sec:defs} we 
review some basic definitions regrading countable abelian groups and subshifts, also introducing a few new ad-hoc definitions needed for our work. In \Cref{sec:krieger_marker_lemma} we present and proof a specific version of Krieger's fundamental Marker Lemma for actions of countable groups. In \Cref{sec:retracts} the map extension property is introduced and studied, along with the notion of retracts, absolute retracts and contractibility for subshifts. In \Cref{sec:entropy_min_subshifts} we  briefly recall the notion of entropy minimality for subshifts and use it to prove the necessity of the conditions for embedding in our main result. In \Cref{sec:partial_tilings} we introduce truncated Voronoi diagrams of subsets in $\mathbb{R}^d$ and in finitely generated abelian groups, deriving results about the existence of ``sufficiently invariant partial continuous equivariant tilings'' for subshifts. These are based on  techniques developed by Lightwood as part of his embedding theorems, with suitable adaptations that are required by the presence of periodic points. Lightwood's papers are not a prerequisite for this work. In \Cref{sec:marker_patttern} we derive a specific and somewhat technical statement regarding the existence of ``good markers'', using the map extension property. 
In  \Cref{sec:emb_fg} we conclude the proof of \Cref{thm:relative_embedding_thm} for the case where $\Gamma$ is a finitely generated abelian group.
In \Cref{sec:reduction_to_fg} we extend the proof to countable abelian groups (possibly not finitely generated). In \Cref{sec:lower_entropy_factors} we prove a multidimensional version of Boyle's theorem regarding lower entropy factors of subshifts, that applies to subshifts with the map extension property.

\textbf{A remark regarding the relation to Poirier and Salo's work \cite{poirier2024contractible}:} The notion of ``contractibility'' for subshifts, along with symbolic-dynamics analogs of other classical notions in topology such as homotopy have been developed by Poirier and Salo concurrently with the writing of this paper. We have not been aware of each other's work when I posted a preliminary version of this work in December 2023. Immediately after I made the first version of this work public, Ville Salo contacted me and informed me of his work with Leo Poirier. An initial version of \cite{poirier2024contractible} was uploaded to Arxiv around the same time.
It quickly became evident to us that Poirier and Salo's  notion of contractility is closely related to the map extension property that I introduced. My initial definition of the map extension property included an additional technical requirement. Ville Salo quickly realized that my initial definition of the map extension property can be streamlined to its current form. At least when abelian groups are concerned, for an arbitrary subshift the map extension property  coincides with being a contractible subshift of finite type. The overlap between the current work and \cite{poirier2024contractible} is partly an historical artifact due to concurrency. There are, however, significant differences in the point of view and emphasis between the current paper and Poirier and Salo's work \cite{poirier2024contractible}. The current paper is aimed at proving the embedding theorem for subshifts over countable abelian groups (with $\mathbb{Z}^d$-subshifts as the primary motivation). Poirier and Salo's work deals with subshifts over arbitrary countable groups (possibly non-commutative).
In attempt to keep the current papers self-contained and readable there is some overlap between these papers. Some results, although not the main ones, also overlap. Reading  \cite{poirier2024contractible} concurrently with the current paper is encouraged for deeper understanding the notion of contractility and homotopy that will be used in this paper, although it is not a prerequisite for this reading.

\textbf{Acknowledgments:} I am grateful to Leo Poirier and Ville Salo for insightful discussions in particular regarding homotopy and contractability in symbolic dynamics. I thank Barak Weiss for providing background about Korkine–Zolotarev reduced bases of lattices in $\mathbb{R}^d$ and the anonymous reviewer for a thorough review. This research was supported by the Israel Science Foundation grant No. 985/23.

\section{Notation and definitions}\label{sec:defs}

\subsection{Countable abelian groups}
Throughout this paper $\Gamma$ will be a countable or finite abelian group. We will use additive notation for the group operation in $\Gamma$. We will denote the identity  element of $\Gamma$ by $0$. We write $A \Subset \Gamma$ to indicate that  $A$ is a finite subset of $\Gamma$. For $A,B \subseteq \Gamma$ and $\gamma \in \Gamma$ the following standard notation will be used:
\[ \gamma + A = \left\{ \gamma +a ~:~ a\in A\right\},\]
\[ A+B = \left\{ a+b ~:~ a\in A,~ b \in B\right\},\]
\[ A-B =\left\{ a-b ~:~ a\in A,~ b \in B\right\}, \]
\[ -A = \left\{ -a ~:~ a \in A \right\}.\]

For $A,B,C \subseteq \Gamma$, we use the notation   $C=A \oplus B$ to indicate that for every $c \in C$ there exists a unique $a \in A$ and  a unique $b \in B$ such that $c=a+b$.

Call a set $A \subseteq \Gamma$  \emph{symmetric} if $A=-A$.

\begin{definition}
     Given $K, F \subseteq \Gamma$, let $\partial_K F$ denote the set of $\gamma \in \Gamma$ such that $(\gamma + K) \cap F \ne \emptyset$ and $(\gamma +K) \setminus F \ne \emptyset$. We refer to  $\partial_K F$ as the \emph{$K$-boundary of $F$}. 
\end{definition}

\begin{definition}
Given  $\epsilon >0$ and $K \Subset \Gamma$, a set $F \Subset \Gamma$ is called \emph{$(K,\epsilon)$-invariant} if $|\partial_K F| < \epsilon |F|$.

\end{definition}

\begin{definition}\label{def:Folner_sequence}
A sequence $(F_n)_{n=1}^\infty$ of finite subsets $F_n \Subset \Gamma$ is called a \emph{F\o lner sequence} in $\Gamma$ if for every $\epsilon>0$ and any $K \Subset \Gamma$ there exists $n_0 \in \mathbb{N}$ such that $F_n$ is $(K,\epsilon)$-invariant for every $n \ge n_0$. 
\end{definition}

It is well-known that any countable abelian group admits a F\o lner sequence. A countable group $\Gamma$ is called \emph{amenable} if it admits a F\o lner sequence.


\begin{definition}\label{def:K_separated_set}
    Given a set $K \subseteq \Gamma$, We say that a set $F \subseteq \Gamma$ is \emph{$K$-separated} if 
    $(v_1+K) \cap (v_2+K) = \emptyset$ whenever $v_1,v_2 \in F$ and $v_1 \ne v_2$.
    We say that $F \subseteq \Gamma$ is a \emph{maximal $K$-separated} set if $F$ is $K$-separated and there is no $K$-separated set that properly contains $F$.
\end{definition}

The following lemma collects some elementary yet useful statements related to the notion of an $A$-separated set:

\begin{lemma}\label{lem:separated_Sets}
    Let  $A,B,C \subset \Gamma$ be subsets. Then:
    
    \begin{enumerate}
         \item $A$ is $B$-separated if and only if $(A-A)\cap (B-B) = \{0\}$.
        \item $A$ is $B$-separated if and only if $B$ is $A$-separated.
        \item  If $A$ is $B$-separated and $(A+B)$ is $C$-separated then $A$ is $(B+C)$-separated.
    \end{enumerate}
\end{lemma}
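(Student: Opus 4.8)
The statement to prove is Lemma~\ref{lem:separated_Sets}, which collects three elementary facts about $B$-separated sets. The plan is to unwind the definition of $K$-separatedness in terms of Minkowski differences and then push everything through routine set algebra in the abelian group $\Gamma$.

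\textbf{Part (1).} The key observation is that, by definition, $A$ is $B$-separated means $(a_1+B)\cap(a_2+B)=\emptyset$ whenever $a_1\neq a_2$ in $A$. Now $(a_1+B)\cap(a_2+B)\neq\emptyset$ is equivalent to the existence of $b_1,b_2\in B$ with $a_1+b_1=a_2+b_2$, i.e. $a_1-a_2=b_2-b_1\in B-B$. So the failure of $B$-separatedness is exactly the existence of distinct $a_1,a_2\in A$ with $a_1-a_2\in (A-A)\cap(B-B)$; since $0$ always lies in both $A-A$ and $B-B$ (assuming $A,B$ nonempty, which is the only case where the statement has content — I would note that the empty/singleton cases are trivial), this failure is equivalent to $(A-A)\cap(B-B)\neq\{0\}$. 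I would write this as a short chain of "if and only if"s.

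\textbf{Part (2).} This is immediate from Part (1): the characterizing condition $(A-A)\cap(B-B)=\{0\}$ is visibly symmetric in $A$ and $B$. I would simply invoke Part (1) twice.

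\textbf{Part (3).} Here I would again translate into difference sets. Assume $A$ is $B$-separated, so $(A-A)\cap(B-B)=\{0\}$, and $A+B$ is $C$-separated, so $((A+B)-(A+B))\cap(C-C)=\{0\}$, noting $(A+B)-(A+B)=(A-A)+(B-B)$. I want to show $(A-A)\cap((B+C)-(B+C))=\{0\}$, i.e. $(A-A)\cap((B-B)+(C-C))=\{0\}$. Suppose $x\in A-A$ and $x=u+v$ with $u\in B-B$, $v\in C-C$. Then $v=x-u$; since $x\in A-A$ and $u\in B-B\subseteq (A-A)+(B-B)$... more directly: $x-u = x + (-u)$ where $x\in A-A$ and $-u\in B-B$ (difference sets are symmetric), so $x-u\in (A-A)+(B-B) = (A+B)-(A+B)$, and also $x-u=v\in C-C$; by $C$-separatedness of $A+B$ this forces $v=0$, hence $x=u\in (A-A)\cap(B-B)=\{0\}$ by $B$-separatedness of $A$, so $x=0$. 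This shows $A$ is $(B+C)$-separated.

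\textbf{Main obstacle.} There is no real obstacle; the only care needed is the bookkeeping with symmetric difference sets (using $-(B-B)=B-B$) and the degenerate cases where one of the sets is empty or a singleton, which I would dispatch in a sentence. The substantive content is entirely in the reformulation of Part (1), and Parts (2) and (3) are then short manipulations; I expect the whole proof to be under half a page.
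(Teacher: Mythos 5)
Your proof is correct and follows essentially the same route as the paper: part (1) by unwinding the definition into the difference-set condition, part (2) by the symmetry of that condition, and part (3) by the same two-step use of the hypotheses (the paper phrases it directly with elements $a_i+b_i+c_i$, you phrase it in difference-set language, but the argument is identical). Your remark about the degenerate empty/singleton cases is a fair observation that the paper's own proof also glosses over by reading the conclusion as $(A-A)\cap(B-B)\subseteq\{0\}$.
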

\begin{proof}
    \begin{enumerate}
        \item The statement that $A$ is $B$-separated means that whenever $a_1,a_2 \in A$, $b_1,b_2 \in B$ satisfy $a_1+b_1 = a_2 +b_2$ then $a_1=a_2$. 
        Equivalently,  $a_1,a_2 \in A$, $b_1,b_2 \in B$ satisfy $a_1-a_2 = b_2 -b_1$ then $a_1=a_2$ and $b_1=b_2$. 
        Rephrasing, $A$ is $B$-separated if and only if  if whenever $\gamma \in (A-A) \cap (B-B)$ then $\gamma = 0$.
        \item 
        The previous formulation shows that the role of the sets  $A$ and $B$ in the statement  ``$A$ is $B$-separated'' are completely symmetric. So 
        $A$ is $B$-separated if and only if $B$ is $A$-separated.
        \item Suppose that  $A$ is $B$-separated and that $A+B$ is $C$-separated. Suppose  $a_1,a_2 \in A$, $b_1,b_2 \in B$ and $c_1,c_2 \in C$ satisfy that 
        \[ a_1+(b_1+c_1)= a_2 +(b_2 +c_2).\]
        Because  $A+B$ is $C$-separated, we have that $a_1+b_1=a_2+b_2$ and $c_1=c_2$.
        Because $A$ is $B$-separated we have conclude that $a_1=a_2$ and $b_1=b_2$.
        We conclude that under the assumptions above, whenever $a_1,a_2 \in A$, $b_1,b_2 \in B$ and $c_1,c_2 \in C$ satisfy that
        \[ a_1+(b_1+c_1)= a_2 +(b_2 +c_2),\]
        Then $a_1=a_2$. Equivalently, we showed that $A$ is $(B+C)$-separated.
    \end{enumerate}
\end{proof}

It is easy to check that an increasing union of $K$-separated sets is again a $K$-separated set, so any $K$-separated set is contained in a maximal 
$K$-separated set.
The following basic lemma regarding maximal $K$-separated sets will be useful:
\begin{lemma}\label{lem:max_k_sep}
    Let $K \subseteq \Gamma$ be a non-empty symmetric set and let $F \subseteq \Gamma$ be a maximal $K$-separated set. Then $F+K+K= \Gamma$.
\end{lemma}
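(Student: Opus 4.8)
The plan is to prove this by contradiction, exploiting the maximality of $F$. Suppose $F+K+K \ne \Gamma$, so there exists some $\gamma \in \Gamma$ with $\gamma \notin F+K+K$. I claim that $F \cup \{\gamma\}$ is $K$-separated, which contradicts the maximality of $F$. Since $F$ itself is $K$-separated, it remains only to check the new pairs: for every $v \in F$ we need $(v+K) \cap (\gamma+K) = \emptyset$ (note $\gamma \notin F$, since $\gamma \in F$ would put $\gamma = v + 0 + 0 \in F + K + K$ as $0 \in K - K$... actually I should be careful since $0 \in K$ need not hold; but $K$ is non-empty and symmetric, and if $k \in K$ then $-k \in K$, so $v + k + (-k) = v \in F+K+K$ for any $v \in F$, hence $F \subseteq F+K+K$ and in particular $\gamma \notin F$).

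The key step: suppose for contradiction that $(v+K)\cap(\gamma+K) \ne \emptyset$ for some $v \in F$. Then there exist $k_1, k_2 \in K$ with $v + k_1 = \gamma + k_2$, hence $\gamma = v + k_1 - k_2$. Since $K$ is symmetric, $-k_2 \in K$, so $\gamma = v + k_1 + (-k_2) \in F + K + K$, contradicting the choice of $\gamma$. Therefore $(v+K)\cap(\gamma+K) = \emptyset$ for every $v \in F$, so $F \cup \{\gamma\}$ is $K$-separated and strictly contains $F$, contradicting maximality. Hence no such $\gamma$ exists, i.e. $F + K + K = \Gamma$.

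I expect no real obstacle here — this is a short pigeonhole-style argument. The only subtlety worth stating carefully is the use of symmetry of $K$ (to rewrite a difference $k_1 - k_2$ as a sum of two elements of $K$) and the fact that $K$ is non-empty (to guarantee $F \subseteq F+K+K$, which is used implicitly to note $\gamma \notin F$, though strictly speaking even that isn't needed: if $\gamma \in F$ then trivially $F \cup \{\gamma\} = F$ and there's nothing to contradict, so one may simply assume $\gamma \notin F$ without loss). A clean write-up just takes $\gamma \in \Gamma \setminus (F+K+K)$, verifies $F \cup \{\gamma\}$ is $K$-separated using symmetry of $K$, and invokes maximality.
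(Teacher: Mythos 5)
Your proof is correct and is essentially the paper's argument in contrapositive form: both reduce to the observation that maximality forces $(\gamma+K)$ to meet $(v+K)$ for some $v\in F$, and then symmetry of $K$ turns $v+k_1-k_2$ into an element of $F+K+K$. Your extra care about $\gamma\notin F$ (via non-emptiness and symmetry of $K$) is a minor tidiness point the paper passes over, but the substance is the same.
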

\begin{proof}
    Since $F$ is a maximal $K$-separated set, for any $v \in \Gamma$ there exists $k,k_1 \in K$ and $v_1 \in F$ such that $v+k = v_1 + k_1$. It follows that
    $v= v_1 + k_1 -k$. Since $K$ is symmetric, $-k \in K$, so $v \in F + K +K$, proving that $\Gamma = F+K+K$.
\end{proof}

\begin{definition}
    A subset $D \subseteq \Gamma$ is a \emph{fundamental domain} for $\Gamma_0 \le \Gamma$ if for each $v \in \Gamma$ the coset $v+ \Gamma_0$ intersects $D$ exactly once. 
\end{definition}
Equivalently, $D \subseteq \Gamma$ is a fundamental domain for $\Gamma_0 \le \Gamma$ if and only if $\Gamma = D \oplus \Gamma_0$.
It can be directly verified that a subset $D \subseteq \Gamma$ is a fundamental domain for  $\Gamma_0 \le \Gamma$ if and only if $D$ is a maximal $\Gamma_0$-separated set.

We denote  the space of subgroups of $\Gamma$ by $\Sub(\Gamma)$. The space $\Sub(\Gamma)$ admits a standard topology called the \emph{Chabauty topology}.  

For a countable discrete group $\Gamma$, 
by viewing $\Sub(\Gamma)$ as a closed subset of $\{0,1\}^\Gamma$, the Chabauty topology can be interpreted as the relative topology induced from the product topology on $\{0,1\}^\Gamma$.

A basis for the Chabauty topology is given by sets of the form 
\[ N(A,B)= \left\{\Gamma_0 \in \Sub(\Gamma)~:~ \Gamma_0 \cap B = A \right\},~ A\Subset B \Subset \Gamma.\]
Thus, the statement $\lim_{n\to \infty}\Gamma_n = \Gamma_0$ means that for any finite set $B \Subset \Gamma$ there exists
$n_0 \in \mathbb{N}$ such that for any $n \ge n_0$ we have $\Gamma_n \cap B = \Gamma_0 \cap B$.

\subsection{Subshifts}
\begin{definition}
Let $A$ be a finite set.
For $x \in A^\Gamma$ and $v \in \Gamma$, we denote the value of $x$ at $v$ by $x_v$. Given $w \in \Gamma$ and $x \in A^\Gamma$, let $\sigma_w(x) \in A^\Gamma$ be given by $\sigma_{w}(x)_v = x_{v+w}$ for every $v \in \Gamma$. This defines an action of $\Gamma$ on $A^\Gamma$ called the \emph{shift action}\footnote{Since in this paper we restrict to commutative groups $\Gamma$, any left-action is also a right action, and the convention we chose is both standard and simple. In the case of non-commutative groups (in multiplcative notation), to get an action from the left one uses $(\sigma_g(x))_h=x_{g^{-1}h}$.}. We equip  $A^\Gamma$ with the product topology, where $A$ is given the discrete topology. This makes $A^\Gamma$ a compact metrizable topological space. 
A \emph{$\Gamma$-subshift} is a closed shift-invariant subset of $A^\Gamma$ for some finite set $A$.
In this context, we refer to $A^\Gamma$ as the \emph{$\Gamma$-full-shift} (or simply the full-shift when $\Gamma$ is fixed) over the alphabet $A$.
Given $F \subseteq \Gamma$ and $x \in A^\Gamma$, we let $x_F \in A^F$ denote the restriction of $x$ to $F$. 
\end{definition}

\begin{definition}
Let $X \subseteq A^\Gamma$ be a $\Gamma$-subshift and $F \Subset \Gamma$ a finite set.
We denote
\[
\cL_F(X) := \left\{ w \in A^F:~ \exists x \in X \mbox{ s.t. } x_F = w\right\}.
\]
\end{definition}
The elements of  $\cL_F(X)$ are called  \emph{$X$-admissible $F$-patterns}.
\begin{definition}
For $x \in A^\Gamma$, the \emph{stablizer} of $x$ is given by
\[
\stab(x) = \{ v\in \Gamma:~ \sigma_v(x)=x\}.
\]
\end{definition}

Clearly,  $\stab(x)$ is  a subgroup of $\Gamma$.

\begin{definition}\label{def:X_Gamma_0}
Given a $\Gamma$-subshift $X$ and a subgroup $\Gamma_0 \le \Gamma$ 
let \[
X_{[\Gamma_0]} = \left\{x \in X~:~ \Gamma_0 \le \stab(x) \right\}.
\]
and
\[
X_{\Gamma_0} = \begin{cases}
X_{[\Gamma_0]} & \mbox{ if } [\Gamma : \Gamma_0] = \infty\\
\left\{x \in X~:~ \stab(x) = \Gamma_0 \right\} & \mbox{ if } [\Gamma:\Gamma_0]<\infty. 
\end{cases}
\]
\end{definition}

\begin{definition}\label{def:overline_X_Gamma_0}
Given a $\Gamma$-subshift $X \subseteq A^{\Gamma}$ and a subgroup $\Gamma_0 < \Gamma$  let $\overline{X}_{[\Gamma_0]},\overline{X}_{\Gamma_0} \subseteq A^{\Gamma / \Gamma_0}$ denote the natural images of $X_{[\Gamma_0]}$ and $X_{\Gamma_0}$ respectively. Namely,

\[
\overline{X}_{[\Gamma_0]} = \left\{ \overline{x} \in A^{\Gamma/\Gamma_0}~:~ \exists x \in X_{[\Gamma_0]} \mbox{ s.t. } x_v=\overline{x}_{v +\Gamma_0} ~ \forall v \in \Gamma \right\}\]
and
\[
\overline{X}_{\Gamma_0} = \left\{ \overline{x} \in A^{\Gamma/\Gamma_0}~:~ \exists x \in X_{\Gamma_0} \mbox{ s.t. } x_v=\overline{x}_{v +\Gamma_0} ~ \forall v \in \Gamma \right\}.\]

\end{definition}

It is clear that both $\overline{X}_{[\Gamma_0]}$ and $\overline{X}_{\Gamma_0}$ are closed $(\Gamma/\Gamma_0)$-invariant sets, and that there are continuous bijections $\overline{X}_{\Gamma_0} \leftrightarrow X_{\Gamma_0}$ and   $\overline{X}_{[\Gamma_0]} \leftrightarrow X_{[\Gamma_0]}$ . 
Thus, $\overline{X}_{\Gamma_0}$ and $\overline{X}_{[\Gamma_0]}$ are (possibly empty)  $(\Gamma/\Gamma_0)$-subshifts.

\begin{definition}
    Given a $\Gamma$-subshift $X$, let 
    \[ \Ker(X) = \{ v \in \Gamma~:~ \sigma_v(x)=x \mbox{ for every } x \in X\}.\]
    Equivalently,
    \[ \Ker(X) = \bigcap_{ x \in X} \stab(x).\]
\end{definition}

The action of $\Gamma$ on $X$ is called \emph{faithful} if  $\Ker(X)$ is equal to the trivial subgroup.

\begin{definition}
Let $W \Subset \Gamma$ be  finite set, and let $\mathcal{F} \subset A^W$
be a set of $W$-patterns.
 We say that a pattern
$w \in A^W$ \emph{occurs} in $x \in A^\Gamma$ at $v \in \Gamma$ if  $\sigma_v(x)_{W} = w$.
\end{definition}

\begin{definition}\label{def:SFT}
 A $\Gamma$-subshift $Y \subseteq A^\Gamma$ is call a  \emph{subshift of finite type (SFT)}  if there exists a finite set $W\Subset \Gamma$ and a set of  patterns $\mathcal{F} \subset A^W$ such that 
 \[
 Y= \{ y \in A^{\Gamma}~:~ \sigma_v(y)_W \not \in \mathcal{F} \mbox{ for all } v \in \Gamma\}.
 \]
 In this case we say that $\mathcal{F} \subset A^W$ is a \emph{defining set of forbidden patterns for $Y$}.
\end{definition}

For a given shift of finite type $Y \subseteq A^\Gamma$ is possible to find many different  sets $W_1 \Subset \Gamma$ and $\mathcal{F}_1 \subseteq A^{W_1}$   such that $\mathcal{F}_1$ is  a defining set of forbidden patterns for $Y$.


\begin{definition}
Given  a finite set $W_0 \Subset \Gamma$, a function $\Phi:A^{W_0} \to B$ and another set $E \subseteq \Gamma$ let $\Phi^E:A^{W_0+E} \to B^{E}$ be given by
\[\Phi^E(x)_v = \Phi(\sigma_{v}(x)_{W_0}).
\]
\end{definition}


\begin{definition}
Let $\rho:X \to Y$ be  map between $\Gamma$-subshifts $X \subseteq A^\Gamma$, $Y \subseteq B^\Gamma$. A finite set $W \Subset \Gamma$ is a \emph{coding window} for $\rho$ if there exists  a function $\Phi:A^{W} \to B$  such that $\rho(x)_0 = \Phi(x_W)$ for all $x \in X$. In this case we say that   $\Phi$ is a \emph{sliding block code} for $\rho$.
\end{definition}

From the definition of the product topology, any map $\rho:X \to Y$ admits a coding window and a sliding block code. This basic fact is known as the \emph{Curtis-Hedlund-Lyndon Theorem}. As in the case of a defining set of forbidden patterns for a subshift of finite type, there is no uniqueness: A map $\rho:X\to Y$ can admit many different coding windows and many different sliding block codes.

\begin{definition}
    Let $\rho:X\to Y$ be a map between subshifts $X\subseteq A^\Gamma$ and $Y\subseteq B^\Gamma$, and let $\Phi:A^W \to B$  be a sliding block code for $\rho$. We say that a finite set $W_0 \Subset \Gamma$ is an \emph{injectivity window} for $\rho$ if for any $w^{(1)},w^{(2)} \in \cL_{W+W_0}(X)$   such that $w{(1)}_0 \ne w^{(2)}_0$ with have  $\Phi^{W_0}(w^{(1)}) \ne \Phi^{W_0}(w^{(2)})$. 
\end{definition}
By a standard compactness argument, a map $\rho:X \to Y$ is injective iff and only if it admits a finite injectivity coding window.

\begin{definition}\label{def:X_Delta}
    
Given a countable  group  $\Gamma$, a subgroup $\Delta < \Gamma$, and a $\Gamma$-subshift $X \subseteq A^\Gamma$ the \emph{$\Delta$-projection} of $X$ is defined to be the $\Delta$-subshift $X^{(\Delta)} \subseteq A^{\Delta}$ given by:
\[ X^{(\Delta)} = \left\{ x_\Delta :~ x \in X \right\}.\]
Let $X^{[\Delta]} \subseteq A^{\Gamma}$ denote the $\Gamma$-subshift given by:
\[X^{[\Delta]} = \left\{ x \in A^\Gamma~:~ \forall \gamma \in \Gamma~ \sigma_\gamma(x)_{\Delta} \in X^{(\Delta)}\right\}.\]
\end{definition}

It is straightforward  to check that $X^{(\Delta)}$ is indeed a $\Delta$-subshift. 
It is also clear that $X^{[\Delta]}$  is a $\Gamma$ subshift.

It follows directly from the definitions that  $X= X^{(\Gamma)}= X^{[\Gamma]}$.

Clearly, $X \subseteq X^{[\Delta]}$, and more generally, if $\Delta \le \tilde \Delta \le \Gamma$ then $X^{[\tilde \Delta]} \subseteq X^{[\Delta]}$.
Moreover:
\begin{lemma}\label{lem:X_Delta_SFT}
Let $X$ be a $\Gamma$-subshift of finite type, then there exists a finitely generated subgroup $\Delta \le \Gamma$ such that $X= X^{[\Delta]}$.    
\end{lemma}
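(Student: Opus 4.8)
\textbf{Proof proposal for \Cref{lem:X_Delta_SFT}.}

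The plan is to extract a finitely generated subgroup $\Delta$ from a finite defining set of forbidden patterns for $X$, and then show that this $\Delta$ already captures all the constraints defining $X$. Since $X$ is an SFT, by \Cref{def:SFT} there is a finite set $W \Subset \Gamma$ and a set $\mathcal{F} \subseteq A^W$ such that $X = \{y \in A^\Gamma : \sigma_v(y)_W \notin \mathcal{F} \text{ for all } v \in \Gamma\}$. Let $\Delta = \langle W \rangle$ be the subgroup of $\Gamma$ generated by the finite set $W$; this is finitely generated, and in particular $W \subseteq \Delta$.

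The inclusion $X \subseteq X^{[\Delta]}$ is already noted in the excerpt, so the content is the reverse inclusion $X^{[\Delta]} \subseteq X$. First I would unwind what membership in $X^{[\Delta]}$ says: by \Cref{def:X_Delta}, $x \in X^{[\Delta]}$ iff for every $\gamma \in \Gamma$ the restriction $\sigma_\gamma(x)_\Delta$ lies in $X^{(\Delta)} = \{x_\Delta : x \in X\}$. The key observation is that because $W \subseteq \Delta$, whether a point avoids all forbidden patterns in $\mathcal F$ can be checked on each coset of $\Delta$ independently: for any $z \in A^\Delta$, the condition ``$\sigma_v(z)_W \notin \mathcal F$ for all $v \in \Delta$'' makes sense, and I claim $X^{(\Delta)}$ equals exactly the set of $z \in A^\Delta$ satisfying it. Indeed $X^{(\Delta)}$ is contained in that set since restrictions of points of $X$ avoid $\mathcal F$; conversely, given such a $z$, one can build $x \in A^\Gamma$ by choosing a fundamental domain $D$ for $\Delta$ (which exists by the discussion after the definition of fundamental domain) and setting $x$ on each coset $d + \Delta$ to be a shifted copy of a pattern in $A^\Delta$ avoiding $\mathcal F$ — e.g. any single $z_0 \in X^{(\Delta)}$, or even $z$ itself translated; since $W \subseteq \Delta$, every translate $\sigma_v(x)_W$ with $v \in \Gamma$ is a translate-by-an-element-of-$\Delta$ of the pattern on the coset containing $v + W$...

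wait — $v+W$ need not lie in a single coset. Let me reconsider: the cleaner route is to note that for $x \in A^\Gamma$, avoiding $\mathcal F$ everywhere is equivalent to: for every $\gamma \in \Gamma$, $\sigma_\gamma(x)_W \notin \mathcal F$, and since $W \subseteq \Delta$, this equals $\sigma_v(\sigma_\gamma(x)_\Delta)_W \notin \mathcal F$ for all $v \in W \subseteq \Delta$ — actually the point is simply that $\sigma_\gamma(x)_W$ depends only on $\sigma_\gamma(x)_\Delta$. So $x \in X$ iff for all $\gamma \in \Gamma$, the pattern $\sigma_\gamma(x)_\Delta \in A^\Delta$ avoids all translates (by elements of $\Delta$) of patterns in $\mathcal F$. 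Now if $x \in X^{[\Delta]}$, then each $\sigma_\gamma(x)_\Delta \in X^{(\Delta)}$, hence is the $\Delta$-restriction of some point of $X$, hence avoids $\mathcal F$ on $\Delta$; therefore $x$ avoids $\mathcal F$ everywhere, so $x \in X$. This gives $X^{[\Delta]} \subseteq X$, completing the proof.

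The main obstacle, and the step requiring care, is precisely the bookkeeping in the previous paragraph: making sure that ``$x$ avoids $\mathcal F$ at every $\gamma \in \Gamma$'' is genuinely equivalent to ``every $\Delta$-coset restriction $\sigma_\gamma(x)_\Delta$ avoids $\mathcal F$ internally,'' which hinges on $W \subseteq \Delta$ so that each window $\gamma + W$ is a $\Gamma$-translate of a subset of $\Delta$, and on $X^{(\Delta)}$ being exactly the set of $\Delta$-configurations avoiding $\mathcal F$ (not merely contained in it). The latter equality is where one invokes the existence of a fundamental domain $D$ for $\Delta$ in $\Gamma$ to glue together shifted copies of a single admissible $\Delta$-pattern into a global point of $X$. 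Everything else is routine unwinding of \Cref{def:X_Delta} and \Cref{def:SFT}.
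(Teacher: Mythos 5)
Your proposal is correct and takes essentially the same route as the paper, which simply sets $\Delta=\langle W\rangle$ and leaves the verification implicit: since $W\subseteq\Delta$, for $x\in X^{[\Delta]}$ each window $\sigma_\gamma(x)_W$ is read off from $\sigma_\gamma(x)_\Delta\in X^{(\Delta)}$, hence coincides with $y_W$ for some $y\in X$ and so avoids $\mathcal{F}$, giving $X^{[\Delta]}\subseteq X$. One small correction to your closing paragraph: only the easy containment (restrictions of points of $X$ avoid $\mathcal{F}$) is actually used, so the fundamental-domain gluing argument for the equality $X^{(\Delta)}=\{z\in A^\Delta : z \mbox{ avoids } \mathcal{F}\}$ is true but superfluous here.
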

\begin{proof}
Let $X$ be a   $\Gamma$-subshift of finite type. So  exists a finite set $W \Subset \Gamma$ and $\mathcal{F} \subseteq A^W$ such that $\mathcal{F}$ is a defining set of forbidden patterns for $X$. It follows that $X= X^{[\Delta]}$, where $\Delta$ is the group generated by $W$.
\end{proof}

\begin{definition}[Topological entropy for subshifts]
Let $X$ be a $\Gamma$-subshift. 
The topological entropy of $X$, denoted by $h(X)$, is given by
\[
h(X) = \lim_{n \to \infty} \frac{1}{|F_n|}\log(|\cL_{F_n}(X)|),
\]
where $(F_n)_{n=1}^\infty$ is any F\o lner sequence in $\Gamma$. 
For convenience, we denote
\[h(\emptyset) = -\infty.\]
\end{definition}

The existence of the limit (which is actually an infimum), and the fact that it does not depend on the choice of F\o lner sequence are well-known. See for instance \cite{downarowicz2016shearer}.

A simpler, equivalent definition is the following:
\[
h(X) = \inf_{F \Subset \Gamma} \frac{1}{|F|}\log(|\cL_F(X)|),
\]
where the infimum is over all finite subsets of $\Gamma$.

The equivalence of these definitions can be obtained from the following lemma, which we will use subsequently:
\begin{lemma}\label{lem:top_entropy_inv_set}
For any $\epsilon>0$ there exists $\delta>0$ and a finite set $W \Subset \Gamma$ such that
for any $(W,\delta)$-invariant set $F \Subset \Gamma$ the following inequality holds:
\[
\log |\cL_F(X)| \ge (1-\epsilon)|F| h(X).
\]
\end{lemma}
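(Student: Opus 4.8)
The plan is to prove \Cref{lem:top_entropy_inv_set} by exploiting the subadditivity of the function $F \mapsto \log|\cL_F(X)|$ together with the fact that a sufficiently invariant set can be tiled almost entirely by translates of a fixed finite set realizing entropy close to $h(X)$. First I would observe that, by the infimum characterization of $h(X)$, for any $\epsilon > 0$ there is a finite set $W \Subset \Gamma$ with $\tfrac{1}{|W|}\log|\cL_W(X)| \le h(X) + \epsilon'$ for a suitably small $\epsilon'$ chosen in terms of $\epsilon$ (and without loss of generality $0 \in W$ and $W$ is symmetric, replacing $W$ by $W - W$ if needed; this only increases $|W|$ but the entropy estimate still holds for an appropriate choice). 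The point of the lemma, however, is a lower bound on $\log|\cL_F(X)|$, so the relevant direction is to cover $F$ efficiently by translates of $W$ and count.

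The key steps, in order: (1) Fix $\epsilon > 0$ and pick $W \Subset \Gamma$ finite, symmetric, containing $0$, such that $\log|\cL_W(X)| \ge$ (something close to) $|W| h(X)$ — actually, since $h(X)$ is an infimum, we automatically have $\log|\cL_W(X)| \ge |W| h(X)$ for \emph{every} $W$, so no choice is needed for this inequality; the choice of $W$ will instead be dictated by needing $W$ to be large enough to absorb boundary losses. (2) Take $\delta > 0$ small (to be fixed) and let $F$ be $(W,\delta)$-invariant. Choose a maximal $W$-separated subset $S \subseteq F$; by \Cref{lem:max_k_sep} (applied with $K = W$) the translates $\{s + W : s \in S\}$ cover $F + W + W$, hence cover $F$, and by $W$-separatedness they are pairwise disjoint. (3) Discard from $S$ those $s$ with $(s+W) \not\subseteq F$; these all lie in $\partial_W F$, so we remove at most $|\partial_W F| < \delta|F|$ of them, leaving $S' \subseteq S$ with $\bigsqcup_{s \in S'}(s+W) \subseteq F$, the union being disjoint, and $|F| - |\bigcup_{s\in S'}(s+W)| \le |F| - (|F| - |F+W+W \setminus F| - |W||\partial_W F|)$; more carefully, the complement $F \setminus \bigcup_{s\in S'}(s+W)$ is covered by $\bigcup_{s \in S \setminus S'}(s+W)$ which has size at most $|W| \cdot |\partial_W F| \le |W|\delta|F|$. (4) Now, since distinct admissible $W$-patterns can be placed independently on disjoint translates $s + W \subseteq F$ while remaining extendable to a point of $X$ (this is the standard fact that patterns on a set of the form $S' + W$ with $S'$ being $W$-separated can be glued — one needs that each individual $(s+W)$-pattern extends and that on disjoint regions the constraints do not interact; for a general subshift this requires care, but it holds when the regions $s+W$ are separated enough that no word of $X$ straddles two of them, which we can ensure by replacing $W$ with a slightly larger set so that $W$-patterns determine nothing outside — actually the clean argument uses that $X \subseteq X^{[\Delta]}$ and chooses $W$ a union of translates, or simply invokes $|\cL_{S'+W}(X)| \ge |\cL_W(X)|^{|S'|}$ which holds by the product structure when the $s+W$ are pairwise disjoint and we only demand local admissibility on each piece — I will need to be careful here). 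Then $\log|\cL_F(X)| \ge \log|\cL_{S'+W}(X)| \ge |S'| \log|\cL_W(X)| \ge |S'| |W| h(X) = |\bigcup_{s\in S'}(s+W)| \cdot h(X) \ge (1 - |W|\delta)|F| h(X)$, and choosing $\delta$ so that $|W|\delta < \epsilon$ finishes the proof.

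The main obstacle I anticipate is step (4): for an \emph{arbitrary} subshift $X$ (not of finite type), it is not true in general that patterns on disjoint finite regions can be freely combined, because admissibility is a global condition. The standard fix is to choose the coding set $W$ to be itself $(K,\eta)$-invariant for the window $K$ of some SFT cover, or more simply to note that $|\cL_{S'+W}(X)| \ge |\cL_W(X)|$ already gives a weak bound and iterate — but to get the full product $|\cL_W(X)|^{|S'|}$ one genuinely needs the translates to be far apart, not merely disjoint. I would resolve this by taking $S$ to be maximal $(W+W)$-separated (or $W$ a symmetric set and demanding $2W$-separation), so that for $s_1 \ne s_2$ in $S'$ the sets $s_1 + W$ and $s_2 + W$ are not only disjoint but separated by a translate of $W$; then any configuration on $\Gamma$ that restricts admissibly to each $s + W$ individually and is filled in arbitrarily elsewhere by completing each $s+W$-pattern to a full point of $X$ and patching — this still needs a compactness/gluing argument. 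The cleanest route, which I would adopt, is: cover $F$ by disjoint translates of a fundamental-domain-like block, use that $\cL_W(X)$ contains $\ge e^{|W|h(X)}$ patterns each extending to a global point, and apply the super-multiplicativity $|\cL_{A \sqcup B}(X)| \ge$ ... — actually the inequality that is robust and requires no separation beyond the definition is simply $\log|\cL_F(X)| \ge \sum \log|\cL_{W}(X)|$ fails, so instead I will invoke the well-known lemma (e.g. from \cite{downarowicz2016shearer}) that $F \mapsto \log|\cL_F(X)|$ is subadditive and that $h(X) = \inf_F \tfrac{1}{|F|}\log|\cL_F(X)|$ implies the reverse Fekete-type bound on invariant sets; concretely, subadditivity gives $\log|\cL_{F}(X)| \le \sum_{s \in S'}\log|\cL_{s+W}(X)| + \log|\cL_{F \setminus (S'+W)}(X)|$, which is the wrong direction, so the genuine content is a \emph{lower} bound and the honest proof packs $F$ with disjoint translates of $W$ and argues that, because each such translate sees only a bounded neighborhood, there exists a global point of $X$ realizing any prescribed admissible pattern on each — this is where I would be most careful, likely phrasing it via a fixed SFT $Y \supseteq X$ with $X = X^{[\Delta]}$-type control, or simply restricting attention (as the paper does elsewhere) to the case that suffices for the application.
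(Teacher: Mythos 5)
The core of your argument, step (4), does not hold: for a general subshift $X$ the inequality $|\cL_{S'+W}(X)| \ge |\cL_W(X)|^{|S'|}$ is false, and no amount of extra separation between the translates $s+W$ repairs it, because $\cL_{S'+W}(X)$ consists of restrictions of \emph{global} points of $X$, not of tuples of locally admissible $W$-patterns. A trivial counterexample is $X=\{\overline{0},\overline{1}\}\subseteq\{0,1\}^\Gamma$ (the two constant points): $|\cL_W(X)|=2$ for every $W$, while $|\cL_{S'+W}(X)|=2$ for every $S'$; taking a product with a full shift gives positive-entropy examples where the claimed inequality still fails for $|S'|\ge 2$. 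This is exactly the obstruction you flag yourself, but none of the proposed fixes (demanding $(W+W)$-separation, passing to an SFT cover, ``fundamental-domain-like blocks'') addresses it, and the proposal ends without an actual argument. You are also inconsistent about what is being assumed: in step (1) you use ``$\log|\cL_W(X)|\ge |W|\,h(X)$ for every $W$'', which is the infimum rule $h(X)=\inf_F\frac{1}{|F|}\log|\cL_F(X)|$ over all finite sets; but if that rule is available, the lemma is immediate for \emph{every} finite $F$ (no invariance hypothesis needed) and the entire packing construction is superfluous, whereas if only the F\o lner-limit definition of $h(X)$ is assumed, that bound is precisely the nontrivial content and cannot be taken as ``automatic'' (the paper presents the infimum characterization as something whose equivalence is \emph{obtained from} this lemma).

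The genuine content of the lemma is a \emph{lower} bound on pattern counts over an arbitrary sufficiently invariant finite set, and subadditivity of $F\mapsto\log|\cL_F(X)|$ only gives upper bounds; since translates of an arbitrary $W$ need not tile the group, the naive packing/Fekete-type argument cannot close the gap for general countable abelian $\Gamma$. The paper's own proof is simply a citation: the statement follows from Shearer's inequality, equivalently the infimum rule for topological entropy of subshifts over countable amenable groups, as in \cite{downarowicz2016shearer}, where uniform covers with multiplicity replace partitions. Your final fallback does gesture at this reference; had you committed to it --- quote the infimum rule, deduce $\log|\cL_F(X)|\ge |F|\,h(X)\ge(1-\epsilon)|F|\,h(X)$, and stop --- the proof would be correct and would essentially coincide with the paper's. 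As written, though, the main argument is invalid and the correct tool is only mentioned, not used.
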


A short proof of \Cref{lem:top_entropy_inv_set} can be obtained using Shearer's inequality \cite{downarowicz2016shearer}.

The topological entropy of a subshift is invariant under isomorphism. 
Furthermore, if $\rho:X \to Y$ is a map, then $h(\rho(X)) \le h(X)$.

It can be easily verified that for any $\Gamma_0 \le \Gamma$ and any $\Gamma$-subshift $X$ we have:
\[
h(\overline{X}_{\Gamma_0}) = \inf \left\{ \frac{1}{|K|}\log \left| \cL_K(X_{\Gamma_0}) \right| ~:~ K \Subset \Gamma~,~ \Gamma_0 \mbox{ is } K \mbox{-separated}\right\}
\]
and
\[
h(\overline{X}_{[\Gamma_0]}) = \inf \left\{ \frac{1}{|K|}\log \left| \cL_K(X_{[\Gamma_0]}) \right| ~:~ K \Subset \Gamma~,~ \Gamma_0 \mbox{ is } K \mbox{-separated}\right\}.
\]

Note that when $\Gamma$ is a finite group and $X$ is a $\Gamma$-subshift, we have

\[ h(X) = \frac{1}{|\Gamma|}\log |X|. \]

The following result is probably folklore, we include it for completeness. 
\begin{proposition}\label{prop:subgroup_entropies_upperbound}
    Let $X$ be a $\Gamma$-subshift. Then 
    \[\limsup_{\Gamma_0 \to \{0_\Gamma\}} h(\overline{X}_{[\Gamma_0]}) \le h(X).\]
\end{proposition}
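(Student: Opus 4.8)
The plan is to show that for any $\epsilon > 0$, there is a neighborhood of the trivial subgroup in the Chabauty topology on which $h(\overline{X}_{[\Gamma_0]}) \le h(X) + \epsilon$. The key point is that $\overline{X}_{[\Gamma_0]}$ consists of points in the full shift $A^{\Gamma/\Gamma_0}$ whose pullback to $\Gamma$ is a $\Gamma_0$-periodic point of $X$; such points have many repeated coordinates, so their patterns are forced to lie in $\cL(X)$ on a genuinely large scale, and entropy of $X$ controls the count.

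First I would fix $\epsilon > 0$ and invoke \Cref{lem:top_entropy_inv_set} (applied to $X$) to obtain $\delta > 0$ and $W \Subset \Gamma$ such that every $(W,\delta)$-invariant $F \Subset \Gamma$ satisfies $\log|\cL_F(X)| \le (1+\epsilon)|F|\, h(X)$ — wait, that lemma gives a lower bound; instead I use the definition $h(X) = \inf_{F} \frac{1}{|F|}\log|\cL_F(X)|$ directly: pick any $F \Subset \Gamma$ with $\frac{1}{|F|}\log|\cL_F(X)| \le h(X) + \epsilon$. Next, use the formula recorded just above the statement,
\[
h(\overline{X}_{[\Gamma_0]}) = \inf\left\{ \tfrac{1}{|K|}\log\bigl|\cL_K(X_{[\Gamma_0]})\bigr| ~:~ K \Subset \Gamma,~ \Gamma_0 \text{ is } K\text{-separated}\right\},
\]
so it suffices to exhibit, for each $\Gamma_0$ in a suitable neighborhood, a single $K$ that is $\Gamma_0$-separated with $\frac{1}{|K|}\log|\cL_K(X_{[\Gamma_0]})| \le h(X)+\epsilon$. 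The natural candidate is $K = F$ itself: the Chabauty neighborhood $N(\{0\}, F-F)$ consists exactly of subgroups $\Gamma_0$ with $\Gamma_0 \cap (F-F) = \{0\}$, which by \Cref{lem:separated_Sets}(1) is precisely the condition that $F$ is $\Gamma_0$-separated.

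It remains to check $\cL_F(X_{[\Gamma_0]}) \subseteq \cL_F(X)$ for $\Gamma_0 \in N(\{0\}, F-F)$; this inclusion is immediate since $X_{[\Gamma_0]} \subseteq X$, giving $\frac{1}{|F|}\log|\cL_F(X_{[\Gamma_0]})| \le \frac{1}{|F|}\log|\cL_F(X)| \le h(X)+\epsilon$, hence $h(\overline{X}_{[\Gamma_0]}) \le h(X)+\epsilon$ for all such $\Gamma_0$. Since $\epsilon > 0$ was arbitrary and $N(\{0\}, F-F)$ is a Chabauty neighborhood of the trivial subgroup, this yields $\limsup_{\Gamma_0 \to \{0_\Gamma\}} h(\overline{X}_{[\Gamma_0]}) \le h(X)$. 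The only genuinely delicate point is matching the combinatorial condition "$F$ is $\Gamma_0$-separated" with the topological condition "$\Gamma_0$ close to $\{0\}$" — this is exactly what \Cref{lem:separated_Sets}(1) provides, so in fact the argument is short; I expect no substantial obstacle beyond being careful that the displayed formula for $h(\overline{X}_{[\Gamma_0]})$ is used with the correct quantifier (infimum over valid $K$, so one well-chosen $K$ suffices).
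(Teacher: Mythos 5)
Your proposal is correct and follows essentially the same route as the paper: choose a single finite set $F$ witnessing $\frac{1}{|F|}\log|\cL_F(X)|\le h(X)+\epsilon$, note that for any $\Gamma_0$ for which $F$ is $\Gamma_0$-separated the infimum formula for $h(\overline{X}_{[\Gamma_0]})$ is bounded by the value at $K=F$, and conclude via $\cL_F(X_{[\Gamma_0]})\subseteq\cL_F(X)$. The only difference is cosmetic: you spell out the translation between ``$F$ is $\Gamma_0$-separated'' and the Chabauty neighborhood $N(\{0\},F-F)$ via \Cref{lem:separated_Sets}, which the paper leaves implicit.
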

\begin{proof}
 We need to prove that for every $\epsilon >0$ there exists a finite set $K \Subset \Gamma$ such that for any $K$-separated subgroup $\Gamma_0 < \Gamma$ it holds that $ h(\overline{X}_{\Gamma_0}) \le h(X)+ \epsilon$.
Fix $\epsilon >0$. Since $h(X) = \inf_{K \Subset \Gamma}\frac{1}{|K|}\log |\cL_K(X)|$, there exists $K \Subset \Gamma$ such that 
\[ \frac{1}{|K|}\log |\cL_K(X)|  \le  h(X) +\epsilon.\]
Let $\Gamma_0 \le \Gamma$ be any $K$-separated subgroup.
Then 
\[h(\overline{X}_{[\Gamma_0]} )\le \frac{1}{|K|} \log |\cL_K(X_{[\Gamma_0]})|.\]
Since $\cL(X_{[\Gamma_0]}) \subseteq \cL(X)$, we have that
\[ \frac{1}{|K|} \log |\cL_K(X_{[\Gamma_0]})| \le \frac{1}{|K|} \log |\cL_K(X)|,\]
so
\[h(\overline{X}_{[\Gamma_0]}) \le  h(X) +\epsilon.\]
\end{proof}

In the classical case $\Gamma= \Z$, \Cref{prop:subgroup_entropies_upperbound} coincides with the well-known fact that for any $\Z$-subshift $X$ the number of points with period at most $n$ is bounded above by $e^{nh(X)+o(n)}$ as $n \to \infty$.

We introduce some specific notation to avoid repetition of the conditions that appear in the statement of \Cref{thm:relative_embedding_thm}:

\begin{definition}\label{def:E_X_Y}
Given $\Gamma$-subshifts $X,Y$, a  $\Gamma$-subshift $Z \subseteq X$, an injective map $\hat \rho:Z \to Y$ and $\Gamma_0 \le \Gamma$,
we use the notation  $X \hookrightarrow_{\hat \rho} Y$ to indicate that $\hat \rho$ extends to an embedding of $X$ into $Y$, and use the notation $X \cong_{\hat \rho} Y$ to indicate that $\hat \rho$ extends to an isomorphism of $X$ and $Y$.
We write $E(X,Y,\hat \rho,\Gamma_0)$ to indicate that  either $h(\overline{X}_{\Gamma_0}) < h(\overline{Y}_{\Gamma_0})$ and $\Ker(Y_{\Gamma_0}) \le \Ker(X_{\Gamma_0})$ or 
$X_{\Gamma_0} \cong_{\hat \rho} Y_{\Gamma_0}$.    
\end{definition}

Note that when $Z= \emptyset$ and $\hat \rho:Z \to Y$ is the ``empty map'', then $X \hookrightarrow_{\hat \rho} Y$ simply means that $X$ embeds in $Y$ and $X \cong_{\hat \rho} Y$ simply means that $X$ is topologically conjugate to $Y$.

\section{Markers in the domain: Krieger's Marker Lemma}
\label{sec:krieger_marker_lemma}

In this section  we state and prove a version of Krieger's Marker Lemma 
 that applies to $\Gamma$-subshifts, where $\Gamma$ is a countable abelian group (\Cref{lem:krieger_marker_lemma}). Krieger's 
original lemma is stated for $\Z$-subshifts \cite[Lemma 2]{MR693975}.
Several variants of this lemma can be found in the literature, for instance \cite[Lemma $3.4$]{MR3540453}, \cite[Lemma 2.1]{MR3359054} and  \cite[Lemma 3.2]{MR4311113}, all with very similar proofs. 
The main reason that we reprove the lemma here is that our formulation of \Cref{lem:krieger_marker_lemma} involves a detailed treatment of  
non-free actions (a slightly different treatment of non-free actions appears in \cite[Lemma 3.2]{MR4311113}).
We only deal here with abelian groups purely for simplicity of notation, to maintain consistency of notation with the rest of the paper: Using suitable (multiplicative) notion, Krieger's marker lemma is valid for any action of a countable groups on a compact zero dimensional space, essentially with the same proof as our proof of  \Cref{lem:krieger_marker_lemma} below.

\begin{definition}
Let $X$ be a $\Gamma$-subshift and let $P \subset \Gamma\setminus \{0\}$ be a finite set. We say that $A \subset X$ is a \emph{$P$-marker} if 
\[
A \cap \sigma_{\gamma}(A) =\emptyset, \mbox{ for all } \gamma \in P.
\]
\end{definition}
The following basic lemma says that markers can be  effectively  merged:
\begin{lemma}\label{lem:merge_clopen_markers}
Suppose that $X$ is a $\Gamma$-subshift
and that $P \subset \Gamma \setminus \{0\}$ is finite and symmetric. Suppose $A,B \subseteq X$ are clopen $P$-markers. Then there exists a clopen $P$-marker $C \subseteq X$ such that $A \subseteq C$,  $B  \subseteq C \cup \bigcup_{\gamma \in P}\sigma_{\gamma}C$ and $C \subseteq (A\cup B)$.
\end{lemma}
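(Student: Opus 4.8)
The idea is to build $C$ by starting from $A$ and "merging in" the parts of $B$ that are far enough away from $A$, using the symmetry and finiteness of $P$ to control overlaps. Concretely, I would set
\[
C = A \cup \bigl(B \setminus \textstyle\bigcup_{\gamma \in P \cup \{0\}} \sigma_\gamma(A)\bigr),
\]
and then check the three required properties. Here $B \setminus \bigcup_{\gamma \in P \cup \{0\}}\sigma_\gamma(A)$ is the set of points of $B$ that are not in $A$ and not within "$P$-distance" of $A$; it is clopen because $A$ is clopen, $P \cup \{0\}$ is finite, and each $\sigma_\gamma$ is a homeomorphism, so $C$ is clopen. The inclusions $A \subseteq C$ and $C \subseteq A \cup B$ are immediate from the definition.

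The substantive points are that $C$ is a $P$-marker and that $B \subseteq C \cup \bigcup_{\gamma \in P}\sigma_\gamma(C)$. For the marker property, fix $\gamma \in P$ and suppose $x \in C \cap \sigma_\gamma(C)$. Writing $C = A \sqcup B'$ with $B' = B \setminus \bigcup_{\delta \in P \cup \{0\}}\sigma_\delta(A)$, I would split into cases according to whether $x$ lies in $A$ or $B'$, and likewise whether $\sigma_{-\gamma}(x)$ lies in $A$ or $B'$. The case $x \in A$ and $\sigma_{-\gamma}(x) \in A$ is excluded since $A$ is a $P$-marker. The case $x \in B'$ and $\sigma_{-\gamma}(x) \in B'$ is excluded since $B$ is a $P$-marker and $B' \subseteq B$. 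If $x \in A$ and $\sigma_{-\gamma}(x) \in B'$, then $\sigma_{-\gamma}(x) \in B'$ means $\sigma_{-\gamma}(x) \notin \sigma_{-\gamma}(A)$ — wait, more carefully: $x \in A$ gives $\sigma_{-\gamma}(x) \in \sigma_{-\gamma}(A)$, and since $-\gamma \in P$ by symmetry, this contradicts $\sigma_{-\gamma}(x) \in B'$, because $B'$ is disjoint from $\sigma_{-\gamma}(A)$. Symmetrically, if $x \in B'$ and $\sigma_{-\gamma}(x) \in A$, then $x = \sigma_\gamma(\sigma_{-\gamma}(x)) \in \sigma_\gamma(A)$ with $\gamma \in P$, again contradicting $x \in B'$. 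Hence $C \cap \sigma_\gamma(C) = \emptyset$ for all $\gamma \in P$.

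For the covering property, take $x \in B$. If $x \in B'$ then $x \in C$ and we are done. Otherwise $x \in \sigma_\delta(A)$ for some $\delta \in P \cup \{0\}$; if $\delta = 0$ then $x \in A \subseteq C$, and if $\delta \in P$ then $x \in \sigma_\delta(A) \subseteq \sigma_\delta(C)$, so $x \in \bigcup_{\gamma \in P}\sigma_\gamma(C)$. This covers all cases. I do not expect a genuine obstacle here; the only thing to be careful about is bookkeeping with the signs of the shifts (which direction $\sigma_\gamma$ versus $\sigma_{-\gamma}$ goes) and making sure $0$ is included in the subtracted set so that $A$ and $B'$ are genuinely disjoint, which is what makes the case analysis clean.
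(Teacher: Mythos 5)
Your proposal is correct and follows essentially the same route as the paper: you define $C = A \cup \bigl(B \setminus \bigcup_{\gamma \in P \cup \{0\}}\sigma_\gamma(A)\bigr)$, which is the same set as the paper's $A \cup \bigl(B \setminus \bigcup_{\gamma \in P}\sigma_\gamma(A)\bigr)$ (including $0$ only makes $A$ and the second piece disjoint), and verify the marker and covering properties by the same case analysis, using symmetry of $P$ exactly where the paper does.
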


\begin{proof}
Let 
\[
C := A \cup \left( B \setminus \bigcup_{\gamma \in P} \sigma_{\gamma}(A)\right).
\]
Clearly,  $C \subseteq (A \cup B)$.
The set  $C$ is clopen because it is contained in the   Boolean algebra spanned by $\{A,B\}\cup\{\sigma_\gamma(A) : \gamma \in P\}$. It is clear that $A \subseteq C$. 

Next, we  check that $B \subseteq  C \cup \bigcup_{\gamma \in P}\sigma_{\gamma}C$:
Otherwise, suppose that  $x \in B \setminus  \left(C \cup \bigcup_{\gamma \in P} \sigma_\gamma(C)\right)$ then in particular  $x \not\in C$ so $x \not \in A$ because $A \subseteq C$. Also, because $x \not \in B \setminus C$, we have that 
$x \in \sigma_\gamma(A)$ for some $\gamma \in P$. But since $A \subseteq C$, it follows that $\sigma_\gamma(A) \subseteq \sigma_\gamma(C)$, so $x \in \sigma_\gamma C$ for this $\gamma \in P$, contradicting the assumption that $x  \in B \setminus  \left(C \cup \bigcup_{\gamma \in P} \sigma_\gamma(C)\right)$.  
This shows that $B \subseteq C \cup \bigcup_{\gamma \in P} \sigma_\gamma(C)$.

It remains to check that $C$ is a $P$-marker.
 For any  $\gamma \in P$ we have:
    \[C \subseteq A \cup \left( B  \cap \sigma_\gamma (A^c) \right)\]
 Because $P$ is symmetric also $-\gamma \in P$ so
    \[C \subseteq A \cup \left( B  \cap \sigma_\gamma^{-1} A^c \right).\]
Applying $\sigma_\gamma$ to both sides, we get:
    \[\sigma_\gamma (C) \subseteq \sigma_\gamma(A) \cup \left( \sigma_\gamma(B)  \cap  A^c \right)\]

    It follows that
    $C \cap \sigma_\gamma C$
    is contained in the union of the following $4$ sets:
    \begin{enumerate}
        \item $E_1 := A \cap  \sigma_\gamma A$,
        \item $E_2:= A \cap \left( \sigma_\gamma B  \cap  A^c \right)$,
        \item $E_3 := \left( B  \cap \sigma_\gamma A^c \right) \cap  \sigma_\gamma A$,
        \item $E_4:= \left( B  \cap \sigma_\gamma A^c \right) \cap \left( \sigma_\gamma B  \cap  A^c \right)$.
    \end{enumerate}
By assumption $A$ is a $P$-marker so, $E_1 \ = \emptyset$. Clearly,  $E_2 \subseteq  A \cap (A^c) = \emptyset$ and $E_3 \subseteq \sigma_\gamma (A^c) \cap \sigma_{\gamma}(A) = \emptyset$. Also, $E_4 \subseteq B \cap \sigma_\gamma (B) = \emptyset$ by assumption that $B$ is a $P$-marker.
    This shows that $C\cap \sigma_\gamma C =\emptyset$ for all $\gamma \in P$.
\end{proof}

\begin{lemma}[Krieger's Marker Lemma for $\Gamma$-subshifts]\label{lem:krieger_marker_lemma}
Let $X$ be a $\Gamma$-subshift.
Let $P \Subset \Gamma\setminus \{0\}$ be a finite symmetric set, and let $V \subseteq X$ be a clopen set such that $\sigma_\gamma(x) \ne x$ 
for every $\gamma \in P$ and every $x \in V$.
Then there exists a clopen set $C \subseteq V$  
so that 
\begin{equation}
C \cap \sigma_\gamma (C) = \emptyset \mbox{ for every } \gamma \in P \mbox { and }
V \subseteq C \cup \bigcup_{\gamma \in P} \sigma_\gamma (C).
\end{equation}
\end{lemma}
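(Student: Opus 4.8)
The plan is to build the clopen marker $C$ as a union of finitely many pieces, each covering part of $V$, using the merging operation from \Cref{lem:merge_clopen_markers}. First I would use compactness: for each point $x \in V$, since $\sigma_\gamma(x) \ne x$ for all $\gamma \in P$ (a finite set), there is a clopen neighborhood $U_x \ni x$ such that $U_x \cap \sigma_\gamma(U_x) = \emptyset$ for every $\gamma \in P$ — this is because $x$ and $\sigma_\gamma(x)$ are distinct points in a zero-dimensional Hausdorff space, so they can be separated by a clopen set, and one intersects finitely many such separations. Each such $U_x \cap V$ is then a clopen $P$-marker contained in $V$. By compactness of $V$ (it is closed in a compact space), finitely many of these, say $U_1, \dots, U_m$, cover $V$; each $U_i$ is a clopen $P$-marker with $U_i \subseteq V$.

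Next I would merge these markers one at a time using \Cref{lem:merge_clopen_markers}. Set $C_1 := U_1$. Having constructed a clopen $P$-marker $C_i \subseteq V$ with $U_1 \cup \dots \cup U_i \subseteq C_i \cup \bigcup_{\gamma \in P} \sigma_\gamma(C_i)$, apply \Cref{lem:merge_clopen_markers} with $A = C_i$ and $B = U_{i+1}$ to obtain a clopen $P$-marker $C_{i+1}$ with $C_i \subseteq C_{i+1}$, $U_{i+1} \subseteq C_{i+1} \cup \bigcup_{\gamma \in P} \sigma_\gamma(C_{i+1})$, and $C_{i+1} \subseteq C_i \cup U_{i+1} \subseteq V$. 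The inclusion $C_i \subseteq C_{i+1}$ together with shift-monotonicity gives $C_i \cup \bigcup_\gamma \sigma_\gamma(C_i) \subseteq C_{i+1} \cup \bigcup_\gamma \sigma_\gamma(C_{i+1})$, so the covering property is inherited and extended to include $U_{i+1}$. After $m$ steps, $C := C_m$ is a clopen $P$-marker contained in $V$ with $\bigcup_{i=1}^m U_i \subseteq C \cup \bigcup_{\gamma \in P}\sigma_\gamma(C)$, and since the $U_i$ cover $V$ we get $V \subseteq C \cup \bigcup_{\gamma \in P}\sigma_\gamma(C)$, as required.

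The only genuine subtlety — and the step I would flag as the main point to get right — is the initial separation: producing, for each $x \in V$, a clopen neighborhood $U_x$ disjoint from all of its $P$-translates. Here one must use the hypothesis $\sigma_\gamma(x) \ne x$ for $\gamma \in P$ (not merely that $x$ is not fixed by some element), choose for each $\gamma \in P$ a clopen set separating $x$ from $\sigma_\gamma(x)$, and then shrink to a clopen neighborhood of $x$ whose image under each $\sigma_\gamma$ avoids it; the fact that $P$ is symmetric makes the condition $U_x \cap \sigma_\gamma(U_x) = \emptyset$ automatically symmetric in $\gamma$, which is mild bookkeeping. Everything else is a routine finite induction and the bulk of the real work has already been packaged into \Cref{lem:merge_clopen_markers}.
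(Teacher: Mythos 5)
Your proposal is correct and follows essentially the same route as the paper: use the hypothesis plus compactness and zero-dimensionality to cover $V$ by finitely many clopen $P$-markers, then merge them one at a time via \Cref{lem:merge_clopen_markers}, noting that the inclusion $C_i \subseteq C_{i+1}$ preserves the covering of the previously absorbed pieces. The extra detail you supply on constructing the initial clopen neighborhoods $U_x$ is exactly the step the paper leaves implicit, and your argument for it is sound.
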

\begin{proof}
For every $x \in V$ and $\gamma \in P$ we have that $x \ne \sigma_{\gamma}(x)$. We can thus find for every $x \in V$ a clopen set $E_x \subseteq V$ such that $x \in E_x$ and $E_x \cap \sigma_{\gamma}(E_x) = \emptyset$ for every $\gamma \in P$ (that is $E_x$ is a $P$-marker).

Since $V$ is compact, we can find a finite subcover $\mathcal{U} = \{D_1,\ldots,D_n\}$ of $\{E_x :~ x \in V\}$. 
Thus each $D_j \subseteq V$ is a clopen $P$-marker and $V \subseteq \bigcup_{j=1}^n D_j$.
We sequentially construct clopen $P$-markers $C_1,\ldots,C_n \subseteq V$ such that $\bigcup_{k=1}^j D_k \subseteq C_j \cup \bigcup_{\gamma \in P} \sigma_{\gamma}(C_j)$.
We start by setting  $C_1 = D_1$.
After $C_1,\ldots,C_{j-1}$ have been constructed, apply
\Cref{lem:merge_clopen_markers} with $A := C_{j-1}$ and $B:=D_j$ to obtain a clopen $P$-marker $C_j$ such that $A \subseteq C_j$ and 
$B \subseteq C_j \bigcup_{\gamma \in P}\sigma_\gamma (C_j)$.
Then $C= C_n$ is a clopen set that satisfies  the conclusion of the lemma. 
\end{proof}




As mentioned in the beginning of this section, an inspection of the proof of \Cref{lem:merge_clopen_markers} and \Cref{lem:krieger_marker_lemma} reveals  that commutativity of the group $\Gamma$  is not used in the proof, except for notational purposes. Also, the fact that $X$ is a $\Gamma$-subshift rather than a general compact $\Gamma$-space is only used to produce a clopen basis. In other words, expansiveness of the action is never used. Thus, the proof of \Cref{lem:krieger_marker_lemma} directly extends to the following (mentioned for future reference, the generality will not be used in the current paper):
\begin{lemma}[Krieger's Marker Lemma for countable groups]\label{lem:krieger_marker_lemma_ctbl}
Let $\Gamma \curvearrowright X$ be an action of a countable group $\Gamma$ on a totally disconnected compact metric space. Let $P \Subset \Gamma$ be a finite symmetric set with $1_\Gamma \not\in P$, and let $V \subseteq X$ be a clopen set such that $g(x) \ne x$ for every $g \in P$ and every $x \in V$.
Then there exists a clopen set $C \subseteq V$  
so that 
\begin{equation}
C \cap g(C) = \emptyset \mbox{ for every } g \in P \mbox { and }
V \subseteq C \cup \bigcup_{g \in P} g (C).
\end{equation}
\end{lemma}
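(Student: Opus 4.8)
The plan is to repeat verbatim the proof of \Cref{lem:krieger_marker_lemma}, observing that neither the commutativity of $\Gamma$ nor the expansiveness of the action was used in an essential way. Concretely, I would first record the general-group analog of \Cref{lem:merge_clopen_markers}: if $A,B \subseteq X$ are clopen $P$-markers (for $P$ finite symmetric with $1_\Gamma \notin P$), then $C := A \cup \bigl( B \setminus \bigcup_{g \in P} g(A)\bigr)$ is again a clopen $P$-marker with $A \subseteq C$, $C \subseteq A \cup B$, and $B \subseteq C \cup \bigcup_{g\in P} g(C)$. The proof is word-for-word the one given for \Cref{lem:merge_clopen_markers}, written in multiplicative notation: clopenness of $C$ uses only that it lies in the Boolean algebra generated by $\{A,B\} \cup \{g(A) : g \in P\}$; the containment $B \subseteq C \cup \bigcup_{g} g(C)$ is the same case analysis; and the disjointness $C \cap g(C) = \emptyset$ decomposes into the same four pieces $E_1,\dots,E_4$, where symmetry of $P$ is used exactly once (to replace $g^{-1}A^c$ by $gA^c$ after applying $g$).

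Next I would construct a finite clopen cover of $V$ by $P$-markers. Fix $x \in V$. For each $g \in P$ we have $g(x) \neq x$, so by joint continuity of the action $X$ is Hausdorff and there are disjoint open sets separating $x$ and $g(x)$; since $X$ is a totally disconnected compact metric space it has a clopen basis, so I can choose a clopen $U_g \ni x$ with $g(U_g) \cap U_g = \emptyset$. Intersecting over the finitely many $g \in P$ (and with $V$) yields a clopen set $E_x \subseteq V$ containing $x$ with $E_x \cap g(E_x) = \emptyset$ for all $g \in P$, i.e.\ a clopen $P$-marker. Compactness of $V$ then gives finitely many of these, say $D_1,\dots,D_n$, covering $V$.

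Finally I would merge them inductively exactly as in \Cref{lem:krieger_marker_lemma}: set $C_1 := D_1$, and given clopen $P$-markers $C_1,\dots,C_{j-1}$ with $\bigcup_{k<j} D_k \subseteq C_{j-1} \cup \bigcup_{g \in P} g(C_{j-1})$, apply the merging lemma with $A := C_{j-1}$, $B := D_j$ to obtain a clopen $P$-marker $C_j$ with $\bigcup_{k \le j} D_k \subseteq C_j \cup \bigcup_{g\in P} g(C_j)$. Then $C := C_n$ satisfies the conclusion, since $V \subseteq \bigcup_{k=1}^n D_k \subseteq C \cup \bigcup_{g \in P} g(C)$. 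There is essentially no obstacle here beyond bookkeeping; the only point needing slight care is the construction of the initial markers $E_x$, where one must invoke total disconnectedness (in place of the subshift's product-topology clopen basis) to get a clopen — not merely open — separating neighborhood.
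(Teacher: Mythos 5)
Your proposal is correct and is exactly the paper's argument: the paper proves \Cref{lem:krieger_marker_lemma_ctbl} by observing that the proof of \Cref{lem:merge_clopen_markers} and \Cref{lem:krieger_marker_lemma} uses commutativity only notationally and uses the subshift structure only to provide a clopen basis, which total disconnectedness supplies. Your construction of the clopen markers $E_x$ and the inductive merging match the paper's proof step for step.
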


As mentioned in the earlier, closely related statements have already appeared in the literature. 


\section{Retractions and the map extension property for subshifts}\label{sec:retracts}


In this section we introduce the \emph{map extension property} for $\Gamma$-subshifts, and the notion of an \emph{absolute retract}. Both of these properties make  sense in the category of  general topological dynamical systems, and also in other categories.  In topology the notion of an absolute retract has been introduced by Borsuk  in the 1930's, see \cite{MR1674915}. We show that a $\Gamma$-subshift has the map extension property if and only if it is an absolute retract for a certain class of subshifts. A $\mathbb{Z}$-subshift has the map extension property if and only if it is a topologically mixing subshift of finite type. As we will shortly explain, in the context of subshifts of finite type over countable abelian groups, the map extension property turns out to be equivalent to the notion of \emph{contractability} \cite{poirier2024contractible}.

\begin{definition}\label{def:right_squigarrow}
    Let $X,Y$ be $\Gamma$-subshifts. 
    We write $X \rightsquigarrow Y$ if the following holds: For every  $x \in X$ there exists $y \in Y$ such that $\stab(x) \le \stab(y)$.
\end{definition}

The condition $X \rightsquigarrow Y$ is a obviously a necessary condition for the existence of map from $X$ into $Y$. Indeed,  if there exists a map $\pi:X \to Y$ then $X \rightsquigarrow Y$ because  for any $x \in X$ we have  $\stab(x) \le \stab(\pi(x))$.

\begin{definition}\label{def:map_extension_property}
    A $\Gamma$-subshift $Y$ has the \emph{map extension property} if the following holds:
    For every $\Gamma$-subshift $X$ such that $X \rightsquigarrow Y$  and any map $\tilde \pi: \tilde X \to Y$ from a (possibly empty) $\Gamma$-subshift $\tilde X \subseteq X$ there exists a map $\pi:X \to Y$  that extends $\tilde \pi$.
\end{definition}

When $\Gamma$ is a finitely generated abelian group, the map extension property  for a  $\Gamma$ subshift already implies that the subshift is of finite type. For subshifts of finite type over a countable abelian group, the map extension property is equivalent to the notion of \emph{(strong) contractibility} introduced recently by Poirier and Salo \cite{poirier2024contractible}:

\begin{definition}[Poirier and Salo \cite{poirier2024contractible}]\label{def:contractible}
    A $\Gamma$-subshift $Y$ is \emph{contractible} of there exists a map $\psi:\{0,1\}^\Gamma \times Y \times Y \to Y$ satisfying 
    \begin{equation}\label{eq:contraction}
         \psi(\overline{0},y^{(0)},y^{(1)}) = y^{(0)} \mbox{ and }
         \psi(\overline{1},y^{(0)},y^{(1)}) = y^{(1)}
    \end{equation}
    for every $y^{(0)},y^{(1)} \in Y$, where $\overline{0},\overline{1} \in \{0,1\}^\Gamma$ are the two fixed points.  
    
    A map $\psi:\{0,1\}^{\Gamma} \times Y \times Y \to Y$ that satisfies \eqref{eq:contraction} is called a \emph{contraction homotopy}.
    \end{definition}
\begin{definition}\label{def:storng_contractable}
      A contraction homotopy $\psi:\{0,1\}^{\Gamma} \times Y \times Y \to Y$ that further satisfies $\phi(z,y,y) = y$ for every $z \in \{0,1\}^\mathbb{Z}$ and every $y \in Y$,  is called a \emph{strong contraction homotopy}. If a $\Gamma$-subshift $Y$ admits a strong contraction homotopy, we say that $Y$ is \emph{strongly contractable}.
\end{definition}

The term ``contractible'' comes from the observation that a topological space is $Y$ is contractible in the usual sense if and only there exists a continuous function $\psi:[0,1] \times Y \times Y \to Y$ satisfying \eqref{eq:contraction} for every $y^{(0)},y^{(1)} \in Y$, where this time $\overline{0}$ and $\overline{1}$ are the two endpoints of the interval $[0,1]$. 
Poirier and Salo \cite{poirier2024contractible} used the term ``equiconnected'' instead of ``strongly contractable'', because it is a direct analog of the corresponding notion of equiconnectedness in topology, with $\{0,1\}^\Gamma$ taking the role of the interval $[0,1]$. Poirier and Salo proved that a $\mathbb{Z}^d$-subshift is (strongly) contractible SFT if and only if it satisfies the map extension property \cite{poirier2024contractible}. Their proof directly extends to $\Gamma$-subshifts, where $\Gamma$ is a finitely generated abelian group.

For the reader's convenience we recall the elegant argument for the easy implication, which assumes nothing about the countable group $\Gamma$. The proof of the converse implication in \cite{poirier2024contractible} uses a version of Krieger's Marker lemma very similar to \Cref{lem:krieger_marker_lemma}. We will not use the converse implication in this paper.

\begin{proposition}[Poirier- Salo \cite{poirier2024contractible}]\label{prop:map_ext_contractible}
 Any $\Gamma$-subshift with the map extension property is strongly contractible.
\end{proposition}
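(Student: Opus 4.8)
\textbf{Proof strategy for \Cref{prop:map_ext_contractible}.}

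The plan is to use the map extension property of $Y$ itself to produce the strong contraction homotopy as an extension of an obviously-defined map on a suitable subsystem of $\{0,1\}^\Gamma \times Y \times Y$. First I would take $X := \{0,1\}^\Gamma \times Y \times Y$, which is a $\Gamma$-subshift with the diagonal shift action, and let $\tilde X \subseteq X$ be the closed shift-invariant subset consisting of all triples $(z,y^{(0)},y^{(1)})$ for which $z \in \{\overline{0},\overline{1}\}$ \emph{or} $y^{(0)} = y^{(1)}$; concretely, $\tilde X = \big(\{\overline{0},\overline{1}\}\times Y \times Y\big) \cup \big(\{0,1\}^\Gamma \times \Delta_Y\big)$ where $\Delta_Y = \{(y,y):y\in Y\}$. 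This is visibly closed and shift-invariant, hence a $\Gamma$-subshift. On $\tilde X$ define $\tilde\pi$ by $\tilde\pi(\overline{0},y^{(0)},y^{(1)}) = y^{(0)}$, $\tilde\pi(\overline{1},y^{(0)},y^{(1)}) = y^{(1)}$, and $\tilde\pi(z,y,y) = y$. The only point to check here is that $\tilde\pi$ is well defined and continuous: the three clauses can only overlap when $z\in\{\overline 0,\overline 1\}$ and $y^{(0)}=y^{(1)}$, and in that case all clauses give the common value $y^{(0)}=y^{(1)}$, so there is no conflict; continuity is clear on each of the two closed pieces $\{\overline 0,\overline 1\}\times Y\times Y$ and $\{0,1\}^\Gamma\times\Delta_Y$ (the map is locally constant in the $z$-coordinate on the first and equals a coordinate projection on the second), and the two pieces agree on their intersection, so $\tilde\pi$ is continuous on $\tilde X$. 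Equivariance is immediate since the shift fixes $\overline 0,\overline 1$ and preserves $\Delta_Y$, and all three clauses are given by shift-commuting formulas.

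Next I would verify the hypothesis $X \rightsquigarrow Y$ needed to invoke the map extension property. Given $x = (z,y^{(0)},y^{(1)}) \in X$, its stabilizer is $\stab(z)\cap\stab(y^{(0)})\cap\stab(y^{(1)})$, which is contained in $\stab(y^{(0)})$; taking $y := y^{(0)} \in Y$ we get $\stab(x)\le\stab(y)$, so $X\rightsquigarrow Y$ holds. Since $Y$ has the map extension property and $\tilde X\subseteq X$ with $X\rightsquigarrow Y$, the map $\tilde\pi:\tilde X\to Y$ extends to a map $\psi:X\to Y$, i.e. a continuous $\Gamma$-equivariant $\psi:\{0,1\}^\Gamma\times Y\times Y\to Y$. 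By construction $\psi$ restricts to $\tilde\pi$ on $\tilde X$, so $\psi(\overline 0,y^{(0)},y^{(1)})=y^{(0)}$, $\psi(\overline 1,y^{(0)},y^{(1)})=y^{(1)}$ for all $y^{(0)},y^{(1)}\in Y$ — this is exactly \eqref{eq:contraction} — and $\psi(z,y,y)=y$ for all $z\in\{0,1\}^\Gamma$ and $y\in Y$, which is the extra requirement in \Cref{def:storng_contractable}. Hence $\psi$ is a strong contraction homotopy and $Y$ is strongly contractible.

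There is essentially no hard part here: the entire content is the clean choice of the subsystem $\tilde X$ on which the homotopy is forced, plus the trivial observation that projecting onto a coordinate witnesses $X\rightsquigarrow Y$. The one place demanding a line of care is the well-definedness/continuity of $\tilde\pi$ at the overlap of its defining clauses, handled above. Everything else is a direct appeal to \Cref{def:map_extension_property}.
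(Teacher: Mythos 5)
Your proposal is correct and follows essentially the same route as the paper: the same ambient subshift $\{0,1\}^\Gamma \times Y \times Y$, the same subsystem $\tilde X$ (union of $\{\overline 0,\overline 1\}\times Y\times Y$ with the diagonal piece), the same $\tilde \pi$, and the same single appeal to the map extension property. Your added checks of well-definedness/continuity of $\tilde\pi$ and of $X \rightsquigarrow Y$ via the coordinate projection are exactly the details the paper leaves as "easily verified."
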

\begin{proof}
Suppose that $Y$ has the map extension property. Consider the subshift $X = \{0,1\}^\Gamma \times Y \times Y$. Clearly $Y$ is a factor of $X$, and so in particular $X \rightsquigarrow Y$. 

Consider the subshifts
\[\tilde X_1 = \left(\left\{\overline{0},\overline{1}\right\} \times Y \times Y\right)  \subseteq X,\]
\[ \tilde X_2 = \left\{ (z,y^{(1)},y^{(2)}) \in \{0,1\}^\Gamma \times Y \times Y~:~ y^{(1)} = y^{(2)} \right\}. \]
and
\[
\tilde X = \tilde X_1 \cup \tilde X_2.
\]
and the map $\tilde \pi: \tilde X \to Y$ given by
\[
\tilde \pi(z,y,y) = y \mbox{ for every } z\in \{0,1\}^\Gamma \mbox{ and every } y \in Y,
\]
\[
\tilde \pi(\overline{0},y^{(0)},y^{(1)}) = y^{(0)} \mbox{ and }\tilde \pi(\overline{1},y^{(0)},y^{(1)}) = y^{(1)}
\]
for every $y^{(0)},y^{(1)} \in Y$.
By the map extension property, $\tilde \pi$ extends to a map $\psi:\{0,1\}^\Gamma \times Y \times Y \to Y$. It is easily verified that the map $\psi$ is  a strong contraction homotopy. Hence $Y$ is indeed strongly contractible.
\end{proof}

Under certain assumptions on the group $\Gamma$, Poirier and Salo have proved that contractible $\Gamma$-subshifts  have dense periodic points \cite[Theorem $1.4$]{poirier2024contractible}. These assumptions hold whenever $\Gamma$ is a finitely generated abelian group.

We need a statement that gives us more control over the collection of subgroups that occur as stablizers of points of a subshift with the map extension property.
The following lemma is an intermediate step. 
\begin{lemma}\label{lem:contractible_periodic_points1}
    Let $Y$ be a contractible $\Gamma$-subshift, and let $\psi:\{0,1\}^\Gamma \times Y \times Y \to Y$ be a strong contraction homotopy with coding window $W \Subset \Gamma$. For any surjective homomorphism $\phi:\Gamma \to \mathbb{Z}$ and any $v \in \Gamma$ such that 
    $\phi(v) > 4 \max |\phi(W)|$ and any $y^{(0)} \in Y$ 
    there exists 
    $y \in Y$
    such that 
    $\sigma_v(y)=y$,
    and furthermore $\stab(y^{(0)}) \cap \Ker(\phi) \subseteq \stab(y)$.
\end{lemma}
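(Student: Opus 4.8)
\emph{Proof strategy.} The plan is to periodize $y^{(0)}$ in the $v$-direction by gluing together infinitely many shifted copies of a single ``transition'' point produced by $\psi$, and then to read off a $v$-periodic point. Write $N:=\phi(v)$ and $M:=\max\{|\phi(w)|:w\in W\}$, so $N>4M$, and set $H:=\stab(y^{(0)})\cap\Ker(\phi)$ and $p_k:=\sigma_{-kv}(y^{(0)})\in Y$ for $k\in\Z$. Then $p_{k+1}=\sigma_{-v}(p_k)$, and every $p_k$ is fixed by every $\sigma_h$ with $h\in H$, because $\Gamma$ is abelian and $h\in\stab(y^{(0)})$.

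First I would construct the transition point. Let $z_0\in\{0,1\}^\Gamma$ be given by $(z_0)_\gamma=1$ if $\phi(\gamma)\ge 0$ and $(z_0)_\gamma=0$ otherwise, and put $S:=\psi(z_0,\sigma_v(y^{(0)}),y^{(0)})=\psi(z_0,p_{-1},p_0)\in Y$. Since $\psi$ has coding window $W$ while $\psi(\overline{0},\cdot,\cdot)$ and $\psi(\overline{1},\cdot,\cdot)$ are the two coordinate projections, and $z_0$ depends only on $\phi(\gamma)$, one reads off that $S$ coincides with $p_{-1}$ on $\{\gamma:\phi(\gamma)<-M\}$ and with $p_0$ on $\{\gamma:\phi(\gamma)\ge M\}$; thus $S$ is a ``seam'' from $p_{-1}$ to $p_0$ localized in the slab $\{\gamma:|\phi(\gamma)|\le M\}$. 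Moreover $\sigma_h(S)=S$ for $h\in H$ (all three inputs are $\sigma_h$-fixed), so $H\le\stab(S)$; and $\sigma_{-kv}(S)=\psi(\sigma_{-kv}(z_0),p_{k-1},p_k)$ is a seam from $p_{k-1}$ to $p_k$ localized near $\{\phi=kN\}$, with $\sigma_v(\sigma_{-kv}S)=\sigma_{-(k-1)v}(S)$.

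The candidate point is then $y$ defined region by region: partition $\Gamma$ into the $\phi$-slabs $R_k$ of width $N$ centred at $kN$, and set $y_\gamma:=(\sigma_{-kv}S)_\gamma$ for $\gamma\in R_k$. Two of the three required facts are one-line computations. If $\gamma\in R_k$ then $\gamma+v\in R_{k+1}$, so $(\sigma_v y)_\gamma=y_{\gamma+v}=(\sigma_{-(k+1)v}S)_{\gamma+v}=S_{\gamma-kv}=(\sigma_{-kv}S)_\gamma=y_\gamma$, whence $\sigma_v(y)=y$. And every $\sigma_h$ with $h\in H$ preserves each slab and fixes $S$, so $\sigma_h(y)=y$, i.e.\ $H\le\stab(y)$, which in particular gives $\stab(y^{(0)})\cap\Ker(\phi)\le\stab(y)$.

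The hard part will be the remaining claim $y\in Y$; this is where $N>4M$ is used. I would exhibit $y$ as a limit of honest points of $Y$. For $m\ge 0$ let the truncation $q_m$ be the point equal to $\sigma_{-kv}S$ on $R_k$ for $|k|\le m$, equal to $p_m$ well above $R_m$, and equal to $p_{-m-1}$ well below $R_{-m}$. Then $q_0=S\in Y$, and one proves $q_m\in Y$ by induction: $q_m$ is built from $q_{m-1}$ by two applications of $\psi$ whose $\{0,1\}^\Gamma$-coordinates are the indicators of the half-spaces $\{\phi\ge mN\}$ and $\{\phi\ge -mN\}$ and whose remaining coordinates are $q_{m-1}$ and the constant points $p_m$ and $p_{-m-1}$. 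Because $q_{m-1}$ is already constant --- equal to $p_{m-1}$, resp.\ $p_{-m}$ --- throughout the slab in which the new seam is created (here one needs only $N>2M$), that seam comes out exactly $\sigma_{-mv}S$, resp.\ $\sigma_{mv}S$, so $q_m$ has the asserted form; the inequality $N>4M$ also ensures that the slabs $R_k$ are broad enough for the pieces to overlap consistently. Each $q_m$ is fixed by every $\sigma_h$, $h\in H$ (all $\psi$-inputs are), and satisfies $\sigma_v(q_m)=q_m$ on $\{\gamma:|\phi(\gamma)|\le(m-1)N\}$, a set increasing to $\Gamma$. A subsequential limit $y=\lim_j q_{m_j}\in Y$ then satisfies $\sigma_v(y)=y$ and $H\le\stab(y)$, which is what the lemma asserts. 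The only real labour is the slab bookkeeping in this induction; every other step is immediate.
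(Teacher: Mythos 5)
Your argument is correct, and it lives in the same circle of ideas as the paper's proof (half-space indicator configurations in $\{0,1\}^\Gamma$, the coding-window locality of $\psi$, and a limit of points of $Y$ that are $v$-periodic on ever wider $\phi$-slabs), but the implementation is genuinely different. The paper never writes down the limit point in advance: it iterates on the point itself, passing from $y^{(n)}$ to $y^{(n+1)}=\sigma_{-kv_1}\psi(z,\sigma_{kv_1}(y^{(n)}),\sigma_{kv_1-v}(y^{(n)}))$ so that the $v$-periodic zone grows by $\lfloor \phi(v)/2\rfloor$ per step while the central zone is untouched; the crucial point there is that $\psi$ glues a point to its own shift, and the \emph{strong} contraction property $\psi(z,y,y)=y$ is exactly what guarantees the already-periodic region is not disturbed. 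You instead pre-compute a single seam $S=\psi(z_0,\sigma_v(y^{(0)}),y^{(0)})$, define the limit $y$ explicitly by tiling $\Gamma$ with the shifts $\sigma_{-kv}S$ on $\phi$-slabs of width $\phi(v)$ (so periodicity and $\stab(y^{(0)})\cap\Ker(\phi)\le\stab(y)$ are immediate), and then prove $y\in Y$ by a separate approximation $q_m\in Y$, each step gluing in a fresh constant point $p_{\pm}$ against the previous approximation; since each new seam is created over a region where $q_{m-1}$ is already constant, you only ever compare outputs of $\psi$ on inputs that agree on a translate of $W$, so you use nothing beyond the coding window and the two endpoint identities --- your proof in fact works for any contraction homotopy, not just a strong one, which is a small gain in generality over the paper's argument. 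One quantitative aside is slightly off: for the new seam to come out exactly $\sigma_{-mv}S$ you need the agreement region $\{\phi\ge(m-1)N+2M\}$ to reach below the seam slab $\{|\phi-mN|\le M\}$, which requires $N\ge 3M$ rather than the stated $N>2M$; this is immaterial since the hypothesis $\phi(v)>4\max|\phi(W)|$, which you invoke anyway for the slab form of $q_m$, covers it.
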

\begin{proof}
Let $\phi:\Gamma \to \mathbb{Z}$ be a surjective homomorphism.
Denote $N_0= \max |\phi(W)|$. Choose $v_0 \in \Gamma$ such that $\phi(v_0)=N_0$.
Let $v \in \Gamma$ satisfy  $\phi(v) > 4 N_0$.

Denote \[N= \phi(v), ~N_1 = \lfloor N/2 \rfloor,\]
and choose $v_1 \in \Gamma$ such that $\phi(v_1)= N_1$.

Let $z \in \{0,1\}^\Gamma$ be given by 
\[
z_w = \begin{cases}
0 & \phi(w) \le 0\\
1 & \phi(w)  > 0.
\end{cases}
\]

We record a few simple observations that follow from the fact that $\psi$ is a strong contraction homotopy with coding window $W$:
\begin{enumerate}
    \item For any $\tilde y \in Y$, the point
    \begin{equation}\label{eq:hat_y_def}
\hat y:= \psi(z,\tilde y,\sigma_{-v}(\tilde y))
    \end{equation}
    
    satisfies $\hat y_w = \hat y_{w+v}$ for every $w \in \Gamma$ such that
    $ -N+N_0 < \phi(w) < -N_0$ and $\hat y_w =\tilde  y_w$ for every $w \in \Gamma$ such that $\phi(w) < -N_0$.
    \item If $\tilde y \in Y$ satisfies $\tilde y_w = \tilde y_{w+v}$ for every $w \in \Gamma$ such that $|\phi(w)| < N_0$ then the point $\hat y \in Y$ given by \eqref{eq:hat_y_def}
    satisfies $\hat y_w = \tilde y_{w+v}$ whenever $-\phi(v) < \phi(w) <0$ and also $\hat y_w= \tilde y_w$ for all $w \in \Gamma$ such that $\phi(w) < 0$.
    \item Suppose $\tilde y \in Y$ satisfies $\tilde y_w =\tilde  y_{w+v}$ for every $w \in \Gamma$ such that $|\phi(w)| < kN_1+N_0$ for some $k \in \mathbb{N}$. Then 
    \[
    \hat y = \sigma_{-kv_1}\phi(z,\sigma_{kv_1}(\tilde y),\sigma_{kv_1-v}(\tilde y))
    \]
    satisfies $\hat y_w =\tilde  y_{w+v}$ for every $w \in \Gamma$ such that $|\phi(w)| < (k+1)N_1+N_0$ and also $\hat y_w= \tilde y_w$ for all $w \in \Gamma$ such that $\phi(w) < kv_1$.
\end{enumerate}
Using the above observations, starting with any $y^{(0)} \in Y$, we can construct by induction a sequence  of points $y^{(1)},\ldots, y^{(n)},\ldots \in Y$  such that  
$y^{(n)}_w = y^{(n)}_{w+v}$ for any $w \in \Gamma$ satisfying $|\phi(w)| < nN_1 +N_0$, and also 
$y^{(n+1)}_w = y^{(n)}_{w}$ for any $w \in \Gamma$ satisfying $|\phi(w)| < n N_1$.
Thus, the limit $y = \lim_{n \to \infty} y^{(n)}$ exists, and satisfies $\sigma_v(y)=y$ .
It also satisfies  $\stab(y^{(0)}) \cap \Ker(\phi) \subseteq \stab(y)$.
\end{proof}

We remark that the statement of \Cref{lem:contractible_periodic_points1} remains valid with essentially the same proof for any countable group $\Gamma$, without assuming commutativity.

Recall that definition of $Y^{[\Delta]}$ as defined in \Cref{def:X_Delta}.

\begin{lemma}\label{lem:contractible_Y_Delta}
    Let $Y$ be a contractible $\Gamma$-subshift.  
    Then there exists a finitely generated subgroup $\Delta < \Gamma$ so that $Y = Y^{[\Delta]}$.
\end{lemma}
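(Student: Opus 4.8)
The plan is to combine \Cref{lem:contractible_periodic_points1} with a compactness argument analogous to the proof of \Cref{lem:X_Delta_SFT}. Since $Y$ is contractible, fix a strong contraction homotopy $\psi:\{0,1\}^\Gamma \times Y \times Y \to Y$ with some coding window $W \Subset \Gamma$. The obvious candidate for $\Delta$ is a subgroup generated by $W$ together with enough extra elements to witness the periodicity supplied by \Cref{lem:contractible_periodic_points1}; concretely, I would take $\Delta$ to be the subgroup generated by $W \cup \{v\}$ for a single suitably chosen $v$, or more safely by $W$ alone after checking that $\langle W\rangle$ already contains the relevant vectors. The point of \Cref{lem:contractible_periodic_points1} is that contractibility forces a very rich supply of periodic points, and this should be enough to ``fill in'' any $y \in A^\Gamma$ all of whose $\Delta$-translates are $Y$-admissible, showing $y \in Y$.

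The key steps, in order: (1) Show $Y$ is an SFT. Since $\psi$ has a finite coding window, one argues that $Y$ itself is determined by finitely many forbidden patterns — this is exactly the content of the remark after \Cref{def:map_extension_property} that, over a finitely generated abelian group, the map extension property implies finite type; here we only have contractibility but the same marker-lemma argument (cf.\ the converse direction of \Cref{prop:map_ext_contractible}) applies, so I would either invoke that or reprove the SFT-ness directly from $\psi$. (2) Once $Y$ is an SFT, apply \Cref{lem:X_Delta_SFT}: there is a finitely generated $\Delta \le \Gamma$ — namely the group generated by the window $W'$ of a defining forbidden family — with $Y = Y^{[\Delta]}$. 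So in fact the whole lemma reduces to proving that a contractible $\Gamma$-subshift is of finite type, and then \Cref{lem:X_Delta_SFT} does the rest.

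Therefore the substance of the proof is: \emph{a contractible $\Gamma$-subshift is an SFT}. To see this, let $\mathcal{F} = A^F \setminus \cL_F(Y)$ ranging over finite $F$; I want to show that for $F$ large enough, $\mathcal{F}$ generates $Y$, i.e.\ every $y \in A^\Gamma$ avoiding all translated forbidden $F$-patterns lies in $Y$. Given such a $y$, use the contraction homotopy exactly as in the proof of \Cref{lem:contractible_periodic_points1} — iteratively applying $\psi$ with shifted coordinates to a locally-admissible configuration — to build a genuine point $y' \in Y$ agreeing with $y$ on larger and larger regions, taking a limit to conclude $y = y' \in Y$. The homomorphism-to-$\Z$ machinery in \Cref{lem:contractible_periodic_points1} gives control in one ``direction'' at a time; since $\Gamma$ need not be finitely generated, I would first reduce to the subgroup generated by $W$ (on which $\psi$ depends), handle finitely many independent directions there, and observe that outside $\langle W\rangle$ the configuration is unconstrained by $\psi$.

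The main obstacle I expect is precisely this last point: making the iterated-$\psi$ ``healing'' argument work when $\Gamma$ is only countable rather than finitely generated, and when the stabilizer structure is nontrivial. The clean fix is to first establish that $\psi$ factors through $\langle W\rangle$ in the sense that $Y = Y^{[\langle W\rangle]}$ is automatic once finite type is known, so all the real work happens inside the finitely generated group $\langle W\rangle$, where \Cref{lem:contractible_periodic_points1} applies verbatim with a surjection $\langle W\rangle \to \Z$ onto each factor; assembling these along a finite generating set yields the desired periodicity in all directions, hence a point of $Y$ extending any locally admissible pattern. I would present the argument in that order: reduce to $\langle W\rangle$, invoke finite type, then cite \Cref{lem:X_Delta_SFT}.
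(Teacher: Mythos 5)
Your reduction in step (2) is fine as far as it goes (finite type plus \Cref{lem:X_Delta_SFT} does give the conclusion), but the whole weight of your argument rests on step (1), the claim that a contractible $\Gamma$-subshift over an \emph{arbitrary countable} abelian group is of finite type, and that is a genuine gap. Nothing in the paper gives you this: the remark you cite says that the \emph{map extension property} implies finite type, and only for \emph{finitely generated} abelian groups; the converse direction of \Cref{prop:map_ext_contractible} is explicitly not proved (or used) in the paper, and in any case it concerns contractible subshifts that are already assumed to be SFTs, so it cannot produce finite type. Worse, your proposed fix for the non-finitely-generated case is circular: to run the healing argument inside $\langle W\rangle$ you want to know that $Y$ is determined by its $\langle W\rangle$-projection, i.e.\ $Y=Y^{[\langle W\rangle]}$ -- but that is exactly the statement of \Cref{lem:contractible_Y_Delta}, which is itself the tool the paper uses to reduce contractibility questions to the finitely generated case. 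Note also that finite type is strictly stronger than the conclusion ($Y=Y^{[\Delta]}$ allows $Y^{(\Delta)}$ to be an arbitrary $\Delta$-subshift), that the paper consistently writes ``contractible subshift of finite type'' as though finite type is an extra hypothesis, and that your sketch quietly upgrades the hypothesis from a contraction homotopy to a \emph{strong} one, which the lemma does not assume. Finally, \Cref{lem:contractible_periodic_points1} supplies periodic points, not a local-to-global admissibility principle, so it does not substitute for the missing SFT argument.

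The paper's proof avoids all of this with one short observation you did not exploit: take $\Delta=\langle W\rangle$ and feed the homotopy a first argument $z\in\{0,1\}^\Gamma$ that is \emph{constant on $\Delta$-cosets}. Since $W\subseteq\Delta$, for every $v$ the restriction of $\sigma_v(z)$ to $W$ is constant, so by \eqref{eq:contraction} the point $\psi(z,y^{(1)},y^{(2)})$ agrees with $y^{(1)}$ on the cosets where $z$ takes one value and with $y^{(2)}$ on the rest; iterating over finitely many cosets and passing to a limit shows that any $x$ whose restriction to every $\Delta$-coset is realized in $Y$ lies in $Y$, i.e.\ $Y^{[\Delta]}\subseteq Y$. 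This needs only the plain contraction homotopy, no finite-type input, no periodic points, and no case distinction on whether $\Gamma$ is finitely generated. I recommend replacing your step (1) by this splicing argument, or, if you want to keep your route, you would first have to actually prove ``contractible $\Rightarrow$ SFT'' over countable abelian groups, which is a substantial claim not available from the paper.
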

\begin{proof}
 Let $\psi: \{0,1\}^\Gamma \times Y \times Y \to Y$ be a contraction homotopy with coding window $W \Subset \Gamma$, and let $\Delta$ be the subgroup of $\Gamma$ generated by $W$. In this case we claim that $Y  = Y^{[\Delta]}$. Indeed for any $y^{(1)},y^{(2)} \in Y$ and any $A \subseteq \Gamma/\Delta$ we can find a point $y \in Y$ that agrees with $y^{(1)}$ on the cosets of $\Delta$ that belong to $A$ and agrees with $y^{(2)}$ on the cosets of $\Delta$ that do not belong to $A$.
Let $z \in \{0,1\}^\Gamma$  be the point satisfying that $z_w=1$ if the $\Delta$-coset of $w$ is in $A$ and $0$ otherwise.
Then the point $y= \psi(z,y^{(1)},y^{(2)})$ agrees with $y^{(1)}$ on the $\Delta$-cosets that belong to $A$ and agrees with $y^{(2)}$ on the cosets of $\Delta$ that do not belong to $A$.
By repeated applications, for any tuple of distinct $\Delta$-cosets $(v_1+\Delta,\ldots,v_k +\Delta)$ with $v_1,\ldots, v_k \in \Gamma$ and any points $y^{(1)},\ldots, y^{(k)}$ we can find $y \in Y$ that agrees with $y^{(i)}$ on $v_i + \Delta$. This proves that  $Y = Y^{[\Delta]}$.
\end{proof}

\begin{proposition}\label{prop:map_extension_implies_finitely_many_forbidden_periods}
    Let $\Gamma$ be a countable abelian group and let $Y$ be a strongly contractible $\Gamma$-subshift. Then there exists a finite set $F \Subset \Gamma \setminus \{0\}$ such that for any subgroup $\Gamma_0 < \Gamma$, with $\Gamma_0 \cap F =\emptyset$ there exists $y \in Y$ such that $\Gamma_0 \le \stab(y)$.
\end{proposition}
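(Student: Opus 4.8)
The plan is to reduce to the finitely generated case and then use \Cref{lem:contractible_periodic_points1} to fill in periods along many independent directions simultaneously. First I would invoke \Cref{lem:contractible_Y_Delta} to find a finitely generated subgroup $\Delta < \Gamma$ with $Y = Y^{[\Delta]}$; let $W \Subset \Gamma$ be a coding window for a strong contraction homotopy $\psi$, chosen so that $\Delta = \langle W \rangle$. Since $Y = Y^{[\Delta]}$, a point $y \in A^\Gamma$ lies in $Y$ provided each of its $\Delta$-cosets, read as an element of $A^\Delta$, lies in $Y^{(\Delta)}$; and a subgroup $\Gamma_0 \le \stab(y)$ can be built coset-by-coset. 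The key point is that $\Gamma_0 \cap \Delta$ controls which "periodic building blocks" we need, and $\Gamma_0 / (\Gamma_0 \cap \Delta)$ only governs how those blocks are glued across $\Delta$-cosets, which the identity $Y = Y^{[\Delta]}$ handles for free. So it suffices to produce, for the finitely generated group $\Delta$ and the $\Delta$-subshift $Y^{(\Delta)}$, a finite set $F_\Delta \Subset \Delta \setminus \{0\}$ such that every subgroup $\Delta_0 \le \Delta$ with $\Delta_0 \cap F_\Delta = \emptyset$ is contained in the stabilizer of some point of $Y^{(\Delta)}$; then one takes $F = F_\Delta$ (here one checks $Y^{(\Delta)}$ inherits strong contractibility, since restriction of the homotopy $\psi$ to $\Delta$-coordinates is a strong contraction homotopy for $Y^{(\Delta)}$).

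For the finitely generated group $\Delta \cong \Z^r \oplus (\text{finite})$, I would argue as follows. Write the torsion-free rank as $r$. A subgroup $\Delta_0 \le \Delta$ with $\Delta_0 \cap F_\Delta = \emptyset$ for a large enough symmetric box $F_\Delta$ must have no nonzero element of small norm, hence projects to a full-rank sublattice of $\Z^r$ spanned by vectors that are "long" in every coordinate direction. Using \Cref{lem:contractible_periodic_points1}: for each of the $r$ coordinate projections $\phi_i : \Delta \to \Z$ (or, more robustly, for a basis of homomorphisms $\Delta \to \Z$ dual to a short generating set), and any $v \in \Delta_0$ with $\phi_i(v)$ large compared to $\max|\phi_i(W)|$, we can find $y \in Y^{(\Delta)}$ with $\sigma_v(y) = y$ and $\stab(y^{(0)}) \cap \Ker(\phi_i) \subseteq \stab(y)$. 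Iterating this over $i = 1, \dots, r$ — starting from an arbitrary $y^{(0)}$, enforcing periodicity under one generator of $\Delta_0$ at each stage while preserving the periodicities already accumulated — produces after $r$ steps a point whose stabilizer contains a finite-index subgroup of $\Delta_0$; a final bookkeeping step using $\Delta_0 \cap F_\Delta = \emptyset$ (so that the finite part of $\Delta_0$ is also controlled, or is absorbed) upgrades this to $\Delta_0 \le \stab(y)$. The finite set $F_\Delta$ is then explicitly $4\max|\phi_i(W)|$-sized boxes, intersected suitably to cover all the coordinate directions and the torsion part.

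The main obstacle I anticipate is the inductive step where one enforces periodicity under a new generator of $\Delta_0$ without destroying the periodicities already built in: \Cref{lem:contractible_periodic_points1} only guarantees $\stab(y^{(0)}) \cap \Ker(\phi) \subseteq \stab(y)$, and to chain this across the $r$ generators one needs the successive homomorphisms $\phi$ to be chosen so that the accumulated stabilizer from earlier stages survives in the kernel of the next — i.e. one is really decomposing $\Delta_0$ as an iterated extension along a flag of subgroups each arising as an intersection with a kernel of a homomorphism to $\Z$, and one must verify that $\Delta_0 \cap F_\Delta = \emptyset$ forces the relevant homomorphisms to take large enough values at the relevant generators. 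A clean way to organize this is to first replace $\Delta_0$ by a finite-index subgroup that splits as a direct sum $\Z v_1 \oplus \cdots \oplus \Z v_r$ with each $v_j$ "long," handle that direct sum by $r$ applications of the lemma, and then separately argue that if $F_\Delta$ contains a full set of coset representatives of every subgroup of index $\le |\Delta_{\mathrm{tor}}| \cdot c$ for a suitable constant, the original $\Delta_0$ equals the split subgroup, or is itself already covered. This torsion/finite-index bookkeeping is routine but is the part requiring the most care; the homotopy-theoretic content is entirely in \Cref{lem:contractible_periodic_points1}.
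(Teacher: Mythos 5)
Your high-level strategy is the paper's: reduce to a finitely generated group via \Cref{lem:contractible_Y_Delta} (your coset-by-coset gluing and the observation that $Y^{(\Delta)}$ inherits a strong contraction homotopy are fine), and then build a $\Gamma_0$-periodic point by iterating \Cref{lem:contractible_periodic_points1} along a chain of homomorphisms. The gap is exactly at the step you flag as the "main obstacle," and your proposed resolution does not close it. What the iteration requires is: for every subgroup $\Gamma_0$ avoiding a large ball, a generating set $v_1,\ldots,v_n$ of $\Gamma_0$ \emph{itself} together with surjective homomorphisms $\phi_i:\Gamma\to\Z$ such that $\phi_i$ vanishes on $v_1,\ldots,v_{i-1}$ (so the periods already enforced lie in $\Ker(\phi_i)$ and survive) \emph{and} $\phi_i(v_i)>4\max|\phi_i(W)|$. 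Coordinate projections do not achieve this, and "all generators long" is not the right condition either: writing $\phi_i=\langle\cdot,w_i\rangle$ with $w_i$ primitive and orthogonal to $v_1,\ldots,v_{i-1}$, one has $|\phi_i(v_i)|\le\|w_i\|\cdot\mathrm{dist}\bigl(v_i,\mathrm{span}(v_1,\ldots,v_{i-1})\bigr)$ while $\max|\phi_i(W)|$ is of order $\|w_i\|\max_{w\in W}\|w\|$, so what must be bounded below is the \emph{orthogonal component} of $v_i$ relative to the earlier generators. A lattice with minimum $\ge R$ can have bases consisting entirely of long vectors in which some generator lies at tiny distance from the span of the others (e.g.\ for the lattice $R\Z\times H\Z\subset\Z^2$ take $v_1=(R,NH)$, $v_2=(0,H)$ with $N$ huge: the distance of $v_2$ from $\mathrm{span}(v_1)$ is about $R/N$). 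The quantitative existence of a basis with uniformly large successive orthogonal components is a genuine theorem of reduction theory, and it is precisely what the paper imports via Korkine--Zolotarev reduced bases; this ingredient is absent from your sketch.

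Your fallback also fails. Once the torsion elements of $\Gamma$ are placed in $F$, any admissible $\Gamma_0$ is torsion-free, hence already free abelian, so "replace $\Gamma_0$ by a finite-index subgroup that splits as a direct sum" buys nothing: the splitting was never the problem, the choice of a \emph{reduced} basis is. And if you do pass to a proper finite-index subgroup $\Gamma_0'\le\Gamma_0$, proving $\Gamma_0'\le\stab(y)$ does not yield $\Gamma_0\le\stab(y)$, and no fixed finite set $F$ can force an arbitrary $\Gamma_0$ with $\Gamma_0\cap F=\emptyset$ to coincide with a preselected finite-index subgroup of itself --- that hypothesis only excludes short nonzero elements of $\Gamma_0$ and says nothing about its coset structure. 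So the proof as proposed has a real hole at the lattice-geometric step; filling it amounts to proving the paper's auxiliary statement (flag of homomorphisms with $\phi_i(v_j)=0$ for $j<i$ and $\phi_i(v_i)$ large), which the paper does with KZ-reduced bases.
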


\begin{proof}
    By \Cref{lem:contractible_Y_Delta} we can assume without loss of generality that  $\Gamma$ is a finitely generated  abelian group. In this case, by the structure theorem for finitely generated abelian groups we can assume that $\Gamma = \Z^d \times G$, where $G$ is a finite abelian group. Let $Y$ be a contractible $\Gamma$-subshift, and let $\psi:\{0,1\}^\Gamma \times Y \times Y \to Y$ be a strong contraction homotopy with coding window $W \Subset \Gamma$. 
    
    We first prove the following auxilary statement: There exists a finite set $F \Subset \Gamma$ (that depends on the set $W$) such that for any subgroup $\Gamma_0 \le \Gamma$ such that $F \cap \Gamma_0 = \emptyset$ there exists an integer $0 \le n \le d$, surjective homomorphisms $\phi_1,\ldots,\phi_n : \Gamma \to \Z$ and $v_1,\ldots,v_n \in \Gamma_0$ such that $v_1,\ldots,v_n$ is a generating set for $\Gamma_0$ and so that
\begin{itemize}
    \item $\phi_i(v_j)=0$ for all $0 \le i < j \le n$.
    \item $\phi_i(v_i)> 2(\max \phi_i(W))$ for every $1\le i \le n$.
\end{itemize}

Indeed, by taking $F$ to be a set that contains the non-identity elements of the torsion subgroup of $\Gamma$, we get that any subgroup $\Gamma_0 < \Gamma$ that does not intersect $F$ is torsion free, hence the  projection of $\Gamma_0$ onto $\Z^d$ is injective. So without loss of generality we can prove the statement under the assumption that $\Gamma= \Z^d$. We choose $R>0$ large enough, in a manner that depends on the set $W$ and the integer $d$ (the precise conditions of $R$ will be clarified soon). Let $F$ denote the intersection of the euclidean ball of radius $R$ around $0$ with $\Z^d \setminus \{0\}$, and let  $\Gamma_0 < \Gamma= \Z^d$ be a subgroup that does not intersect $F$. We can assume without loss of generality that $\Gamma_0$ is a finite index subgroup of $\Z^d$.
In other words, $\Gamma_0$ is a lattice whose shortest non-zero element has length at least $R$. Let $v_1,\ldots,v_d \in \Gamma_0$ be a Korkine–Zolotarev reduced basis  of $\Gamma_0$ as in \cite{Korkine1873}. Then there exists a constant $c_1 >0$ depending only on $d$ such that for any $1\le i \le d$ the orthogonal projection of $v_i$ onto the orthogonal complement of the span of $v_1,\ldots,v_{i-1}$ has length at least $c_1 R$.
Thus, there exists another constant $c_2>0$ that depends only on $d$, and primitive vectors $w_1,\ldots,w_d \in \Z^d$  such that $w_i$ belongs to the  orthogonal complement of the span of $v_1,\ldots,v_{i-1}$ and $\langle v_i,w_i \rangle > c_2 R \|w_i \|$. Define $\phi_i:\Z^d \to \Z$ by $\phi_i(v)=\langle v,w_i \rangle$. Then for every $i=1,\ldots,d$ we have that  $\phi_i$ is a surjective homomorphism that satisfies $\phi_i(v_i) \ge c_2 R \|w_i\|$. Since $\max \phi_i(W) \le \| w_i \| \max_{w \in W}\| w\|$, if $R > 2 c_2^{-1} \max_{w \in W}\| w\|$ we have that $\phi_i(v_i)> 2(\max \phi_i(W)$. This proves the auxiliary statement.

Now repeatedly apply \Cref{lem:contractible_periodic_points1} to produce a sequence of points $y^{(1)},\ldots,y^{(d)} \in Y$ such that $v_1,\ldots,v_i \in \stab(y^{(i)})$. In particular, $\Gamma_0$ is contained in  $\stab(y^{(d)})$.
This completes the proof. 

\end{proof}

Next, we consider the notion of retractions, retracts and absolute retracts in the context of symbolic dynamics.
Recall that a subspace $Y$ of a topological space $X$  is called a \emph{retract} if there is a continuous function $r:X \to Y$ such that the restriction of $r$ to $Y$ is the identity. Such a  continuous function $r:X \to Y$ is called a retraction.
Let $\mathcal{C}$ be a class of topological spaces, closed under homeomorphisms and passage to closed subsets. Following Borsuk \cite{Borsuk1931}, a topological space  $Y$ is called an \emph{absolute retract for the class $\mathcal{C}$} if $Y\in \mathcal{C}$ and whenever $Y$ is a closed subspace of $X \in \mathcal{C}$ then $Y$ is a retract of $X$.

The notions of a  retract and universal retract naturally adapt to the category of dynamical systems, in particular to $\Gamma$-subshifts:

\begin{definition}
    Let $Y \subseteq X$ be $\Gamma$-subshifts. We say that $Y$ is a retract of $X$ if there exists a map $r:X \to Y$ such that the restriction of $r$ to $Y$ is the identity.  A map $r$ as above is called a retraction. 

\end{definition}

As shown by the following proposition, various important properties of subshifts are preserved under retractions:

\begin{prop}\label{prop:closed_under_retracts}
    The following families of subshifts are closed under taking retracts:
    \begin{enumerate}
        \item sofic shifts
      \item topologically mixing subshifts
    \item strongly irreducbile subshifts
      \item subshifts with the map extension property
        \item shifts of finite type
    \end{enumerate}
\end{prop}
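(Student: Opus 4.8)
The plan is to prove each of the five closure properties by exhibiting, given a retraction $r \colon X \to Y$ and the relevant structure on $X$, the corresponding structure on $Y$. In every case the key point is that $Y$ sits inside $X$ as a closed shift-invariant subset, that the inclusion $\iota \colon Y \hookrightarrow X$ is a map, and that $r \circ \iota = \mathrm{id}_Y$; so any ``approximate orbit'' or ``filling'' statement we can assert in $X$ can be pushed forward to $Y$ by applying $r$, while admissibility in $Y$ is controlled by admissibility in $X$ via $\iota$. I would state at the outset that $r$ has some finite coding window $W_r \Subset \Gamma$ with sliding block code $\Phi_r$; this is what converts topological statements into combinatorial ones with explicit window bookkeeping.

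\textbf{Sofic shifts (1).} A sofic shift is exactly a factor of an SFT; equivalently, the image of a map from an SFT. If $X$ is sofic, say $X = \pi(S)$ for an SFT $S$, then $Y = r(X) = (r \circ \pi)(S)$ is also the image of a map from an SFT, hence sofic. (One could alternatively first prove (5) and then note $Y$ is a factor of $X$.)

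\textbf{Topologically mixing (2) and strongly irreducible (3).} For (2): given nonempty open $U, V \subseteq Y$, the sets $r^{-1}(U), r^{-1}(V)$ are nonempty open subsets of $X$, so by mixing of $X$ there is a cofinite set of $\gamma$ with $\sigma_\gamma(r^{-1}(U)) \cap r^{-1}(V) \neq \emptyset$; applying $r$ and using $\Gamma$-equivariance and $r|_Y = \mathrm{id}$ gives $\sigma_\gamma(U) \cap V \neq \emptyset$ (after intersecting with $Y$; here one uses that $r$ maps into $Y$ and restricts to the identity there, so $r(\sigma_\gamma(r^{-1}(U))) \subseteq \sigma_\gamma(U) \cap Y$, wait — more carefully: if $x \in \sigma_\gamma(r^{-1}(U)) \cap r^{-1}(V)$ then $r(x) \in \sigma_\gamma(r(r^{-1}(U))) \cap V \subseteq \sigma_\gamma(U) \cap V$, using $r(r^{-1}(U)) = U$ since $r$ is onto $Y$). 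For (3): strong irreducibility says there is a ``gap'' $K \Subset \Gamma$ such that for any two admissible patterns on $K$-separated finite supports one can glue them into a point of the subshift. Transporting this to $Y$ requires a window-loss argument: given $Y$-admissible patterns $w_1$ on $F_1$ and $w_2$ on $F_2$ with $F_1, F_2$ sufficiently separated (separation enlarged by the coding window $W_r$ of the retraction and the gap of $X$), extend them to points $y_1, y_2 \in Y \subseteq X$ realizing these patterns, glue $y_1|_{F_1'}$ and $y_2|_{F_2'}$ in $X$ (on slightly shrunk supports) to get $x \in X$, then $r(x) \in Y$ agrees with $w_i$ on $F_i$ because $r$ has coding window $W_r$ and $r$ fixes $y_i$ locally where $x$ and $y_i$ agree on a $W_r$-neighborhood. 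This is the step with the most bookkeeping.

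\textbf{Map extension property (4) and SFT (5).} For (4): suppose $X$ has the map extension property, and let $Z$ be any $\Gamma$-subshift with $Z \rightsquigarrow Y$, $\tilde Z \subseteq Z$, and $\tilde \pi \colon \tilde Z \to Y$ a map. Since $Y \subseteq X$ we have $Z \rightsquigarrow Y$ implies $Z \rightsquigarrow X$ (same stabilizer containments, as $Y \subseteq X$ gives points of $X$), and $\tilde\pi$ is also a map $\tilde Z \to X$; by the map extension property of $X$ it extends to $\pi' \colon Z \to X$; then $\pi := r \circ \pi' \colon Z \to Y$ is a map, and on $\tilde Z$ it equals $r \circ \tilde\pi = \tilde\pi$ since $\tilde\pi$ already lands in $Y$ and $r|_Y = \mathrm{id}$. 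Hence $Y$ has the map extension property. For (5): let $W_r \Subset \Gamma$ be a coding window for $r$, and let $W_X \Subset \Gamma$ with forbidden set $\mathcal F \subseteq A^{W_X}$ define $X$; enlarging, assume $W_X \supseteq W_r$ and is symmetric-ish as needed. I claim $Y$ is the SFT on window $W := W_X + W_r$ (inside $A_Y = \cL_{\{0\}}(Y)$, i.e.\ $Y \subseteq B^\Gamma$ where $B$ is $Y$'s alphabet) whose admissible $W$-patterns are exactly $\cL_W(Y)$. One inclusion is trivial. For the other: given $y^* \in B^\Gamma$ all of whose $W$-patterns are in $\cL_W(Y)$, I want $y^* \in Y$. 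The idea is that $Y$ being a retract of an SFT makes it an SFT; the cleanest route is to show $Y$ is ``locally $X$-like'' — each $W$-pattern of $y^*$ lifts to a point of $Y \subseteq X$, patch these via a compactness/diagonal argument to build $x \in X$ with $r(x)$ forced to equal $y^*$ on every coordinate because the local data determines $\Phi_r$. I expect step (5) — and more precisely proving that the finite-type condition on $Y$ can be read off from a fixed finite window using $r$ — to be the main obstacle, since it requires turning the global dynamical fact ``$Y$ is a retract of an SFT'' into a genuinely local combinatorial description of $Y$; the other four parts are comparatively formal diagram-chasing with the one careful window-loss computation in part (3).
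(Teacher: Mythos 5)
Parts (1)--(4) of your proposal are correct. For (1)--(3) the paper simply observes that sofic, topologically mixing and strongly irreducible subshifts are closed under factor maps (a retraction is in particular a factor map onto its image); your hands-on arguments for (2) and (3), including the window-loss bookkeeping with $K_Y = K_X + W_r$, are sound and just specialize the factor argument to the case where the ``lift'' is the inclusion $Y \subseteq X$. Your argument for (4) is exactly the paper's: extend into the ambient subshift with the map extension property and compose with the retraction, noting $r\circ\tilde\pi=\tilde\pi$ since $\tilde\pi$ lands in $Y$.

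Part (5), however, has a genuine gap, and you flag it yourself: you state the claim that $Y$ equals the SFT determined by its admissible $(W_X+W_r)$-patterns, but the step you propose for the hard inclusion --- ``patch local lifts via a compactness/diagonal argument to build $x\in X$ with $r(x)$ forced to equal $y^*$'' --- is not worked out and is not the right mechanism (any $x$ agreeing with $y^*$ on every translate of $W_r$ is just $y^*$ itself, so there is nothing to patch). The missing idea is that the fixed-point set of a sliding block code is a \emph{local} condition: since $r|_Y=\mathrm{id}$, the block code $\Phi_r$ satisfies $\Phi_r(w)=w_0$ for every $w\in\cL_{W_r}(Y)$, and conversely $x\in Y$ if and only if $x\in X$ and $\Phi_r\bigl(\sigma_v(x)_{W_r}\bigr)=x_v$ for all $v\in\Gamma$. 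This is how the paper proceeds: it forbids, on the window $W_0=W_r$, the patterns $\mathcal{F}_0=\{w: \Phi_r(w)\ne w_0\}$, and together with a defining forbidden set for $X$ this exhibits $Y$ as an SFT in two lines, with no compactness argument. Your route can be repaired with the same observation: if every $W_X$-pattern of $y^*$ is $Y$-admissible then $y^*\in X$ (since $\cL_{W_X}(Y)\subseteq\cL_{W_X}(X)$ and $W_X$ is a defining window for $X$), and if every $W_r$-pattern of $y^*$ is $Y$-admissible then $r(y^*)_v=\Phi_r(\sigma_v(y^*)_{W_r})=y^*_v$ for all $v$, so $y^*=r(y^*)\in Y$. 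Without this local characterization, your part (5) remains an unproven claim rather than a proof.
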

\begin{proof}
The classes of sofic shifts, topological mixing subshifts and strongly irreducible subshifts are each closed under taking (subshift) factors. In particular, they are closed under taking retracts. 

Let us show that a retract of a subshift with the map extension property also has the map extension property. 
Suppose that $\hat Y$ is a subshift with the map extension property, that $Y \subseteq \hat Y$ and that $r:\hat Y \to Y$ is a retraction map. 
Let $\tilde X \subseteq X$ be subshifts so that $X \rightsquigarrow Y$ and $\tilde \pi:\tilde X \to Y$ be a map. Since $Y \subseteq \hat Y$ it follows that $X \rightsquigarrow \hat Y$. Since $\hat Y$ has the map extension property, $\tilde \pi$ extends to a map $\hat \pi:X \to \hat Y$. Then $\pi = r \circ \hat \pi: X \to Y$ is a map that extends $\tilde \pi$.

It remains to show that a retract of any retract of a  shift of finite type is also a shift of finite type.
Let $X \subseteq A^\Gamma$ be a shift of finite type with a defining set of  forbidden patterns $\mathcal{F}_1 \Subset A^{W_1}$.
Suppose that $Y \subseteq X$  is a retract, with $r:X \to Y$ a retraction.
Let $W_0 \Subset \Gamma$ be a coding window for $r$ such that $0 \in W_0$, and let  $\Phi:A^{W_0} \to A$ be a sliding block code for $r$.
Let 
    \[
    \mathcal{F}_0 = \left\{ w \in A^{W_0} ~:~\Phi(w)_0 \ne w_0 \right\}.
    \]
    Now suppose that $x \in X$ satisfies that $\sigma_v(x)_{W_0} \not \in \mathcal{F}_0$ for all $v \in \Gamma$. It follows that $r(x)=x$, so $x \in Y$. This means that 
    \[
    Y = \left\{ x \in X~:~  \sigma_v(x)_{W_0} \not \in \mathcal{F}_0 \mbox{ for all } v \in \Gamma \right\}.
    \]
    Denote $W = W_0 \cup W_1$ and let
    \[\mathcal{F} = \left\{ w \in A^W ~:~~ w_{W_0} \in \mathcal{F}_0 \mbox{ or } w_{W_1} \in \mathcal{F}_1 \right\}.\]
    Then $\mathcal{F}$ is a defining set of forbidden patterns for $Y$. This proves that $Y$ is a shift of finite type.
\end{proof}

\begin{definition}
    A $\Gamma$-subshift $Y$ is called an \emph{absolute retract} if  whenever $Y$ embeds in a $\Gamma$-subshift $X$, then $Y$ is a retract of $X$.
\end{definition}

An absolute retract always has a fixed point for the following simple reason:  Any subshift $Y$ embeds in a subshift with a fixed point, and any retract of a subshift with a fixed point must admit a fixed point. 
\begin{definition}
Let $\mathcal{C}$ be a class of $\Gamma$-subshifts, closed under isomorphism and passing to subshifts.
    A $\Gamma$-subshift $Y$ is called an \emph{absolute retract for the class $\mathcal{C}$} if $Y \in \mathcal{C}$ and whenever $Y$ embeds in a $\Gamma$-subshift $X \in \mathcal{C}$ then $Y$ is a retract of $X$.
\end{definition}

\begin{definition}\label{def:RAR}
    Given a finite set $\mathcal{G} \Subset \Sub(\Gamma)$, we say that a $\Gamma$-subshift $X$ is \emph{$\mathcal{G}$-free} if for every $x \in X$ and $\Gamma_0 \in \mathcal{G}$ we have that $\Gamma_0 \not \subseteq \stab(x)$. 
\end{definition}

\begin{definition}\label{def:witness_G_free}
    Let $Y$ be a $\Gamma$-subshift, and let $\mathcal{G}$
    be a finite set of finitely generated subgroups of $\Gamma$.
    We say that a finite set $K \Subset \Gamma$ \emph{witnesses that $Y$ is $\mathcal{G}$-free} if 
    each $\Gamma_0 \in \mathcal{G}$ admits a finite generating set $F_{\Gamma_0} \Subset \Gamma$ so that
    for every $w \in \cL_{K}(Y)$ and every $\Gamma_0 \in \mathcal{G}$ there exists $\gamma \in F_{\Gamma_0}$ and $v \in K$ such that $v+ \gamma\in K$ and $w_{v} \ne w_{v+\gamma}$.
\end{definition}

\begin{lemma}\label{lem:witness_G_free}
Let $\mathcal{G}\Subset \Sub(\Gamma)$ be a finite set of finitely generated subgroups of $\Gamma$, and let $Y$ be a $\Gamma$-subshift.
Then $Y$ is $\mathcal{G}$-free if and only if there exists a finite set $K \Subset \Gamma$ with $0 \in K$ that witnesses that $Y$ is $\mathcal{G}$-free.
\end{lemma}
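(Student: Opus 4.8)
The plan is to prove both implications of \Cref{lem:witness_G_free}, with the forward direction being the substantive one.

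\textbf{The easy direction.} First I would dispose of the ``if'' direction. Suppose some finite $K \Subset \Gamma$ with $0 \in K$ witnesses that $Y$ is $\mathcal{G}$-free, via generating sets $F_{\Gamma_0}$. Let $x \in Y$ and $\Gamma_0 \in \mathcal{G}$; I must show $\Gamma_0 \not\subseteq \stab(x)$. Apply the witnessing condition to $w = x_K \in \cL_K(Y)$: there exist $\gamma \in F_{\Gamma_0}$ and $v \in K$ with $v + \gamma \in K$ and $x_v \ne x_{v+\gamma}$. Then $\sigma_\gamma(x)_v = x_{v+\gamma} \ne x_v$, so $\gamma \notin \stab(x)$. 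Since $\gamma \in F_{\Gamma_0} \subseteq \Gamma_0$ (as $F_{\Gamma_0}$ generates $\Gamma_0$), we conclude $\Gamma_0 \not\subseteq \stab(x)$, hence $Y$ is $\mathcal{G}$-free.

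\textbf{The main direction.} Now suppose $Y$ is $\mathcal{G}$-free; I want to produce a single finite witnessing set $K$. Fix, once and for all, a finite generating set $F_{\Gamma_0} \Subset \Gamma$ for each $\Gamma_0 \in \mathcal{G}$ (this is possible since each $\Gamma_0$ is finitely generated). The key point is a compactness argument. For each $y \in Y$ and each $\Gamma_0 \in \mathcal{G}$, since $\Gamma_0 \not\subseteq \stab(y)$ and $\Gamma_0 = \langle F_{\Gamma_0}\rangle$, there must be some $\gamma \in F_{\Gamma_0}$ with $\gamma \notin \stab(y)$ --- because if every generator fixed $y$, all of $\Gamma_0$ would. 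Wait, I should double-check this last claim: if $\sigma_\gamma(y) = y$ for all $\gamma \in F_{\Gamma_0}$, then since $\stab(y)$ is a subgroup containing a generating set of $\Gamma_0$, indeed $\Gamma_0 \le \stab(y)$, contradiction. So there is $\gamma = \gamma(y,\Gamma_0) \in F_{\Gamma_0}$ and a coordinate $u = u(y,\Gamma_0) \in \Gamma$ with $y_{u + \gamma} \ne y_u$. This is a condition on finitely many coordinates of $y$, hence determines a clopen neighbourhood of $y$ on which it persists; ranging over the finitely many $\Gamma_0 \in \mathcal{G}$ and intersecting gives a clopen neighbourhood $U_y \ni y$ such that for every $y' \in U_y$ and every $\Gamma_0 \in \mathcal{G}$ we have $y'_{u(y,\Gamma_0) + \gamma(y,\Gamma_0)} \ne y'_{u(y,\Gamma_0)}$. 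By compactness of $Y$, finitely many $U_{y^{(1)}}, \dots, U_{y^{(m)}}$ cover $Y$.

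\textbf{Assembling $K$.} Let $K$ be a finite symmetric set containing $0$, all the $\gamma(y^{(j)}, \Gamma_0)$, and all the coordinates $u(y^{(j)},\Gamma_0)$ and $u(y^{(j)},\Gamma_0) + \gamma(y^{(j)},\Gamma_0)$ for $1 \le j \le m$ and $\Gamma_0 \in \mathcal{G}$; enlarging if necessary, I also want $K$ large enough that the clopen conditions defining each $U_{y^{(j)}}$ are all expressed in coordinates inside $K$ (they are, by construction, since each is a single inequality $y_{a} \ne y_b$ with $a,b \in K$). I claim $K$ witnesses that $Y$ is $\mathcal{G}$-free, with the same generating sets $F_{\Gamma_0}$. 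Indeed, take any $w \in \cL_K(Y)$, say $w = x_K$ for some $x \in Y$, and any $\Gamma_0 \in \mathcal{G}$. Since the $U_{y^{(j)}}$ cover $Y$, we have $x \in U_{y^{(j)}}$ for some $j$, so $x_{u + \gamma} \ne x_u$ where $\gamma = \gamma(y^{(j)},\Gamma_0) \in F_{\Gamma_0}$ and $u = u(y^{(j)},\Gamma_0)$. Setting $v := u \in K$, we have $v + \gamma = u + \gamma \in K$ and $w_v = x_v \ne x_{v+\gamma} = w_{v+\gamma}$, which is exactly the required condition. I do not anticipate a serious obstacle here; the only mild care is bookkeeping to ensure $K$ simultaneously contains all the relevant shift vectors and coordinate locations, and to note that membership in the clopen cover $\{U_{y^{(j)}}\}$ is itself determined by $x_K$ once $K$ is chosen appropriately --- which it is, since each $U_{y^{(j)}}$ is a finite intersection of the cylinder conditions $y_a \ne y_b$ with $a, b \in K$.
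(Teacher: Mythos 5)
Your proof is correct and follows essentially the same route as the paper: for each point and each $\Gamma_0$ you locate a generator in $F_{\Gamma_0}$ and a pair of coordinates witnessing non-periodicity, pass to clopen neighbourhoods, extract a finite subcover by compactness, and take $K$ to be the union of the finitely many coordinate sets involved (the paper phrases the neighbourhoods as cylinder sets $\{\tilde y : \tilde y_{K_y}=y_{K_y}\}$, which is an immaterial difference). The easy direction is handled identically.
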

\begin{proof}
If $K \Subset \Gamma$ witnesses that $Y$ is $\mathcal{G}$-free then
 each $\Gamma_0 \in \mathcal{G}$ admits a finite generating set $F_{\Gamma_0} \Subset \Gamma$ so that
for every $y \in Y$ and every $\Gamma_0 \in \mathcal{G}$ there exists $v \in K$ and $\gamma \in F_{\Gamma_0}$ such that $y_v \ne y_{v + \gamma}$, and so $\Gamma_0 \not \subseteq \stab(y)$. This proves that $Y$ is $\mathcal{G}$-free.

For the converse direction:
For each $\Gamma_0 \in \mathcal{G}$, fix a finite generating set $F_{\Gamma_0} \Subset \Gamma$.
Given $K \Subset \Gamma$ say that $w \in \cL_K(Y)$ is \emph{$F_{\Gamma_0}$-periodic} if $w_v = w_{v + \gamma}$ whenever $\gamma \in F_{\Gamma_0}$,  and $v,v + \gamma \in K$.
Suppose that $Y$ is $\mathcal{G}$-free. Then for any $y \in Y$ and every $\Gamma_0 \in \mathcal{G}$ there exists a finite set $K_{y,\Gamma_0} \Subset \Gamma$ such that the restriction of $y$ to $K_{y,\Gamma_0}$ is not $F_{\Gamma_0}$ periodic. Take $K_y = \bigcup_{\Gamma_0 \in \mathcal{G}}K_{y,\Gamma_0}$, then the restriction of $y$ to $K_{y}$ is not $F_{\Gamma_0}$ periodic for all $\Gamma_0 \in \mathcal{G}$. For each $y \in Y$ let
\[ U_y = \left\{ \tilde y \in Y~:~ \tilde y_{K_y} = y_{K_y} \right\}.\]
Then $U_y \subseteq Y$ is an open neighborhood of $y$.
By compactness of $Y$, the open cover $\{U_y\}_{y \in Y}$ admits a finite subcover $U_{y_1},\ldots,U_{y_N}$.
Let $K =\bigcup_{j=1}^N K_{y_j}$, then for every $y \in Y$ we have that $y_K$ is  not $F_{\Gamma_0}$ periodic for all $\Gamma_0 \in \mathcal{G}$. 
This proves  that $K$  witnesses that $Y$ is $\mathcal{G}$-free.
\end{proof}

\begin{definition}\label{def:Y_K_D}
  Given a $\Gamma$-subshift $Y \subseteq A^\Gamma$ and finite non-empty sets $K,D \Subset \Gamma$ let
  \[
  Y^{(K,D)} = \left\{ y \in A^{\Gamma} ~:~ \forall v \in \Gamma \; \exists w \in D \mbox{ s.t. } \sigma_{v+w}(y)_K \in \cL_{K}(Y)\right\}.
  \]
\end{definition}
In words, given a $\Gamma$-subshift $Y \subseteq A^\Gamma$, 
the subshift $Y^{(K,D)} \subseteq A^{\Gamma}$ consists of elements of $A^\Gamma$ in which every $K+D$ pattern contains a $Y$-admissible $K$-pattern.

\begin{lemma}\label{lem:Y_K_D_G_free}
    Let  $\mathcal{G} \Subset \Sub(\Gamma)$ be a finite set of finitely generated subgroups,  let $Y \subseteq A^\Gamma$ be $\mathcal{G}$-free subshift, and let
    $K \Subset \Gamma$ be a finite set that witnesses that $Y$ is $\mathcal{G}$-free. For any non-empty finite set $D \Subset \Gamma$, the subshift
     $Y^{(K,D)}$ is a $\mathcal{G}$-free subshift such that $Y \subseteq Y^{(K,D)}$.
\end{lemma}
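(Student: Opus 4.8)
The plan is to verify the two assertions separately: first that $Y \subseteq Y^{(K,D)}$, and then that $Y^{(K,D)}$ is $\mathcal{G}$-free. The inclusion $Y \subseteq Y^{(K,D)}$ is immediate from the definitions: if $y \in Y$, then for every $v \in \Gamma$ and in particular taking any $w \in D$ (say $w = 0$ if $0 \in D$, or any fixed element otherwise, using that $D \ne \emptyset$), we have $\sigma_{v+w}(y)_K \in \cL_K(Y)$ since $\sigma_{v+w}(y) \in Y$. Hence $y \in Y^{(K,D)}$. I should also note in passing that $Y^{(K,D)}$ is indeed a $\Gamma$-subshift: it is shift-invariant by construction, and it is closed because membership is determined by the collection of closed conditions ``for each $v$, some translate lands in the clopen set $\{z : z_K \in \cL_K(Y)\}$'', each of which is a finite union of clopen sets hence clopen, and the intersection over $v \in \Gamma$ of clopen sets is closed.

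For $\mathcal{G}$-freeness, fix $y \in Y^{(K,D)}$ and $\Gamma_0 \in \mathcal{G}$; I must exhibit $\gamma \in \stab$-breaking data, i.e. find $\gamma$ in a generating set of $\Gamma_0$ and $u \in \Gamma$ with $y_u \ne y_{u+\gamma}$, so that $\Gamma_0 \not\subseteq \stab(y)$. Since $K$ witnesses that $Y$ is $\mathcal{G}$-free, each $\Gamma_0 \in \mathcal{G}$ has a finite generating set $F_{\Gamma_0} \Subset \Gamma$ such that every $w' \in \cL_K(Y)$ contains a ``defect'': there exist $\gamma \in F_{\Gamma_0}$ and $v \in K$ with $v + \gamma \in K$ and $w'_v \ne w'_{v+\gamma}$. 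Now apply the defining property of $Y^{(K,D)}$ with the base point $0$ (or any base point): there exists $w_0 \in D$ such that $\sigma_{w_0}(y)_K \in \cL_K(Y)$. Applying the witnessing property to this admissible $K$-pattern $\sigma_{w_0}(y)_K$, we obtain $\gamma \in F_{\Gamma_0}$ and $v \in K$ with $v + \gamma \in K$ and $(\sigma_{w_0}(y))_v \ne (\sigma_{w_0}(y))_{v+\gamma}$, that is $y_{w_0 + v} \ne y_{w_0 + v + \gamma}$. Setting $u = w_0 + v$, this shows $y_u \ne y_{u + \gamma}$, hence $\gamma \notin \stab(y)$. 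Since $\gamma \in F_{\Gamma_0}$ and $F_{\Gamma_0}$ generates $\Gamma_0$, we conclude $\Gamma_0 \not\subseteq \stab(y)$. As $y$ and $\Gamma_0$ were arbitrary, $Y^{(K,D)}$ is $\mathcal{G}$-free.

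I do not expect any serious obstacle here; the lemma is essentially a bookkeeping exercise unwinding Definitions~\ref{def:witness_G_free} and~\ref{def:Y_K_D}. The only mild subtlety is making sure the witnessing property of $K$ is applied to a genuine element of $\cL_K(Y)$ rather than to $y_K$ directly (which need not be $Y$-admissible for $y \in Y^{(K,D)} \setminus Y$) — this is precisely what the $D$-translate in the definition of $Y^{(K,D)}$ provides. It is also worth emphasizing that the \emph{same} generating sets $F_{\Gamma_0}$ furnished by the hypothesis that $K$ witnesses $\mathcal{G}$-freeness of $Y$ are used verbatim, so no re-choice of generators is needed.
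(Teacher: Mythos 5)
Your proof is correct and follows essentially the same route as the paper: the paper's (one-line) argument simply asserts that $D+K$ witnesses that $Y^{(K,D)}$ is $\mathcal{G}$-free, and your unwinding — taking the $D$-translate guaranteed by the definition of $Y^{(K,D)}$ and applying the defect in the resulting $Y$-admissible $K$-pattern with the same generating sets $F_{\Gamma_0}$ — is precisely the verification of that claim, stated directly in terms of stabilizers.
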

\begin{proof}
    It is clear that $Y \subseteq Y^{(K,D)}$, because for any $y \in Y$ and any $v \in \Gamma$ and $w \in D$ we have that $\sigma_{v+w}(y)_K \in \cL_K(Y)$.
    Because $K$ witness that $Y$ is $\mathcal{G}$-free, it follows that $D+K$ witnesses that $Y^{(K,D)}$ is $\mathcal{G}$-free.
\end{proof}

We proceed with some technical lemmas that will eventually be used to establish significant properties for subshifts with the map extension property. These properties will be exploited in our proof of the main theorem.

\begin{lemma}\label{lem:map_ext_G_free}
Let $\mathcal{G} \Subset \Sub(\Gamma)$ be a finite set of finitely generated subgroups. Then:
\begin{enumerate}
    \item[(i)] \label{enum:map_ext_G_free_1}
    Any $\mathcal{G}$-free $\Gamma$-subshift $Y$ is contained in a $\mathcal{G}$-free $\Gamma$-subshift of finite type $\hat Y$ with the map extension property (possibly over a bigger alphabet).
    \item[(ii)] \label{enum:uniform_retract} If $Y$ is an absolute retract for the class of $\mathcal{G}$-free $\Gamma$-subshifts then there exists a finite set $F \Subset \Gamma$ so that for any $\mathcal{G}$-free $\Gamma$-subshift $X$ with $Y \subseteq X$ there exists a retraction map $r: X \to Y$ such that whenever $y_F \in \cL(Y)$ then $r(y)_0=y_0$.
\end{enumerate}

\end{lemma}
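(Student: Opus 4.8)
\textbf{Proof plan for \Cref{lem:map_ext_G_free}.}

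The plan is to prove the two parts separately, with part (i) providing the structural construction and part (ii) being a uniform-retraction argument that relies on a compactness/saturation step.

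For part (i), I would first reduce to the case where $\Gamma$ is finitely generated using \Cref{lem:contractible_Y_Delta} (or rather its proof idea): since $\mathcal{G}$ is a finite set of finitely generated subgroups, all the relevant data lives in a finitely generated subgroup $\Delta \le \Gamma$, and building $\hat Y$ over $\Gamma$ from a suitable $\Delta$-subshift via the $[\Delta]$-construction preserves $\mathcal{G}$-freeness and the map extension property provided $\Delta$ is chosen to contain generating sets for all $\Gamma_0 \in \mathcal{G}$ and a witnessing set $K$. So assume $\Gamma = \Z^d \times G$ with $G$ finite. Now the key idea is to enlarge $Y$ to a subshift with a ``safe-symbol-like'' structure that is still $\mathcal{G}$-free. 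Concretely, I would fix a witnessing set $K$ for $Y$ (via \Cref{lem:witness_G_free}), pass to $Y^{(K,D)}$ for a large enough $(K,\epsilon)$-invariant set $D$ (which is $\mathcal{G}$-free and contains $Y$ by \Cref{lem:Y_K_D_G_free}), and then adjoin to the alphabet a new ``wildcard'' layer: points of $\hat Y$ are allowed, in every $K+D$ window, either to look locally like a legal $Y$-pattern or to be marked by a controlled defect pattern that itself breaks each $\Gamma_0$-periodicity. The defect patterns must be engineered so that (a) $\hat Y$ is of finite type, (b) $\hat Y \supseteq Y$, (c) $\hat Y$ is $\mathcal{G}$-free, and (d) $\hat Y$ has the map extension property — for (d) one exhibits a strong contraction homotopy $\psi: \{0,1\}^\Gamma \times \hat Y \times \hat Y \to \hat Y$ directly, using the wildcard layer to interpolate: in the region where the $\{0,1\}^\Gamma$ coordinate transitions, insert the defect pattern, which is always legal. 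This is the standard mechanism by which ``safe symbol'' type SFTs are contractible, adapted to respect the $\mathcal{G}$-free constraint. The fact that the defect/wildcard patterns can simultaneously be made $\mathcal{G}$-breaking is where the hypothesis that $\mathcal{G}$ consists of \emph{finitely generated} subgroups (finitely many of them) is essential; one just needs the defect pattern to be non-periodic under each generator of each $\Gamma_0$, inside a single window.

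For part (ii), I would argue as follows. By part (i), embed $Y$ into a $\mathcal{G}$-free SFT $\hat Y$ with the map extension property. Suppose $Y$ is an absolute retract for the class of $\mathcal{G}$-free subshifts, and let $X$ be any $\mathcal{G}$-free subshift with $Y \subseteq X$. I want a retraction $r: X \to Y$ that is the identity near $Y$-admissible configurations, with a single finite set $F$ working for all such $X$ simultaneously. First, the existence of \emph{some} retraction $r_X : X \to Y$ is immediate from the absolute-retract hypothesis. The issue is the uniform ``$y_F \in \cL(Y) \Rightarrow r(y)_0 = y_0$'' clause and the uniformity of $F$. Here is the plan: apply the absolute retract property not to $X$ itself but to a universal object. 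Consider the disjoint union (or rather, since we need a single subshift, a suitable ``join'') construction — but more cleanly, observe that $Y^{(K,D)}$ for $D$ large is itself a $\mathcal{G}$-free subshift containing $Y$, and every $\mathcal{G}$-free $X \supseteq Y$ over a fixed alphabet is \emph{contained in} $Y^{(K,D)}$ for the appropriate $D$. So it suffices to produce one retraction $r: Y^{(K,D)} \to Y$ with the identity-near-$Y$ property and restrict it to $X$; the set $F$ is then $K + D$ (or a fixed enlargement thereof). To get that one retraction: apply the absolute retract hypothesis to the $\mathcal{G}$-free subshift $Y^{(K,D)}$ to obtain \emph{some} retraction $r_0: Y^{(K,D)} \to Y$. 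It need not fix $y_0$ when $y_F \in \cL(Y)$, so I would post-compose with a ``correction'' that is built using the fact that $r_0$ is a sliding block code with some window $W_0$: enlarge the window and, by a compactness argument, show that if the $W_0$-neighborhood of the origin already reads a $Y$-pattern that extends to a point of $Y$ agreeing with the input, then $r_0$ must already equal the input there — this is not automatic, so instead one redefines $r$ on the set where the input looks locally like $Y$ to be the identity, and checks continuity and equivariance. Ensuring this redefinition still lands in $Y$ (finite type!) and is continuous is the delicate point.

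\textbf{Main obstacle.} I expect the genuine difficulty to be in part (ii): manufacturing a retraction that is \emph{exactly} the identity on the (clopen, shift-invariant) set of configurations whose $F$-window is $Y$-legal, while remaining continuous and $\Gamma$-equivariant and valued in $Y$. The naive ``splice in the identity'' fails continuity at the boundary between the $Y$-legal region and the rest. The resolution should again invoke the map extension property of $\hat Y$: use it to interpolate over a marker region (obtained from \Cref{lem:krieger_marker_lemma} applied in $X$) between ``be the identity where the local picture is $Y$-legal'' and ``be $r_0$ elsewhere,'' then retract $\hat Y$ back onto $Y$. The bookkeeping for why a single finite $F$ suffices for \emph{all} $\mathcal{G}$-free $X$ (not just those over one alphabet) then comes from noting that any $\mathcal{G}$-free subshift $X \supseteq Y$ maps into $Y^{(K,D)}$ by the identity on the common alphabet and that $Y^{(K,D)}$ absorbs everything — so the whole argument can be run once inside $Y^{(K,D)}$.
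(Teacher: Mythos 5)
There are genuine gaps in both parts, and they sit exactly where the paper does its real work. In part (i), you propose to verify the map extension property of $\hat Y$ by ``exhibiting a strong contraction homotopy directly.'' But the implication you need — strongly contractible SFT $\Rightarrow$ map extension property — is the nontrivial converse direction of Poirier--Salo; this paper proves only the easy direction (\Cref{prop:map_ext_contractible}) and explicitly declines to use the converse, whose proof itself runs through a marker-lemma argument in the domain. So your plan either imports an unproved implication or secretly has to redo the same work. Moreover, a single ``defect pattern that breaks each $\Gamma_0$-periodicity inside one window'' cannot serve as the certificate of $\mathcal{G}$-freeness of $\hat Y$: an extension $\pi:X\to\hat Y$ must be equivariant, and a point $x\in X$ of a $\mathcal{G}$-free $X$ may have a nontrivial stabilizer containing \emph{some} generators of \emph{some} $\Gamma_0\in\mathcal{G}$ without containing any $\Gamma_0$; the defects in $\pi(x)$ then necessarily recur along $\stab(x)$, so any SFT rule of the form ``the defect never recurs at a generator displacement'' (which is what would make the defect alone witness freeness) is violated. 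This is precisely why the paper's alphabet carries one symbol for each symmetric $Q\subseteq P$ meeting every $\Gamma_0\setminus\{0\}$, with the rule forbidding recurrence of $Q$ only at displacements $\gamma\in Q$, and why the extension $\pi$ is assembled using \Cref{lem:krieger_marker_lemma} applied to the clopen sets $V_Q$ on which exactly the periods in $Q$ are locally broken; your sketch has no mechanism playing this role.

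In part (ii), the reduction to a universal object fails at the first step: it is \emph{not} true that every $\mathcal{G}$-free $X$ with $Y\subseteq X$ is contained in $Y^{(K,D)}$ for some $D$. Such an $X$ may contain points in which no $Y$-admissible $K$-pattern occurs anywhere (their $\mathcal{G}$-freeness being witnessed by entirely different local data), and $X$ may live over a strictly larger alphabet than $Y$, so restricting one retraction $r_0:Y^{(K,D)}\to Y$ says nothing about $X$. Your fallback (splice in the identity on the locally-$Y$ region and repair via markers and $\hat Y$) gestures toward the correct mechanism but leaves unresolved exactly the point you flag as delicate — continuity at the interface and landing in $Y$. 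The paper resolves it by fixing, once and for all, a retraction $\hat r:\hat Y\to Y$ (with $\hat Y$ from part (i), depending only on $Y$ and $\mathcal{G}$) with coding window $W$, and then, for each $X$, building an explicit map $\tilde r:X\to \hat Y$ by the same marker construction, which by construction satisfies $\tilde r(x)_v=x_v$ whenever $\sigma_v(x)_{K_0}\in\cL_{K_0}(Y)$; the composition $r=\hat r\circ\tilde r$ is automatically continuous, equivariant, valued in $Y$, and satisfies the identity clause with the uniform window $F=K_0+W$, the uniformity in $X$ coming from the fact that $\hat r$ and $W$ are fixed independently of $X$. Without some substitute for this two-stage factorization through $\hat Y$, your argument for (ii) does not go through.
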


\begin{proof}
Let 
 $\mathcal{G} \Subset \Sub(\Gamma)$ be a finite set, and let $Y$  be a $\mathcal{G}$-free $\Gamma$-subshift.
 
 Choose a finite symmetric generating set $F_{\Gamma_0} \Subset \Gamma$ for each $\Gamma_0 \in \mathcal{G}$. 
 By \Cref{lem:witness_G_free} there exists a finite set $K_0 \Subset \Gamma$ with $0 \in K_0$ that witnesses that $Y$ is $\mathcal{G}$-free.
Let \[P= \bigcup_{\Gamma_0 \in \mathcal{G}}F_{\Gamma_0},\]
and let $\mathcal{Q}$ denote the collection of symmetric subsets $Q \subseteq P$ that satisfy $Q \cap (\Gamma_0 \setminus \{0\}) \ne \emptyset$ for all $\Gamma_0 \in \mathcal{G}$.

We first prove $(i)$.
We are about to define a $\Gamma$-subshift $\hat Y \subseteq \hat A^{\Gamma}$ 
and show that $\hat Y$ is $\mathcal{G}$-free, contains $Y$ and has the map extension property.
Let us first define the ``alphabet'' $\hat A$:\[
\hat A = A \uplus \mathcal{Q} \uplus \{\hat a\},
\]
where $Y \subseteq A^\Gamma$.

We regard the finite set $\mathcal{Q}$ as another alphabet disjoint for $A$, and $\hat a$ as a new ``symbol'' not contained in $A$ or in $\mathcal{Q}$.

Let $\hat Y \subseteq \hat A^{\Gamma}$ 
be the $\Gamma$-subshift of finite type defined by the following constraints: For every $y \in \tilde Y$ and any $v \in \Gamma$: 
\begin{enumerate}
    \item  There exists $u \in K_0$ such that either $\sigma_{v-u}(y)_{K_0} \in \cL_{K_0}(Y)$ or there exists $Q \in \mathcal{Q}$  and $\gamma \in Q \cup \{0\}$ such that $\sigma_{v-u+ \gamma}(y)=Q$.
    \item If there exists $Q \in \mathcal{Q}$ such that $y_v = Q$, then for every $\gamma \in Q$ we have that $y_{v+\gamma} \ne Q$.
\end{enumerate}

It is clear that $Y \subseteq \hat Y$.
If $y \in Y$ then for every $v \in \Gamma$ and every $u \in K_0$ we have that $\sigma_{v-u}(y)_{K_0} \in \cL_{K_0}(Y)$, and $y_v \not \in Q$, so $y \in \hat Y$. This shows that $Y \subseteq \hat Y$. Let us prove that $\hat Y$ is $\mathcal{G}$-free:
Given $y \in \hat Y$ and $\Gamma_0 \in \mathcal{G}$,
we need to show that 
$\Gamma_0 \not \le \stab(y)$.
Consider an arbitrary point $y \in \hat Y$.
If there exists $v \in \Gamma$ such that $\sigma_v(y)_{K_0} \in \cL_{K_0}(Y)$, 
then $\Gamma_0 \not \le \stab(y)$ for any $\Gamma_0 \in \mathcal{G}$, because $K_0$ witnesses that $Y$ is $\mathcal{G}$-free. 
Otherwise, there exists $v \in \Gamma$ and $Q \in \mathcal{Q}$ such that $\sigma_v(y)=Q$.
In this case for every  $\gamma \in Q$ we have $y_{v+\gamma} \ne y_v$, 
so $\stab(y) \cap Q = \emptyset$. Since $(\Gamma_0 \setminus \{0\}) \cap Q \ne \emptyset$ this shows that $\Gamma_0 \not \le \stab(y)$.

Let us show that $\hat Y$ has the map extension property. Suppose that $X$ is a $\Gamma$-subshift such that $X \rightsquigarrow \hat Y$. In particular, $X \subseteq B^\Gamma$ is $\mathcal{G}$-free subshift, and suppose that $\tilde \pi: \tilde X \to \hat Y$ is a map from a $\Gamma$-subshift $\tilde X \subseteq X$. Let $W \Subset \Gamma$ be a coding window for $\tilde \pi$, and let $K = K_0 + W$.
We will now describe a map $\pi:X \to \hat Y$ that extends $\tilde \pi$
As before, since $X$ is compact and $\mathcal{G}$-free there exists a finite set $K_2 \Subset \Gamma$ that witnesses that $X$ is $\mathcal{G}$-free.
 For every $Q \in \mathcal{Q}$ let $L_Q \subseteq B^{K_2}$ be the set of patterns $w \in \cL_{K_2}(X)$ such that:
\begin{itemize}
    \item For every $\gamma \in Q$ there exists $v \in K_2$ such that $v+\gamma \in K_2$ and $w_v \ne v_{v+\gamma}$.
    \item For every $\gamma \in P\setminus Q$ and every  $v \in K_2$ such that $v+\gamma \in K_2$ we have $w_v = v_{v+\gamma}$.
\end{itemize}
The sets $\{L_Q\}_{Q \in \mathcal{Q}}$ are mutually disjoint and 
$\cL_{K_2}(X)) \subseteq \biguplus_{Q \in \mathcal{Q}}L_Q$.
For every $Q \in \mathcal{Q}$ let $V_Q =\{ x \in X~:~ x_{K_2} \in L_Q\}$. 
Then $V_Q$ is a clopen set in $X$. Furthermore for every $x \in V_Q$ and every $\gamma \in Q$ we have $\sigma_\gamma(x) \ne x$.
By \Cref{lem:krieger_marker_lemma} there exists a clopen set $C_Q \subseteq V_Q$ such that 
\[
C_Q \cap \sigma_\gamma(C_Q) = \emptyset \mbox{ for every } \gamma \in Q \mbox{ and } V_Q \subseteq C_Q \cup \bigcup_{\gamma \in Q}\sigma_\gamma(C_Q).
\]
Define a map $\pi:X\to \hat Y$ as follows:
\[
\pi(x)_v = \begin{cases}
    \tilde \pi(\sigma_v(x))_0 & \mbox{ if } \exists u \in K_0 \mbox{ s.t. } \sigma_{v-u}(x)_{K_0+W} \in \cL_{W}(\tilde X)\\
    Q & \mbox{ if } \sigma_v(x) \in C_Q \mbox{ for some } Q \in \mathcal{Q} \mbox{ and we are not in the previous case},\\
    \hat{a} & \mbox{ otherwise.}
\end{cases}
\]

Notice that if there exists $u \in K_0$ such that $\sigma_{v-u}(x)_{K_0+W} \in \cL_{K_0+W}(\tilde X)$, then  it follows that $\sigma_v(x)_{W} \in \cL_W(\tilde X)$ and so $\tilde \pi(\sigma_v(x))_0$ is well defined, because $W$ is a coding window for $\tilde \pi$.
Moreover, if $\sigma_v(x)_K \in \cL_{K_0+W}(\tilde X)$ then there exists $\tilde x \in \tilde X$ such that such that $\pi(x)_{K_0}= \tilde \pi(\tilde x)_{K_0}$, and in particular $\pi(x)_0 = \tilde \pi(x)_0$.
Since $X \subseteq\bigcup_{Q \in \mathcal{Q}} V_Q$ 
and $V_Q \subseteq C_Q \cup \bigcup_{\gamma \in Q}\sigma_\gamma(C_Q)$ for every $Q \in \mathcal{Q}$,
it follows that for every $x \in X$ and $v \in \Gamma$, either there exists $u \in K_0$ such that $\sigma_{v-u}(x)_{K} \in \cL_{K_0+W}(X)$, in which case $\sigma_{v-u}(\pi(x)) \in \cL_{K_0}(Y)$ or there exists $Q \in \mathcal{Q}$ and $\gamma \in Q \cup \{0\}$ such that $\sigma_{v+\gamma}(\hat \pi(y))=Q$. We have thus shown that $\pi(x) \in \hat Y$ for every $x \in X$, completing the proof of $(i)$.

We now prove $(ii)$. Let $Y$ be an absolute retract for the class of $\mathcal{G}$-free subshifts, and let $\hat Y$ be the subshift defined in the first part of the proof. Since $Y \subseteq \hat Y$ and $\hat Y$ is $\mathcal{G}$-free, there exists a retraction map $\hat r:\hat Y \to Y$. Let $W \Subset \Gamma$ be a coding window for $\hat r$. Let $X$ be a $\mathcal{G}$-free subshift that contains $Y$. 
Again, choose a finite set $K_2 \Subset \Gamma$ that witnesses that $X$ is $\mathcal{G}$-free.
 For every $Q \in \mathcal{Q}$ let $L_Q$, $V_Q \subset X$ and $C_Q$ be an in the previous part of the proof.
 Define a map $\tilde r:X\to \hat Y$ as follows:
\[\tilde r(x)_v = \begin{cases}
   \sigma_v(x)_0 & \mbox{ if } \exists u \in K_0 \mbox{ s.t. } \sigma_{v-u}(x)_{K_0} \in \cL_{K_0}(Y)\\
    Q & \mbox{ if } \sigma_v(x) \in C_Q \mbox{ for some } Q \in \mathcal{Q} \mbox{ and we are not in the previous case},\\
    \hat{a} & \mbox{ otherwise.}
\end{cases}
\]

As in the previous part, it follows that indeed $\hat r(x) \in \hat Y$ for every $x \in X$, so $\tilde r:X\to \hat Y$ is well defined.
Clearly $\tilde r(x)_0 = x_0$ whenever $x_{K_0} \in \cL_{K_0}(Y)$. Let $K = K_0 +W$.
Let $r = \hat r \circ \tilde r$. Then $r: X \to Y$ is a retraction map, and $r(y)_0= y_0$ whenever $x \in X$ and $x_{K} \in \cL_K(Y)$. This completes the proof of $(ii)$.

\end{proof}



\begin{proposition}\label{prop:map_ext_iff_RAR}
    A $\Gamma$-subshift $Y$ has the map extension property if and only if  there exists a finite set $\mathcal{G} \Subset \Sub(\Gamma)$ such that $Y$ is an absolute retract for the class of $\mathcal{G}$-free subshifts.
\end{proposition}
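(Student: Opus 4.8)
The plan is to prove both implications by combining the structural lemmas already established. For the ``if'' direction, suppose $Y$ is an absolute retract for the class of $\mathcal{G}$-free subshifts for some finite $\mathcal{G} \Subset \Sub(\Gamma)$. I would take an arbitrary $\Gamma$-subshift $X$ with $X \rightsquigarrow Y$ and a map $\tilde\pi \colon \tilde X \to Y$ from a subshift $\tilde X \subseteq X$; the goal is to extend $\tilde\pi$ to a map $X \to Y$. The key point is that $X \rightsquigarrow Y$ forces $X$ to be $\mathcal{G}$-free: if some $x \in X$ had $\Gamma_0 \le \stab(x)$ for $\Gamma_0 \in \mathcal{G}$, then the corresponding $y \in Y$ with $\stab(x) \le \stab(y)$ would contradict $\mathcal{G}$-freeness of $Y$. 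Now form the disjoint-union subshift $X' = X \uplus Y$ glued along $\tilde X$ and $\tilde\pi$ — more precisely, the subshift on alphabet $A_X \uplus A_Y$ consisting of all points of $X$, all points of $Y$, and nothing else; wait, we need the map $\tilde\pi$ recorded. The cleaner route: consider the ``mapping cylinder'' type construction, i.e. the subshift $X'' \subseteq (A_X \uplus A_Y)^\Gamma \times \{0,1\}^\Gamma$ or, even more simply, use the graph of $\tilde\pi$: let $X' = (X \times \{pt\}) \cup \{(x, \tilde\pi(x)) : x \in \tilde X\}$ viewed appropriately — actually the simplest is to let $X'$ be $X$ with $Y$ glued on via $\tilde\pi$, namely the quotient is not a subshift, so instead work inside $A_X{}^\Gamma \times A_Y{}^\Gamma$: set $X' = \{(x, \tilde\pi(x)) : x \in \tilde X\} \cup \{(x, y^*) : x \in X\}$ where we pick a fixed point $y^* \in Y$ (which exists since $Y$ is an absolute retract, hence has a fixed point) — but this is not closed unless... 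Let me instead take $X' = \overline{\{(x,\tilde\pi(x)) : x \in \tilde X\}} \cup (X \times \{y^*\})$; this is a $\mathcal{G}$-free subshift (both pieces are $\mathcal{G}$-free, being factors/subshifts of $\mathcal{G}$-free shifts), it contains a copy of $Y$ via $y \mapsto$ (its preimage structure)... The clean formulation is: embed $Y$ into a $\mathcal{G}$-free subshift $X'$ that also contains a copy of $X$ in which $\tilde X$ sits over $Y \subseteq X'$ compatibly with $\tilde\pi$; then the absolute retract property gives a retraction $r \colon X' \to Y$, and restricting $r$ to the copy of $X$ gives the desired extension $\pi$ of $\tilde\pi$. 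The construction of $X'$ is the disjoint union of $X$ and $Y$ with the subsets $\tilde X$ and $\tilde\pi(\tilde X)$ identified via the graph of $\tilde\pi$ — this identification is legitimate because $\tilde\pi$ is a sliding block code, so the ``identified'' object is again a subshift (one records on each coordinate either an $X$-symbol or, in the $Y$-region, a $Y$-symbol, with finitely many type constraints along the interface — but since there is no interface, the two regions being genuinely disjoint $\Gamma$-orbits, no constraint is needed beyond ``each orbit lies entirely in the $X$-region or entirely in the $Y$-region'').

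For the ``only if'' direction, suppose $Y$ has the map extension property. By Proposition \ref{prop:map_ext_contractible}, $Y$ is strongly contractible, so Proposition \ref{prop:map_extension_implies_finitely_many_forbidden_periods} yields a finite set $F \Subset \Gamma \setminus \{0\}$ such that any subgroup $\Gamma_0 < \Gamma$ with $\Gamma_0 \cap F = \emptyset$ is the stabilizer of some point of $Y$. I would then let $\mathcal{G}$ be the finite set of subgroups $\Gamma_0 \le \Gamma$ that are generated by subsets of $F$ and that contain no point of $F$... more carefully: let $\mathcal{G}$ be the (finite) collection of all cyclic subgroups $\langle \gamma \rangle$ for $\gamma \in F$, or rather all subgroups $\Gamma_0$ with $\Gamma_0 \cap F \ne \emptyset$ generated by elements of $F$ — the point is to choose $\mathcal{G}$ so that ``$Y$ is $\mathcal{G}$-free'' holds and so that ``$X$ is $\mathcal{G}$-free'' implies $X \rightsquigarrow Y$. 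Concretely: take $\mathcal{G} = \{\langle\gamma\rangle : \gamma \in F\}$. Then $Y$ is $\mathcal{G}$-free: if some $y \in Y$ had $\langle\gamma\rangle \le \stab(y)$ with $\gamma \in F$, then $\gamma \in \stab(y)$; but we must rule this out — hmm, Proposition \ref{prop:map_extension_implies_finitely_many_forbidden_periods} does not directly say $Y$ omits period $\gamma$ for $\gamma \in F$. I would instead choose $F$ at the outset so that $Y$ itself has no point with any period in $F$ — this requires strengthening: first, since $Y$ has dense periodic points is not what's needed; rather, I claim we may take $F$ to additionally have the property that no $y \in Y$ is fixed by any element of $F$, by simply enlarging — no. The correct move: a $\Gamma$-subshift with the map extension property, being strongly contractible, has, for \emph{each} subgroup $\Gamma_0$, either a point with that stabilizer-or-larger or $\Gamma_0$ ``obstructed''; the obstructed ones are exactly those meeting a finite set. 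So set $\mathcal{G}$ = all subgroups generated by $\le$ (rank bound) elements of $F$ that intersect $F$ nontrivially — finiteness holds. Then ``$X \rightsquigarrow Y$'' is equivalent to ``$X$ is $\mathcal{G}$-free'': given $X$ $\mathcal{G}$-free and $x \in X$, $\stab(x) \cap F = \emptyset$ (since otherwise a cyclic subgroup in $\mathcal{G}$ would be in $\stab(x)$), so by the choice of $F$ there is $y \in Y$ with $\stab(x) \le \stab(y)$. Conversely $X \rightsquigarrow Y$ and $Y$ $\mathcal{G}$-free force $X$ $\mathcal{G}$-free. Now for any $\mathcal{G}$-free $X \supseteq Y$ we have $X \rightsquigarrow Y$ and the inclusion map on the subshift $\tilde X = Y \subseteq X$ extends, by the map extension property, to a map $X \to Y$ restricting to the identity on $Y$ — a retraction. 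Hence $Y$ is an absolute retract for the class of $\mathcal{G}$-free subshifts.

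The main obstacle I anticipate is the bookkeeping in the ``if'' direction: making precise that ``$X$ with $Y$ glued on along $\tilde\pi$'' is genuinely a $\Gamma$-subshift that is $\mathcal{G}$-free and into which $Y$ embeds as a subshift with the required compatibility, and verifying that a retraction of this glued object restricts to an extension of $\tilde\pi$. One should be careful that the two glued pieces really do form a closed shift-invariant set and that $\mathcal{G}$-freeness is inherited (it is, since $\mathcal{G}$-freeness is a closed condition on orbits and each orbit lands in one of the two $\mathcal{G}$-free pieces). A secondary subtlety is ensuring, in the ``only if'' direction, that the finite set $F$ from Proposition \ref{prop:map_extension_implies_finitely_many_forbidden_periods} can be chosen (or enlarged) so that $Y$ is itself $\mathcal{G}$-free for the resulting $\mathcal{G}$; this should follow because the finitely many ``bad'' stabilizer-classes for $Y$ are each pinned down by finitely many group elements, which we throw into $F$, and $\mathcal{G}$-freeness for a fixed finite $\mathcal{G}$ of finitely generated subgroups is, by Lemma \ref{lem:witness_G_free}, a finitary (hence robust) condition.
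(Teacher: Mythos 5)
Your ``only if'' direction is essentially on the right track but the choice of $\mathcal{G}$ is not yet correct: \Cref{prop:map_extension_implies_finitely_many_forbidden_periods} only says that every subgroup \emph{avoiding} $F$ is dominated by some stabilizer of $Y$; it does not say that subgroups meeting $F$ are \emph{not} dominated, so taking $\mathcal{G}$ to be all (cyclic or otherwise) subgroups generated by elements of $F$ that meet $F$ may fail to make $Y$ itself $\mathcal{G}$-free, and ``enlarging $F$'' cannot repair this, since $F$ controls which subgroups are realized, not which are forbidden. The paper's fix is to \emph{filter} rather than enlarge: $\mathcal{G}$ consists of those subgroups generated by subsets of $F$ that are \emph{not} contained in $\stab(y)$ for any $y\in Y$. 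With that definition $Y$ is $\mathcal{G}$-free by construction, any $\mathcal{G}$-free $X$ satisfies $X\rightsquigarrow Y$, and a $\mathcal{G}$-free $X\supseteq Y$ then yields a retraction by extending the identity on $Y$ via the map extension property --- exactly your intended conclusion.

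The serious gap is in the ``if'' direction. The gluing you attempt (identifying $\tilde X$ with $\tilde\pi(\tilde X)$ inside a disjoint union of $X$ and $Y$) does not exist in the category of subshifts: the quotient identifying points of $X$ with points of $Y$ along the graph of $\tilde\pi$ is not a subshift, and your fallback --- the plain disjoint union with ``each orbit entirely in the $X$-region or the $Y$-region'' --- drops the identification entirely. On that disjoint union a retraction onto the $Y$-copy is only required to be the identity on the $Y$-copy; its restriction to the $X$-copy is an arbitrary map $X\to Y$ and has no reason to agree with $\tilde\pi$ on $\tilde X$ (e.g.\ take $X=Y$ a full shift, $\tilde X$ two fixed points and $\tilde\pi$ the swap). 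So the ``main obstacle'' you flag is not bookkeeping; it is the crux, and it is precisely what part~(i) of \Cref{lem:map_ext_G_free} is for. The paper's route is: since $X\rightsquigarrow Y$ and $Y$ is $\mathcal{G}$-free, $X$ is $\mathcal{G}$-free; embed $Y$ into a $\mathcal{G}$-free subshift $\hat Y$ that \emph{has the map extension property} (\Cref{lem:map_ext_G_free}(i), whose proof is an explicit construction using auxiliary symbols and the marker lemma, not a pushout); use $\hat Y$'s map extension property to extend $\tilde\pi:\tilde X\to Y\subseteq\hat Y$ to $\hat\pi:X\to\hat Y$; and finally compose with a retraction $r:\hat Y\to Y$ furnished by the absolute-retract hypothesis, so $\pi=r\circ\hat\pi$ extends $\tilde\pi$ because $r$ is the identity on $Y$. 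Without \Cref{lem:map_ext_G_free}(i) (or an equivalent enveloping object), your argument cannot be completed as written.
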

\begin{proof}
    Suppose that $Y$ has the map extension property. By \Cref{prop:map_extension_implies_finitely_many_forbidden_periods}, 
    there exists a finite set $F \Subset \Gamma \setminus \{0\}$ 
    such that for every $\Gamma_0 \le \Gamma$ 
    with $\Gamma_0 \cap F = \emptyset$ there exists $y \in Y$ such that $\Gamma_0 \le \stab(y)$.
    Let $\mathcal{G} \Subset \Sub(\Gamma)$ denote the collection of subgroups $\Gamma_0 \le  \Gamma$ that are generated by a subset of $F$, and so that $\Gamma_0 \not \le \stab(y)$ for every $y \in Y$.
    It follows that $Y$ is  $\mathcal{G}$-free, and that $X \rightsquigarrow Y$ for any $\mathcal{G}$-free $\Gamma$-subshift $X$.
     Let $X$ be a $\mathcal{G}$-free subshift that contains $Y$.
     Since $Y$ has the map extension property and $Y \subseteq X$, it follows that then the identity map on $Y$ extends to a retraction from $X$ to $Y$.
     This shows that $Y$ is an absolute retract for the class of $\mathcal{G}$-free subshifts. 
     
    For the converse direction: Suppose that there exists a finite set $\mathcal{G} \Subset \Sub(\Gamma)$ such that $Y$ is an absolute retract for the   class of $\mathcal{G}$-free subshifts. 
    Let $X$ be a $\Gamma$-subshift such that $X \rightsquigarrow Y$, and let $\pi:\tilde X \to Y$ be a map from a closed, $\Gamma$-invariant subset $\tilde X \subseteq X$. 
    Our goal is to show that $\tilde \pi$ extends to a map $\pi:X \to Y$. By \Cref{lem:map_ext_G_free}, 
    since $X$ is $\mathcal{G}$-free, exists a $\mathcal{G}$-free subshift $\hat Y$ that contains $Y$ and has the map extension property. Since $Y \subseteq \hat Y$ it follows that 
    $\tilde \pi$ extends to a map $\hat \pi:X \to \hat Y$. 
    Since $Y$ is an absolute retract for the class of $\mathcal{G}$-free subshifts, there exists a retraction  map $r:\hat Y \to Y$.
     Setting $\pi= r \circ \hat \pi$, we see that $\pi:X \to Y$ extends $\tilde \pi$.
 \end{proof}

\begin{proposition}\label{prop:locally_correctable_entropy_subgroups}
    Let $\Gamma$ be a countable abelian group and let $Y$ be a $\Gamma$-subshift with the map extension property. Then 
    \[ \lim_{\Gamma_0 \to \{0\}}h(\overline{Y}_{[\Gamma_0]}) = h(Y).\]
    If furthermore $\Ker(Y)=\{0\}$ then
    \[ \lim_{\Gamma_0 \to \{0\}}h(\overline{Y}_{\Gamma_0}) = h(Y).\]
\end{proposition}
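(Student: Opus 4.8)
The plan is to prove the two displayed equalities by showing in each case that $\liminf_{\Gamma_0\to\{0\}}$ of the left-hand side is $\ge h(Y)$; the reverse inequality $\limsup\le h(Y)$ is free, being \Cref{prop:subgroup_entropies_upperbound} for $\overline{Y}_{[\Gamma_0]}$ and a consequence of $Y_{\Gamma_0}\subseteq Y_{[\Gamma_0]}$ for $\overline{Y}_{\Gamma_0}$, with no hypothesis on $Y$. By \Cref{lem:contractible_Y_Delta} we have $Y=Y^{[\Delta]}$ for a finitely generated $\Delta\le\Gamma$, and since $\Gamma_0\to\{0\}$ in $\Sub(\Gamma)$ forces $\Gamma_0\cap\Delta\to\{0\}$ in $\Sub(\Delta)$, a routine reduction lets me assume $\Gamma$ is finitely generated; by the structure theorem, absorbing the fixed finite torsion subgroup into multiplicative constants, I will work with $\Gamma=\mathbb{Z}^d$.

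For the $\overline{Y}_{[\Gamma_0]}$ lower bound, fix $\epsilon>0$ and take $W_\epsilon\Subset\Gamma$, $\delta_\epsilon>0$ from \Cref{lem:top_entropy_inv_set}. The heart of the matter is an \emph{extension lemma}: there is $R=R(\epsilon)$ so that whenever $\Gamma_0\le\mathbb{Z}^d$ has shortest nonzero vector of length $\ge R$ and $D$ is its Voronoi fundamental domain, every pattern in $\cL_D(Y)$ extends to a $\Gamma_0$-periodic point of $Y$ (for infinite-index $\Gamma_0$ one replaces $D$ by a large $\Gamma_0$-separated $(W_\epsilon,\delta_\epsilon)$-invariant subset of a fundamental domain and argues along a F\o lner exhaustion of $\Gamma/\Gamma_0$). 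Granting the lemma, the count is forced: the Voronoi cell is convex with inradius $\ge R/2$, so its $W_\epsilon$-boundary has relative size $O(\operatorname{diam}(W_\epsilon)/R)<\delta_\epsilon$ for $R$ large, hence $D$ is $(W_\epsilon,\delta_\epsilon)$-invariant and $\log|\cL_D(Y)|\ge(1-\epsilon)|D|h(Y)$ by \Cref{lem:top_entropy_inv_set}; each such pattern extends to a distinct point of $\overline{Y}_{[\Gamma_0]}$ (since $D$ is $\Gamma_0$-separated and the pattern is recovered from its restriction to $D$), so $h(\overline{Y}_{[\Gamma_0]})\ge(1-\epsilon)h(Y)$, and $\epsilon$ was arbitrary.

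To prove the extension lemma I refine the argument of \Cref{prop:map_extension_implies_finitely_many_forbidden_periods}. Using that $\Gamma_0$ has no short vectors, I obtain — exactly as there — a Korkine--Zolotarev reduced basis $v_1,\dots,v_n$ of $\Gamma_0$ and surjective homomorphisms $\phi_1,\dots,\phi_n\colon\mathbb{Z}^d\to\mathbb{Z}$ with $\phi_i(v_j)=0$ for $i<j$ and $\phi_i(v_i)>4\max|\phi_i(W)|$, where $W$ is a coding window of a contraction homotopy of $Y$ (and $R$ is taken large depending only on $\epsilon$ and $W$). A close inspection of the proof of \Cref{lem:contractible_periodic_points1} shows that, applied with $(\phi,v)$, it in fact produces $y$ agreeing with the starting point on a half-space $\{v':\phi(v')\le c\}$; applying it successively in the order $(\phi_n,v_n),(\phi_{n-1},v_{n-1}),\dots,(\phi_1,v_1)$ — the conditions $\phi_i(v_j)=0$ for $i<j$ guaranteeing that each step keeps the previously produced periods — yields a $\Gamma_0$-periodic point agreeing with the starting point on $\bigcap_{i}\{v':\phi_i(v')\le c_i\}$. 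Since $\phi_i(D)$ is a bounded set of integers, one translate of $D$ (again a fundamental domain of $\Gamma_0$) lies inside this intersection, and starting from a point of $Y$ realizing the given pattern on that translate gives the lemma.

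For $\overline{Y}_{\Gamma_0}$ the infinite-index case coincides with $\overline{Y}_{[\Gamma_0]}$. For finite-index $\Gamma_0$, $|Y_{\Gamma_0}|\ge|Y_{[\Gamma_0]}|-\sum_{\Gamma_1\text{ covers }\Gamma_0}|Y_{[\Gamma_1]}|$; writing $p=[\Gamma_1:\Gamma_0]$ (prime) and noting $pw\in\Gamma_0\setminus\{0\}$ for $0\ne w\in\Gamma_1$, the shortest vector of $\Gamma_1$ has length $\ge R/p$, so for bounded $p$ the subgroup $\Gamma_1$ still avoids a large ball and \Cref{prop:subgroup_entropies_upperbound} bounds $|Y_{[\Gamma_1]}|$ by $e^{([\Gamma:\Gamma_0]/2)(h(Y)+\epsilon)}$, while for large $p$ even the trivial bound $|Y_{[\Gamma_1]}|\le|A|^{[\Gamma:\Gamma_0]/p}$ is $e^{o([\Gamma:\Gamma_0])}$; as there are only subexponentially many covers and $|Y_{[\Gamma_0]}|\ge e^{(1-\epsilon)[\Gamma:\Gamma_0]h(Y)}$ by the previous part, the sum is $o(|Y_{[\Gamma_0]}|)$ once $\epsilon$ is small, so $h(\overline{Y}_{\Gamma_0})=h(\overline{Y}_{[\Gamma_0]})-o(1)\to h(Y)$. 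The hypothesis $\Ker(Y)=\{0\}$ enters to guarantee $Y_{\Gamma_0}\ne\emptyset$ (otherwise $h(\overline{Y}_{\Gamma_0})=-\infty$: if $0\ne k\in\Ker(Y)$ then $Y_{\Gamma_0}=\emptyset$ whenever $k\notin\Gamma_0$), and it is essential in the degenerate case $h(Y)=0$, where the counting is vacuous and nonemptiness of $Y_{\Gamma_0}$ must be argued directly. \emph{Main obstacle:} the extension lemma — forcing the iterated homotopy construction to preserve an entire fundamental-domain pattern while that domain (the Voronoi cell, with large inradius) still carries nearly $e^{|D|h(Y)}$ admissible patterns, so that ``thin'' sublattices $\Gamma_0$ do not let the homotopy's transition regions swamp $D$; a secondary difficulty is controlling covers $\Gamma_1\supsetneq\Gamma_0$ with short vectors in the $\overline{Y}_{\Gamma_0}$ part.
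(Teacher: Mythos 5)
Your overall frame (upper bound from \Cref{prop:subgroup_entropies_upperbound}, reduction to finitely generated $\Gamma$ via \Cref{lem:contractible_Y_Delta}, then a lower bound by producing many $\Gamma_0$-periodic points out of admissible patterns on a fundamental domain) matches the paper, but the step you lean on is wrong. Your ``extension lemma'' --- every $p\in\cL_D(Y)$ extends to a $\Gamma_0$-periodic point of $Y$ --- is false as stated: since $D$ is a fundamental domain, a $\Gamma_0$-periodic point with restriction $p$ on $D$ is forced to be the $\Gamma_0$-periodic tiling of $p$, and that tiling need not be admissible. Concretely, for $Y$ the $3$-colorings of $\mathbb{Z}$ (a mixing SFT, hence with the map extension property by \Cref{prop:mixing_Z_SFT_abs_retract}), $\Gamma_0=3\mathbb{Z}$, $D=\{-1,0,1\}$ and $p=(1,2,1)$, no $\Gamma_0$-periodic point restricts to $p$ on $D$; the same phenomenon occurs for the colorings of \Cref{prop:k_coloring_RAR} in any dimension. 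Some loss along a boundary layer of $D$ is unavoidable, and this is exactly what the paper's proof accepts: it tiles $A^\Gamma$ periodically by $p$ inside an auxiliary $\mathcal{G}$-free subshift $\hat Y\supseteq Y$ and applies the uniform retraction of part $(ii)$ of \Cref{lem:map_ext_G_free} (via \Cref{prop:map_ext_iff_RAR}), which is the identity at every coordinate whose $K_1$-neighborhood sees a $Y$-admissible pattern; the retracted point is $\Gamma_0$-periodic, lies in $Y$, and agrees with $p$ only on $D\setminus\partial_{K_1}D$, which still yields $|\cL_D(Y_{[\Gamma_0]})|\ge|\cL_{D\setminus\partial_{K_1}D}(Y)|$ and the entropy bound.

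The proposed proof of your lemma also breaks down internally. A ``close inspection'' of \Cref{lem:contractible_periodic_points1} cannot give a $v$-periodic output agreeing with an arbitrary starting point $y^{(0)}$ on a half-space $\{\phi\le c\}$: if $y$ is $v$-periodic and $y_w=y^{(0)}_w$ on that half-space, then $y^{(0)}$ would itself be $v$-periodic on $\{\phi\le c-\phi(v)\}$, which fails for generic $y^{(0)}$. What the homotopy construction actually preserves is (at best) a slab of $\phi$-width about one period $\phi(v)$, minus transition zones of width comparable to $\max|\phi(W)|$; after iterating over a Korkine--Zolotarev basis the preserved region is a parallelepiped-type set of the size of one fundamental domain minus boundary layers, not an intersection of half-spaces. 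In particular your final step --- ``one translate of $D$ lies inside this intersection'' --- cannot be carried out: the preserved region has strictly fewer lattice points than $[\Gamma:\Gamma_0]=|D|$, so no translate of the Voronoi domain fits inside it. One could likely salvage a homotopy-based argument by counting patterns on a shrunken parallelepiped fundamental domain adapted to the $\phi_i$ (and then checking its F\o lner-type invariance so that \Cref{lem:top_entropy_inv_set} applies), but that is a different argument from the one you wrote, and it still produces a boundary loss rather than the loss-free extension you claim. The treatment of $\overline{Y}_{\Gamma_0}$ by inclusion--exclusion over covers of $\Gamma_0$ is plausible in outline but inherits the gap above and leaves the $h(Y)=0$ and nonemptiness issues unargued; the paper instead handles this case by the same retraction scheme.
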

\begin{proof}
Suppose that $Y \subseteq A^{\Gamma}$ is a $\Gamma$-subshift with the map extension property.
The inequality 
\[\limsup_{\Gamma_0 \to \{0\}}h(\overline{Y}_{[\Gamma_0]}) \le h(Y)\]
hold by \Cref{prop:subgroup_entropies_upperbound} for any subshift, regardless of the map extension property.
So in order to conclude the first part of the statement, we only need to prove the inequality
\[\liminf_{\Gamma_0 \to \{0\}}h(\overline{Y}_{[\Gamma_0]}) \ge h(Y).\]

By \Cref{lem:contractible_Y_Delta}, it suffices to prove this under the additional assumption that $\Gamma$ is a finitely generated abelian group. Furthermore, using \Cref{prop:subgroup_entropies_upperbound}, it suffices to prove that for any $\epsilon >0$ there exists a finite set $K \Subset \Gamma$ such that for any $K$-separated subgroup $\Gamma_0 < \Gamma$ of finite index we have $h(\overline{Y}_{[\Gamma_0]}) \ge  h(Y) - \epsilon$.
By \Cref{prop:map_ext_iff_RAR} there exists a finite set $\mathcal{G} \Subset \Sub(\Gamma)$ such that $Y$ is an absolute retract for the class of $\mathcal{G}$-free subshifts. 
In particular, $Y$ is $\mathcal{G}$-free, and so by \Cref{lem:witness_G_free} there exists a finite set $K_0 \Subset \Gamma$ that witnesses that $Y$ is $\mathcal{G}$-free. 
By part $(ii)$ of \Cref{lem:map_ext_G_free},  there exists another finite set $K_1 \Subset \Gamma$ such that for any $\mathcal{G}$-free subshift 
$\hat Y$ that contains $Y$ there exists a retraction map $r:\hat Y \to Y$ so that $r(y)_0=r_0$
for any $y \in \hat Y$ with $y_{K_1} \in \cL_{K_1}(Y)$. 

For a finitely generated abelian group $\Gamma$, it is not difficult to show For any $\delta >0$ and any finite set $F \subset \Gamma$ there exists a finite set $K \Subset \Gamma$ so that any $K$-separated subgroup finite-index subgroup $\Gamma_0$ admits a fundamental domain $D$ which is $(F,\delta)$-invariant. 
This can be deduced from \Cref{lem:Voronoi_k_epsilon_inv} in \Cref{sec:partial_tilings}, by taking $D$ to be the  disjointified Voronoi cell of $0$ with respect to $\Gamma_0$ (see \Cref{def:disj_Voronoi}).
By \Cref{lem:top_entropy_inv_set}, for any $\epsilon >0$, by choosing $F$ and $\delta$ appropriately as functions of $K_1$ and $\delta$, for  any $(F,\delta)$-invariant we have 
\[
 \frac{1}{|D|} \log | \cL_{D \setminus \partial_{K_1} D}(Y)
| > h(Y) -\epsilon.
\]

So we can complete the proof by showing that for any finite index subgroup $\Gamma_0 < \Gamma$ that admits a fundamental domain $D \Subset \Gamma$ 
with $K_0 \subseteq D$ and
$D \setminus \partial_{K_1} D \ne \emptyset$ we have
\begin{equation}\label{eq:h_Y_Gamma_lower_ineq}
h(\overline{Y}_{[\Gamma_0]}) \ge \frac{1}{|D|} \log | \cL_{D \setminus \partial_{K_1} D}(Y)
|.    
\end{equation}

We assume that $Y \subseteq A^\Gamma$.
Given a finite index subgroup $\Gamma_0 < \Gamma$ that admits a fundamental domain $D \Subset \Gamma$ such that $K_0 \subseteq D$ and
$D \setminus \partial_{K_1} D \ne \emptyset$
 Consider the subshift

\[
\hat Y = \left\{ y \in A^\Gamma~:~ \exists v \in \Gamma  \mbox{ s.t. for every } w \in \Gamma_0 ~\sigma_{v+w}(y)_{D} \in \cL_D(Y)  \right\}.
\]

Because $K_0 \subseteq D$ and $K_0$ witnesses that $Y$ is $\mathcal{G}$-free, it follows that $\hat Y$ is also $\mathcal{G}$-free. Clearly, $Y \subseteq \hat Y$. Thus there exists a  retraction map $r:\hat Y \to Y$ so that $r(y)_0=r_0$
for any $y \in \hat Y$ with $y_{K_1} \in \cL_{K_1}(Y)$. For any $p \in \cL_D(Y)$ we can define a point $y \in \hat Y_{[\Gamma_0]}$ such that $\sigma_w(y)_D = p$ for every $w \in \Gamma_0$. It follows that $r(y)_{D \setminus \partial_{K_1} D} = y_{D \setminus \partial_{K_1} D}$. Since $r(y) \in Y_{[\Gamma_0]}$, it follows that 
\[
|\cL_{D}(Y_{[\Gamma_0]})| \ge |\cL_{D \setminus \partial_{K_1} D}(Y)|.
\]
Since $h(Y_{[\Gamma_0]}) = \frac{1}{|D|} \log |\cL_{D}(Y_{[\Gamma_0]})|  $, we have proven \eqref{eq:h_Y_Gamma_lower_ineq}.

The proof that \[ \lim_{\Gamma_0 \to \{0\}}h(\overline{Y}_{\Gamma_0}) = h(Y)\]
under the assumption that $\Ker(Y) = \{0\}$ is similar, so we omit the details.
\end{proof}

\begin{definition}\label{def:K_disjoint}
Let $F_1,F_2,K \subsetneq \Gamma$ be  subsets of $\Gamma$ and  $\emptyset \ne K \Subset \Gamma$. We say that $F_1,F_2$ are \emph{$K$-disjoint} if $(K+F_1) \cap (K+F_2) = \emptyset$.
\end{definition}
\begin{definition}\label{def:SI}
A subshift $X \Subset A^\Gamma$ is called \emph{strongly irreducible} if there exists $K \Subset \Gamma$ such that for any pair of $K$-disjoint sets $F_1,F_2 \subseteq \Gamma$ and any $x^{(1)},x^{(2)} \in X$ there exists $x \in X$ such that $x_{F_1} = x^{(1)}_{F_1}$ and $x_{F_2} = x^{(2)}_{F_2}$. We say that $K \Subset \Gamma$ as above is an \emph{irreducibility window} for $X$. 
\end{definition}

\begin{lemma}\label{lem:G_free_embeds_in_SI_SFT}
    Let $\mathcal{G} \Subset \Sub(\Gamma)$ be a finite set, and let $Y \subseteq A^\Gamma$ 
    be a $\mathcal{G}$-free subshift. Then there exists a finite set $\hat A$ with $A \subset \hat A$ and a $\mathcal{G}$-free subshift $\hat Y \subseteq \hat A^\Gamma$ such that $Y \subseteq \hat Y$ and so that $\hat Y$ is a strongly irreducible shift of finite type.
\end{lemma}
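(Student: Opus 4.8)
# Proof Proposal for Lemma \ref{lem:G_free_embeds_in_SI_SFT}

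\textbf{Overall approach.} The plan is to enlarge the alphabet and add a "background" layer that forces every sufficiently large window to contain a $Y$-admissible pattern, while simultaneously encoding markers (as in the proof of \Cref{lem:map_ext_G_free}) that witness $\mathcal{G}$-freeness everywhere. The natural candidate is a variant of $Y^{(K,D)}$ from \Cref{def:Y_K_D}, enriched so as to be strongly irreducible. First I would invoke \Cref{lem:witness_G_free} to fix a finite set $K_0 \Subset \Gamma$ with $0 \in K_0$ witnessing that $Y$ is $\mathcal{G}$-free, with associated finite symmetric generating sets $F_{\Gamma_0} \Subset \Gamma$ for each $\Gamma_0 \in \mathcal{G}$; set $P = \bigcup_{\Gamma_0 \in \mathcal{G}} F_{\Gamma_0}$ and let $\mathcal{Q}$ be the collection of symmetric subsets $Q \subseteq P$ with $Q \cap (\Gamma_0 \setminus \{0\}) \ne \emptyset$ for all $\Gamma_0 \in \mathcal{G}$, exactly as in \Cref{lem:map_ext_G_free}.

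\textbf{Construction.} Take $\hat A = A \uplus \mathcal{Q} \uplus \{\hat a\}$. Choose a large finite set $D \Subset \Gamma$ (to be specified) and define $\hat Y \subseteq \hat A^\Gamma$ by the local rules: (1) for every $v \in \Gamma$ there exists $u \in D$ such that either $\sigma_{v+u}(y)_{K_0} \in \cL_{K_0}(Y)$, or there exist $Q \in \mathcal{Q}$ and $\gamma \in Q \cup \{0\}$ with $\sigma_{v+u+\gamma}(y) = Q$; and (2) whenever $y_v = Q$ for some $Q \in \mathcal{Q}$ then $y_{v+\gamma} \ne Q$ for all $\gamma \in Q$. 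These are finitely many local constraints, so $\hat Y$ is an SFT. The verification that $Y \subseteq \hat Y$ and that $\hat Y$ is $\mathcal{G}$-free is identical to the corresponding argument in the proof of part $(i)$ of \Cref{lem:map_ext_G_free}: any point of $\hat Y$ either contains a $Y$-admissible $K_0$-pattern somewhere (so a generator of each $\Gamma_0 \in \mathcal{G}$ is broken, by the witness property of $K_0$) or contains a symbol $Q \in \mathcal{Q}$, whose local rule (2) again breaks some generator of each $\Gamma_0$.

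\textbf{Strong irreducibility.} This is the main point, and where the choice of $D$ matters. Given two $K$-disjoint sets $F_1, F_2 \subseteq \Gamma$ (for a window $K$ to be determined) and points $y^{(1)}, y^{(2)} \in \hat Y$, I want to build $y \in \hat Y$ agreeing with $y^{(i)}$ on $F_i$. On $F_1$ and $F_2$ copy $y^{(1)}$ and $y^{(2)}$; on the complement, I need to "fill in" with $\hat A$-symbols so that both local rules hold everywhere. The trick: choose $D$ to be large enough that a single fixed $Y$-admissible $K_0$-pattern $p_0 \in \cL_{K_0}(Y)$, placed on a $K_0$-separated net of translates that is syndetic with gaps controlled by $D$, suffices to satisfy rule (1) at every $v \in \Gamma$. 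Concretely, fix any $y_0 \in Y$; in the "buffer region" (points at distance between roughly $K_0$ and $K_0 + D$ from $F_1 \cup F_2$, and in all of the complement) set $y$ to agree with a translate of $y_0$ on a suitable grid and put $\hat a$ elsewhere, ensuring that every window of diameter $\sim D$ sees a copy of the $K_0$-pattern of $y_0$. Taking the irreducibility window $K := K_0 + D + P$ (symmetrized) guarantees: the $F_i$-regions are far enough apart that the filling has room to place the needed admissible pattern around each point not in $F_1 \cup F_2$, and rule (1) is satisfied at points near $\partial F_i$ because $y^{(i)} \in \hat Y$ already supplies the needed pattern or marker within distance $D$ on the $F_i$ side. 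Rule (2) is automatically preserved since we only introduce the symbol $\hat a$ and genuine translates of $y_0 \in Y$ (which carry no $\mathcal{Q}$-symbols) in the filled region, and the $\mathcal{Q}$-symbols inside $F_1, F_2$ are untouched and were already locally consistent. The hard part is bookkeeping the distance estimates so that the filling never creates a violation of rule (1) in the transition zone between $F_i$ and the complement; this is handled by making $K$ (hence the separation between $F_1$ and $F_2$, and between each $F_i$ and the fill pattern) a large enough multiple of $K_0 + D$ so that every point has a full "$D$-certificate" lying entirely on one side.

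I expect the strong-irreducibility verification — specifically, arranging $D$ and the filling grid so that rule (1) holds uniformly including near $\partial_K F_i$ — to be the only genuinely delicate step; everything else is a direct reprise of techniques already deployed in \Cref{lem:map_ext_G_free} and \Cref{lem:Y_K_D_G_free}.
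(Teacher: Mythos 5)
Your construction of $\hat Y$ (an SFT over $A \uplus \mathcal{Q} \uplus \{\hat a\}$ containing $Y$ and $\mathcal{G}$-free) is fine and faithfully mirrors part $(i)$ of \Cref{lem:map_ext_G_free}, but the strong irreducibility verification has a genuine gap, and it is not just bookkeeping. First, your key claim that rule (1) holds near $\partial F_i$ ``because $y^{(i)}\in\hat Y$ already supplies the needed pattern or marker within distance $D$ on the $F_i$ side'' is unjustified: membership in $\hat Y$ only guarantees a certificate \emph{somewhere} in the $D$-window, and that certificate may lie outside $F_i$, exactly in the region your filling overwrites. Second, strong irreducibility puts no constraint on the internal geometry of $F_1$ and $F_2$: the complement of $F_1\cup F_2$ may contain single-site holes or thin slivers deep inside $F_1$, far from $F_2$, where no translate of a $K_0$-pattern of $y_0$ fits; your prescribed fill (a grid of $y_0$-translates plus $\hat a$ elsewhere) then erases certificates of adjacent $F_1$-points without supplying new ones. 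Concretely, let $y^{(1)}$ be a sea of $\hat a$'s with isolated $\mathcal{Q}$-markers at spacing comparable to $D$, and let $F_1$ be the set of non-marker sites: every certificate of every point of $F_1$ lies outside $F_1$, and your fill destroys all of them, so the resulting configuration violates rule (1). Repairing this (e.g.\ by copying $y^{(1)}$ on the whole complement away from $F_2$ and only filling one interface shell, or by inserting $\mathcal{Q}$-markers into small holes) leads to a genuine corridor/shell-filling argument with its own geometric complications, which your sketch does not address.

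For comparison, the paper avoids the marker layer and any filling geometry altogether: take $W\Subset\Gamma$ symmetric with $0\in W$ witnessing that $Y$ is $\mathcal{G}$-free, let $\hat A\supseteq A$ have $|\hat A|\ge |W|$, and let $\hat Y$ be the SFT obtained by forbidding every $W$-pattern that is $\Gamma_0$-periodic for some $\Gamma_0\in\mathcal{G}$. Then $Y\subseteq\hat Y$ and $\mathcal{G}$-freeness are immediate, and strong irreducibility follows from a one-site extension argument: given a locally admissible pattern on $F\not\ni 0$, since $|\hat A|>|W\setminus\{0\}|$ one can choose at $0$ a symbol different from all symbols at positions in $(W\cap F)\setminus\{0\}$, which breaks every potential $\Gamma_0$-periodic window through $0$; filling site by site then glues any two admissible patterns on $W$-disjoint sets. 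You may want to adopt this route, which renders the delicate step in your proposal unnecessary.
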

\begin{proof}
Let  $Y \subseteq A^\Gamma$ be a $\mathcal{G}$-free subshift. By \Cref{lem:witness_G_free} there exists 
a finite set $W \Subset \Gamma$ 
that witnesses that $Y$ is $\mathcal{G}$-free.
Replacing $W$ by $W \cup (-W) \cup \{0\}$,  we can
assume that $0 \in W$ and that $W$ is symmetric.
Let $\hat A$ be a finite set with $|\hat A| \ge |W|$ such that $A \subseteq \hat A$,
and  let $\mathcal{F} \subset \hat A^W$ denote the set of patterns that are not $\mathcal{G}$-free in the sense that there exists $\Gamma_0 \in \mathcal{G}$ such that for every $v \in \Gamma_0$ if $u \in W$ and $u+v \in W$ then $w_u=w_{u+v}$.
Let $\hat Y \subseteq \hat A^\Gamma$ the subshift of finite type defined by the set of forbidden words $\mathcal{F}$.
Then $\hat Y$ is $\mathcal{G}$-free, as witnessed by the set $W$. Clearly $Y \subseteq \hat Y$. 
We claim that $\hat Y$ is strongly irreducible. 
To see this, it suffices to show that for any  $F \subseteq \Gamma \setminus \{0\}$ and any pattern $w \in \hat A^F$ which is  $\mathcal{F}$-free (in the sense that sub-patterns from $\mathcal{F}$ do not occur in it) can be extended to a $\mathcal{F}$-free pattern $\tilde w \in \hat A^{F \cup \{0\}}$.

Let   $F \subseteq \Gamma \setminus \{0\}$ and an $\mathcal{F}$-free pattern $w \in A^F$. Since $|W \setminus \{0\}| < |\hat A|$ there exists  $\hat a \in \hat A$ such that $w_{v} \ne \hat a$ for any $v \in (W \cap F) \setminus \{0\}$. Let  $\tilde w \in \hat A^{F \cup \{0\}}$ be defined by
\[
\tilde  w_v =
\begin{cases}
\hat a & \mbox{ if } v= 0\\
w_v & \mbox{ otherwise.}
\end{cases}
\]
Then $\tilde W$ is also $\mathcal{F}$-free $\tilde w_F =w$.
\end{proof}

\begin{prop}\label{prop:abs_retract_prop}
Let $\Gamma$ be an infinite countable abelian group and let $Y$ be a $\Gamma$-subshift with the map extension property.
Then $Y$ is a  strongly irreducible shift of finite type.
\end{prop}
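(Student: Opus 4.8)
The plan is to realize $Y$ as a retract of a strongly irreducible shift of finite type (over a possibly larger alphabet) and then conclude using the fact that both ``strongly irreducible'' and ``shift of finite type'' are inherited by retracts.

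First I would invoke \Cref{prop:map_ext_iff_RAR}: since $Y$ has the map extension property, there is a finite set $\mathcal{G} \Subset \Sub(\Gamma)$ such that $Y$ is an absolute retract for the class of $\mathcal{G}$-free $\Gamma$-subshifts; in particular $Y$ is itself $\mathcal{G}$-free. I would then feed this same $\mathcal{G}$ into \Cref{lem:G_free_embeds_in_SI_SFT}, obtaining a $\mathcal{G}$-free strongly irreducible shift of finite type $\hat Y$, over an enlarged alphabet $\hat A \supseteq A$, with $Y \subseteq \hat Y$. Because $\hat Y$ is a $\mathcal{G}$-free subshift containing $Y$ and $Y$ is an absolute retract for that class, the identity map on $Y$ extends to a retraction $r \colon \hat Y \to Y$, so $Y$ is a retract of $\hat Y$. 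Finally, \Cref{prop:closed_under_retracts} says that the class of shifts of finite type and the class of strongly irreducible subshifts are each closed under taking retracts; hence $Y$, being a retract of the strongly irreducible SFT $\hat Y$, is a strongly irreducible shift of finite type.

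Given the machinery already in place this argument is essentially a bookkeeping exercise; the only point requiring attention is to make sure the finite collection $\mathcal{G}$ produced by \Cref{prop:map_ext_iff_RAR} is precisely the one fed to \Cref{lem:G_free_embeds_in_SI_SFT} and appearing in the definition of absolute retract for $\mathcal{G}$-free subshifts, so that the retraction step applies without modification. The substantive content lives entirely in the cited results --- \Cref{prop:map_extension_implies_finitely_many_forbidden_periods} underpinning \Cref{prop:map_ext_iff_RAR} (this is where the assumption that $\Gamma$ is infinite is genuinely used, through \Cref{lem:contractible_periodic_points1} and its surjections onto $\mathbb{Z}$), the explicit construction of an ambient strongly irreducible SFT in \Cref{lem:G_free_embeds_in_SI_SFT}, and the SFT part of \Cref{prop:closed_under_retracts} --- so no new obstacle arises at this stage.
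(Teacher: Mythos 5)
Your argument is correct and is essentially identical to the paper's own proof: invoke \Cref{prop:map_ext_iff_RAR} to realize $Y$ as an absolute retract for the class of $\mathcal{G}$-free subshifts, embed $Y$ in a $\mathcal{G}$-free strongly irreducible SFT via \Cref{lem:G_free_embeds_in_SI_SFT}, and conclude by closure of these classes under retracts (\Cref{prop:closed_under_retracts}). The only difference is that you spell out the retraction step explicitly, which the paper leaves implicit.
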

\begin{proof}
Suppose that $Y$ has the map extension property. Then by \Cref{prop:map_ext_iff_RAR}, there exists a finite set $\mathcal{G} \Subset \Sub(\Gamma)$ such that  $Y$ is an absolute retract for the class of $\mathcal{G}$-free subshifts.

Then by \Cref{lem:G_free_embeds_in_SI_SFT}, $Y$ embeds in a $\mathcal{G}$-free strongly irreducible SFT $\hat Y$.
By \Cref{prop:closed_under_retracts} it follows that $Y$ is also a strongly irreducible subshift of finite type.
\end{proof}

We remark that alternative proofs of \Cref{prop:abs_retract_prop} (as well as stronger statements) can be found in \cite{poirier2024contractible}. 

\begin{lemma}\label{lem:stab_of_abs_retract}
    Let $Y$ be a $\Gamma$-subshift with the map extension property. 
    Then $\overline{Y}_{[\Gamma_0]}$ has the map extension property for any subgroup $\Gamma_0 \le \Gamma$.
\end{lemma}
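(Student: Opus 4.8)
Throughout, write $q\colon \Gamma \to \Gamma/\Gamma_0$ for the quotient homomorphism. The plan is to transfer the problem from $\Gamma/\Gamma_0$ to $\Gamma$ by pulling everything back along $q$, and then to apply the map extension property that we are given for $Y$. The dictionary is as follows: the assignment $\bar x \mapsto \bar x\circ q$ is a homeomorphism from $A^{\Gamma/\Gamma_0}$ onto the set of $x\in A^\Gamma$ with $\Gamma_0 \le \stab(x)$, it satisfies $\stab(x) = q^{-1}(\stab(\bar x))$, and it intertwines the $(\Gamma/\Gamma_0)$-shift with the $\Gamma$-shift (the latter factoring through $q$). Hence every $(\Gamma/\Gamma_0)$-subshift $W \subseteq A^{\Gamma/\Gamma_0}$ corresponds to a $\Gamma$-subshift $W^{\uparrow}\subseteq A^\Gamma$, all of whose points are fixed by $\Gamma_0$; a map of $(\Gamma/\Gamma_0)$-subshifts corresponds to a map of the associated $\Gamma$-subshifts and conversely (equivariance over $\Gamma/\Gamma_0$ and over $\Gamma$ agree because $q$ is onto); and, by the very definition of $\overline{Y}_{[\Gamma_0]}$ in \Cref{def:overline_X_Gamma_0}, one has $(\overline{Y}_{[\Gamma_0]})^{\uparrow} = Y_{[\Gamma_0]}$ as $\Gamma$-subshifts.

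Now let $X$ be a $(\Gamma/\Gamma_0)$-subshift with $X \rightsquigarrow \overline{Y}_{[\Gamma_0]}$, let $\tilde X \subseteq X$ be a (possibly empty) $(\Gamma/\Gamma_0)$-subshift, and let $\tilde\pi\colon \tilde X \to \overline{Y}_{[\Gamma_0]}$ be a map; we must extend it to a map $X \to \overline{Y}_{[\Gamma_0]}$. Pulling back gives $\Gamma$-subshifts $\tilde X^{\uparrow} \subseteq X^{\uparrow}$ and a map $\tilde\pi^{\uparrow}\colon \tilde X^{\uparrow} \to Y_{[\Gamma_0]} \subseteq Y$. First I would check that $X^{\uparrow} \rightsquigarrow Y$: given $x \in X^{\uparrow}$ with descent $\bar x \in X$, use $X \rightsquigarrow \overline{Y}_{[\Gamma_0]}$ to pick $\bar y \in \overline{Y}_{[\Gamma_0]}$ with $\stab(\bar x) \le \stab(\bar y)$, and let $y\in Y_{[\Gamma_0]}$ be its lift; then $\stab(x) = q^{-1}(\stab(\bar x)) \le q^{-1}(\stab(\bar y)) = \stab(y)$.

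With $X^{\uparrow} \rightsquigarrow Y$ established, the map extension property of $Y$ applied to $\tilde\pi^{\uparrow}\colon \tilde X^{\uparrow}\to Y$ yields an extension $\pi^{\uparrow}\colon X^{\uparrow} \to Y$. The crucial observation is that $\pi^{\uparrow}$ lands inside $Y_{[\Gamma_0]}$ automatically: every $x\in X^{\uparrow}$ has $\Gamma_0 \le \stab(x)$, so by $\Gamma$-equivariance $\Gamma_0 \le \stab(\pi^{\uparrow}(x))$, i.e.\ $\pi^{\uparrow}(x) \in Y_{[\Gamma_0]} = (\overline{Y}_{[\Gamma_0]})^{\uparrow}$. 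Thus $\pi^{\uparrow}$ is a map between pullbacks, which descends under the dictionary above to a map $\pi\colon X \to \overline{Y}_{[\Gamma_0]}$ of $(\Gamma/\Gamma_0)$-subshifts; since $\pi^{\uparrow}$ extends $\tilde\pi^{\uparrow}$, the map $\pi$ extends $\tilde\pi$, and we are done.

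I do not expect a serious obstacle here; the substance of the argument is the equivariance observation forcing $\pi^{\uparrow}$ into $Y_{[\Gamma_0]}$, which is what makes the descent possible. The points requiring (only routine) care are that ``$\Gamma_0 \le \stab$'' is a closed condition, so that $X^{\uparrow}$ is genuinely a subshift even when $\Gamma_0$ is not finitely generated; that the continuous bijection $\overline{Y}_{[\Gamma_0]} \leftrightarrow Y_{[\Gamma_0]}$, and its analogue for $X$, is a homeomorphism, being a continuous bijection of compact metrizable spaces; and the bookkeeping that the pullback of a map is a map and conversely.
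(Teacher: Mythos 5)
Your proof is correct and follows essentially the same route as the paper: the pullback $\bar x \mapsto \bar x\circ q$ is exactly the paper's map $\psi_{\Gamma_0}$, and the key steps (verifying $X^{\uparrow}\rightsquigarrow Y$, applying the map extension property of $Y$, and using that points of $X^{\uparrow}$ are $\Gamma_0$-fixed so the extension lands in $Y_{[\Gamma_0]}$ and descends) match the paper's argument. Your write-up is in fact slightly more careful about the stabilizer computation and the homeomorphism onto the image than the paper's own proof.
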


\begin{proof}
Let $\Gamma$ be a countable abelian group.
    Suppose that $Y$ is  a $\Gamma$-subshift with the map extension property and $\Gamma_0 < \Gamma$.
    We want to prove that $\overline{Y}_{[\Gamma_0]}$ also has the map extension property.
    Suppose that $\tilde X \subseteq X$ are $\Gamma/\Gamma_0$ subshifts with $X \rightsquigarrow \overline{Y}_{[\Gamma_0]}$ and that $\tilde \pi:\tilde X \to \overline{Y}_{[\Gamma_0]}$ is a map of $\Gamma/\Gamma_0$-subshifts. 
    For any set finite set $A$, there is a natural continuous injective map $\psi_{\Gamma_0}: A^{\Gamma/\Gamma_0} \to A^{\Gamma_0}$ given by $\psi_{\Gamma_0}(x)_v = x_{v +\Gamma_0}$ for any $x \in A^{\Gamma/\Gamma_0}$, $v \in \Gamma$ and $x \in A^{\Gamma/\Gamma_0}$.
    Then $\psi_{\Gamma_0}^{-1}(\tilde X) \subseteq \psi_{\Gamma_0}^{-1}(X)$ are $\Gamma$-subshifts with $\Gamma_0 <  \ker(\psi_{\Gamma_0}^{-1}(X))$, and 
    $\psi_{\Gamma_0}^{-1}(X) \rightsquigarrow Y$ 
    because $X \rightsquigarrow \overline{Y}_{[\Gamma_0]}$. The map $\tilde \pi$ naturally lifts to a map 
    $\overline{\pi}:\psi_{\Gamma_0}(\tilde X) \to Y$. By the map extension property,
    the  map $\overline{\pi}$ extends to a map $\hat \pi:\psi_{\Gamma_0}(X) \to Y$.
    Since  $\Gamma_0 <  \ker(\psi_{\Gamma_0}^{-1}(X))$ 
    it follows that the image of $\psi_{\Gamma_0}(X))$ under $\hat \pi$ is contained in $Y_{[\Gamma_0]}$.
    Hence $\hat \pi$  induces a well-defined map $\pi:X \to \overline{Y}_{[\Gamma_0]}$ that extends $\tilde \pi$.

\end{proof}


In preparation for the next result, we  recall Boyle's extension lemma \cite[Lemma 2.2]{MR753922}:

\begin{lemma}[Boyle's Extension Lemma \cite{MR753922}]\label{lem:boyle_extension_lemma}
    Let $X$ be a $\mathbb{Z}$-subshift, let $\tilde X \subseteq X$ be a closed shift-invariant set, and let $Y \subseteq B^\Z$ be mixing subshift of finite type such that $X \rightsquigarrow Y$.
    Let  $\tilde \pi:\tilde X \to Y$ be a map. Then $\tilde \pi$ extends to a map $\pi:X \to Y$.
\end{lemma}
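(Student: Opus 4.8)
\textbf{Reductions and strategy.}
Passing to a higher block presentation, assume $Y$ is the vertex shift of a primitive digraph $G$, and fix a \emph{mixing length} $M$: for every $m\ge M$ and every ordered pair of vertices $(u,v)$ there is a path of length exactly $m$ from $u$ to $v$. Fix, once and for all, one such \emph{designated} path $p_{u,v,m}$ for each $m\ge M$ and each $(u,v)$. (If $G$ is a single vertex then $Y$ is a point and the lemma is trivial, so assume $h(Y)>0$.) Fix a sliding block code $\Phi$ for $\tilde\pi$ with coding window $[-\ell,\ell]$. The plan is to define $\pi$ by keeping the value $\Phi(x_{[n-\ell,n+\ell]})$ at coordinates $n$ where $x$ locally looks like a point of $\tilde X$, discarding it elsewhere, and rebuilding a legal $G$-orbit on the discarded coordinates by splicing in designated paths. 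The obstruction to this being a sliding block code is that the discarded regions, and the gaps between the ``legal'' regions, can be arbitrarily long; Krieger's Marker Lemma is what cuts $\Z$ into pieces of bounded length.

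\textbf{The aperiodic part.}
Fix $N$ large compared with $\ell$ and $M$ and put $V=\{x\in X:\ \sigma^{j}x\ne x \text{ for } 1\le j\le N\}$, a clopen shift-invariant set. By \Cref{lem:krieger_marker_lemma} with $P=\{\pm1,\dots,\pm N\}$ there is a clopen $C\subseteq V$ with $C\cap\sigma^{j}C=\emptyset$ for $1\le|j|\le N$ and $V\subseteq\bigcup_{|j|\le N}\sigma^{j}C$; hence for $x\in V$ the marker coordinates $\{n:\ \sigma^{n}x\in C\}$ are at least $N$ apart and at most $2N$ apart. Call $n$ \emph{good} if it is not a marker coordinate and $x_{[n-R,n+R]}\in\cL_{[-R,R]}(\tilde X)$ for a fixed $R\gg\ell$, and \emph{bad} otherwise; enlarge the bad set by $[-M,M]$, so that every maximal finite bad run has bounded length and every maximal bad run has length at least $M$, while on good coordinates the $\tilde X$-window still holds with margin to spare, so the symbols $\Phi(x_{[n-\ell,n+\ell]})$ along a good run form a legal path in $G$. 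Put $\pi(x)_{n}=\Phi(x_{[n-\ell,n+\ell]})$ on good coordinates; on each maximal finite bad run insert the designated path of the appropriate length joining the already-determined boundary vertices, and on an infinite bad run use designated closed paths through a fixed home vertex $v_{0}$ (after a further bounded adjustment ensuring all connecting lengths exceed $M$). Since all relevant runs are finite and bounded, $\pi\mid_{V}$ is given by a sliding block code; by construction $\pi(x)\in Y$ for $x\in V$, and $\pi$ agrees with $\tilde\pi$ at coordinates where $x$ genuinely lies in $\tilde X$.

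\textbf{Periodic points, and the main obstacle.}
The set $F=X\setminus V$ of points of period at most $N$ is finite, closed and shift-invariant, and $\tilde\pi$ is already defined on $F\cap\tilde X$. Since a mixing SFT has a point of every period $\ge M$, the hypothesis $X\rightsquigarrow Y$ only constrains periods $<M$, and it lets us assign to each $X$-periodic orbit contained in $F\setminus\tilde X$ a $Y$-orbit whose period divides it; this produces a shift-equivariant extension of $\tilde\pi$ to all of $F$. The crux is to arrange the construction so that this extension and the marker construction on $V$ are compatible in the limit --- i.e.\ so that the resulting map $\pi:X\to Y$ is continuous: if $x^{(k)}\to p\in F$ with $x^{(k)}\in V$, then the periods of the $x^{(k)}$ tend to infinity while the designated-path repairs performed near the origin of $x^{(k)}$ must converge to the $Y$-orbit chosen for $p$. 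One handles this by interlocking the ingredients --- choosing the marker set, the enlargements and the designated paths so that whenever a long window of $x$ is $q$-periodic for some $q\le N$ the repair rule reproduces a shift of the $Y$-orbit assigned to that pattern. Since $F$ is finite, only finitely many periodicities must be accommodated, and the slack among admissible fillings (available because $h(Y)>0$) makes this possible. Carrying out this coordination is the technical heart of Boyle's argument; the aperiodic construction above, and the appeals to mixing, are then routine.
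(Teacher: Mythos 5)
The paper itself does not prove this lemma: it is quoted verbatim from Boyle \cite{MR753922} and used as a black box, so there is no internal proof to compare against; judged on its own merits, your sketch has concrete gaps. The most basic one is that the map you build does not extend $\tilde\pi$. You declare every marker coordinate bad, and by the covering half of \Cref{lem:krieger_marker_lemma} the marker coordinates of any $x\in V$ are syndetic (one in every window of length roughly $2N$). Hence for $x\in\tilde X\cap V$ your rule discards the values $\Phi(x_{[n-\ell,n+\ell]})=\tilde\pi(x)_n$ on a syndetic set and splices in the fixed designated paths there; a single pre-chosen path $p_{u,v,m}$ cannot reproduce the many words that $\tilde\pi$ actually realizes between those endpoints, so $\pi\mid_{\tilde X}\neq\tilde\pi$. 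Overwriting the image near markers is exactly what one does in Krieger's embedding theorem (marker words are inserted for injectivity, and there is no subsystem map to respect), but in the extension lemma nothing may be overwritten on $\tilde X$: a coordinate at which a long window of $x$ is $\tilde X$-admissible must be given the $\Phi$-value, marker or not.

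Second, your locality argument rests on a claim that is backwards. The marker syndeticity bounds the \emph{good} runs (length at most about $2N$); the \emph{bad} runs are unbounded, since any long stretch on which the $\tilde X$-window condition fails is entirely bad. Consequently the rule ``on each maximal finite bad run insert the designated path of the appropriate length joining the already-determined boundary vertices'' cannot be evaluated from a bounded window around a coordinate deep inside a long bad run, and the assertion that $\pi\mid_V$ is a sliding block code does not follow. The standard repair (fill the deep interior of long bad runs with a fixed periodic orbit of $Y$, with transition paths near the ends) forces you to choose the \emph{phase} of that periodic filler by a local rule, and this phase problem is precisely entangled with the third gap: the compatibility of the construction with the finitely many points of period at most $N$ (continuity at $F$, coherence of the local rule on long nearly periodic windows with the $Y$-orbits assigned via $X\rightsquigarrow Y$, and consistency with $\tilde\pi$ on periodic points of $\tilde X$). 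You identify this as ``the technical heart'' and then assert, without construction, that interlocking the ingredients makes it possible. That step is the actual content of Boyle's proof; without it, and with the two defects above, the proposal is an outline of the right genre of argument rather than a proof.
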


Boyle's extension lemma implies an alternative charterization of topologically mixing $\mathbb{Z}$-subshifts of finite type:    
\begin{prop}\label{prop:mixing_Z_SFT_abs_retract}
A $\mathbb{Z}$-subshift  has the map extension property  if and only if it is a topologically mixing subshift of finite type.
\end{prop}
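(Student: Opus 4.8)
The plan is to prove both directions. For the ``if'' direction, suppose $Y \subseteq B^\Z$ is a topologically mixing subshift of finite type. We must verify the map extension property: given any $\Z$-subshift $X$ with $X \rightsquigarrow Y$, any $\Z$-subshift $\tilde X \subseteq X$, and any map $\tilde \pi:\tilde X \to Y$, we need to extend $\tilde \pi$ to a map $\pi:X \to Y$. But this is \emph{exactly} the content of Boyle's Extension Lemma (\Cref{lem:boyle_extension_lemma}): a mixing SFT $Y$ receives an extension of any partial map defined on a subsystem, as long as $X \rightsquigarrow Y$. So this direction is essentially immediate once the correspondence between the two statements is noted --- the hypothesis $X \rightsquigarrow Y$ in the definition of the map extension property matches the hypothesis $X \rightsquigarrow Y$ in Boyle's lemma verbatim.

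For the ``only if'' direction, suppose the $\Z$-subshift $Y$ has the map extension property. First, $Y$ is a subshift of finite type: by \Cref{prop:abs_retract_prop} (applied with $\Gamma = \Z$, an infinite countable abelian group), any $\Z$-subshift with the map extension property is a strongly irreducible subshift of finite type --- in particular an SFT. It remains to show $Y$ is topologically mixing. For one-dimensional SFTs, strong irreducibility already implies topological mixing (a strongly irreducible $\Z$-subshift is mixing), so this also follows from \Cref{prop:abs_retract_prop}. Alternatively, and more directly, one can argue that an SFT with the map extension property must be topologically mixing: if $Y$ were not mixing, it would decompose (after passing to a higher power / using the standard cycle structure of the Fischer cover or the period decomposition of an irreducible SFT) in a way that obstructs extending certain maps --- for instance, one can construct a subshift $X \supseteq \tilde X$ with $X \rightsquigarrow Y$ and a map $\tilde \pi : \tilde X \to Y$ that cannot be extended because the points one would need to connect lie in different ``phases''. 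But invoking \Cref{prop:abs_retract_prop} together with the one-dimensional fact ``strongly irreducible $\Z$-SFT $\Rightarrow$ mixing'' is the cleanest route.

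The main obstacle --- if one did not have \Cref{prop:abs_retract_prop} available --- would be establishing that the map extension property forces topological mixing, since one must rule out all the subtle ways a non-mixing SFT (e.g. one with a nontrivial period, or a reducible one) could still admit extensions; the periodic-point bookkeeping implicit in \Cref{prop:map_extension_implies_finitely_many_forbidden_periods} together with irreducibility is what does this work. Since \Cref{prop:abs_retract_prop} is already in hand, the proof reduces to: (1) cite Boyle's Extension Lemma for the forward direction, and (2) cite \Cref{prop:abs_retract_prop} plus the elementary fact that strongly irreducible $\Z$-SFTs are mixing for the reverse direction.
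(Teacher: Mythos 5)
Your proposal is correct and follows essentially the same route as the paper: the paper likewise deduces the ``only if'' direction from \Cref{prop:abs_retract_prop} together with the fact that a strongly irreducible $\Z$-subshift of finite type is topologically mixing, and obtains the ``if'' direction by invoking Boyle's Extension Lemma (\Cref{lem:boyle_extension_lemma}), whose hypotheses match \Cref{def:map_extension_property} essentially verbatim. The only cosmetic difference is that the paper's proof also records, using the existence of periodic points of all sufficiently large periods in a mixing $\Z$-SFT, that the associated family of ``forbidden period'' subgroups is finite, but this does not change the substance of the argument.
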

\begin{proof}
 From \Cref{prop:abs_retract_prop} it follows that any $\Gamma$-subshift which has the map extension property is a strongly irreducible SFT. In particular, any  $\Z$-subshift which is a  restricted absolute retract is a mixing shift of finite type.
 Now suppose $Y \subset A^\mathbb{Z}$  is a mixing SFT. 
 Let $\mathcal{G} \subset \Sub(\Z)$ denote the collection of subgroups $\Gamma_0 \le \Z$ such that for every $y \in Y$ the subgroup $\Gamma_0$ is not contained in $\stab(x)$. We claim that $\mathcal{G}$ is a finite set. Indeed, since $Y$ is a mixing $\Z$-SFT, there exists $n_0 \in \mathbb{N}$ such that for every $n \ge n_0$ there exists $y \in Y$ with $\sigma^n(y)=y$. Thus, every subgroup $\Gamma_0 \in \mathcal{G}$ must be of the form $n \Z$ with $n < n_0$. 
 It follows from Boyle's Extension (\Cref{lem:boyle_extension_lemma}) that $Y$
 has the  map extension property.
\end{proof}

Although the map extension property is a rather strong property for $\Z^d$-subshifts, it is satisfied by many  ``naturally occuring'' subshifts of finite type:


\begin{definition}
    Let $X \subseteq A^\Gamma$ be a subshift. We say that $a \in A$ is a \emph{safe symbol} or $X$ if for any $F \Subset \Gamma$ and any $w \in \cL_F(X)$ and any $\tilde w \in A^F$ any pattern $\tilde w \in A^F$ that satisfies $\tilde w_v \in \{ a, w_v\}$ for all $v \in F$ is in $\cL_F(X)$.
    Equivalently, $a \in A$ is a safe symbol for $X$ if for any $x \in X$ changing the value of $x$ at any $v \in \Gamma$ to the ``symbol'' $a$ results in a point of $X$.
\end{definition}

\begin{proposition}
 Any $\Gamma$-SFT with a safe symbol is an absolute retract for the class of subshifts, and in particular has the map extension property.
\end{proposition}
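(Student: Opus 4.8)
The plan is to build a single ``correct near the defects'' retraction and deduce both halves of the statement from it. The case $Y=\emptyset$ is trivial, so assume $Y\subseteq A^\Gamma$ is a nonempty SFT with safe symbol $a\in A$. I would begin with the observation that the constant configuration $a^\Gamma$ lies in $Y$: starting from any $y\in Y$, changing finitely many coordinates to $a$ keeps the point in $Y$ by the safe-symbol property, and taking the set of altered coordinates to exhaust $\Gamma$ exhibits $a^\Gamma$ as a limit of points of the closed set $Y$. Consequently $\Gamma\le\stab(a^\Gamma)$, so $X\rightsquigarrow Y$ holds automatically for \emph{every} $\Gamma$-subshift $X$. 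Because of this, the map extension property for $Y$ is equivalent to the unconditional statement: for every $\Gamma$-subshift $X$, every $\Gamma$-subshift $\tilde X\subseteq X$, and every map $\tilde\pi\colon\tilde X\to Y$, there is a map $\pi\colon X\to Y$ extending $\tilde\pi$. Moreover, applying this unconditional statement to an arbitrary embedding $\iota\colon Y\to X$ with $\tilde X=\iota(Y)$ and $\tilde\pi=\iota^{-1}$ produces a map $\pi\colon X\to Y$ with $\pi\circ\iota=\mathrm{id}_Y$, which is precisely the absolute retract property. So it suffices to prove the unconditional extension statement.

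The core ingredient is the following retraction construction. Since $Y$ is an SFT, fix a finite symmetric set $W\Subset\Gamma$ with $0\in W$ large enough that $Y=\{x\in A^\Gamma:\sigma_v(x)_W\in\cL_W(Y)\text{ for all }v\in\Gamma\}$. Given any $\Gamma$-subshift $Z\subseteq A^\Gamma$ with $Y\subseteq Z$, define $r\colon Z\to A^\Gamma$ by setting $r(z)_v=z_v$ if $\sigma_{v+u}(z)_W\in\cL_W(Y)$ for every $u\in W$, and $r(z)_v=a$ otherwise; this is a sliding block code with window $W+W$. I claim $r(Z)\subseteq Y$ and $r$ restricts to the identity on $Y$. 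For the first claim, fix $z\in Z$ and $v\in\Gamma$ and examine the $W$-pattern $p:=\sigma_v(r(z))_W$: by construction $p$ is obtained from $\sigma_v(z)_W$ by overwriting some coordinates with $a$. If $\sigma_v(z)_W\in\cL_W(Y)$, the safe-symbol property gives $p\in\cL_W(Y)$; if $\sigma_v(z)_W\notin\cL_W(Y)$, then for each $w\in W$ taking $u=-w\in W$ in the defining condition shows $r(z)_{v+w}=a$, so $p$ is the constant-$a$ $W$-pattern, which belongs to $\cL_W(Y)$ because $a^\Gamma\in Y$. In either case $\sigma_v(r(z))_W\in\cL_W(Y)$, so $r(z)\in Y$ by the choice of $W$. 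The second claim is immediate: every $W$-window of a point of $Y$ is admissible, so $r$ fixes $Y$ pointwise.

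To finish, given $\tilde\pi\colon\tilde X\to Y$ with $\tilde X\subseteq X$, I would ``predecode'': choosing a coding window $W_1$ and sliding block code $\Phi$ for $\tilde\pi$, define $\pi_0\colon X\to A^\Gamma$ by $\pi_0(x)_v=\Phi(\sigma_v(x)_{W_1})$ when $\sigma_v(x)_{W_1}\in\cL_{W_1}(\tilde X)$ and $\pi_0(x)_v=a$ otherwise. This is a sliding block code with $\pi_0|_{\tilde X}=\tilde\pi$, and $Z:=\pi_0(X)\cup Y$ is a $\Gamma$-subshift contained in $A^\Gamma$ with $Y\subseteq Z$. Applying the core construction to this $Z$ yields a retraction $\rho\colon Z\to Y$, and then $\pi:=\rho\circ\pi_0\colon X\to Y$ extends $\tilde\pi$, since for $\tilde x\in\tilde X$ we have $\pi_0(\tilde x)=\tilde\pi(\tilde x)\in Y$ and $\rho$ fixes $Y$. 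This establishes the unconditional extension statement, hence the map extension property, and hence (by the first paragraph) that $Y$ is an absolute retract for the class of all $\Gamma$-subshifts. One may alternatively phrase the last implication through \Cref{prop:map_ext_iff_RAR} with $\mathcal G=\emptyset$.

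The only point that is more than bookkeeping is verifying that the correction map $r$ lands in $Y$; the delicate case is a $W$-window of $z$ that is already inadmissible, where one must be sure the correction erases that entire window — and this is exactly what symmetry of $W$, together with $a^\Gamma\in Y$, delivers. I also flag the slight mismatch between ``$Y$ embeds in $X$'' and ``$Y$ is a subshift of $X$'', which matters here because possessing a safe symbol is not a conjugacy invariant; this is exactly what the predecoding step $\pi_0$ is for, as it replaces $X$ by a subshift over the alphabet $A$ in which $Y$ sits literally. Neither of these is a serious obstacle, but both need to be handled rather than glossed over.
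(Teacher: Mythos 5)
Your proof is correct, and its engine is exactly the paper's: the block-code retraction that keeps a symbol when every nearby $W$-window is admissible and overwrites it with the safe symbol $a$ otherwise — the paper's entire proof consists of writing down this formula (with a defining set of forbidden patterns in place of $\cL_W(Y)$) and leaving the verification to the reader. Where you differ is in the architecture: the paper proves the retract statement directly for a subshift literally containing the safe-symbol SFT and gets the map extension property ``in particular'' (implicitly via \Cref{prop:map_ext_iff_RAR} with $\mathcal{G}=\emptyset$), whereas you first observe $a^\Gamma\in Y$, so $X\rightsquigarrow Y$ is automatic and the map extension property becomes an unconditional extension statement, prove that statement by predecoding into $A^\Gamma$ and then retracting, and recover the absolute-retract property as the special case $\tilde\pi=\iota^{-1}$. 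This reversal buys you two things the paper's terse proof glosses over: the actual check that the corrected point lies in $Y$ (your case analysis, using symmetry of $W$, the safe-symbol property applied to partially overwritten admissible windows, and $a^\Gamma\in Y$ for the all-$a$ window, is precisely the missing argument; and working with $\cL_W(Y)$ for a sufficiently large window $W$, rather than an arbitrary defining forbidden set, is what makes the first case go through), and the alphabet/conjugacy mismatch when $Y$ merely embeds in $X$ or $X$ lives over a larger alphabet, which your map $\pi_0$ handles cleanly while the paper's formula as written would silently keep symbols outside $A$. One pedantic caveat: dismissing $Y=\emptyset$ as trivial is fine for the extension statement but not for the retract half (the empty subshift is not a retract of a nonempty one); since the safe-symbol hypothesis holds for the empty subshift only vacuously, this degenerate case is clearly outside the intended scope and immaterial.
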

\begin{proof}
    Let $X \subseteq A^\Gamma$ be a $\Gamma$-SFT with a safe symbol $a \in A$, and suppose that $Y \subseteq \tilde A^\Gamma$ is another subshift such that $X \subseteq Y$.
    Let $\mathcal{F} \subset A^W$ be a defining set of forbidden patterns for $X$. 
    We can define a retraction $r:Y \to X$ as follows:
    \[r(y)_v = \begin{cases}
        y_v & (\sigma_{-v-w}(y))_W \not \in \mathcal{F} \mbox{ for all } w \in W\\
        a & \mbox{ otherwise}.
    \end{cases}\]
\end{proof}


The following proposition shows that the subshift of proper $k$-colorings of  the  Cayley graph of $\Gamma$  corresponding to a symmetric generating set with less than $k$ generators has the map extension property. Another proof of the same result appears in \cite{poirier2024contractible}.
\begin{prop}\label{prop:k_coloring_RAR}
For any finite symmetric set $F \Subset \Gamma \setminus \{0\}$ and any $k \in \mathbb{N}$, consider  the subshift $X_{k,F}$ given by 
 \[
    X_{k,F} = \left\{x \in \{1,\ldots,k\}^\Gamma~:~ \forall \gamma 
    \in \Gamma, v \in F~ x_{\gamma} \ne x_{\gamma +v} \right\}.
    \]
If $k > |F|$, then $X_{k,F}$
has the map extension property.
\end{prop}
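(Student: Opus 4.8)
The plan is to show directly that $X_{k,F}$ has the map extension property by constructing, for any $\Gamma$-subshift $X$ with $X \rightsquigarrow X_{k,F}$, any $\Gamma$-subshift $\tilde X \subseteq X$, and any map $\tilde\pi : \tilde X \to X_{k,F}$, an extension $\pi : X \to X_{k,F}$. The key observation is that $X_{k,F}$ is $\mathcal{G}$-free, where $\mathcal{G}$ is the (finite) collection of subgroups $\Gamma_0 \le \Gamma$ generated by some subset of $F$ with the property that $\Gamma_0$ cannot be the stabilizer-containing subgroup of any proper $k$-coloring; more precisely, a point $x \in \{1,\dots,k\}^\Gamma$ with $\Gamma_0 \le \stab(x)$ descends to a proper $k$-coloring of the Cayley graph of $\Gamma/\Gamma_0$ with generating image $F + \Gamma_0$, and if that quotient graph has chromatic number $> k$ then no such point exists. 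So the condition $X \rightsquigarrow X_{k,F}$ is exactly the statement that $X$ is $\mathcal{G}$-free for this $\mathcal{G}$, i.e. every point of $X$ locally ``breaks'' all the bad periodicities. By \Cref{prop:map_ext_iff_RAR} it then suffices to prove that $X_{k,F}$ is an absolute retract for the class of $\mathcal{G}$-free subshifts, i.e. that whenever $X_{k,F} \subseteq X$ with $X$ a $\mathcal{G}$-free $\Gamma$-subshift there is a retraction $r : X \to X_{k,F}$.

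To build the retraction, I would use a greedy/sequential recoloring argument enabled by the hypothesis $k > |F|$. Given $x \in X$, I want to produce a proper $k$-coloring $r(x)$ that agrees with $x$ wherever $x$ already looks like a valid coloring in a large enough window. The crucial combinatorial fact is that in the Cayley graph with connection set $F$, every vertex has exactly $|F|$ neighbours, so $k > |F|$ guarantees that a proper partial coloring of the neighbourhood of any vertex can always be completed at that vertex — there is always a free colour. However, a naive greedy extension is not equivariant or continuous. Instead I would use Krieger's Marker Lemma (\Cref{lem:krieger_marker_lemma}) together with the $\mathcal{G}$-freeness of $X$: because $X$ is $\mathcal{G}$-free, for each ``bad'' periodicity pattern one gets a clopen marker set on which the relevant shifts act freely, and one processes the ``defective'' vertices (those where $x$ fails to be a proper coloring, or fails to be locally consistent with $X_{k,F}$) in finitely many marker-indexed passes, at each pass locally overwriting the colour using the smallest available colour not used by any already-fixed neighbour. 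Since each vertex has only $|F| < k$ neighbours, a legal choice always exists, and the marker structure makes the whole procedure a sliding block code, hence continuous and equivariant. One should arrange that if $x$ is already in $X_{k,F}$ (equivalently, $x$ has no defective vertices in a fixed window $W$), then $r(x) = x$, which gives the retraction property; this is the content one feeds into \Cref{prop:map_ext_iff_RAR}, or one can invoke part $(ii)$ of \Cref{lem:map_ext_G_free} to get the uniform window.

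The main obstacle I expect is making the recoloring both \emph{well-defined as a genuine proper $k$-coloring everywhere} and \emph{a retraction}, simultaneously, in the presence of points with nontrivial stabilizers. The subtlety is that a point $x \in X$ might be periodic with respect to some subgroup $\Gamma_0$ that is \emph{not} in $\mathcal{G}$ (so $X_{k,F}$ does contain $\Gamma_0$-periodic colorings, and the recoloring must respect $\Gamma_0$-periodicity automatically if the code is equivariant — which it is), but it could also be that the ``defective region'' of $x$ has a complicated geometry where the marker passes must be ordered carefully so that committed colours never conflict. The way around this is standard in this circle of ideas: one merges the markers for all the periodicity-breaking patterns (using \Cref{lem:merge_clopen_markers}) into a single clopen partition of $X$ indexed by which shifts act freely near a given point, and on each piece one recolors the defective vertices that are ``local extrema'' in a fixed finite linear order on a fundamental-type region, iterating finitely many times; the entropy/counting content is irrelevant here, only the chromatic bound $k > |F|$ and the marker lemma are needed. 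I would also double-check the degenerate cases $F = \emptyset$ (then $X_{k,F}$ is the full shift and trivially has the map extension property for $k \ge 1$) and small $k$, to make sure the statement as phrased is not vacuous or in need of a hypothesis like $k \ge 1$.

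\begin{proof}[Proof sketch]
By \Cref{prop:map_ext_iff_RAR} it suffices to exhibit a finite set $\mathcal{G} \Subset \Sub(\Gamma)$ such that $X_{k,F}$ is an absolute retract for the class of $\mathcal{G}$-free subshifts. Let $\mathcal{G}$ be the collection of subgroups $\Gamma_0 \le \Gamma$ generated by a subset of $F$ such that $\Gamma_0 \not\le \stab(x)$ for every $x \in X_{k,F}$; this is a finite set, and $X_{k,F}$ is $\mathcal{G}$-free by definition. One checks that a $\Gamma$-subshift $X$ satisfies $X \rightsquigarrow X_{k,F}$ if and only if $X$ is $\mathcal{G}$-free: indeed $\Gamma_0 \le \stab(x)$ forces $x$ to descend to a proper $k$-coloring of the Cayley graph of $\Gamma/\Gamma_0$ with respect to the image of $F$, so the existence of $y \in X_{k,F}$ with $\stab(x) \le \stab(y)$ is equivalent to $\stab(x)$ containing no member of $\mathcal{G}$.

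Now let $X$ be any $\mathcal{G}$-free $\Gamma$-subshift with $X_{k,F} \subseteq X$; we construct a retraction $r : X \to X_{k,F}$. Fix a finite set $W \Subset \Gamma$ that witnesses that $X$ is $\mathcal{G}$-free (\Cref{lem:witness_G_free}), enlarged so that $0 \in W$, $W$ is symmetric, and $F \subseteq W$. For $x \in X$ call a site $v \in \Gamma$ \emph{bad} if there exists $u \in F$ with $x_{v+u} = x_v$; the set of bad sites of $x$ is determined by $\sigma_v(x)_W$. We repair bad sites in finitely many passes. Using Krieger's Marker Lemma (\Cref{lem:krieger_marker_lemma}) together with $\mathcal{G}$-freeness, partition (up to the usual marker structure) $X$ into finitely many clopen pieces on each of which a suitable finite symmetric set of shifts acts freely, and on each piece recolor each bad site $v$ that is locally minimal in a fixed finite linear order on the marker cell, replacing $x_v$ by the least colour in $\{1,\dots,k\}$ not appearing among $\{x_{v+u} : u \in F\}$. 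Such a colour exists because $|F| < k$. Iterating this finitely many times (the number of passes bounded in terms of $W$ and the marker data) yields a point $r(x) \in \{1,\dots,k\}^\Gamma$ with no bad sites, i.e. $r(x) \in X_{k,F}$; the construction is given by a sliding block code, hence $r$ is continuous and $\Gamma$-equivariant. If $x \in X_{k,F}$ then $x$ has no bad sites, so no recoloring occurs and $r(x) = x$. Thus $r$ is a retraction, $X_{k,F}$ is an absolute retract for the class of $\mathcal{G}$-free subshifts, and by \Cref{prop:map_ext_iff_RAR} it has the map extension property.
\end{proof}
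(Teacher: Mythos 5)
Your overall route is the paper's: reduce, via \Cref{prop:map_ext_iff_RAR}, to showing that $X_{k,F}$ is an absolute retract for a class of $\mathcal{G}$-free subshifts, then build the retraction by a marker-organized greedy recolouring, using $k>|F|$ to guarantee a free colour at each step. One remark on economy: the auxiliary machinery you invoke is not needed. Since every $x\in X_{k,F}$ satisfies $x_0\ne x_v$ for $v\in F$, you may simply take $\mathcal{G}=\{\langle v\rangle : v\in F\}$, as the paper does; then $\mathcal{G}$-freeness of $X$ says precisely that $\sigma_v(x)\ne x$ for all $v\in F$ and $x\in X$, so a single application of \Cref{lem:krieger_marker_lemma} with $P=F$ and $V=X$ is legitimate — no witness sets, no merging of markers for several periodicity patterns, and no need for the larger family of subgroups generated by subsets of $F$ (that family defines the same class of free subshifts anyway, since each nontrivial member contains some $\langle v\rangle$ with $v\in F$).

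The substantive issue is that the decisive step of your sketch is not pinned down and, as written, does not parse: Krieger's Marker Lemma produces a clopen set $C$ with $C\cap\sigma_v(C)=\emptyset$ for all $v\in F$ and $X=C\cup\bigcup_{v\in F}\sigma_v(C)$; it does not produce ``marker cells'' or any tiling, so ``recolour each bad site that is locally minimal in a fixed finite linear order on the marker cell'' has no meaning in this setting. What actually must be verified — and is the only real content of the construction — is (a) that the sites recoloured simultaneously within one pass are pairwise non-$F$-adjacent, so the parallel greedy assignments are mutually consistent and never re-create a conflict at a site repaired in an earlier pass, and (b) that every site is treated after a uniformly bounded number of passes. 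The paper obtains both directly from the marker structure: enumerate $F\cup\{0\}=\{v_0,\dots,v_{|F|}\}$, set $t(x)=\min\{j : \sigma_{v_j}(x)\in C\}$ (well defined by the covering), and in pass $j$ recolour exactly those $v$ with $t(\sigma_v(x))=j$ whose colour currently clashes, using the least colour outside $\{x_{v+u} : u\in F\}$. If $v$ and $v+u$ with $u\in F$ were both recoloured in the same pass $j$, then $\sigma_{v+v_j}(x)$ and $\sigma_u\bigl(\sigma_{v+v_j}(x)\bigr)$ would both lie in $C$, contradicting that $C$ is an $F$-marker (recall $F$ is symmetric); and after $|F|+1$ passes every site has been handled, while sites already proper are never touched, so points of $X_{k,F}$ are fixed. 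Your sketch asserts the analogous facts (``the number of passes bounded in terms of $W$ and the marker data'') but supplies no mechanism that delivers them; as it stands this is a gap, though it is fixable exactly along the paper's lines.
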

\begin{proof}
    Let $F \Subset \Gamma \setminus \{0\}$ be a finite symmetric set and $k >|F|$.
    Let $\mathcal{G} \Subset \Sub(\Gamma)$ denote the set of cyclic subgroup generated by elements of $F$:
    \[
    \mathcal{G} = \left\{ \langle v \rangle ~:~ v \in F \right\}.
    \]
    We will prove that $X_{k,F}$ is an absolute retract for the class of $\mathcal{G}$-free subshifts. Since $x_{0} \ne x_{v}$ for any $x \in X_{k,F}$ and $v \in F$, it is clear that $v \not \in \stab(x)$ for any $x \in X_{k,F}$, so $X_{k,F}$ is $\mathcal{G}$-free.
    Let $X$ be any $\mathcal{G}$-free subshift such that $X_{k,F} \subseteq X$.
    By \cref{lem:krieger_marker_lemma} there exists a clopen set $C \subset X$ such that $C \cap \sigma_v(C) = \emptyset$ for every $v \in F$ and $X= C \bigcup_{v \in V} \sigma_v(C)$. Define a map $\alpha:X \to \{0,1\}^\Gamma$ by declaring for $x \in X$ and $v \in \Gamma$
    \[\alpha(x)_v= \begin{cases}
    1 & \sigma_v(x) \in C\\
    0 & \mbox{otherwise}.
    \end{cases}
    \]
    For $x \in X$ let 
    \[B(x) = \{ x_v~:~ v \in F\}.\]
    Let $F= \{ v_0,v_1,\ldots,v_{|F|}\}$ be some enumeration of the elements of $F \cup \{0\}$. 
    For every $x \in X$ let 
    \[ t(x) = \min \{ 0\le j \le |F|~:~ \sigma_v(x) \in C\}.\]
    For every $0 \le j \le |F|$, define a map  $r^{(j)}:X \to X$ as follows be declaring for $x \in X$ and $v \in \Gamma$:
    \[
    r^{(j)}(x)_v =
    \begin{cases}
        x_v & \mbox{ if  } t(X) \ne j  \mbox{ or } x_v \in \{1,\ldots,k\} \setminus B(\sigma_{v}(x)).\\
        \min\left( \{1,\ldots,k\} \setminus B(\sigma_{v}(x)) \right) & \mbox{ otherwise. }
    \end{cases}
    \]
    Note that for every $v \in F \cap \{0\}$ we have that 
    Since $|B(\sigma_{v}(x)| \le |F| < k$, it follows that $ \{1,\ldots,k\} \setminus B(\sigma_{v}(x))  \ne \emptyset$, so $r^{(j)}$ is well-defined.
    
    Define $\tilde r^{(0)} = r^{(0)}$ and  $\tilde r^{(j)}= r^{(j)} \circ \tilde r^{(j-1)}$
    for every $ 1\le j \le |F|$. 
    Let $r =\tilde r^{(|F|)}:X \to X$. We claim that $r:X \to X_{k,F}$ is a retraction.
    By induction it follows that if $x \in C \cup \bigcup_{\ell=1}^j \sigma_{v_\ell}^{-1}(C)$  then $x_0 \in \{1\ldots,k\} \setminus B(x)$. In particular, since $X=  \bigcup_{\ell=1}^{|F|} \sigma_{v_\ell}^{-1}(C)$, it follows that $r(x)_0 \in \{1\ldots,k\} \setminus B(x)$ for every $x \in X$. So for every $x \in X$, $r(x) \in \{1,\ldots,l\}^\Gamma$ and $r(x)_v \ne r(x)_{v+W}$ for every $v \in \Gamma$ and $w \in F$. This proves that $r(x) \in X_{k,F}$ for every $x \in X$. It is easy to check that $r^{(j)}(x) = x$ for every $ 0 \le j \le |F|$ and $x \in X_{k,F}$, so $r(x)=x$ for every $x \in X_{k,F}$. This proves that $r:X \to X_{k,F}$ is indeed a retraction.
\end{proof}


As particular case, when $\Gamma=Z^2$ and  $F = \{(\pm 1,0),(0,\pm 1)\}$  is the standard generating set, the subshift $X_{k,F}$ appearing in the statement of \Cref{prop:k_coloring_RAR} is the subshift of $k$-colorings of the standard Cayley graph of $\Z^d$. It is known that for any $d >1$ the subshift $X_{F_3}$ of proper $3$-colorings of the standard Cayley graph does not contain any strongly irreducible subshift \cite[Proposition 12.2]{MR4311117}, so certainly an analog of \Cref{thm:relative_embedding_thm} does not hold with $Y=X_{3,F}$. The subshift of  $k$-colorings  of the $\Z^2$-lattice, was shown to be strongly irreducible if and only if $k \ge 4$ \cite{MR4247630}.

\section{Entropy minimal subshifts  and necessity of the conditions for embedding}\label{sec:entropy_min_subshifts}
In this section we recall the notion of entropy minimality for subshifts, 
and prove that for an entropy minimal  $\Gamma$-subshift $Y$, the conditions in the statement of \Cref{thm:relative_embedding_thm} are necessary for the existence of an  embedding $\rho:X \to Y$ that extends  $\hat \rho$.

A $\Gamma$-subshift $Y$ is called \emph{entropy minimal}
if the only $\Gamma$-subshift $X \subseteq Y$  that satisfies $h(X)=h(Y)$ is $X=Y$. Equivalently, any proper subshift of $Y$ has strictly lower topological entropy.  

The following easy result is a consequence of the fact that isomorphic subshifts have equal topological entropy and that a topological entropy of a subsystem of $Y$ never exceeds the topological entropy of $Y$. Recall the definition of condition $E(X,Y,\hat \rho,\Gamma_0)$ as defined in \Cref{def:E_X_Y}.

\begin{proposition}
    Let $\Gamma$ be a countable abelian group and let $X,Y$ be $\Gamma$-subshifts, let $Z \subseteq X$ be a closed $\Gamma$-invariant set, and let $\hat \rho:Z \to Y$ be an embedding. If $X \hookrightarrow_{\hat \rho} Y$ then for any subgroup $\Gamma_0 \le \Gamma$ 
    we have that $h(\overline{X}_{\Gamma_0})\le h(\overline{Y}_{\Gamma_0})$.
    Furthermore, if for some subgroup $\Gamma_0 \le \Gamma$ the subshift $\overline{Y}_\Gamma$ is entropy minimal,
    then the condition $E(X,Y,\hat \rho,\Gamma_0)$ holds.
\end{proposition}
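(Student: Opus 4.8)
The plan is to unwind the definitions and use two basic monotonicity facts: that an embedding of $\Gamma$-subshifts restricts to embeddings of the corresponding "stabilizer subsystems," and that topological entropy cannot increase under a map and is an isomorphism invariant. First I would suppose $\rho:X\to Y$ is an embedding extending $\hat\rho$, and fix a subgroup $\Gamma_0\le\Gamma$. Since $\rho$ is $\Gamma$-equivariant and injective, for any $x\in X$ we have $\stab(x)\le\stab(\rho(x))$; moreover injectivity forces equality of stabilizers — indeed if $\sigma_v(\rho(x))=\rho(x)$ then $\rho(\sigma_v(x))=\rho(x)$ hence $\sigma_v(x)=x$. Therefore $\rho$ maps $X_{[\Gamma_0]}$ into $Y_{[\Gamma_0]}$, and in the finite-index case maps $X_{\Gamma_0}$ into $Y_{\Gamma_0}$ (stabilizers are preserved exactly). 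Passing to the quotient $\Gamma/\Gamma_0$, this gives an embedding $\overline{X}_{\Gamma_0}\hookrightarrow\overline{Y}_{\Gamma_0}$ of $(\Gamma/\Gamma_0)$-subshifts, and likewise $\overline{X}_{[\Gamma_0]}\hookrightarrow\overline{Y}_{[\Gamma_0]}$.

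From the embedding $\overline{X}_{\Gamma_0}\hookrightarrow\overline{Y}_{\Gamma_0}$ I immediately get $h(\overline{X}_{\Gamma_0})\le h(\overline{Y}_{\Gamma_0})$, since entropy is monotone under maps; this is the first assertion. For the second assertion, assume additionally that $\overline{Y}_{\Gamma_0}$ is entropy minimal. The image $\rho(\overline{X}_{\Gamma_0})$ is a $(\Gamma/\Gamma_0)$-subshift contained in $\overline{Y}_{\Gamma_0}$. There are two cases. If $h(\overline{X}_{\Gamma_0})<h(\overline{Y}_{\Gamma_0})$, then I need the companion condition $\Ker(Y_{\Gamma_0})\le\Ker(X_{\Gamma_0})$: this follows because if $v\in\Ker(Y_{\Gamma_0})$, then $\sigma_v$ fixes every point of $Y_{\Gamma_0}$, in particular every point of $\rho(X_{\Gamma_0})$, so for each $x\in X_{\Gamma_0}$ we have $\rho(\sigma_v(x))=\sigma_v(\rho(x))=\rho(x)$, hence $\sigma_v(x)=x$ by injectivity, i.e. $v\in\Ker(X_{\Gamma_0})$. (Here I use $X_{\Gamma_0}\ne\emptyset$ when $h(\overline{X}_{\Gamma_0})>-\infty$; if $X_{\Gamma_0}=\emptyset$ the kernel inclusion is vacuous and the entropy inequality is strict by the convention $h(\emptyset)=-\infty<h(\overline{Y}_{\Gamma_0})$, unless $\overline{Y}_{\Gamma_0}$ is also empty, a degenerate case handled separately.) If instead $h(\overline{X}_{\Gamma_0})=h(\overline{Y}_{\Gamma_0})$, then the subshift $\rho(\overline{X}_{\Gamma_0})\subseteq\overline{Y}_{\Gamma_0}$ has full entropy, so by entropy minimality $\rho(\overline{X}_{\Gamma_0})=\overline{Y}_{\Gamma_0}$; thus $\rho$ restricts to a bijective map, i.e. an isomorphism $\overline{X}_{\Gamma_0}\to\overline{Y}_{\Gamma_0}$, equivalently $X_{\Gamma_0}\cong_{\hat\rho}Y_{\Gamma_0}$. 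In either case $E(X,Y,\hat\rho,\Gamma_0)$ holds.

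The only genuinely delicate point is bookkeeping around the two variants $X_{[\Gamma_0]}$ versus $X_{\Gamma_0}$ and the empty/degenerate cases, together with checking that the restricted isomorphism indeed extends $\hat\rho\mid_{Z_{\Gamma_0}}$ — but this last is automatic since $\rho$ extends $\hat\rho$ and the restrictions are taken coherently. I expect the write-up to be short; the substance is entirely the "stabilizers are preserved under an embedding" observation plus entropy minimality, and no new machinery beyond the facts already recorded in \Cref{sec:defs} is needed.
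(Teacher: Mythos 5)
Your proposal is correct and follows exactly the route the paper intends: the paper offers only a one-sentence justification (entropy is an isomorphism invariant and is monotone for subsystems), and your argument supplies the supporting details — that an injective equivariant map preserves stabilizers exactly, hence restricts to embeddings $X_{\Gamma_0}\hookrightarrow Y_{\Gamma_0}$, giving the entropy inequality, the inclusion $\Ker(Y_{\Gamma_0})\le\Ker(X_{\Gamma_0})$, and, via entropy minimality in the equal-entropy case, the isomorphism branch of $E(X,Y,\hat\rho,\Gamma_0)$. No gaps; this matches the paper's (implicit) proof.
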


It is well-known  that any strongly irreducible $\Gamma$-subshift  is entropy minimal, see for instance \cite{MR2645044,MR1764932}.
In particular, if $Y$ is a $\Gamma$-subshift with the map extension property, then by \Cref{lem:stab_of_abs_retract} and \Cref{prop:abs_retract_prop} we have that $\overline{Y}_{\Gamma_0}$ is strongly irreducible for any $\Gamma_0 \le \Gamma$.

It follows that the conditions in the statement of \Cref{thm:relative_embedding_thm} for extending an embedding of a subsystem to an embedding are necessary when the target $Y$ is has the map extension property:
\begin{corollary}
Let $\Gamma$ be a countable abelian group and let $X,Y$ be $\Gamma$-subshifts, where $Y$ has the map extension property let $Z \subseteq X$ be a closed $\Gamma$-invariant set, and let $\hat \rho:Z \to Y$ be an embedding. If $X \hookrightarrow_{\hat \rho} Y$ then for any subgroup $\Gamma_0 \le \Gamma$, the condition  $E(X,Y,\hat \rho,\Gamma_0)$ holds.
\end{corollary}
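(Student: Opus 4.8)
The plan is to reduce the statement entirely to the Proposition stated just above it, which asserts that $E(X,Y,\hat\rho,\Gamma_0)$ holds as soon as $\overline{Y}_{\Gamma_0}$ is entropy minimal. So, fixing an embedding $\rho:X\to Y$ extending $\hat\rho$ and an arbitrary subgroup $\Gamma_0\le\Gamma$, it suffices to prove:
\emph{if $Y$ has the map extension property, then $\overline{Y}_{\Gamma_0}$ is entropy minimal for every $\Gamma_0\le\Gamma$.}
I would split this into two cases according to the index $[\Gamma:\Gamma_0]$.

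First, the case $[\Gamma:\Gamma_0]=\infty$. Here, by \Cref{def:X_Gamma_0}, $Y_{\Gamma_0}=Y_{[\Gamma_0]}$, hence $\overline{Y}_{\Gamma_0}=\overline{Y}_{[\Gamma_0]}$, viewed as a $(\Gamma/\Gamma_0)$-subshift. Since $Y$ has the map extension property, \Cref{lem:stab_of_abs_retract} gives that $\overline{Y}_{[\Gamma_0]}$ also has the map extension property (over the group $\Gamma/\Gamma_0$). Because $[\Gamma:\Gamma_0]=\infty$, the group $\Gamma/\Gamma_0$ is infinite, so \Cref{prop:abs_retract_prop} applies and shows $\overline{Y}_{\Gamma_0}$ is a strongly irreducible shift of finite type. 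Finally, strong irreducibility implies entropy minimality (as recalled just before the statement, citing \cite{MR2645044,MR1764932}), and we are done in this case.

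Second, the case $[\Gamma:\Gamma_0]<\infty$. Now $\overline{Y}_{\Gamma_0}$ is a subshift over the finite group $\Gamma/\Gamma_0$, hence a finite set, with $h(\overline{Y}_{\Gamma_0})=\frac{1}{[\Gamma:\Gamma_0]}\log|\overline{Y}_{\Gamma_0}|$ (and $h(\emptyset)=-\infty$). Any proper $(\Gamma/\Gamma_0)$-subshift $W\subsetneq\overline{Y}_{\Gamma_0}$ is a proper subset of a finite set, so $|W|<|\overline{Y}_{\Gamma_0}|$ and thus $h(W)<h(\overline{Y}_{\Gamma_0})$; hence $\overline{Y}_{\Gamma_0}$ is entropy minimal. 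Combining both cases and invoking the preceding Proposition yields $E(X,Y,\hat\rho,\Gamma_0)$ for every $\Gamma_0\le\Gamma$, completing the proof.

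I do not expect a genuine obstacle here: the real content (the entropy inequality $h(\overline{X}_{\Gamma_0})\le h(\overline{Y}_{\Gamma_0})$ together with the dichotomy giving $E$) is packaged in the Proposition above, and the structural facts about the map extension property are already established in \Cref{lem:stab_of_abs_retract} and \Cref{prop:abs_retract_prop}. The only point requiring mild care is that \Cref{prop:abs_retract_prop} is stated for infinite groups, which is precisely why the finite-index subgroups must be treated separately via the elementary finiteness argument; and one should keep in mind the convention $h(\emptyset)=-\infty$ so that the degenerate case $\overline{Y}_{\Gamma_0}=\emptyset$ is covered.
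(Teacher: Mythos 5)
Your proposal is correct and follows essentially the same route as the paper: reduce to the preceding Proposition by showing $\overline{Y}_{\Gamma_0}$ is entropy minimal, which the paper gets from \Cref{lem:stab_of_abs_retract} and \Cref{prop:abs_retract_prop} (map extension property passes to $\overline{Y}_{[\Gamma_0]}$, hence strong irreducibility, hence entropy minimality). Your separate treatment of finite-index $\Gamma_0$ via the elementary finiteness argument is a sensible refinement, since \Cref{prop:abs_retract_prop} is stated for infinite groups and \Cref{lem:stab_of_abs_retract} concerns $\overline{Y}_{[\Gamma_0]}$ rather than $\overline{Y}_{\Gamma_0}$, a point the paper's one-line justification glosses over.
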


\section{Truncated Voronoi diagrams and Partial tilings}
\label{sec:partial_tilings}

Throughout this section $\Gamma$ will be a finitely generated abelian group of the form
$\Gamma = \Z^d \times G$, where $G$ is a finite abelian group.

\begin{definition}
    A \emph{partial tiling} of $\Gamma$ is a collection of pairwise disjoint finite subsets of $\Gamma$. 
We denote the set of partial tilings of $\Gamma$ by $\PTiling(\Gamma)$.
A \emph{tiling} of $\Gamma$ is a partition of $\Gamma$ into pairwise disjoint finite subsets.
For a partial tiling $\tau \in \PTiling(\Gamma)$  we refer to the elements of $\tau$ as tiles.
\end{definition}
The space $\PTiling(\Gamma)$ admits a natural Polish topology, which we now describe:  We describe a basis for this topology.
For $F \Subset \Gamma$ and $\tau \in \PTiling(\Gamma)$ let $N(F,\tau) \subseteq \PTiling(\Gamma)$ denote the collection of partial tilings $\tilde \tau \in \PTiling(\Gamma)$ that satisfy the property that for $T \subseteq F$ we have that $T \in \tilde \tau$ if and only if $T \in \tau$.
 A closed subset  $P \subseteq \PTiling(\Gamma)$ is compact if and only if every $\gamma \in \Gamma$ is covered by one of finitely many tiles (among all possibilities of tilings in $\Gamma$).
The group $\Gamma$ acts on $\PTiling(\Gamma)$ by translations.
Although we will not use this fact, we mention that any compact $\Gamma$-invariant subsystem of $\PTiling(\Gamma)$ is isomorphic to a $\Gamma$-subshift. 

\begin{definition}
  Let $\tau \in \PTiling(\Gamma)$ be a partial tiling.
 Denote $\bigcup \tau \subseteq \Gamma$ the union of tiles in $\tau$. 
    We say that $\tau \in \PTiling(\Gamma)$ is \emph{$(K,\epsilon)$-invariant} for $K \Subset \Gamma$ and $\epsilon >0$
    if  each $T \in \tau$ is $(K,\epsilon)$-invariant.
    Given a partial tiling $\tau \in \PTiling(\Gamma)$, $T \in \tau$ and $K \Subset \Gamma$, we let 
    \[\partial_K^\tau T = \left\{ v \in \Gamma~:~ (v+K) \cap T \ne \emptyset \mbox{ and } (v+K) \not \subseteq \bigcup \tau\right\}.\]
    We refer to the set $\partial_K^\tau T$ as the \emph{exterior $K$-boundary of $T$ with respect to $\tau$}.
    We say that a partial tiling $\tau$ \emph{has $(K,\epsilon)$-small exterior boundary} if every $T \in \tau$ satisfies 
    \[ |\partial_K^\tau T | \le \epsilon |T|.\]
\end{definition}

\begin{definition}
We identify the space $\{0,1\}^\Gamma$ with the space of subsets of $\Gamma$ via the obvious bijection 
\[ x \in \{0,1\}^\Gamma \Leftrightarrow \left\{ v \in \Gamma~:~ x_v =1\right\}.\]
Given $F\Subset \Gamma$, we say that a $\Gamma$-subshift $Z \subseteq \{0,1\}^\Gamma$ is $F$-separated if every $z \in Z$ is $F$-separated (when viewed as a subset of $\Gamma$).
Given a $\Gamma$-subshift $Z$, $K \Subset \Gamma$ and $\epsilon>0$,  
we say that a map $\tau:Z \to \PTiling(\Gamma)$ is a \emph{$(K,\epsilon)$-invariant tiling of $Z$} if for every $z \in Z$ the partial tiling $\tau(z) \in \PTiling(\Gamma)$ is $(K,\epsilon)$-invariant.
If $Z \subseteq \{0,1\}^\Gamma$ is a $\Gamma$-subshift, we say that a map  $\tau:Z \to \PTiling(\Gamma)$ is a \emph{pointed partial tiling} if for every $z \in Z$ and for every tile $T \in \tau(z)$ there exists a unique $v \in T$ such that $z_v =1$. In that case we say that $v$ is the \emph{center} of the tile $z$.
\end{definition}

Our main goal is to prove that any $\Gamma$-subshift $X$ admits a    ``well-behaved''  continuous, equivariant map into the space of $(K,\epsilon)$-invariant partial tilings, for arbitrary $K \Subset \Gamma$ and $\epsilon >0$:

\begin{proposition}\label{prop:good_partial_tilings}
    Let $\Gamma$ be a finitely generated abelian group, and let $X$ be a $\Gamma$-subshift.
    For every $K \Subset \Gamma$ and $\epsilon >0$ there exists a finite 
    set $P \Subset \Gamma \setminus \{0\}$  so that given:
    \begin{itemize}
        \item A finite subset $W\Subset \Gamma$,
        \item $\epsilon_1 >0$,
        \item A set  of patterns $\mathcal{F} \subseteq \cL_{W}(X)$  so that $x_W \not \in \mathcal{F}$ for any $x \in X$ such that $\stab(x) \cap P = \emptyset$.
    \end{itemize}
    There exists a map $\alpha:X \to \{0,1\}^\Gamma$
    and a pointed $(K,\epsilon)$-invariant tiling $\tau: \alpha(X) \to \PTiling(\Gamma)$ 
     such that  for every $x \in X$:

    \begin{enumerate}
   
        \item If $v \not\in \bigcup\tau(\alpha(x))$ then $\sigma_{v}(x)_{W} \in \mathcal{F}$.
        \item If $v \in \Gamma$ satisfies $\alpha(x)_v=1$ then $\sigma(x)_{W} \not\in \mathcal{F}$ and there exist $T \in \tau(\alpha(x))$ such that $v +K \subseteq T$.
        \item The partial tiling $\tau(\alpha(x)) \in \PTiling(\Gamma)$ has $(W, \epsilon_1)$-small exterior boundary.
      
    \end{enumerate}
    
\end{proposition}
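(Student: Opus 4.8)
The plan is to build the map $\alpha$ and the tiling $\tau$ from a \emph{truncated Voronoi diagram} associated with a carefully chosen marker set, using the Krieger Marker Lemma (\Cref{lem:krieger_marker_lemma}) as the source of the marker set and the Voronoi machinery of this section to guarantee the invariance properties. First I would fix $K\Subset\Gamma$ and $\epsilon>0$; enlarging $K$ we may assume $0\in K$ and $K$ is symmetric. The key point is to choose, \emph{before} seeing $W,\epsilon_1,\mathcal{F}$, a finite symmetric $P\Subset\Gamma\setminus\{0\}$ large enough that: (a) any point $x$ with $\stab(x)\cap P=\emptyset$ has a genuinely "$P$-aperiodic" local structure, so that Krieger's lemma applies to the relevant clopen set, and (b) the Voronoi cells of a maximal $(P+P)$-separated-type set are forced to contain a translate of $K$ and to be $(K,\epsilon)$-invariant. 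For (b) I would invoke \Cref{lem:Voronoi_k_epsilon_inv} (referenced in the excerpt): there is a finite set whose "separation" forces Voronoi cells to be $(K,\epsilon)$-invariant; take $P$ to dominate that set together with $K+K$. This is where the scale separation between the final invariance parameter $(K,\epsilon)$ and the later "window" parameter $W$ is set up — $P$ depends only on $(K,\epsilon)$, exactly as the statement demands.

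Next, given the data $W,\epsilon_1,\mathcal{F}$, I would define the clopen set
\[
V \;=\; \bigl\{ x\in X \;:\; \sigma_v(x)_W\notin\mathcal{F}\ \text{for all } v\ \text{in some fixed finite neighborhood of }0\bigr\},
\]
more precisely $V=\{x\in X: x_W\notin\mathcal F\}$ intersected with enough shifts to make the Voronoi cells well-defined; by the hypothesis on $\mathcal F$, every $x\in V$ satisfies $\stab(x)\cap P=\emptyset$, hence $\sigma_\gamma(x)\ne x$ for every $\gamma$ in the large symmetric set $P'$ built in step one. Krieger's Marker Lemma then yields a clopen $C\subseteq V$ with $C\cap\sigma_\gamma(C)=\emptyset$ for all $\gamma\in P'$ and $V\subseteq C\cup\bigcup_{\gamma\in P'}\sigma_\gamma(C)$. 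Define $\alpha:X\to\{0,1\}^\Gamma$ by $\alpha(x)_v=1$ iff $\sigma_v(x)\in C$. Thus the "$1$-set" of $\alpha(x)$ is a $P'$-separated subset of $\{v:\sigma_v(x)_W\notin\mathcal F\}$ which is also "$P'$-syndetic inside $V$". Now let $\tau(\alpha(x))$ be the \emph{truncated/disjointified Voronoi partial tiling} of $\Gamma$ with centers at this $1$-set: each center $v$ gets the points of $\Gamma$ strictly closer to it than to any other center (ties broken by a fixed rule), but then \emph{removed} if they are within some bounded distance of the "missing region" $\Gamma\setminus(\text{centers}+P')$ — i.e. we keep only the core of each Voronoi cell. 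Because the centers are $P'$-separated with $P'$ dominating $K+K$, each retained cell contains $v+K$, giving (2); because $P'$ also dominates the set from \Cref{lem:Voronoi_k_epsilon_inv}, each retained cell is $(K,\epsilon)$-invariant, so $\tau$ is a pointed $(K,\epsilon)$-invariant tiling. Property (1) holds essentially by construction: the complement of $\bigcup\tau(\alpha(x))$ lies within a bounded distance of $\{v:\sigma_v(x)_W\in\mathcal F\}\cup(\text{inter-cell boundaries})$, and by enlarging the truncation we arrange that such $v$ themselves satisfy $\sigma_v(x)_W\in\mathcal F$ — here I would use that $\mathcal F\subseteq\cL_W(X)$ is "shift-closed enough" only in the weak sense needed, or alternatively simply redefine "$\bigcup\tau$" to be exactly the union of retained cells and check directly that points outside it must have $\sigma_v(x)_W\in\mathcal F$ because they were never eligible to be in a retained cell.

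The main obstacle is property (3): making the retained Voronoi cells have $(W,\epsilon_1)$-small \emph{exterior} boundary, i.e. $|\partial^\tau_W T|\le\epsilon_1|T|$, where $\partial^\tau_W T$ counts points whose $W$-neighborhood meets $T$ but is not contained in $\bigcup\tau$. This is genuinely delicate because $\epsilon_1$ (unlike $\epsilon$) is supplied \emph{after} $P$ is chosen, so the cells cannot simply be made uniformly huge relative to $W$ up front — instead the truncation radius and the marker density must be tuned as functions of $W$ and $\epsilon_1$. The resolution is that the Marker Lemma gives us freedom: we do not use a single $P'$ but first pass to a much sparser marker set, applying Krieger's lemma with a larger symmetric set $P''\supseteq P'$ chosen depending on $W,\epsilon_1$ (the statement only fixes $P$, not $P''$; we may enlarge the separation of the centers as needed). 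With centers $P''$-separated for $P''$ large relative to $W$, a standard packing estimate shows each Voronoi cell has $|\partial_W T|\le\epsilon_1|T|$ — exactly the disjointified-Voronoi estimate behind \Cref{lem:Voronoi_k_epsilon_inv}, applied now with parameter $(W,\epsilon_1)$ rather than $(K,\epsilon)$. The exterior boundary $\partial^\tau_W$ is no worse than the ordinary boundary $\partial_W$ plus the part of the cell near the truncation cut, and both are controlled once the cells are large; I would finish by checking that enlarging $P''$ does not disturb properties (1) and (2), which only require $P''\supseteq$ (the set from step one), and that the resulting $\tau$ is still $(K,\epsilon)$-invariant, which is monotone in the separation of the centers. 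Thus all three properties hold simultaneously, and the dependence is as claimed: $P$ on $(K,\epsilon)$ only, everything else on the later data.
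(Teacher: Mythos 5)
Your overall architecture matches the paper's proof: choose $P$ from the Voronoi invariance lemma (\Cref{lem:Voronoi_k_epsilon_inv}) so that it depends only on $(K,\epsilon)$, apply Krieger's Marker Lemma to the clopen set $V=\{x: x_W\notin\mathcal F\}$, let $\alpha$ record the marker set, and tile by a disjointified truncated Voronoi diagram of the centers. Properties (1) and (2) come out essentially as you describe. But your resolution of property (3) has a genuine gap: you propose to re-run Krieger's Marker Lemma with a larger symmetric set $P''$ depending on $(W,\epsilon_1)$, so as to sparsify the centers and make each cell large relative to $W$. This is not available. The hypothesis on $\mathcal F$ only guarantees that points of $V$ have no period in the \emph{fixed} set $P$; they may very well have periods in $P''\setminus P$, and for such points the conclusion of \Cref{lem:krieger_marker_lemma} with $P''$ is outright impossible: if $x\in V$ satisfies $\sigma_\gamma(x)=x$ for some $\gamma\in P''$ and $x\in\sigma_{\gamma'}(C)$ for some $\gamma'\in P''\cup\{0\}$, then $c:=\sigma_{-\gamma'}(x)\in C$ also has period $\gamma$, so $c\in C\cap\sigma_\gamma(C)$, contradicting the $P''$-marker property; hence such $x$ cannot be covered by $C\cup\bigcup_{\gamma\in P''}\sigma_\gamma(C)$ at all. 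This is exactly the periodic-point obstruction the proposition is designed around, and it is why the statement insists that $P$ be chosen before $W$ and $\epsilon_1$: the density of the markers, and hence the size of the cells, cannot be improved after the fact.

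The paper's way out is different in exactly this step: the marker separation stays at $P$ (so the centers remain only $F$-separated, with $P=(F+F)\setminus\{0\}$), and instead the \emph{truncation radius} $R$ of the Voronoi cells is taken large as a function of $(W,\epsilon_1)$, using \Cref{lem:Voronoi_extrior_bdry} (which rests on the convex annulus estimate of \Cref{lem:convex_r_boundary_basic}). The point you are missing is that the exterior boundary $\partial_W^\tau T$ does not see the interfaces between adjacent cells -- if $v+W$ meets $T$ but stays inside $\bigcup\tau$, it does not count -- so it is supported only near the truncation sphere at distance about $R$ from the center, i.e.\ inside an annulus $B_{R+r_0}\setminus B_{R-r_0}$ intersected with the (convex) cell. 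For $R$ large this annular piece is at most an $\epsilon_1$-fraction of the cell, uniformly over convex cells containing the center, with no lower bound on the cell size and hence no need to sparsify the markers. Enlarging $R$ costs nothing elsewhere: any $R>R_0$ keeps the cells $(K,\epsilon)$-invariant, property (2) is unaffected, and property (1) still holds since points outside all tiles are at distance at least $R>\mathrm{diam}(P)$ from every marker and are therefore forced by the marker-covering property to satisfy $\sigma_v(x)_W\in\mathcal F$. In short: do not trade marker density for $W$; trade truncation radius for $W$.
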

The statement of the \Cref{prop:good_partial_tilings} above is slightly involved because in our application it will be essential that the set $P \Subset \Gamma\setminus \{0\}$ can be chosen depending only on $K \Subset \Gamma$ and $\epsilon >0$, and independently from  $\epsilon_1>0$, $W\Subset \Gamma$ and  $\mathcal{F} \subseteq \cL_{W}(Y)$. In our application the parameters $\epsilon_1>0$, $W\Subset \Gamma$ and $\mathcal{F} \subseteq \cL_{W}(Y)$ will actually depend on the set $P \Subset \Gamma\setminus \{0\}$ and the previously chosen parameters.

In the case where $X$ is an aperiodic subshift, we actually get a map into the space of $(K,\epsilon)$-invariant partial tiling: 
\begin{corollary}\label{cor:apriodic_inv_tiling}[Lightwood \cite{MR1972240}]
    Let $\Gamma$ be a finitely generated abelian group, and let $X$ be an aperiodic $\Gamma$-subshift. Then for any $K \Subset \Gamma$ and $\epsilon$ there exists a map from $\tau:X \to \PTiling(\Gamma)$  such that for every $x \in X$ $\tau(x)$ is a $(K,\epsilon)$-invariant tiling.
\end{corollary}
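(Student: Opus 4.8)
The plan is to deduce this directly from \Cref{prop:good_partial_tilings} by using aperiodicity to trivialize all of the ``bad pattern'' bookkeeping. First I would apply \Cref{prop:good_partial_tilings} to the given $K \Subset \Gamma$ and $\epsilon > 0$, obtaining the finite set $P \Subset \Gamma \setminus \{0\}$. Since $X$ is aperiodic, $\stab(x) = \{0\}$ for every $x \in X$, and because $0 \notin P$ this gives $\stab(x) \cap P = \emptyset$ for every $x \in X$. I would then feed the proposition the trivial auxiliary data: take $W = \{0\}$, any $\epsilon_1 > 0$, and $\mathcal{F} = \emptyset \subseteq \cL_W(X)$. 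The hypothesis of \Cref{prop:good_partial_tilings} that $x_W \not\in \mathcal{F}$ whenever $\stab(x) \cap P = \emptyset$ then holds vacuously, simply because $\mathcal{F}$ is empty. The proposition hands back a map $\alpha : X \to \{0,1\}^\Gamma$ together with a pointed $(K,\epsilon)$-invariant tiling $\tau : \alpha(X) \to \PTiling(\Gamma)$.

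Next I would observe that with $\mathcal{F} = \emptyset$, property (1) of \Cref{prop:good_partial_tilings} forces $\bigcup \tau(\alpha(x)) = \Gamma$ for every $x \in X$: if some $v \in \Gamma$ were not covered by $\tau(\alpha(x))$, then $\sigma_v(x)_W \in \mathcal{F} = \emptyset$, which is absurd. Hence $\tau(\alpha(x))$ is a genuine tiling of $\Gamma$ (a partition of $\Gamma$ into finite tiles), and each of its tiles is $(K,\epsilon)$-invariant because $\tau$ is a $(K,\epsilon)$-invariant tiling. Finally, the composition $\tau \circ \alpha : X \to \PTiling(\Gamma)$ is continuous and $\Gamma$-equivariant, being a composition of two such maps, and by construction $(\tau \circ \alpha)(x)$ is a $(K,\epsilon)$-invariant tiling of $\Gamma$ for each $x \in X$; this is the desired map. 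I do not expect any genuine obstacle in this deduction: all of the real work is contained in \Cref{prop:good_partial_tilings}, and the only point requiring care is checking that the empty choice of $\mathcal{F}$ simultaneously satisfies the proposition's hypothesis and, via property (1), upgrades the output from a partial tiling to an honest tiling.
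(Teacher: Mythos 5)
Your proposal is correct and is essentially the paper's own deduction: apply \Cref{prop:good_partial_tilings} with the given $K$ and $\epsilon$ to get $P$, note aperiodicity gives $\stab(x)\cap P=\emptyset$ for all $x$, take $\mathcal{F}=\emptyset$ (with any $W$ and $\epsilon_1$), and conclude via property (1) that the resulting pointed partial tiling covers all of $\Gamma$, so $\tau\circ\alpha$ is the desired map. Your explicit verification that the empty $\mathcal{F}$ upgrades the partial tiling to a genuine tiling is exactly the step the paper leaves implicit.
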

For context, we explain why \Cref{cor:apriodic_inv_tiling} is a direct consequence of \Cref{prop:good_partial_tilings}:
\begin{proof}
Let $X$ be an aperiodic $\Gamma$-subshift and let $K\Subset \Gamma$ and $\epsilon >0$ be given.
    Apply \Cref{prop:good_partial_tilings} on $X$ with the given $K$ and $\epsilon$.
    We obtain a corresponding set  $P \Subset \Gamma \setminus \{0\}$ (and a map $\alpha:X \to \{0,1\}^\Gamma$). Choose any $W,\tilde W \Subset \Gamma$ and any $\epsilon_1 >0$. Take $\mathcal{F}= \emptyset$ then $\mathcal{F}\subseteq \cL_W(X)$ and since $\stab(x) \cap P = \emptyset$ for any $x \in X$, in particular $x_W \not \in \mathcal{F}$ for any $x \in X$.
    Then there exists a map $\tau:X \to \PTiling(X)$ such that $\tau(x)$ is $(K\epsilon)$-invariant and such that each $\tau(x)$ is a tiling.
\end{proof}

We remark that the assumption that $\Gamma$ is finitely generated can be removed with out much difficulty both in \Cref{cor:apriodic_inv_tiling} and in \Cref{prop:good_partial_tilings}.
We also remark that the conclusion of \Cref{cor:apriodic_inv_tiling} holds for various non-abelian amenable groups. See \cite{MR4539364} and also \cite{bland2022embedding}, although in the more general setting the proof proceeds via different methods.

However, in the presence of points with non-trivial stablizer, one can easily see that it is impossible to get a map into the space of $(K,\epsilon)$-invariant partial tilings for arbitrary $K \Subset \Gamma$  and $\epsilon >0$. 

The proof below of \Cref{prop:good_partial_tilings} proceeds via the notion of Voronoi diagrams (that still makes sense in any metric space, in particular for finitely generated groups). The arguments involve some elementary convex analysis in euclidean spaces. Similar results, proven using similar methods, can be found in the literature \cite{MR1951240,MR3540453,MR1972240}. We present  self-contained proofs here for the sake of completeness and also due to certain less standard variations that we seem to require for our specific application.

Recall that  $\Gamma$ is a finitely generated abelain group of the form $\Gamma=\Z^d \times G$, where $G$ is a finite abelian group. Let $P_{\Gamma,\Z^d}:\Gamma \to \Z^d$ and $P_{\Gamma, G}: \Gamma \to G$ denote the obvious projection maps given by
\[
P_{\Gamma,\Z^d}(v,g) = v \mbox{ for } v\in \Z^d \mbox{ and } g \in G
\]
and
\[
P_{\Gamma,G}(v,g) = g \mbox{ for } v\in \Z^d \mbox{ and } g \in G
\]
On the group $\Gamma = \Z^d \times G$ we consider the metric $\dist_\Gamma:\Gamma \times \Gamma \to [0,+\infty)$ given by:
\[
\dist_\Gamma\left( \gamma_1,\gamma_2\right) = \dist_{\mathbb{R}^d}(P_{\Gamma,\Z^d}(\gamma_1),P_{\Gamma,\Z^d}(\gamma_2)) + \delta(\gamma_1,\gamma_2),~  \gamma_1,\gamma_2 \in \Gamma,
\]
where $\dist_{\mathbb{R}^d}:\mathbb{R}^d \times \mathbb{R}^d \to [0,+\infty)$ is the Euclidean distance in $\mathbb{R}^d$ and
\[
\delta(\gamma_1,\gamma_2) = \begin{cases}
0 & \gamma_1=\gamma_2\\
1 & \mbox{otherwise.}
\end{cases}
\]

\begin{definition}\label{def:Voronoi_diagram}
Let $C \subseteq \Gamma$ be a subset of $\Gamma$.
    The \emph{Voronoi cell} of $c \in C$ with respect to $C$, denoted by $V(c,C)$, is defined be the set of point in $v \in \Gamma$  such that $\dist_\Gamma$ distance between  $c$ and $v$ is smaller than the distance to any other point in $C$:
       \[
V(c,C) =\left\{
\gamma \in \Gamma:~ \dist_\Gamma(c,\gamma) \le \inf_{\tilde c \in C}\dist_\Gamma(\tilde c,\gamma) \right\}.
\]
    The \emph{Voroni diagram} of $C$ is the collection of Voronoi cells.

Given $R >0$ the  \emph{$R$-truncated Voronoi cell} of $c \in C$ with respect to $C$, denoted by $V_R(c,C)$ is the intersection of the Voronoi cell of $c$ with the ball of $\dist_\Gamma$ radius $R$ around $C$:
       \[
V_R(c,C) =\left\{
\gamma \in  V(c,C) ~:~ \dist_\Gamma(c,\gamma) \le R \right\}.
\]
The \emph{$R$-truncated Voronoi diagram} of $C$ is the collection of $R$-truncated Voronoi cells. 
\end{definition}

In general, the $R$-truncated Voronoi cells corresponding to a subset $C$ are not necessarily  pairwise disjoint because it could happen that for some $v \in \Gamma$ there exist  $c_1,c_2 \in C$ with $c_1, \ne c_2$ such that  \[\dist_\Gamma(c_1,\gamma)=\dist_\Gamma(c_1,\gamma)=\inf_{\tilde c \in C}\dist_\Gamma(\tilde c,\gamma) \le R.\] 
For our application we would like to extract a partial tiling from the $R$-truncated Voronoi diagram. For this purpose we need to ``disjointify'' the cells. 

\begin{definition}\label{def:disj_Voronoi}

Fix an arbitrary total order $\prec_\Gamma$ on the group $\Gamma$, with the property that whenever $\gamma_1,\tilde \gamma_1,\gamma_2,\tilde \gamma_2 \in \Gamma$ satisfy that $P_{\Gamma,\Z^d}(\gamma_1)=P_{\Gamma,\Z^d}(\tilde \gamma_1) \ne P_{\Gamma,\Z^d}(\gamma_2)=P_{\Gamma,\Z^d}(\tilde \gamma_2)$ and $\gamma_1 \prec_{\Gamma} \gamma_2$ then $\tilde \gamma_1 \prec \tilde \gamma_2$.  We do not require that the total order $\prec_\Gamma$ is invariant, as this can only be done in the case where $G$ is the trivial group, otherwise $\Gamma$ has non-trivial torsion, hence is not orderable.

The \emph{disjointified $R$-truncated Voronoi cell} of $c \in C$, denoted by $\overline{V}_R(c,C)$ is the subset of $V_R(c,C)$ obtained by ``breaking ties according to $\prec_{\Gamma}$'':
\[
\overline{V}_R(c,C) = \left\{ \gamma \in V_R(c,C)~:~ (\gamma-c) \prec_{\Gamma}  (\gamma-\tilde c) \mbox{ for every } \tilde c \in M(\gamma,C)\setminus \{c\} \right\},
\]     
where $M(\gamma,C) \subseteq C$ is the set is the set of minimizers of $\dist_{\Gamma}(\gamma,\cdot)$ in $C$:
\[
M(\gamma,C) = \left\{ c \in C~:~ \dist_{\Gamma}(\gamma,c) \le \inf_{\tilde c \in C}\dist_{\Gamma}(\gamma,\tilde c)
\right\}.
\]
The \emph{disjointified $R$-truncated Voronoi digaram of $C \subset \Gamma$}, 
denoted $\overline{V}(C)$ is the collection \[ \overline{V}(C) = \{\overline{V}_R(c,C)~:~ c\in C\}\] of disjointified  $R$-truncated Voronoi cells.
\end{definition}

\begin{definition}
    Call a finite set $F \Subset \Gamma$ \emph{convex} if there exists a compact convex subset $\tilde F \subset \mathbb{R}^d$ such that $F= (\tilde F \cap \Z^d) \times G$.
\end{definition}

\begin{lemma}\label{lem:disjoint_Vor_PTiling}
   For any $C \subseteq \Gamma$ and $R>0$ we have:
   \begin{enumerate}
       \item The disjointified $R$-truncated Voronoi diagram of $C$ is a partial tiling of $\Gamma$.
       \item $ (C+B_R^\Gamma )\subseteq \biguplus_{c \in C}\overline{V}_R(c,C)= \bigcup_{c \in C} V_R(c,C)$,
       where \[B_R^\Gamma = \left\{ \gamma \in \Gamma ~:~ \dist_{\mathbb{R}^d}(0,P_{\Gamma,\Z^d}(\gamma)) \le R\right\}.\]
       \item If   $C \subset \Gamma$ is a $(\{0\}\times G)$-separated set
       then every $c \in C$, the  $R$-truncated Voronoi cell $V_R(c,C) \Subset \Gamma$ is  convex.
   \end{enumerate}
\end{lemma}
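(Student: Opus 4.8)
## Proof proposal for Lemma \ref{lem:disjoint_Vor_PTiling}

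The plan is to verify the three assertions in order, since each is essentially a geometric bookkeeping statement about the metric $\dist_\Gamma$ and the total order $\prec_\Gamma$.

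\textbf{Assertion (1): the disjointified cells form a partial tiling.} First I would check that the disjointified cells are pairwise disjoint. Suppose $\gamma \in \overline{V}_R(c_1,C) \cap \overline{V}_R(c_2,C)$ with $c_1 \ne c_2$. Then $\gamma$ lies in both $V_R(c_1,C)$ and $V_R(c_2,C)$, so $\dist_\Gamma(\gamma,c_1) = \dist_\Gamma(\gamma,c_2) = \inf_{\tilde c \in C}\dist_\Gamma(\gamma,\tilde c)$, i.e.\ both $c_1$ and $c_2$ belong to $M(\gamma,C)$. By the defining condition of $\overline{V}_R(c_1,C)$ applied to the competitor $c_2 \in M(\gamma,C)\setminus\{c_1\}$ we get $(\gamma - c_1) \prec_\Gamma (\gamma - c_2)$, and symmetrically $(\gamma - c_2) \prec_\Gamma (\gamma - c_1)$; since $\prec_\Gamma$ is a total (strict) order this is a contradiction. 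Finiteness of each cell is immediate since $\overline{V}_R(c,C) \subseteq V_R(c,C)$ which is contained in the $\dist_\Gamma$-ball of radius $R$ around $c$, and such balls are finite because $\dist_\Gamma$ restricted to the $\Z^d$-component is the Euclidean metric on a lattice (and $G$ is finite). Hence $\overline V(C)$ is a collection of pairwise disjoint finite sets, i.e.\ a partial tiling.

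\textbf{Assertion (2): the covering identity.} The second equality $\biguplus_{c\in C}\overline V_R(c,C) = \bigcup_{c\in C} V_R(c,C)$ is the content of the disjointification: every $\gamma$ that lies in some $V_R(c,C)$ lies in $V_R(c',C)$ for exactly those $c' \in M(\gamma,C)$, and among those finitely many $c'$ there is a unique $\prec_\Gamma$-minimal value of $\gamma - c'$ (here I use that $M(\gamma,C)$ is finite — it is contained in the finite ball of radius $R$ around $\gamma$ — so the minimum exists); that unique $c'$ is the one whose disjointified cell contains $\gamma$. This simultaneously shows the union on the right equals the disjoint union on the left and that the disjoint union notation $\biguplus$ is justified. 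For the inclusion $(C + B_R^\Gamma) \subseteq \bigcup_{c\in C} V_R(c,C)$: let $\gamma = c + b$ with $c \in C$ and $b \in B_R^\Gamma$, so $\dist_\Gamma(\gamma,c) = \dist_{\mathbb R^d}(0,P_{\Gamma,\Z^d}(b)) + \delta(\gamma,c) \le R + 1$. Hmm — this is the one point I should be careful about: the off-by-one from $\delta$. I would handle it by observing that if $\gamma \ne c$ then in fact $\dist_{\mathbb R^d}(0,P_{\Gamma,\Z^d}(b)) \le R$ already forces $P_{\Gamma,\Z^d}(b)$ to be a lattice point in the closed Euclidean $R$-ball, and the relevant comparison in the definition of $V_R$ uses $\dist_\Gamma(c,\gamma) \le R$; one checks directly that $\dist_\Gamma(c,\gamma) = \dist_{\mathbb R^d}(P_{\Gamma,\Z^d}(c), P_{\Gamma,\Z^d}(\gamma)) + \delta(c,\gamma)$ and the definition of $B_R^\Gamma$ via the $\Z^d$-component means we should interpret the truncation consistently — so I would either absorb the $\delta$-term into the choice of $R$ or, cleaner, note that $\gamma$ always lies in the Voronoi cell of its $\dist_\Gamma$-nearest point of $C$ and that nearest point is at distance $\le \dist_\Gamma(\gamma,c) $, and since $c$ itself has $P_{\Gamma,\Z^d}$-distance $\le R$ to $\gamma$, the nearest point does too, putting $\gamma \in V_R(c',C)$ for the minimizer $c'$. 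This establishes the chain of inclusions/equalities.

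\textbf{Assertion (3): convexity of the truncated cell.} Here I assume $C$ is $(\{0\}\times G)$-separated, which by Lemma \ref{lem:separated_Sets}(1) means that $P_{\Gamma,\Z^d}$ is injective on $C$ — no two points of $C$ share a $\Z^d$-coordinate. Fix $c \in C$ and write $c_0 = P_{\Gamma,\Z^d}(c)$. For $\gamma \in \Gamma$ with $P_{\Gamma,\Z^d}(\gamma) = p$: if $\gamma \ne c$, I claim $\dist_\Gamma(\gamma, \tilde c) = \dist_{\mathbb R^d}(p, P_{\Gamma,\Z^d}(\tilde c)) + 1$ for every $\tilde c \in C$ (using $\gamma \ne \tilde c$, which holds because either the $\Z^d$-parts differ or, if $p = P_{\Gamma,\Z^d}(\tilde c)$, then $\tilde c = c$ by injectivity and $\gamma \ne c$). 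Hence membership of $\gamma$ in $V_R(c,C)$ is equivalent to the two Euclidean conditions $\dist_{\mathbb R^d}(p, c_0) \le \dist_{\mathbb R^d}(p, P_{\Gamma,\Z^d}(\tilde c))$ for all $\tilde c \in C$ and $\dist_{\mathbb R^d}(p,c_0) \le R$ (with the $+1$ cancelling on both sides or both being absent when $\gamma = c$, which I treat separately — $c \in V_R(c,C)$ always). The set of $p \in \mathbb R^d$ satisfying $\dist_{\mathbb R^d}(p,c_0) \le \dist_{\mathbb R^d}(p, P_{\Gamma,\Z^d}(\tilde c))$ is a closed half-space (perpendicular bisector), the condition $\dist_{\mathbb R^d}(p,c_0)\le R$ is a closed Euclidean ball, and a finite-or-arbitrary intersection of half-spaces with a ball is a compact convex set $\tilde F \subset \mathbb R^d$. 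Then $V_R(c,C) = (\tilde F \cap \Z^d)\times G$ — I need to double-check the $G$-factor: since the conditions only involve $P_{\Gamma,\Z^d}$, if $\gamma \in V_R(c,C)$ then every $\gamma'$ with $P_{\Gamma,\Z^d}(\gamma') = P_{\Gamma,\Z^d}(\gamma)$ is also in $V_R(c,C)$ provided $\gamma' \ne \tilde c$ for the relevant competitors, which again follows from $P_{\Gamma,\Z^d}$-injectivity on $C$ except possibly at $c$ itself; the case $p = c_0$ contributes the whole fiber $\{c_0\}\times G$ to the cell (all of it is within distance $\le 1 \le R$), consistent with $(\tilde F\cap\Z^d)\times G$. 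So $V_R(c,C)$ is convex in the sense of Definition \ref{def:...}.

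\textbf{Main obstacle.} The only genuinely delicate point is the interaction between the ``$+\delta$'' term in $\dist_\Gamma$ and the radius-$R$ truncation — i.e.\ whether the truncation is effectively by Euclidean radius $R$ or $R\pm 1$ on the $\Z^d$-component, and making sure the $G$-fiber over the center behaves correctly. I would resolve this cleanly at the outset by recording the identity $\dist_\Gamma(\gamma_1,\gamma_2) = \dist_{\mathbb R^d}(P_{\Gamma,\Z^d}\gamma_1, P_{\Gamma,\Z^d}\gamma_2) + \mathbbm{1}[\gamma_1 \ne \gamma_2]$ and doing a once-and-for-all case split on whether the point equals the center, after which all three assertions reduce to standard facts about Euclidean Voronoi diagrams of the point set $P_{\Gamma,\Z^d}(C)$ (which is in bijection with $C$ in part (3)). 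Everything else is routine.
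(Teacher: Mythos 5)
Your proof is correct and follows essentially the same route as the paper: pairwise disjointness via trichotomy of the total order $\prec_\Gamma$ applied to two competing centers in $M(\gamma,C)$, coverage by assigning each covered point to the $\prec_\Gamma$-extremal element of $M(\gamma,C)$ (you correctly say minimal, matching \Cref{def:disj_Voronoi}; the paper's text says ``maximal'', an apparent typo), and convexity of $V_R(c,C)$ by reduction to a Euclidean Voronoi cell intersected with a ball. The off-by-one from the $\delta$-term that you flag in part (2) is in fact an imprecision in the lemma's statement itself (with the truncation taken in $\dist_\Gamma$, the set $C+B_R^\Gamma$ is only covered up to a unit margin), which the paper's proof also glosses over and which is harmless in the application; your detailed argument for part (3) simply fills in the classical fact the paper cites, with the added benefit of making explicit where the $(\{0\}\times G)$-separation hypothesis is used.
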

\begin{proof}
\begin{enumerate}
    \item   Fix $C \subseteq \Gamma$  
    and $R>0$. 
    For every $c \in c$ the  disjointified  $R$-truncated Voronoi cell $\overline{V}_R(c,C)$ is a finite, because the number of elements in $\Gamma$ whose distance from $c$ is bounded by $R$ is finite.
    We need to prove that $\overline{V}_R(c_1,C) \cap \overline{V}_R(c_2,C)$ whenever $c_1,c_2 \in  C$ and $c_1 \ne c_2$. Choose any district elements $c_1,c_2 \in C$. Suppose that $v \in \overline{V}_R(c_1,C) \cap \overline{V}_R(c_1,C)$. In particular, $c_1,c_2 \in M(\gamma,C)$. 
    Since $c_1 \ne c_2$ it follows that $v- c_1 \ne v- c_2$ so either $v-c_1 \prec_{\Gamma} v- c_2$ or 
    $v-c_2 \prec_{\Gamma} v- c_1$.
    In the first case we have that $v \not \in  \overline{V}_R(c_2,C)$ and in the second case we have that $v \not \in  \overline{V}_R(c_1,C)$, either way we have a contradiction.
    \item If $v \in C+ B_R^\Gamma$ then there exists $c \in C$ such that $\dist_{\mathbb{R}^d}(P_{\Gamma,\Z^d}(v)) \le R$. It follows that $v \in \overline{V}_R(\tilde c,C)$,
    where $\tilde c \in M(v,C)$ is the element of $M(v,c)$ such that $(v-\tilde c)$ is maximal with respect to $\prec_{\Gamma}$.
    \item The fact that  every cell of the  disjointified $R$-truncated Voronoi cell $\overline{V}_R(c,C) \Subset \Gamma$ is  convex follows from the classical and well-known that every cell of a Voronoi diagram of a discrete subset of $\mathbb{R}^d$ is a convex polygon.
\end{enumerate}
  
\end{proof}

The following simple lemma is closely related to  Lemma 3.5 in \cite{MR1972240}, except that in \cite{MR1972240} the Voronoi diagram  of syndetic sets was considered, rather than the disjointified $R$-truncated Voronoi diagram of an arbitrary subset of $\Gamma$ (also in \cite{MR1972240} only the case $\Gamma = \Z^d$ was considered, though this it is a trivial modification to consider the case $\Gamma= \Z^d \times G$, with $G$ finite):

\begin{lemma}\label{lem:R_truncted_voronoi continuous}
    For any $R>0$ the function  $V_R:\{0,1\}^\Gamma \to \PTiling(\Gamma)$ that sends $V$ to the disjointified $R$-truncated Voronoi diagram of (the set corresponding to) $z$ is continuous and equivariant.
\end{lemma}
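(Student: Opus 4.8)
The plan is to treat equivariance and continuity separately, in each case reducing to the single pointwise question of which disjointified $R$-truncated Voronoi cell a given $\gamma \in \Gamma$ falls into.

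\emph{Equivariance.} First I would record that $\dist_\Gamma$ is translation invariant, since both $\dist_{\mathbb{R}^d}\circ(P_{\Gamma,\Z^d}\times P_{\Gamma,\Z^d})$ and $\delta$ are. Consequently $V(c-w,C-w)=V(c,C)-w$, $V_R(c-w,C-w)=V_R(c,C)-w$, and $M(\gamma,C-w)=M(\gamma+w,C)-w$ for all $c\in C$ and $w\in\Gamma$. The one point needing attention is that the order $\prec_\Gamma$ is \emph{not} translation invariant; but the disjointification rule only ever compares the \emph{differences} $\gamma-c$ and $\gamma-\tilde c$, which are unchanged when $\gamma$ and $c,\tilde c$ are all translated by $w$. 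Hence $\gamma\in\overline{V}_R(c-w,C-w)$ iff $\gamma+w\in V_R(c,C)$ and $\bigl((\gamma+w)-c\bigr)\prec_\Gamma\bigl((\gamma+w)-\tilde c\bigr)$ for every $\tilde c\in M(\gamma+w,C)\setminus\{c\}$, i.e.\ iff $\gamma+w\in\overline{V}_R(c,C)$. This gives $\overline{V}_R(c-w,C-w)=\overline{V}_R(c,C)-w$, and since the shift $\sigma_w$ on $\{0,1\}^\Gamma$ corresponds to the translation $C\mapsto C-w$ of sets, the map $V_R$ intertwines the translation actions.

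\emph{Continuity.} The central observation is a locality property: writing $B(\gamma)=\{v\in\Gamma:\dist_\Gamma(\gamma,v)\le R\}$ (a finite set, since $\dist_\Gamma$ dominates the Euclidean distance on the $\Z^d$-coordinate and $G$ is finite), membership $\gamma\in\overline{V}_R(c,C)$ depends only on $\gamma$, $c$, and $C\cap B(\gamma)$. Indeed, if $\dist_\Gamma(\gamma,c)>R$ then $\gamma\notin V_R(c,C)$ automatically, while if $\dist_\Gamma(\gamma,c)\le R$ then $\inf_{\tilde c\in C}\dist_\Gamma(\gamma,\tilde c)\le R$, so membership in $V_R(c,C)$, the minimizer set $M(\gamma,C)$, and hence the disjointification condition are all computed from $C\cap B(\gamma)$ alone. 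Now fix $F\Subset\Gamma$ and let $F''=\{v\in\Gamma:\dist_\Gamma(v,F)\le 2R\}$, which is finite. I claim that if $z,z'\in\{0,1\}^\Gamma$ agree on $F''$, then $V_R(z)$ and $V_R(z')$ have exactly the same tiles contained in $F$; granting the claim, $V_R^{-1}(N(F,\tau_0))$ is a union of $F''$-cylinders for every basic open set $N(F,\tau_0)$, hence open, so $V_R$ is continuous. To prove the claim, let $T\subseteq F$ be a tile of $V_R(z)$, say $T=\overline{V}_R(c,C)$ with $c\in C$ (here $C=z^{-1}(1)$ and $C'=(z')^{-1}(1)$); since $c\in\overline{V}_R(c,C)$ we get $c\in T\subseteq F$, so $c\in C'$ as well. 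For $\gamma$ with $\dist_\Gamma(\gamma,c)\le R$ the triangle inequality together with $c\in F$ gives $B(\gamma)\subseteq F''$, hence $C\cap B(\gamma)=C'\cap B(\gamma)$, and therefore $\gamma\in\overline{V}_R(c,C)$ iff $\gamma\in\overline{V}_R(c,C')$; for $\gamma$ with $\dist_\Gamma(\gamma,c)>R$ both memberships fail. Thus $\overline{V}_R(c,C)=\overline{V}_R(c,C')$, so $T$ is a tile of $V_R(z')$, and the reverse inclusion is symmetric in $z$ and $z'$.

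I do not expect a genuine obstacle. The two points that require a little care are: (i) the disjointification compares the translation-invariant differences $\gamma-c$, which is exactly what lets equivariance survive despite $\prec_\Gamma$ not being invariant; and (ii) the localization window must be the $2R$-neighborhood $F''$ of $F$, not merely the $R$-neighborhood, because certifying the equality $T=\overline{V}_R(c,C)$ as subsets of $\Gamma$ requires knowing $C$ on the $R$-ball around \emph{every} point of $T$, and such balls can protrude beyond the $R$-neighborhood of $F$.
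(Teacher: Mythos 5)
Your proof is correct and follows the same underlying ideas as the paper, whose own proof is a one-liner (well-definedness via \Cref{lem:disjoint_Vor_PTiling} plus the remark that equivariance follows from translation invariance of the distance). Your write-up simply supplies the details the paper leaves implicit — in particular the observation that the tie-breaking order $\prec_\Gamma$ only enters through the translation-invariant differences $\gamma-c$, and the locality-within-a-$2R$-neighborhood argument for continuity — both of which are accurate.
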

\begin{proof}
     By \Cref{lem:disjoint_Vor_PTiling} $\overline{V}_R(z) \in \PTiling(\Gamma)$ for every $z \in Z$. Equivariance follows directly from the fact that Euclidean distances are invariant under translations in $\mathbb{R}^d$.
\end{proof}

The following result is implicitly contained in \cite{MR1972240}, and also in  \cite{MR1951240}, where it was shown  that for any $\Z^d$ subshift $X$ we have $h(X) = \lim_{n \to \infty}\frac{1}{|C_n|}\log |\cL_{C_n}(X)|$  for any sequence $(C_n)_{n=1}^\infty$ of bounded convex subsets in $\Z^d$ such that for every $r >0$ there exist $n_0 \in \mathbb{N}$ so that $C_n$ contains a ball or radius $r$ for all $n \ge n_0$. A similar statement (with a similar proof) can also be found in \cite[Lemma 3.5]{MR3540453}. Again, we include a proof for completeness. 

\begin{lemma}\label{lem:conv_is_invariant}
For any $K \Subset \Gamma$ and any $\epsilon >0$ there exists  a finite set $F_0 \Subset \Gamma$ such that any convex set $F \Subset \Gamma$ with $F_0\subseteq F$ is $(K,\epsilon)$-invariant.
\end{lemma}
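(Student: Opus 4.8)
The plan is to reduce the statement to a purely Euclidean fact about convex bodies and then transfer it back to $\Gamma = \Z^d \times G$. First I would observe that since $K \Subset \Gamma$, there is a radius $r>0$ with $K \subseteq B_r^\Gamma$, and it suffices to prove the statement for $K = B_r^\Gamma$, because $\partial_{K'}F \subseteq \partial_{K}F$ whenever $K' \subseteq K$. Next, writing $F = (\tilde F \cap \Z^d)\times G$ for a compact convex $\tilde F \subseteq \mathbb{R}^d$, one checks directly from the definition of the metric $\dist_\Gamma$ that $v=(u,g) \in \partial_{B_r^\Gamma}F$ forces $u$ to lie within Euclidean distance $r$ of the topological boundary $\partial \tilde F$; hence
\[
|\partial_{B_r^\Gamma}F| \;\le\; |G|\cdot \bigl|\{u \in \Z^d : \dist_{\mathbb{R}^d}(u,\partial\tilde F)\le r\}\bigr|,
\qquad |F| = |G|\cdot|\tilde F \cap \Z^d|.
\]
So the $|G|$ factors cancel and everything comes down to the following: for every $r>0$ and $\epsilon>0$ there is $\rho>0$ such that every compact convex $\tilde F\subseteq\mathbb{R}^d$ containing a ball of radius $\rho$ satisfies $|\{u\in\Z^d:\dist(u,\partial\tilde F)\le r\}| \le \epsilon\,|\tilde F\cap\Z^d|$; taking $F_0$ to be any convex set in $\Gamma$ whose $\Z^d$-projection contains a ball of radius $\rho$ then does the job, since $F_0 \subseteq F$ forces $\tilde F$ to contain such a ball.

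For this Euclidean estimate I would use the standard comparison between lattice-point counts and volumes for convex bodies not too small relative to the lattice: there are dimensional constants $c_d, C_d$ with $c_d\,\mathrm{vol}(\tilde F) \le |\tilde F\cap\Z^d| \le C_d\,\mathrm{vol}(\tilde F)$ once $\tilde F$ contains a unit ball (the lower bound uses convexity — cover a slightly shrunken copy by unit cubes centered at lattice points), and similarly the $r$-neighborhood of $\partial\tilde F$ is contained in $(\tilde F + B_r)\setminus(\tilde F \ominus B_r)$, a set whose volume is bounded, via the Steiner formula (or simple monotonicity of intrinsic volumes), by $C_d' \, r \, (1 + \mathrm{diam}\,\tilde F)^{d-1}$ — more precisely by a constant times $r$ times the surface area of $\tilde F + B_r$. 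The key geometric input is the isoperimetric-type bound: for a convex body containing a ball of radius $\rho$, the surface area is at most $\frac{d}{\rho}\,\mathrm{vol}(\tilde F + B_r) \le \frac{d}{\rho}\,C\,\mathrm{vol}(\tilde F)$ for $\rho$ large compared to $r$, because an inscribed ball of radius $\rho$ lets one bound the "boundary shell" thickness $r$ against the body's volume (concretely, $\tilde F \setminus (\tilde F\ominus B_r)$ has volume at most $(1-(1-r/\rho)^d)\,\mathrm{vol}(\tilde F)$ by scaling $\tilde F$ about the inscribed center, and the outward layer $(\tilde F+B_r)\setminus\tilde F$ is controlled similarly). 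Choosing $\rho = \rho(r,\epsilon,d)$ large enough makes the ratio of these volume bounds less than $\epsilon/C$ for the appropriate constant $C$, and translating back through the lattice-point/volume comparisons gives the claim.

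The main obstacle I anticipate is making the boundary-shell volume bound genuinely uniform over \emph{all} convex bodies containing a ball of radius $\rho$, with no upper bound on the diameter — a long thin convex body has surface area huge relative to an inscribed ball of fixed radius $\rho$, so one must be careful that the inscribed ball is required to have radius growing with the desired invariance, not a fixed radius. The clean way around this is the scaling argument just sketched: if $x_0$ is the center of an inscribed ball of radius $\rho$, then $(1-r/\rho)(\tilde F - x_0) + x_0 \subseteq \tilde F \ominus B_r$, so $\mathrm{vol}(\tilde F \ominus B_r) \ge (1-r/\rho)^d\,\mathrm{vol}(\tilde F)$, and dually $\mathrm{vol}(\tilde F + B_r) \le (1+r/\rho)^d\,\mathrm{vol}(\tilde F)$ using the same inscribed ball together with convexity (every point of $\tilde F + B_r$ lies in the dilate $(1 + r/\rho)(\tilde F - x_0) + x_0$). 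Hence the shell has volume at most $\bigl((1+r/\rho)^d - (1-r/\rho)^d\bigr)\mathrm{vol}(\tilde F)$, which is $O(r/\rho)\,\mathrm{vol}(\tilde F)$ uniformly, with no diameter dependence at all — this is the crucial point. After that, the passage back to $\Z^d$-point counts and then to $\Gamma$ is routine bookkeeping with the constants $c_d, C_d$ and the factor $|G|$.
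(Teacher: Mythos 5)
Your proposal is correct and takes essentially the same route as the paper's proof: reduce to a Euclidean statement, compare lattice-point counts with volumes, and control the $r$-neighborhood of $\partial \tilde F$ by the dilation/contraction trick about an inscribed ball, yielding the uniform bound $\bigl((1+r/\rho)^d-(1-r/\rho)^d\bigr)m_d(\tilde F)$, which is precisely \Cref{lem:conv_Rd_bdry} (the paper scales about the origin since its $F_0$ is a centered box, while you scale about the inscribed-ball center --- an immaterial difference). The only bookkeeping point is that $F_0\subseteq F$ only gives you the convex hull of the lattice points of $\tilde F_0$ inside $\tilde F$, so you should ask $F_0$'s projection to contain a ball of radius $\rho+\sqrt d$ to guarantee a $\rho$-ball in $\tilde F$; this is routine and does not affect the argument.
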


For a subset $A \Subset \mathbb{R}^d$ and $t \in (0,\infty)$ we use the notation $t A =\left\{ tv~:~ v \in A\right\}$.
Also, for $t>0$ let $B^d_{t} \subseteq \mathbb{R}$ denote the Euclidean ball of radius $t$ around $0$.
We  denote the Lebesgue measure of a measurable set $A \subseteq \mathbb{R}^d$ by $m_d(A)$.
For a set $\tilde F \subseteq \mathbb{R}^d$, $\partial \tilde F$ denotes the boundary of $\tilde F$  with respect to the usual topology on $\mathbb{R}^d$. Given $r_0 >0$ we denote
\[ \partial_{r_0} \tilde F  = \partial \tilde F + B^d_{r_0}.\]

The proof of \Cref{lem:conv_is_invariant} is based on the following lemma about convex sets in $\mathbb{R}^d$:

\begin{lemma}\label{lem:conv_Rd_bdry}
 For any convex compact set  $\tilde F \subseteq \mathbb{R}^d$ that contains $B^d_r$ , the Euclidean ball of radius $r$ around $0$ and any  $0< r_0 < r$, the following inclusion holds:
\begin{equation}
    \partial_{r_0} \tilde F  \subseteq \overline{\left(1+ \frac{r_0}{r}\right)\tilde F \setminus \left( 1- \frac{r_0}{r}\right)\tilde F}.
\end{equation}
Thus, 
\[
m_d( \partial_{r_0} \tilde F) \le \left( ( 1+\frac{r_0}{r})^d -( 1-\frac{r_0}{r})^d \right) m_d(\tilde F).
\]
\end{lemma}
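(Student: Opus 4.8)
The plan is to reduce both inclusions to the statement that, for a convex body $\tilde F$ containing the ball $B^d_r$, scaling $\tilde F$ by a factor $1+r_0/r$ fattens it by at least $r_0$ in every direction, while scaling by $1-r_0/r$ shrinks it by at least $r_0$ in every direction. First I would fix a point $p \in \partial_{r_0}\tilde F$, so that $p = q + u$ with $q \in \partial\tilde F$ and $\|u\| \le r_0$. The key geometric input is the following: for any boundary point $q$ of a convex body containing $B^d_r$, we have
\[
q + B^d_{r_0} \subseteq \left(1+\tfrac{r_0}{r}\right)\tilde F
\quad\text{and}\quad
\left(1-\tfrac{r_0}{r}\right)\tilde F \subseteq \tilde F \setminus \left(q + B^d_{r_0}\right)^{\!\circ},
\]
where the second inclusion means the shrunk body stays $r_0$-away from the boundary point $q$. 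Granting these two facts, the first gives $p \in (1+r_0/r)\tilde F$, and the second gives $p \notin ((1-r_0/r)\tilde F)^\circ$, so $p$ lies in the closure of the set-difference, establishing the displayed inclusion.

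To prove the two geometric facts I would argue as follows. For the first, write a generic point of $q + B^d_{r_0}$ as $q + u$ with $\|u\| \le r_0$; since $B^d_r \subseteq \tilde F$, the point $(r/r_0)u \in B^d_r \subseteq \tilde F$ (when $u\neq 0$; the case $u=0$ is trivial since $q\in\tilde F$), and writing
\[
q + u = \left(1+\tfrac{r_0}{r}\right)\left(\tfrac{r}{r+r_0}\, q + \tfrac{r_0}{r+r_0}\cdot \tfrac{r}{r_0}u\right)
\]
exhibits $q+u$ as $(1+r_0/r)$ times a convex combination of $q \in \tilde F$ and $(r/r_0)u \in \tilde F$, hence $q+u \in (1+r_0/r)\tilde F$. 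For the second, take any $y \in (1-r_0/r)\tilde F$, say $y = (1-r_0/r)x$ with $x \in \tilde F$, and use a supporting hyperplane of $\tilde F$ at $q$: if $\langle n, \cdot\rangle \le \langle n, q\rangle$ on $\tilde F$ with $\|n\|=1$, then since $B^d_r \subseteq \tilde F$ we get $\langle n,q\rangle \ge r$, and $\langle n, y\rangle = (1-r_0/r)\langle n,x\rangle \le (1-r_0/r)\langle n,q\rangle = \langle n,q\rangle - (r_0/r)\langle n,q\rangle \le \langle n,q\rangle - r_0$, so $y$ is at distance at least $r_0$ from the hyperplane through $q$, hence $\|y-q\|\ge r_0$; thus $y \notin (q+B^d_{r_0})^\circ$. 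This is the crux of the argument — the supporting-hyperplane bound combined with $\langle n,q\rangle \ge r$ is what turns the multiplicative scaling into an additive $r_0$ margin.

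Finally, the measure estimate follows immediately by monotonicity and scaling of Lebesgue measure: from the inclusion just proved,
\[
m_d(\partial_{r_0}\tilde F) \le m_d\!\left(\left(1+\tfrac{r_0}{r}\right)\tilde F\right) - m_d\!\left(\left(1-\tfrac{r_0}{r}\right)\tilde F\right) = \left(\left(1+\tfrac{r_0}{r}\right)^d - \left(1-\tfrac{r_0}{r}\right)^d\right) m_d(\tilde F),
\]
using $\left(1-\tfrac{r_0}{r}\right)\tilde F \subseteq \left(1+\tfrac{r_0}{r}\right)\tilde F$ (valid since $\tilde F$ is convex and contains the origin, as $B^d_r \subseteq \tilde F$) and $m_d(t\tilde F) = t^d m_d(\tilde F)$. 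I expect the only subtle point to be handling degenerate or boundary cases cleanly (e.g. $u = 0$, or ensuring the supporting hyperplane exists, which is standard for convex bodies), but these are routine; the main conceptual step is the supporting-hyperplane argument for the inner inclusion.
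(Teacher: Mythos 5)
Your proposal is correct, and it diverges from the paper's proof in a way worth noting. For the outer inclusion the two arguments are essentially the same: your identity $q+u=\left(1+\tfrac{r_0}{r}\right)\left(\tfrac{r}{r+r_0}q+\tfrac{r_0}{r+r_0}\cdot\tfrac{r}{r_0}u\right)$ is the pointwise form of the paper's Minkowski-sum computation, which observes $B^d_{r_0}\subseteq\tfrac{r_0}{r}\tilde F$ and then uses $t_1A+t_2A=(t_1+t_2)A$ for convex $A$ to get $\partial_{r_0}\tilde F\subseteq\tilde F+\tfrac{r_0}{r}\tilde F=\left(1+\tfrac{r_0}{r}\right)\tilde F$. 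Where you take a genuinely different route is the inner inclusion: the paper reuses the same scaling trick, showing $t\tilde F+B^d_{r_0}\subseteq\left(t+\tfrac{r_0}{r}\right)\tilde F\subseteq\tilde F\setminus\partial\tilde F$ for every $t<1-\tfrac{r_0}{r}$, so that $\partial_{r_0}\tilde F\subseteq\overline{\mathbb{R}^d\setminus\left(1-\tfrac{r_0}{r}\right)\tilde F}$, whereas you use a supporting hyperplane at the boundary point $q$ together with $\langle n,q\rangle\ge r$ to convert the multiplicative shrinking into the additive margin $\|y-q\|\ge r_0$. Both are sound; the paper's version is two lines of set arithmetic, yours is more quantitative and would generalize to statements about distances to the boundary. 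Two small points to tighten, both of the routine kind you anticipated: (i) the step ``$p\in\left(1+\tfrac{r_0}{r}\right)\tilde F$ and $p\notin\left(\left(1-\tfrac{r_0}{r}\right)\tilde F\right)^{\circ}$ imply $p\in\overline{\left(1+\tfrac{r_0}{r}\right)\tilde F\setminus\left(1-\tfrac{r_0}{r}\right)\tilde F}$'' is not valid for arbitrary sets; here it holds because $\left(1-\tfrac{r_0}{r}\right)\tilde F$ lies in the interior of $\left(1+\tfrac{r_0}{r}\right)\tilde F$ (a dilation by a factor $>1$ of a convex body with $0$ in its interior), so if $p$ meets the smaller set it is interior to the larger one and every neighborhood of $p$ contains points of the difference; (ii) passing from the inclusion into a closure to the measure inequality uses, as the paper notes explicitly, that boundaries of convex sets are Lebesgue-null.
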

\begin{proof}
By assumption,  $B^d_{r} \subseteq \tilde F$, so $B^d_{r_0} \subseteq \frac{r_0}{r} \tilde F$.
It follows that
\[\partial_{r_0} \tilde F = \partial \tilde F+ B^d_{r_0}  \subseteq \partial \tilde F + \frac{r_0}{r} \tilde F \subseteq \tilde F +  \frac{r_0}{r} \tilde F = \left( 1+ \frac{r_0}{r}\right) \tilde F.\]
In the last equality we use the fact that for a convex set $A \subseteq \mathbb{R}^d$ and $t_1,t_2 >0$ it holds that $t_1 A+ t_2A = (t_1+t_2)A$.

Also, by similar considerations for any $t < \left (1-\frac{r_0}{r}\right)$ we have that
\[ t\tilde F + B^d_{r_0} \subseteq t - \frac{r_0}{r} \tilde F \subseteq \tilde F \setminus \partial \tilde F.\]
It follows that $\partial_{r_0}\tilde F \subseteq \overline{\mathbb{R}^d \setminus \left( 1- \frac{r_0}{r}\right)\tilde F}$.
This proves that
\[
\partial_{r_0} \tilde F  \subseteq \overline{\left(1+ \frac{r_0}{r}\right)\tilde F \setminus \left( 1- \frac{r_0}{r}\right)\tilde F}.
\]
Thus (because the boundary of convex sets has zero Lebesgue measure),
\[
m_d\left(\partial_{r_0} \tilde F   \right) \le m_d\left(\left(1+ \frac{r_0}{r}\right)\tilde F \right)-  m_d\left(\left(1- \frac{r_0}{r}\right)\tilde F \right).
\]
From this it follows that
\[
m_d( \partial_{r_0} \tilde F) \le \left( ( 1+\frac{r_0}{r})^d -( 1-\frac{r_0}{r})^d \right) m_d(\tilde F).
\]
\end{proof}
For $k \in \mathbb{N}$ we denote
\[B_k = \{-k,\ldots,k\}^d \times G \Subset \Gamma.\]

\begin{proof}[Proof of \Cref{lem:conv_is_invariant}]

For any finite subset $K \Subset \Gamma$ we can find $r_0 >0$ such that 
$K \subseteq (B_{r_0}^d \cap \Z^d) \times G$.
For such $r_0 >0$, and  any convex set $\tilde F\subseteq \mathbb{R}^d$ it holds:
\[|\partial_K \left((\tilde F \cap \Z^d) \times G\right)| \le |G| \cdot m_d(\partial_{r_0} \tilde F).\]

Also, for any convex set $\tilde F \subseteq \mathbb{R}^d$:
\[\left| |G| \cdot m_d(\tilde F) - |(\tilde F \cap \Z^d) \times G| \right| \le m_d(\partial_{\sqrt{d}} \tilde F).\]

Let $F \subset \Gamma$ be a convex set that contains $B_j$. By our definition of a convex set in $\Gamma$, there exists a convex set $\tilde F \subset \Z^d$ so that $ F = (\tilde F \cap \Z^d) \times G$. Since $B_j \subset F$. If $j$ is sufficiently big it follows that $B_{r}^d \subseteq \tilde F$.

It thus suffices to prove that for any $r_0 >0$ and any $\delta >0$ there exists $r>0$ such that for any compact convex subset  $\tilde F \subseteq \mathbb{R}^d$ that contains the Euclidean ball of radius $r$ around $0$ the following inequality holds:
\[
m_d(\partial_{r_0} \tilde F) \le \delta \cdot m_d(\tilde F).
\]
Fix $r_0 >0 $ and $\delta >0$.
Since 
\[ \lim_{r \to \infty} ( 1+\frac{r_0}{r})^d -( 1-\frac{r_0}{r})^d  =0,\]
we can choose $r >0$ sufficiently big so that 
\[
( 1+\frac{r_0}{r})^d -( 1-\frac{r_0}{r})^d < \delta.
\]

Then by \Cref{lem:conv_Rd_bdry} it follows that $m_d(\partial_{r_0} \tilde F) \le \delta \cdot m_d(\tilde F)$ for any convex set $\tilde F$ that contains $B_{r}^d$.
\end{proof}

We now proceed to state and prove results needed to ensure that the cells of the disjointified $R$-truncated Voronoi diagram have ``sufficiently small external boundary'', provided that $R$ is sufficiently big.
\begin{lemma}\label{lem:convex_r_boundary_basic}
For any convex set  $A \subseteq \mathbb{R}^d$ and any $0 < r_0 <R$  such that $0 \in A$ the following inequality holds:
\begin{equation}\label{eq:ext_bdr_ineq}
    m_d\left( A \cap  B_{R+r_0} \setminus B_{R-{r_0}} \right) \le  \left( \frac{(R+r_0)^d-(R-r_0)^d}{(R-r_0)^d}\right)m_d(A). 
\end{equation}
\end{lemma}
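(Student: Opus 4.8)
The plan is to use the radial scaling $x \mapsto \lambda x$ with $\lambda = \frac{R-r_0}{R+r_0} \in (0,1)$, which maps $B_{R+r_0}$ onto $B_{R-r_0}$. First I would note that since $0 \in A$ and $A$ is convex, $\lambda A \subseteq A$; intersecting with balls and using $\lambda B_{R+r_0} = B_{R-r_0}$ gives
\[
\lambda\bigl(A \cap B_{R+r_0}\bigr) = (\lambda A) \cap B_{R-r_0} \subseteq A \cap B_{R-r_0}.
\]
Taking Lebesgue measure and recalling that scaling by $\lambda$ in $\mathbb{R}^d$ multiplies volume by $\lambda^d$, this yields $\lambda^d\, m_d(A \cap B_{R+r_0}) \le m_d(A \cap B_{R-r_0})$.

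Next, since $B_{R-r_0} \subseteq B_{R+r_0}$, the left-hand side of \eqref{eq:ext_bdr_ineq} equals $m_d(A \cap B_{R+r_0}) - m_d(A \cap B_{R-r_0})$, which by the previous inequality is at most $(1 - \lambda^d)\, m_d(A \cap B_{R+r_0})$. A direct computation gives $1 - \lambda^d = \bigl((R+r_0)^d - (R-r_0)^d\bigr)/(R+r_0)^d$. Finally, bounding $m_d(A \cap B_{R+r_0}) \le m_d(A)$ and enlarging the coefficient by replacing the denominator $(R+r_0)^d$ with the smaller quantity $(R-r_0)^d$ produces exactly the claimed inequality \eqref{eq:ext_bdr_ineq}.

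The argument is essentially a routine scaling estimate; the only points to keep track of are that the hypothesis $0 \in A$ is genuinely needed for the inclusion $\lambda A \subseteq A$, and that when $m_d(A) = \infty$ the statement is trivial, since the left-hand side is always finite (it lies inside the bounded set $B_{R+r_0}$). Hence there is no real obstacle here.
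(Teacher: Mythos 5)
Your proof is correct, and it takes a different route from the paper. The paper works in polar coordinates: it sets $A_t=\{v\in S^{d-1}: tv\in A\}$, writes $m_d(A)=c\int_0^\infty t^{d-1}H_{d-1}(A_t)\,dt$, and uses that $t\mapsto A_t$ is decreasing (which is where convexity and $0\in A$ enter) to bound the annular piece by $\frac{c\,H_{d-1}(A_{R-r_0})}{d}\bigl((R+r_0)^d-(R-r_0)^d\bigr)$ and $m_d(A)$ from below by $\frac{c\,H_{d-1}(A_{R-r_0})}{d}(R-r_0)^d$. You instead exploit the same star-shapedness through a single dilation: with $\lambda=\frac{R-r_0}{R+r_0}$ the inclusion $\lambda\,(A\cap B_{R+r_0})\subseteq A\cap B_{R-r_0}$ gives $\lambda^d\,m_d(A\cap B_{R+r_0})\le m_d(A\cap B_{R-r_0})$, and the rest is arithmetic. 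Your argument is more elementary (no spherical decomposition or Hausdorff measure, just scaling of Lebesgue measure) and in fact yields the slightly sharper constant $\frac{(R+r_0)^d-(R-r_0)^d}{(R+r_0)^d}$, which you then relax to the stated coefficient by replacing the denominator with $(R-r_0)^d$; the paper's radial method, on the other hand, is the kind of computation that generalizes directly to bounding other radially sliced quantities. Your side remarks are also in order: $0\in A$ is exactly what makes $\lambda A\subseteq A$ work, and the case $m_d(A)=\infty$ is vacuous since the left-hand side sits inside the bounded set $B_{R+r_0}$.
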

\begin{proof}
Let for $t >0$ let $S^{d-1} \subset \mathbb{R}^d$ denote the unit sphere  in $\mathbb{R}^d$, namely, $S^{d-1} = \partial B_1^d$.
Let $A \subseteq \mathbb{R}^d$ be a convex set such that $0 \in A$ and $0 < r_0 <R$.
For $t >0$, consider the set $A_t \subseteq S^{d-1}$ given by:
\[
A_t = \left\{ v \in S^{d-1} ~:~ tv \in A\right\}.
\]

Given a Borel subset $B \subseteq S^{d-1}$ let $H_{d-1}(B)$ denote the $d-1$-dimensional Hausdorff measure of $B$.
Then there exists a constant $c>0$ (that depends on the normalization of $H_{d-1}$) so that:
\[m_d(A) = c\int_0^\infty t^{d-1} H_{d-1}(A_t) dt.\]
and so 
\[m_d\left( A \cap  B_{R+r_0} \setminus B_{R-{r_0}} \right) = c \int_{R-r_0}^{R+r_0} t^{d-1} H_{d-1}(A_t) dt.\]

Because $0 \in A$ and $A$ is convex, it follows that for every $0 <t_1 <t_2$ we have $A_{t_2} \subseteq A_{t_1}$. Thus,

\[m_d(A) \ge c \int_0^{R-r_0} t^{d-1} H_{d-1}(A_{R-r_0})dt = \frac{c  H_{d-1}(A_{R-r_0}) }{d} \cdot (R-r_0)^d.\]
and
\[m_d\left( A \cap  B_{R+r_0} \setminus B_{R-{r_0}} \right) \le  c \int_{R-r_0}^{R+r_0} t^{d-1} H_{d-1}(A_{R-r_0}) dt =\frac{c  H_{d-1}(A_{R-r_0}) }{d} \cdot \left((R+r_0)^d-(R-r_0)^d\right).\]
The proof of the lemma follows directly by combining these inequalities.
\end{proof}

\begin{lemma}\label{lem:Voronoi_k_epsilon_inv}
    For any $K \Subset \Gamma$ and $\epsilon >0$ there exists  a finite set $F \Subset \Gamma$ and $R_0 >0$   such that for any  $F$-separated set $C \subseteq \Gamma$  and any $R>R_0$, the disjointified $R$-truncated Voronoi diagram of $C$ is $(K,\epsilon)$-invariant.
\end{lemma}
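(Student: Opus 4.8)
The plan is to control each disjointified cell $T := \overline{V}_R(c,C)$ by comparing it with the honest truncated Voronoi cell $V := V_R(c,C)$, which by \Cref{lem:disjoint_Vor_PTiling} is convex as soon as $C$ is $(\{0\}\times G)$-separated. Accordingly I would take $F := B_s^\Gamma$ and $R_0 := s$ for a large parameter $s$, to be fixed only at the very end. If $C$ is $F$-separated, then \Cref{lem:separated_Sets} gives $(C-C)\cap B_s^\Gamma = \{0\}$ (in particular $C$ is $(\{0\}\times G)$-separated), so the $\Z^d$-projections of distinct centers lie at Euclidean distance $> s$; a triangle-inequality argument then yields $c + B_{s/3}^\Gamma \subseteq V_R(c,C)$ for every $c\in C$ and every $R > R_0$. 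Thus each $V$ is a convex subset of $\Gamma$ containing a large ``ball''.

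Next I would record the elementary inclusion $\partial_K T \subseteq \partial_K V \cup \big((V\setminus T) - K\big)$: if $\gamma+K$ meets $T$ but is not contained in $T$, then either $\gamma+K \not\subseteq V$, whence $\gamma \in \partial_K V$, or $\gamma+K \subseteq V$ and $\gamma+K$ meets $V\setminus T$, whence $\gamma \in (V\setminus T)-K$. This gives $|\partial_K T| \le |\partial_K V| + |K|\,|V\setminus T|$. The first term is handled by \Cref{lem:conv_is_invariant} together with the (immediate) translation-invariance of $(K,\epsilon)$-invariance: once $s$ is large enough that $B_{s/3}^\Gamma$ contains a translate of a box $B_{k_0}$ witnessing $(K,\tfrac{\epsilon}{4})$-invariance of convex sets, we have $|\partial_K V| < \tfrac{\epsilon}{4}|V|$.

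The heart of the argument is the bound on the tie-breaking set $V\setminus T$, and I expect this geometric estimate to be the main obstacle. Every $\gamma\in V\setminus T$ is $\dist_\Gamma$-equidistant to $c$ and to some other center $\tilde c$, so it lies on the common boundary of the convex Voronoi cells of $c$ and $\tilde c$ and, lying also inside the ball around $c$, on the boundary of the convex set $V$. Moreover, since $C$ is $(\{0\}\times G)$-separated no two centers share a $\Z^d$-coordinate, so this equidistance forces equality of the Euclidean distances of the $\Z^d$-projections; writing $V = (\tilde F\cap\Z^d)\times G$ with $\tilde F\subseteq\mathbb{R}^d$ convex and containing a Euclidean ball of radius comparable to $s$, the projection of $\gamma$ therefore lies on $\partial\tilde F$. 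Counting lattice points near $\partial\tilde F$ by disjoint unit cubes gives $|V\setminus T| \le |G|\cdot m_d(\partial_{\sqrt d}\tilde F)$; \Cref{lem:conv_Rd_bdry} bounds $m_d(\partial_{\sqrt d}\tilde F) \le \tfrac{c_d}{s}\,m_d(\tilde F)$; and comparing $m_d(\tilde F)$ with $|V|$ exactly as in the proof of \Cref{lem:conv_is_invariant} gives $|V\setminus T| \le \tfrac{c'_d}{s}\,|V|$ for an absolute constant $c'_d$. Everything else is bookkeeping built on \Cref{lem:disjoint_Vor_PTiling}, \Cref{lem:conv_is_invariant}, \Cref{lem:conv_Rd_bdry}, and \Cref{lem:separated_Sets}.

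To finish, I would fix $s$ large enough that $\tfrac{c'_d}{s} < \min\{\tfrac12,\tfrac{\epsilon}{4|K|}\}$ and that the two previous estimates hold. Then $|T| = |V| - |V\setminus T| \ge \tfrac12|V|$, and
\[ |\partial_K T| \;\le\; |\partial_K V| + |K|\,|V\setminus T| \;<\; \tfrac{\epsilon}{4}|V| + \tfrac{\epsilon}{4}|V| \;=\; \tfrac{\epsilon}{2}|V| \;\le\; \epsilon|T|, \]
so every cell of the disjointified $R$-truncated Voronoi diagram of any $F$-separated set $C$ is $(K,\epsilon)$-invariant, which is the assertion.
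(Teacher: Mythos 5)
Your argument is correct, and its skeleton is the same as the paper's: force each truncated cell to contain a large translate of a ball, invoke convexity of the truncated cells (\Cref{lem:disjoint_Vor_PTiling}) and \Cref{lem:conv_is_invariant} (plus translation invariance) to get $(K,\epsilon)$-invariance. Where you genuinely diverge is in how the tie-breaking is handled. The paper's proof is a one-liner at this point: it asserts that the \emph{disjointified} cell $\overline{V}_R(c,C)$ is convex and contains $c+F$, and concludes directly — even though \Cref{lem:disjoint_Vor_PTiling} as stated only gives convexity of the non-disjointified cell $V_R(c,C)$, and the set removed by the order-based tie-breaking need not leave a convex set in the paper's sense. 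You instead keep the convexity statement for $V:=V_R(c,C)$ only, and separately estimate the discrepancy $V\setminus \overline{V}_R(c,C)$: every such point is $\dist_\Gamma$-equidistant from two centers, hence (since centers have distinct $\Z^d$-projections) its projection lies on a bisecting hyperplane and thus on $\partial\tilde F$ for the Euclidean truncated cell $\tilde F$, so a lattice-point count plus \Cref{lem:conv_Rd_bdry} makes this set an $O(1/s)$ fraction of $|V|$; the inclusion $\partial_K T\subseteq \partial_K V\cup((V\setminus T)-K)$ then finishes the bookkeeping. (One small presentational caveat: the convex representative $\tilde F$ of $V$ is not unique, so you should say explicitly that you take $\tilde F$ to be the genuine Euclidean bisector-defined truncated Voronoi cell, for which the claim ``tie points project into $\partial\tilde F$'' is valid.) In short, your route buys a fully justified treatment of the disjointification step that the paper's proof elides, at the cost of the extra boundary estimate; the paper's version is shorter but leans on an unproved convexity claim for $\overline{V}_R(c,C)$.
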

\begin{proof}
Fix $K \Subset \Gamma$ and $\epsilon >0$. 
By \Cref{lem:conv_is_invariant} there exists $F_0 \Subset \Gamma$ so that any convex set that contains $F_0$ is $(K,\epsilon)$-invariant. Since the property of being  $(K,\epsilon)$-invariant remains unchanged under translations, it is clear that any  any convex set that contains $v+ F_0$ for some $v \in \Gamma$ is also $(K,\epsilon)$-invariant. Choose $R_1 > \max \{ \dist_{\mathbb{R}^d}(0,P_{\Gamma,\Z^d}(v)) ~:~ v \in F_0\}$ 
Let $F \Subset \Gamma$ be a finite set such that  $(\{0\} \times G )\subseteq F$, and also so that $F$ contains the set $B_{R_1}^\Gamma = \left\{ \gamma \in \Gamma~:~ \dist_{\mathbb{R}^d}(0,P_{\Gamma,\Z^d}(v)) \le R_1\right\}$.
Note that since $F \Subset \Gamma$ such that $(\{0\} \times G )\subseteq F$, it follows that any $F$-separated set $C \subseteq \Gamma$ is also $(\{0\}\times G)$-separated.
Choose $R_0 > 2R_1$.
By \Cref{lem:disjoint_Vor_PTiling}, every cell $\overline{V}_R(c,C)$ is convex, and $c+F \subseteq \overline{V}_R(c,C)$, so $\overline{V}_R(c,C)$ is $(K,\epsilon)$-invariant.

\end{proof}

\begin{lemma}\label{lem:Voronoi_extrior_bdry}
    There exists $F_0 \Subset \Gamma$ so that for any $K \Subset \Gamma$ and any $\epsilon >0$ there exists $R_0>0$  such that for any  $R>R_0$ and any $F_0$-separated $C \subseteq \Gamma$
    the disjointified $R$-truncated Voronoi diagram of $C$  has  $(K,\epsilon)$-small exterior boundary.
\end{lemma}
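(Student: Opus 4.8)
The plan is to estimate the exterior $K$-boundary $\partial_K^\tau T$ of a cell $T = \overline{V}_R(c,C)$ by locating it inside a thin annular shell around the truncation radius $R$, and then to control the measure of that shell relative to the measure of $T$. First I would fix $F_0 \Subset \Gamma$ so that $(\{0\}\times G)\subseteq F_0$ and $F_0$ contains a ball $B^\Gamma_{r_1}$ of some fixed radius $r_1$ (chosen as in \Cref{lem:Voronoi_k_epsilon_inv} so that any $F_0$-separated $C$ is $(\{0\}\times G)$-separated and every cell $V_R(c,C)$ is convex by \Cref{lem:disjoint_Vor_PTiling}, containing the translated ball $c+B^\Gamma_{r_1}$). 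Given $K$ and $\epsilon$, pick $r_0$ with $K\subseteq (B^d_{r_0}\cap\Z^d)\times G$. The key geometric observation is: if $v\in\partial_K^\tau T$, then $v+K$ meets $T=\overline{V}_R(c,C)$, so $\dist_\Gamma(c,v)\le R+r_0$, while $v+K\not\subseteq\bigcup\tau$; by part (2) of \Cref{lem:disjoint_Vor_PTiling}, $(C+B^\Gamma_R)\subseteq\bigcup\tau$, so some point of $v+K$ lies outside $C+B^\Gamma_R$, forcing $\dist_\Gamma(c,v) > R-r_0$. Hence (after projecting to $\Z^d$ and passing to the convex body $\tilde T\subseteq\mathbb{R}^d$ with $T=(\tilde T\cap\Z^d)\times G$), the set $\partial_K^\tau T$ is contained in $\big((\tilde T\cap B_{R+r_0}\setminus B_{R-r_0})\cap\Z^d\big)\times G$, up to the usual lattice-versus-Lebesgue discretization error controlled by $m_d(\partial_{\sqrt d}\tilde T)$.

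Next I would apply \Cref{lem:convex_r_boundary_basic} to the convex set $\tilde T$ (which contains $0$ after recentering at $c$, since $c+B^\Gamma_{r_1}\subseteq T$ and $r_1>r_0$ can be arranged): this gives
\[
m_d\left(\tilde T\cap B_{R+r_0}\setminus B_{R-r_0}\right)\le \left(\frac{(R+r_0)^d-(R-r_0)^d}{(R-r_0)^d}\right)m_d(\tilde T).
\]
Since $\tilde T\supseteq B^d_{r_1}$, we also have $m_d(\tilde T)\ge m_d(B^d_{r_1})$, so the discretization errors are a bounded fraction of $m_d(\tilde T)$ once $r_1$ is fixed, and $|T| = |G|\cdot|\tilde T\cap\Z^d|$ is comparable to $|G|\cdot m_d(\tilde T)$ within the same fixed multiplicative constants. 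Because $(R+r_0)^d-(R-r_0)^d = O(R^{d-1})$ while $(R-r_0)^d\sim R^d$, the ratio above tends to $0$ as $R\to\infty$; choosing $R_0$ large enough (depending on $K$ through $r_0$, on $\epsilon$, and on the fixed $F_0$ through $r_1$) makes $|\partial_K^\tau T|\le\epsilon|T|$ for every cell and every $R>R_0$. Note the order of quantifiers matches the statement: $F_0$ is chosen first and absolutely, while $R_0$ depends on $K$ and $\epsilon$ afterwards.

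The main technical nuisance — not a deep obstacle, but the part requiring care — is bookkeeping the passage between the lattice $\Gamma=\Z^d\times G$ and the continuous convex body in $\mathbb{R}^d$: one must absorb the terms $|G|\cdot m_d(\partial_{\sqrt d}\tilde T)$ and the $|G|$ factors uniformly, using only that $\tilde T$ contains a ball of the fixed radius $r_1$, so that all the error terms are a fixed fraction of $m_d(\tilde T)$ independent of $C$ and of $R$. Once that is set up, the annular estimate from \Cref{lem:convex_r_boundary_basic} does all the real work, and the conclusion follows by taking $R_0$ large.
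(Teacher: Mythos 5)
Your proposal is correct and follows essentially the same route as the paper: choose $F_0$ via \Cref{lem:Voronoi_k_epsilon_inv} so cells are convex bodies containing a fixed ball, trap the exterior $K$-boundary of each cell in the annulus $B_{R+r_0}\setminus B_{R-r_0}$ around its center, apply \Cref{lem:convex_r_boundary_basic}, absorb the lattice-versus-Lebesgue errors using the fixed inner ball, and take $R_0$ large. Your explicit use of part (2) of \Cref{lem:disjoint_Vor_PTiling} to justify the lower radius $R-r_0$ is a detail the paper leaves implicit, but the argument is the same.
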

\begin{proof}
Using \Cref{lem:Voronoi_k_epsilon_inv} we choose $F_0 \Subset \Gamma$ and $R_1$ so that for any $F_0$-separated set $C \subseteq \Gamma$ and any $R> R_1$, the size of any cell in the disjointified $R$-truncated Voronoi diagram of $C$ is at least half the size of the (non-disjointified)  $R$-truncated Voronoi cell. 
Given $K \Subset \Gamma$, we can find $r_0 >0$ such that $K \subseteq (B_{r_0}^d \cap \Z^d) \times G$. 
Thus, if $F \Subset \Gamma$ is a cell of the $R$-truncated Voronoi diagram of $C$, 
there exists a compact convex set $\tilde F \subseteq \mathbb{R}^d$ 
such that $F= (\tilde F \cap \Z^d) \times G$.
The exterior $K$-boundary of $F$ is contained in \[\tilde F \cap (B_{R+r_0}^d \setminus B_{R-r_0}^d) \cap \Z^d)\times G.\]
As in the proof of \Cref{lem:conv_is_invariant}, it thus suffices to prove that for any $\delta >0$, provided that $R$ is sufficiently big, we have
\[
m_d\left( \tilde F \cap (B_{R+r_0}^d \setminus B_{R-r_0}^d)\right) \le \delta m_d( \tilde F).
\]
This follows from \Cref{lem:convex_r_boundary_basic}, provided that $R$ is big enough so that
\[
\frac{(R+r_0)^d-(R-r_0)^d}{(R-r_0)^d} < \delta.
\]
\end{proof}


\begin{proof}[Proof of \Cref{prop:good_partial_tilings}:]
Let $\epsilon >0$ and $K \Subset \Gamma$ be given. 
By \Cref{lem:Voronoi_k_epsilon_inv} there exists  $F \Subset \Gamma$ and $R_0 >0$   such that for any
 $F$-separated set $C \subseteq \Gamma$  and any $R>R_0$,
the disjointified $R$-truncated Voronoi diagram of $C$ is $(K,\epsilon)$-invariant. By further enlarging $F$, we can assume that $F_0 \subseteq F$, where $F_0 \Subset \Gamma$ is as in \Cref{lem:Voronoi_extrior_bdry}, and that $F$ is symmetric and so that $B_{R_1} \subseteq F$, where $R_1= 2\max\{ \dist_{\mathbb{R^d}}(0,v)~:~ v\in K\}$ and $B_{R_1}= \{ v \in \Gamma~:~ \dist_{\mathbb{R}^d}(0,P_{\Gamma,\Z^d}(v)) \le R_1\}$.

Let $P = (F+F) \setminus \{0\}$. Now let $X$ be a $\Gamma$-subshift, $W \Subset \Gamma$
    and let $\mathcal{F} \subseteq \cL_W(X)$  be a set of patterns so that $x_W \not \in \mathcal{F}$ for any $x \in X$ such that $\stab(x) \cap P = \emptyset$. 
    By \Cref{lem:krieger_marker_lemma}, there exists a clopen set $C \subset X$ such that $C \cap \sigma_v(C) = \emptyset$ for every $v \in P$ and so that for every $x \in X$ either there exists $v \in P \cup\{0\} = B_k$ such that $\sigma_v(x) \in C$ or $x_W \in \mathcal{F}$.
   
    Define  $\alpha:X \to \{0,1\}^\Gamma$ by
    \[\alpha(x)_v = \begin{cases}
    1 & \sigma_v(x) \in C\\
    0 & \mbox{ otherwise}. 
    \end{cases}
    \]
    It is clear that $\alpha:X \to \{0,1\}^\Gamma$ is equivariant. Continuity of $\alpha$ follows from the fact that $C$ is a clopen set.
    
    Because $C \cap \sigma_v(C) = \emptyset$ for every $v \in P$ it follows that for every $x \in X$ $\alpha(x)$ corresponds to an $F$-separated.

    Using \Cref{lem:Voronoi_extrior_bdry} we can find $R>\max\{R_0,R_1\}$ so that for any $F$-separated $C \subseteq \Gamma$ the disjointified $R$-truncated diagram of $C$ has $( W,\epsilon_1)$-small exterior boundary.

    Let $\tau:\{0,1\}^\Gamma \to \PTiling(\Gamma)$ be the map given by $\tau(z)= \overline{V}_R(z)$  be the map that assigns to every $z \in \{0,1\}^\Gamma$ the disjointified $R$-truncated diagram of $\alpha(x) \in\{0,1\}^\Gamma$ (interpreted as a subset of $\Gamma$). By \Cref{lem:R_truncted_voronoi continuous} $\tau:\{0,1\}^\Gamma \to \PTiling(\Gamma)$ is continuous and equivariant. For every $x \in X$, since every $\alpha(x)$ corresponds to an $F$-separated set, by the choice of $F$ it follows that $\tau(\alpha(x))$ is a pointed $(K,\epsilon)$-invariant tiling. 
    If $v \not \in \bigcup \tau(\alpha(x))$ then $\dist_{\mathbb{R}^d}(v,w) \ge R$ for every $w \in \Gamma$ such that $\alpha(x)_w=1$. This means that $\sigma_{v}(x) \not \in \bigcup_{u \in P \cup \{0\}}\sigma_u(C)$, so $\sigma_{-v}(x)_W \in \mathcal{F}$. This proves  that property $1$ is satisfied.
    Because $B_{R_1} \subseteq F$ and $\alpha(x)$ is $F$-separated, it follows that whenever $\alpha(x)_v=1$ then the unique $T \in \tau(\alpha(x))$ corresponding to the disjointified $R$-truncated Voronoi cell of $v$ contains $v+K$. This  proves  that property $2$ is satisfied. By the choice of $R$ using \Cref{lem:Voronoi_extrior_bdry},  the disjointified $R$-truncated diagram of the set corresponding to $\alpha(x)$ has $( W,\epsilon_1)$-small exterior boundary for every $x \in X$. This proves  that property $3$ is satisfied.
\end{proof}

\section{Good marker patterns  via the map extension property}\label{sec:marker_patttern}

One of the key ideas in Krieger's proof of his embedding theorem is to designate a special pattern in the range of the embedding to mark the occurrences of some clopen set in the domain. These ``marker patterns'' need to satisfy several condition. In particular, they need to be ``locally identifyable'', which in particular requires them to have no self overlaps. Also, the subshift obtained from $Y$ by forbidding this marker pattern needs to have enough entropy``big enough to embed $X$''. 
The purpose of this section is to formulate and prove a lemma that guarantees suitable marker patterns in the domain. The slightly involved formulation is given in  \Cref{lem:marker_pattern_RAR}.

We first state a basic result about the existence of patterns without self-overlaps.

\begin{definition}\label{def:self_overlap}
Given $F \Subset \Gamma$, $v \in \Gamma$ and  a pattern $w \in A^F$, we say that $w$  \emph{has a self-overlap $v$} if 
\[ w_{F \cap (v+F)} = \sigma_v(w)_{F \cap (v+F)}.\]
We say that $w^{(1)},w^{(2)} \in  A^F$ have an overlap at $v\in \Gamma$ if 
\[ w^{(1)}_{F \cap (v+F)} = \sigma_v(w^{(2)})_{F \cap (v+F)}.\]
\end{definition}

\begin{lemma}\label{lem:marker_pattern_basic}
    Let $G$ be a finite abelian group, $ d\ge 1$ and $\Gamma= \Z^d \times G$, 
    and for $k \in \mathbb{N}$ let 
    \begin{equation}
    B_k = \{-k,\ldots,\ldots,k\}^d \times G \Subset  \Gamma.
    \end{equation}
    and
    \[
    Q_k = B_k \setminus B_{\lfloor k/10 \rfloor}.
    \]
    and let $Y$ be a non-trivial strongly irreducible  $\Gamma$-subshift of finite type.
    Then there exists $n_0$ such that for every $n \ge n_0$ there exist $w \in \cL_{Q_n}(Y)$ that has no self-overlaps in  $B_n \setminus \Ker(Y)$.
\end{lemma}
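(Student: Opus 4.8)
The plan is to produce the desired pattern by a "random" counting argument, exploiting strong irreducibility to build many patterns on $Q_n$ and exploiting the finite-type and strong-irreducibility structure to bound the number of patterns that do have a bad self-overlap. First I would fix an irreducibility window $K \Subset \Gamma$ for $Y$, and recall (from \Cref{lem:conv_is_invariant} and the discussion of convex Følner sets, or directly from \Cref{lem:top_entropy_inv_set}) that since $Y$ is non-trivial strongly irreducible it has $h(Y) > 0$, so $|\cL_{Q_n}(Y)| \ge e^{c|Q_n|}$ for some $c>0$ and all large $n$; here it is important that $Q_n$ is an annulus whose volume grows like $n^d$ and which is "thick" (its inner and outer radii are comparable), so $|Q_n|$ is comparable to $n^d$ and $Q_n$ becomes arbitrarily invariant. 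Actually, one should first pin down the correct count: because $Y$ may have a nontrivial kernel $\Ker(Y)$, the genuinely free directions are those in $\Gamma/\Ker(Y)$, and the relevant entropy is $h(\overline{Y}_{[\{0\}]})$... but more simply, since $\Ker(Y)$ is a fixed finite-index-free subgroup and $Y$ factors through $\Gamma/\Ker(Y)$, one works with patterns on $Q_n$ modulo the trivial redundancy coming from $\Ker(Y)$; I would phrase the whole argument in $\Gamma/\Ker(Y)$ (which is again of the form $\Z^{d'} \times G'$) where $Y$ is faithful, prove the statement there, and lift back, noting that a self-overlap at $v$ lifting to $\Gamma/\Ker(Y)$ corresponds exactly to a self-overlap at $v \in B_n \setminus \Ker(Y)$ in $\Gamma$.

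The core estimate is the following: for each fixed nonzero $v \in B_n \setminus \Ker(Y)$ (working now in the faithful quotient, where $v \notin \Ker(Y)$ means $v$ acts nontrivially), the number of $w \in \cL_{Q_n}(Y)$ with a self-overlap at $v$ is at most $|\cL_{Q_n \setminus S_v}(Y)| \cdot |A|^{|S_v|}$ for an appropriate "fundamental-domain-like" subset $S_v \subseteq Q_n$ of the overlap region — more precisely, a self-overlap at $v$ forces $w$ to be periodic-with-period-$v$ on the overlap region $Q_n \cap (v + Q_n)$, so $w$ is determined on that region by its values on a fundamental domain for the $\langle v\rangle$-action intersected with the region, whose size is at most $(|Q_n \cap (v+Q_n)|)/p$ where $p = |\langle v \rangle$-orbit of a generic point$|$ is $\ge 2$ when $v$ is not torsion, and bounded by the order of $v$ otherwise. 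The upshot is that the number of bad patterns for a single $v$ is at most $|A|^{|Q_n|/2 + O(\text{boundary})}$ (when $v$ has infinite order), which for $n$ large is exponentially smaller than $e^{c|Q_n|}$ provided $\tfrac12 \log|A| < c$ — and this is NOT automatically true. To fix this I would not count all of $Q_n$ but instead restrict attention to a well-chosen large sub-annulus and use strong irreducibility to fill the rest freely; alternatively, and more cleanly, I would use that for a self-overlap at $v$ with $\|v\|$ comparable to $n$ the overlap region $Q_n \cap (v+Q_n)$ has volume bounded away from $|Q_n|$ by a definite fraction $\theta < 1$, so the bad count is at most $|A|^{\theta |Q_n| + o(|Q_n|)}$, while a strong-irreducibility "free corner" argument gives a lower bound $|\cL_{Q_n}(Y)| \ge |A'|^{(1-\theta')|Q_n|}$ for a suitable non-trivial sub-alphabet configuration — here one needs the two fractions to be chosen consistently, which is the delicate bookkeeping.

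The main obstacle, and where I would spend the most care, is precisely this quantitative competition between the lower bound on $|\cL_{Q_n}(Y)|$ and the sum over $v \in B_n$ of the number of patterns with a self-overlap at $v$: there are only polynomially many ($O(n^d)$) choices of $v$, so a union bound costs only a polynomial factor, but one must ensure the per-$v$ bad count is a genuinely smaller exponential. I expect the cleanest route is: (1) pass to the faithful quotient; (2) observe that if $w \in \cL_{Q_n}(Y)$ has a self-overlap at $v \ne 0$ then $w$ restricted to the overlap region is $\langle v \rangle$-periodic, hence lies in a set of size at most $|\cL_{Q_n'}(Y)|$ where $Q_n'$ is the image of $Q_n$ in the quotient by $\langle v \rangle$ (a set of size $\le |Q_n|/2$ up to $O(n^{d-1})$ boundary terms when $v$ has infinite order, and $\le |Q_n| \cdot \tfrac{|G|-1}{|G|}$-type savings when $v$ is torsion in the finite part but nontrivial — here using faithfulness to guarantee $v$ acts nontrivially so the periodicity is a real constraint), times $|A|^{O(n^{d-1})}$ for the non-overlap part; (3) since $h(Y)>0$ is attained on these thick convex-annular Følner sets, $\log|\cL_{Q_n}(Y)| = h(Y)|Q_n| + o(|Q_n|)$, and for $v$ with $\|v\|$ of order $n$ the constraint kills a constant fraction $\ge \tfrac12$ of the volume, so the count of bad-at-$v$ patterns is at most $e^{(h(Y)(1-\tfrac12) + o(1))|Q_n|}\cdot e^{O(n^{d-1})}$; while for $v$ with $\|v\|$ small compared to $n$ — these are the dangerous short periods — one instead argues that a short period $v$ propagated across all of $Q_n$ is itself impossible or extremely constrained because $Q_n$ is much larger than $v$ and $Y$, making the bad count at most $e^{o(|Q_n|)}$; in either regime the bad count is $o(|\cL_{Q_n}(Y)|)$, and summing the $O(n^d)$ contributions still gives something $< |\cL_{Q_n}(Y)|$ for $n$ large, so a good $w$ exists. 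The finite-type hypothesis enters to guarantee that "$\langle v\rangle$-periodic on the overlap region, $Y$-admissible on $Q_n$" genuinely restricts to admissible patterns on the quotient region (no extra patterns appear), and strong irreducibility is what powers the positive-entropy lower bound uniformly on annuli.
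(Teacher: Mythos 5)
Your overall strategy (count admissible patterns on $Q_n$, bound the bad ones for each shift $v$, union bound over the polynomially many $v$) is a legitimate cousin of the paper's argument, but as written the two quantitative steps that carry all the weight are not established, and one of them is based on a false claim. For $v$ of infinite order with $\|v\|$ comparable to $n$, your bound of the shape $e^{(h(Y)\theta+o(1))|Q_n|}$ for the bad count presupposes an upper bound $|\cL_F(Y)|\le e^{(h(Y)+\epsilon)|F|}$ for the determining region $F$ (the free part together with a fundamental domain of the overlap); such a bound fails for general finite $F$ and needs an invariance argument, uniform over all $v\in B_n$, that you never supply. Your proposed repairs are not estimates: the bound $|\cL_{Q_n\setminus S_v}(Y)|\cdot|A|^{|S_v|}$ is vacuous (it exceeds $|\cL_{Q_n}(Y)|$), and the ``free corner'' lower bound $|\cL_{Q_n}(Y)|\ge |A'|^{(1-\theta')|Q_n|}$ has no justification for a general strongly irreducible SFT; you yourself flag the bookkeeping as unresolved. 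A workable version of your comparison is: the bad count at $v$ is at most $|\cL_{D_v}(Y)|$ where $D_v$ is the set of coordinates not determined by the overlap relation, while strong irreducibility glues any admissible pattern on $D_v$ to any admissible pattern on a large sub-ball $E_v$ of $Q_n\cap(v+Q_n)$ (which contains a translate of $B_{\lfloor n/3\rfloor}$), so $|\cL_{Q_n}(Y)|\ge |\cL_{D_v}(Y)|\cdot e^{h(Y)|E_v|}$ with $h(Y)>0$; but this is not the argument you wrote.

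The second, more serious gap is the torsion case. Even after passing to the faithful quotient, nonzero torsion elements $v$ (in the $\{0\}\times G$ direction) with $v\notin\Ker(Y)$ remain, and for these $v+Q_n=Q_n$: there is no free region, a self-overlap forces full $v$-invariance of the pattern on $Q_n$, and your claim that such ``short periods'' contribute only $e^{o(|Q_n|)}$ is simply false --- for the full shift over $\Z^d\times(\Z/2\Z)$ the number of $v$-invariant patterns on $Q_n$ is $|A|^{|Q_n|/2}$, which need not be smaller than $e^{h(Y)|Q_n|}$ in general systems. To handle this case you must use $v\notin\Ker(Y)$ quantitatively: some admissible $B_m$-block is not $v$-invariant, so a uniformly random admissible $B_m$-block is $v$-self-overlapping with probability at most some $p<1$, and $v$-invariance of the whole pattern forces this simultaneously for linearly many independent blocks. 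This is precisely how the paper proceeds: it plants i.i.d.\ uniformly random admissible $B_m$-blocks on a separated net in $Q_n$ (glued into one admissible pattern by strong irreducibility), shows that a self-overlap at any fixed $v\in B_n\setminus\Ker(Y)$ forces many independent block-coincidence events, each of probability at most $p<1$ (one estimate for small or torsion $v$, one for pairs of distinct blocks when $\|v\|$ is large), and concludes with a union bound over the $O(n^d)$ shifts. Your counting framework could be repaired along these lines, but as it stands both the large-$\|v\|$ estimate and the torsion estimate are missing.
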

Similar results have appeared in the literature. For instance in \cite{MR1307966} patterns without self-overlaps have been constructed for strongly irreducible $\Z^2$-subshifts. Also see \cite{MR2643712} where the existence of non-overlapping pattern is established using a non-constructive argument using only positive positive entropy.
We briefly sketch a proof below. 
\begin{proof}
    Let $Y$ be a non-trivial strongly irreducible  $\Gamma$-subshift of finite type. In particular, $Y$ is topologically mixing and non-trivial so $\Ker(Y)$ is a finite subgroup of $\Gamma$. Since $Y$ is strongly irreducible subshift of finite type there exists $N \in \mathbb{N}$ so that for any $m \in \mathbb{N}$,  any set $F \Subset \Gamma$ and any $B_{n+N}$-separated set $K$ and any $f:K \to \cL_{B_m}(Y)$ there exists $x \in X$ with $\sigma_v(x)_{B_m} = f(v)$ for all $v \in K$. Choose $m \in \mathbb{N}$ sufficiently big,     
    and then choose $n \in \mathbb{N}$ much bigger. Since pairwise dijoint translates of $B_m$ tile the group $\Gamma$, there is a $B_{n+N}$ separated subset $K_{n,m}$ of $Q_n$ of size at least $\frac{|Q_n|- |\partial_{B_m}Q_n|}{|B_m|}$. Choose $f \in \cL_{B_m}(Y)^{K_{n,m}}$ uniformly at random. In other words, for every $v \in K_{n,m}$ choose $f(v) \in \cL_{B_m}(Y)$ uniformly at random, so that the random variables $(f(v))_{v \in K_{n,m}}$ are jointly independent. Let $w \in \cL_{B_m}(Y)$ be such that $\sigma_v(w)_{B_m}= f(v)$ for every $v \in K_{n,m}$. If $m$ is sufficiently big, there exists $p <1$ such that the probability that a uniformly chosen $w \in \cL_{B_m}(Y)$ has a self-overlap in  $B_{2N} \setminus \Ker(Y)$ is at most $p$. Also  if $m$ is sufficiently big, the probability that independently chosen $w^{(1)},w^{(2)} \in \cL_{B_m}(Y)$ have an overlap in $B_{\lfloor m/w \rfloor -N}$ is at most $p$. It follows that for any $v \in B_n \setminus \Ker(Y)$ the probability that the randomly chosen $w$ has a self-overlap $v$ is at most $p^{\frac{|Q_{n-m} \cap (v+Q_{n-m})|}{|B_m|}}$.
    Since $Q_{n-m} \cap (v+Q_{n-m})$ contains a translate of $B_{\lfloor n/3 \rfloor}$ for every $v \in B_m$, it follows that the probability that $w$ has an overlap at $v \in B_m \setminus \Ker(Y)$ is at most $p^{(n-m)/3m}$. By a union-bound, the probability that $w$ has a self-overlap in $B_n \setminus \Ker(Y)$ is bounded from above by $(2n+1)^d p^{(n-m)/3m}$, which tends to $0$ as $n\to \infty$, for fixed $m$. 
\end{proof}

\begin{lemma}\label{lem:marker_pattern_RAR}
    Let $\Gamma$ be an infinite finitely generated abelian group, let $Y$ be a non-trivial $\Gamma$-subshift with the map extension property, and let $\hat Y \subset Y$ be a (possibly empty) $\Gamma$-subshift strictly contained in $Y$. For any given $\epsilon >0$, and any finite set $F_0 \Subset \Gamma$ there exist:
    \begin{itemize}
        \item A subshift $S \subset Y$ with $\hat Y \subseteq S$.
        \item A retraction map $\tilde r:Y \to S$.
        \item A map $\kappa:S \times \{0,1\}^\Gamma \to Y$.
        \item A finite symmetric set $F \Subset \Gamma$ such that $F_0 \Subset F$.
        \item Patterns $w^{(0)},w^{(1)} \in \cL_{F}(Y)$ such that $w^{(0)} \not \in \cL_{F}(S)$.
    \end{itemize}
    So that the following hold:
    \begin{enumerate}
        \item[(1)] For any $y \in Y$ if $\tilde r(y)_0 \ne y_0$ then there exists $w \in F$ such that $\sigma_w(y)_F = w^{(0)}$. In particular, if $y \in Y$ and $y_{F+F} \in \cL_{F+F}(S)$ then $\tilde r(y)_0=y_0$.
        \item[(2)] For every $y \in S$ and $z \in \{0,1\}^\Gamma$ if $\tilde y = \kappa(y,z)$ then $\tilde y_{F}= w^{(0)}$ if and only if $y_F= w^{(1)}$ and $z_0 =1$. 
        \item[(3)] For every $y \in S$ and $z \in \{0,1\}^\Gamma$ if $\tilde y = \kappa(y,z)$ and $z_v \ne 1$ for all $v \in F$ then $\tilde y_0 = y_0$
          \item[(4)] For any $\Gamma_0 \le \Gamma$, if $Y_{\Gamma_0} \ne \emptyset$
    then $S_{\Gamma_0} \ne \emptyset$. 
        \item[(5)]  For any $\Gamma_0 \le \Gamma$ it holds that
 $\Ker( Y_{\Gamma_0})= \Ker(S_{\Gamma_0})$ and 
    and $h(\overline{S}_{\Gamma_0}) \ge h(\overline{Y}_{\Gamma_0})-\epsilon$.
    \end{enumerate}
\end{lemma}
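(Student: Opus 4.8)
The plan is to build the objects in \Cref{lem:marker_pattern_RAR} in two stages: first produce a ``marker pattern'' $w^{(0)}$ inside $Y$ together with the subshift $S$ obtained by forbidding it (and a few of its variants), and then use the map extension property to manufacture the retraction $\tilde r$ and the insertion map $\kappa$. By \Cref{prop:abs_retract_prop}, $Y$ is a strongly irreducible SFT, and by \Cref{lem:contractible_Y_Delta} (via \Cref{prop:map_ext_contractible}) we may pass to a finitely generated $\Delta$ with $Y=Y^{[\Delta]}$; writing $\Gamma=\Z^d\times G$ as usual, \Cref{lem:marker_pattern_basic} gives, for all large $n$, a pattern $u\in\cL_{Q_n}(Y)$ with no self-overlaps in $B_n\setminus\Ker(Y)$. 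The first task is to upgrade this to a full rectangular pattern $w^{(0)}\in\cL_{F}(Y)$, $F=B_n$, with no self-overlaps off $\Ker(Y)$: fill the interior of $u$ using strong irreducibility, choosing the filling of a fixed sub-block to encode a binary bit so we simultaneously obtain a second pattern $w^{(1)}$ agreeing with $w^{(0)}$ on the annulus $Q_n$ but differing on that sub-block; by enlarging $n$ we can also ensure $w^{(0)}\notin\cL_{F}(\hat Y)$ (since $\hat Y\subsetneq Y$, some finite pattern witnesses the strict containment, and we arrange $w^{(0)}$ to contain it) — actually it is cleaner to first intersect $\hat Y$'s complement: pick a window on which $\hat Y$ misses some $Y$-pattern and bake that into the interior of $w^{(0)}$. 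Then set $S=\{y\in Y:\ \sigma_v(y)_F\ne w^{(0)}\ \forall v\in\Gamma\}$; this is an SFT with $\hat Y\subseteq S\subsetneq Y$, and $w^{(0)}\notin\cL_F(S)$ while $w^{(1)}\in\cL_F(S)$ provided $w^{(1)}$ itself (and its translates) avoid $w^{(0)}$, which the no-overlap property guarantees.

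Next I would obtain $\tilde r$, $\kappa$, and verify (1)--(3). For $\tilde r$: by \Cref{prop:closed_under_retracts} and \Cref{lem:map_ext_G_free}, or more directly by the map extension property of $Y$ applied to the inclusion $S\hookrightarrow Y$ after checking $Y\rightsquigarrow S$ (which holds because $S$ still contains a periodic point of every relevant stabilizer — this is exactly item (4), established via \Cref{prop:map_extension_implies_finitely_many_forbidden_periods} and \Cref{lem:stab_of_abs_retract} applied to $Y$, choosing $n$ large enough that forbidding $w^{(0)}$ destroys no stabilizer that $Y$ realizes) — we get a retraction $\tilde r:Y\to S$; a locality argument like the one in the safe-symbol proof shows that if $y$ has no occurrence of $w^{(0)}$ in an $F$-neighbourhood of $0$ then $\tilde r(y)_0=y_0$, giving (1) after enlarging $F$ to absorb the coding window of $\tilde r$. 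For $\kappa$: define it on the sub-subshift $\tilde X\subseteq S\times\{0,1\}^\Gamma$ consisting of pairs where the $1$'s of $z$ are $F$-separated and sit at positions where $y$ reads $w^{(1)}$, by overwriting $w^{(1)}$ with $w^{(0)}$ at those centers; this is a well-defined map into $Y$ because the overwritten copies are $F$-separated hence non-interacting, and then extend it to all of $S\times\{0,1\}^\Gamma$ by the map extension property (checking $S\times\{0,1\}^\Gamma\rightsquigarrow Y$, which is immediate since $\{0,1\}^\Gamma$ contributes trivial constraints and $S\subseteq Y$). Properties (2) and (3) then hold by construction on $\tilde X$ and are preserved by the extension, again after padding $F$ by the coding window and using the no-self-overlap property of $w^{(0)}$ to see that an occurrence of $w^{(0)}$ in $\kappa(y,z)$ can only come from a deliberately inserted copy.

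The remaining point is (5): $\Ker(Y_{\Gamma_0})=\Ker(S_{\Gamma_0})$ and $h(\overline S_{\Gamma_0})\ge h(\overline Y_{\Gamma_0})-\epsilon$, uniformly over all subgroups $\Gamma_0\le\Gamma$. The kernel equality is the easy direction once we know $Y_{\Gamma_0}\ne\emptyset\Rightarrow S_{\Gamma_0}\ne\emptyset$ together with: any point of $S_{\Gamma_0}$ is a point of $Y_{\Gamma_0}$, so $\Ker(S_{\Gamma_0})\supseteq\Ker(Y_{\Gamma_0})$, and conversely a common period of all of $S$ is, because $S$ is ``most'' of $Y$ (it contains translates of every sufficiently invariant $Y$-pattern by strong irreducibility of $Y$ plus the sparseness of $w^{(0)}$-occurrences), forced to be a period of all of $Y$. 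The entropy bound is where the real work sits and is the main obstacle: forbidding a single large pattern from a strongly irreducible SFT costs only $o(1)$ in entropy as the pattern grows, and the analogous statement must be made to hold simultaneously for every restricted system $\overline Y_{\Gamma_0}$. Here I would use \Cref{prop:locally_correctable_entropy_subgroups}, which says $h(\overline Y_{[\Gamma_0]})\to h(Y)$ as $\Gamma_0\to\{0\}$, to handle $\Gamma_0$ with small ``injectivity radius'' trivially (for those, $h(\overline Y_{\Gamma_0})$ is already within $\epsilon/2$ of... no — rather: for $\Gamma_0$ very close to $\{0\}$ the quantity $h(\overline Y_{\Gamma_0})$ is essentially $h(Y)$, and one shows $h(\overline S_{\Gamma_0})$ is too, since forbidding one pattern of density going to $0$ in $\Gamma/\Gamma_0$ costs little), while for the finitely many $\Gamma_0$ of bounded index (using the structure of $\Gamma=\Z^d\times G$, there are finitely many up to the relevant scale once the index is bounded) one argues directly: in $\overline Y_{\Gamma_0}$, strong irreducibility of $Y$ lets us avoid the forbidden pattern on a sparse net of translates while changing the pattern count by a factor $e^{o(|F_N|)}$, where $N$ is the diameter of the Følner set used — and by choosing $n$ (the size of $w^{(0)}$) large relative to the finitely many competing scales, $o(\cdot)/|\cdot|<\epsilon$ uniformly. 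The delicate bookkeeping is making ``$n$ large enough'' a finite condition that works for the infinitely many $\Gamma_0$'s at once; this is exactly why \Cref{prop:locally_correctable_entropy_subgroups} is invoked — it reduces the infinitely many cases to a single limiting regime plus finitely many exceptional subgroups.
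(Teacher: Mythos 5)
Your first stage (the marker patterns via \Cref{lem:marker_pattern_basic} plus strong irreducibility, and $S$ defined by forbidding $w^{(0)}$) matches the paper's construction, but the rest of the proposal has genuine gaps. First, the retraction: you invoke ``the map extension property of $Y$ applied to the inclusion $S\hookrightarrow Y$'' to get $\tilde r:Y\to S$, but that is the wrong direction — the map extension property of $Y$ only lets you extend maps \emph{into} $Y$ (extending the inclusion $S\hookrightarrow Y$ to a map $Y\to Y$ is trivially solved by the identity and gives no retraction onto $S$). A retraction of $Y$ onto $S$ via abstract extension would require $S$ itself to be an absolute retract, which is not known at this point; the paper instead defines $\tilde r$ \emph{explicitly} as the sliding-block map replacing every occurrence of $w^{(0)}$ by $w^{(1)}$ (well defined because the two patterns share the non-self-overlapping annulus and differ only in the central block), and this explicitness is exactly what yields condition (1). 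The same issue affects $\kappa$: defining it only on pairs $(y,z)$ where the $1$'s of $z$ are separated and then extending abstractly gives no control off that sub-subshift, so conditions (2)--(3), which quantify over \emph{all} $z\in\{0,1\}^\Gamma$, are not ``preserved by the extension''; the fix is simply to define $\kappa$ everywhere by the replacement rule (occurrences of $w^{(1)}$ in $y\in S$ are automatically separated, so no restriction on $z$ is needed), as the paper does.

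The two most substantial gaps are (4)/(5)-kernel and the entropy bound. For (4) and $\Ker(S_{\Gamma_0})=\Ker(Y_{\Gamma_0})$ you need, for every $y\in Y$, a point of $S$ with \emph{exactly} the same stabilizer; replacing all $w^{(0)}$'s by $w^{(1)}$'s only gives $\stab(y)\le\stab(\tilde r(y))$ and can strictly enlarge the stabilizer, which would break both the nonemptiness of $S_{\Gamma_0}$ (recall $Y_{\Gamma_0}$ demands stabilizer exactly $\Gamma_0$ in the finite-index case) and the inclusion $\Ker(S_{\Gamma_0})\le\Ker(Y_{\Gamma_0})$. Your phrase ``choosing $n$ large enough that forbidding $w^{(0)}$ destroys no stabilizer'' has no mechanism behind it. The paper resolves this by constructing a \emph{third} pattern $w^{(2)}$ with the same annulus and another interior block absent from $\cL(\hat Y)$: given $y\in Y_{\Gamma_0}$ containing $w^{(0)}$, one replaces the occurrences along a single $\stab(y)$-coset by $w^{(2)}$ and all other occurrences of $w^{(0)}$ or $w^{(2)}$ by $w^{(1)}$, producing a point of $S$ with exactly the stabilizer $\stab(y)$. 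Finally, for (5) your reduction to ``$\Gamma_0$ near $\{0\}$ plus finitely many bounded-index subgroups'' does not cover all subgroups (e.g.\ $\Z\times\{0\}\le\Z^2$ and its neighbours are neither), and \Cref{prop:locally_correctable_entropy_subgroups} applies to $Y$, not to $S$. The paper's bound is uniform and elementary: for $\Gamma_0$ that is not $B_n$-separated the marker cannot occur in $Y_{\Gamma_0}$ at all, so $S_{\Gamma_0}=Y_{\Gamma_0}$; for $B_n$-separated $\Gamma_0$ a direct counting over a fundamental domain shows the per-site cost of forbidding $w^{(0)}$ is at most $\log\bigl(|W_{n,k}|/(|W_{n,k}|-1)\bigr)\le\epsilon$, where $W_{n,k}$ is the set of admissible fillings of the marker frame. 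You would need to replace your entropy argument by something of this uniform nature for the lemma to hold for every subgroup simultaneously.
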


\begin{proof}
    By the structure of finitely generated abelian groups we can assume that $\Gamma = \Z^d \times G$, where $G$ is a finite abelian group and $d \ge 1$. 
    Let $Y$ be a non-trivial $\Gamma$-subshift with the map extension property, and let $\hat Y \subset Y$ be a (possibly empty) closed, $\Gamma$-invariant set such that $\hat Y \ne Y$. 
    Note that since $Y$ is a non-trivial subshift with the map extension property, it is topologically mixing and in particular $\Ker((Y)$ is finite hence contained in $\{0\} \times G \le \Gamma$. Replacing $\Gamma$ with $\Gamma/ \Ker(Y)$, we can conveniently assume that $\Ker(Y)= \{0\}$
    Fix any $\epsilon >0$ and any finite set $F_0 \Subset \Gamma$.
     For any $n \in \mathbb{N}$ let $B_n, Q_n \Subset \Gamma$ be defined as in \Cref{lem:marker_pattern_basic}, and let 
       \[ W_{n,k} =\left\{ w \in \cL_{B_n}(Y)~:~ w_{Q_n} = w^{(1)}_{Q_n} \right\} .\]
         Since $Y$ has the map extension property, by \Cref{prop:abs_retract_prop},  $Y$ is strongly irreducible. By \Cref{lem:marker_pattern_basic}, for  sufficiently big $n \in \mathbb{N}$, we can find a pattern $w^{(1)} \in \cL_{B_n}(Y)$ such that $w^{(1)}_{Q_n}$ has no self-overlaps in $B_n$.
     Because $Y$ is strongly irreducible, we can choose  $n \in \mathbb{N}$ sufficiently big and $\frac{1}{20} n < k < \frac{1}{11} n$ so that the following are also satisfied:
     \begin{itemize}
         \item For every $\hat w \in \cL_{Q_n}(Y)$ and $\tilde w \in \cL_{B_k}(Y)$, there exists $w \in \cL_{B_n}(Y)$ so that $w_{Q_n} = \hat w$ and $w_{B_k}= \tilde w$. 
         \item   $F_0 \subseteq B_{\lfloor n/11 \rfloor}$.
         \item  $|\cL_{B_k}(Y)\setminus \cL_{B_k}(\hat Y)| \ge  |B_n|+2$.
         \item   \[
     \log \left(\frac{ |W_{n,k}| }{|W_{n,k}|-1} \right) \le \epsilon,
    \]
    \end{itemize}

       The properties above grantees that there exists two distinct words $\tilde w^{(0)},\tilde w^{(2)} \in \cL_{B_{k}}(Y)\setminus \cL_{B_{k}}(\tilde Y)$ that do not occur in $w^{(1)}$, and also (under our assumption that $\Ker(Y)= \{0\}$) and so that
     $\sigma_v(\tilde w^{(i)}) \ne \tilde w^{(i)}$ for every $v \in \{0\} \times G$ and $i \in \{0,2\}$.
    Also, 
    there exists $w^{(0)},w^{(2)} \in \cL_{B_n}(Y)$ such that $w^{(0)}_{Q_n} = w^{(1)}_{Q_n}= w^{(2)}_{Q_n}$ and $w^{(i)}_{B_k} = \tilde w^{(i)}$ for $i \in \{0,2\}$.

    Let $F= B_n$, and 
    let 
    
    \[ S = Y^{ w^{(0)}} = \left\{ y \in Y~:~ \sigma_v(y)_{B_n} \ne w^{(0)} \mbox{ for all } v \in \Gamma \right\}.\]

     Since $\tilde w \not \in \cL_{B_k}(\hat Y)$, it follows that $w^{(0)} \not \in \cL_{B_n}(\hat Y)$, so $\hat Y \subseteq S$.

     Let $\tilde r:Y \to S$  be the map  that replaces every occurrence of $w^{(0)}$ with an occurrence of $w^{(1)}$. The map $r$ is well defined because  the occurrences of $w^{(0)}$ in any $y \in Y$ are $B_n$-separated and also $B_{n-k}$-disjoint from the occurrences of $w^{(1)}$. The map $\tilde r$ is a retraction map from $Y$ to $S$. It is clear that condition $(1)$ in the statement of the lemma holds.

     Let $\kappa: S \times \{0,1\}^\Gamma \to Y$ be the map defined as follows: For $y \in S$ and $z \in \{0,1\}^\Gamma$ the point $\tilde y \in Y$ is obtained by replacing the  occurrences of $w^{(1)}$ in the set $\{ v \in \Gamma~:~ z_v=1\}$ by occurrences of $w^{(0)}$. Because the occurrences of $w^{(1)}$ are $B_n$-separated, this is well defined. From the definition it follows that conditions $(2)$ and $(3)$ in the statement of the lemma are satisfied.

We will now show that for any $y \in Y$ there exists $\hat y \in S$ with $\stab(y) = \stab(\hat y)$. 
Take any $y \in Y$.
If $w^{(0)}$ does not occur in $y$, then $y \in S$ and we can take $\hat y= y$.
Otherwise, 
there exists $v \in \Gamma$ such that $w^{(0)}$ occurs in $y$ at $v+w$ for every $w \in \stab(y)$.
Replacing $y$ by $\sigma_{-v}(y)$,  we can assume that  $w^{(0)}$ occurs in $y$ at  every  $w \in \stab(y)$.
Let $\hat y \in Y$ be the point obtained from $y$ by replacing each occurrence of $w^{(0)}$ in $\stab(y)$ by an occurrence of $w^{(2)}$, and replacing every other occurrence of $w^{(0)}$ or $w^{(2)}$ in $y$ by an occurrence of $w^{(1)}$. It is clear that $\stab(y) \le \stab(\hat y)$. On the other hand, since $w^{(2)}$ only occurs in $y$ at $\stab(y)$, it follows that $\stab(\hat y) \le \stab(y)$. 

From this it follows that if $Y_{\Gamma_0} \ne \emptyset$ then $S_{\Gamma_0} \ne \emptyset$, 
and also that for every $\Gamma_0 \le \Gamma$ we have $\Ker(S_{\Gamma_0})=\Ker(Y_{\Gamma_0})$. In particular, condition $(4)$ in the statement of the lemma is satisfied.

Let $\Gamma_0 \le \Gamma$ be a subgroup which is not $B_n$-separated, then $w^{(0)}$ does not occur in $Y_{\Gamma_0}$, and so
$S_{\Gamma_0}= Y_{\Gamma_0}$, and in particular if $Y_{\Gamma_0} \ne \emptyset$ then $S_{\Gamma_0} \ne \emptyset$, and in that case $\Ker(S_{\Gamma_0})=\Ker(Y_{\Gamma_0})$ and $h(S_{\Gamma_0})=h(Y_{\Gamma_0})$.
Take any $B_n$-separated  subgroup $\Gamma_0 \le \Gamma$.

and let $D_{\Gamma_0}$ be a fundamental domain for $\Gamma_0$.

    Then

    \[
    \frac{|\cL_{B_m}(Y_{\Gamma_0})|}{|\cL_{B_n}(S_{\Gamma_0})|} \le \left(\frac{ |W_{n,k}| }{|W_{n,k}|-1}\right)^{|B_m \cap D_{\Gamma_0}|}
    \]

    It follows that 
    \[
    \frac{1}{|B_m \cap D_{\Gamma}|}\log\left| \cL_{B_m}(Y_{\Gamma_0})\right| -\frac{1}{|B_m \cap D_{\Gamma}|}\log\left| \cL_{B_m}(S_{\Gamma_0}) \right| \le \log \left(\frac{ |W_{n,k}| }{|W_{n,k}|-1} \right).
    \]

    So taking $m \to \infty$ we have that
    \[ h(\overline{Y}_{\Gamma_0}) \le h(\overline{S}_{\Gamma_0}) + \epsilon.\]

This shows that condition $(5)$ is satisfied and completes the proof of the lemma.
\end{proof}
\section{Proof of the embedding theorem for subshifts over a finitely generated abelian group}
\label{sec:emb_fg}

In this section we complete the proof of the more difficult direction of our main theorem for the case that $\Gamma$ is a finitely generated abelian group. Namely, we prove that in the case that $\Gamma$ is a finitely generated abelain group, the conditions $E(X,Y,\hat \rho,\Gamma_0)$ from \Cref{def:E_X_Y} are sufficient to assure that  $X_{\tilde \Gamma} \hookrightarrow_{\hat \rho} Y_{\tilde \Gamma}$.
We phrase a slightly more involved statement for the purpose of the internal inductive nature of our proof:

\begin{proposition}\label{prop:emb_inductive}
     Let $\Gamma$ be a finitely generated  abelian group and  let $X,Y$ be $\Gamma$-subshifts. Suppose that  $Y$ has the map extension property.
      Let $Z \subseteq X$ be a  closed $\Gamma$-invariant set,  
      and let $\hat \rho:Z \to Y$ be an embedding.  
 Suppose that  $E(X,Y,\hat \rho,\Gamma_0)$
 holds   for any subgroup $\Gamma_0 \le \Gamma$. Then $X_{\tilde \Gamma} \hookrightarrow_{\hat \rho} Y_{\tilde \Gamma}$.
\end{proposition}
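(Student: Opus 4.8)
The plan is to prove \Cref{prop:emb_inductive} by induction on the structure of the (finitely many, up to the relevant equivalence) subgroups $\Gamma_0 \le \Gamma$ that can occur as stabilizers of points of $X$ or $Y$, organized by index. Because $\Gamma$ is finitely generated, for each $N$ there are only finitely many subgroups of index $\le N$, and the finite-index subgroups that arise as stabilizers are what make $X_{\Gamma_0}$ and $Y_{\Gamma_0}$ finite sets (for infinite index we use $X_{[\Gamma_0]}$, which we reduce to quotient subshifts $\overline{X}_{\Gamma_0}$ over $\Gamma/\Gamma_0$ via \Cref{lem:stab_of_abs_retract}, which guarantees $\overline{Y}_{[\Gamma_0]}$ again has the map extension property). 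The base of the induction handles the ``most periodic'' points: for subgroups $\Gamma_0$ of smallest index among those we must treat, condition $E(X,Y,\hat\rho,\Gamma_0)$ either already gives $X_{\Gamma_0}\cong_{\hat\rho} Y_{\Gamma_0}$ (nothing to do), or gives a strict entropy gap together with the kernel containment, in which case we must actually construct the embedding on $X_{\Gamma_0}$ extending $\hat\rho$. The inductive step extends a given embedding on the union of $X_{\Gamma'}$ over all $\Gamma'$ already handled (a closed $\Gamma$-invariant set, which together with $Z$ plays the role of the ``already-defined'' part) to the next stabilizer class.

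The engine of each inductive step is a Krieger-style marker argument, now available over $\Gamma$ thanks to \Cref{lem:krieger_marker_lemma}, \Cref{prop:good_partial_tilings} (sufficiently-invariant partial continuous equivariant tilings, which handle periodic points by leaving the periodic locus untiled), and \Cref{lem:marker_pattern_RAR} (good marker patterns, supplied by the map extension property, which also preserve all the relevant stabilizer, kernel and entropy data up to $\epsilon$). Concretely, to extend an embedding past a new class: first apply \Cref{lem:marker_pattern_RAR} to replace $Y$ by a subshift $S$ still containing the image of the already-defined part and still having (approximately) the same entropies and exactly the same kernels on every $Y_{\Gamma_0}$, while freeing up a marker pattern $w^{(0)}$; then use \Cref{prop:good_partial_tilings} to get a continuous equivariant partial tiling of $X$ whose untiled region lies in the already-handled periodic locus; then on each tile encode the relevant block of $x$ by a distinct $S$-admissible pattern — this is where the entropy inequality $h(\overline X_{\Gamma_0}) < h(\overline Y_{\Gamma_0})$ is consumed, via \Cref{lem:top_entropy_inv_set} applied to the $(W,\delta)$-invariant tiles, giving enough room to injectively code — and overlay the marker pattern $w^{(0)}$ at tile centers via $\kappa$ to make the coding locally decodable and hence injective. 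On the already-defined part one instead uses the retraction $\tilde r:Y\to S$ and the map extension property to glue: $\hat\rho$ (resp.\ the previously built embedding) composed with $\tilde r$ lands in $S$, and the condition $X\rightsquigarrow S$ (which follows from $X\rightsquigarrow Y$ plus condition (4) of \Cref{lem:marker_pattern_RAR}) lets us extend. The kernel containment $\Ker(Y_{\Gamma_0})\le\Ker(X_{\Gamma_0})$ is exactly what is needed to guarantee $X\rightsquigarrow S$ on the new stratum.

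For the finite-index strata the argument is finitary: $X_{\Gamma_0}$ and $Y_{\Gamma_0}$ are finite $\Gamma/\Gamma_0$-sets (permutation-like dynamical systems over a finite group), the entropy inequality becomes a strict cardinality inequality $|X_{\Gamma_0}| < |Y_{\Gamma_0}|$ after accounting for the kernel, and the extension is a finite combinatorial matching problem: one must place the (finitely many) orbits of $X_{\Gamma_0}$ not already constrained by $\hat\rho$ into distinct orbits of $Y_{\Gamma_0}$, respecting stabilizers — possible precisely because of the cardinality gap and the kernel containment, and because the already-chosen images (coming from $\hat\rho$ and from strata of larger index already fixed) occupy only finitely many orbits. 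Care is needed that the embedding built on the finite-index strata is compatible, as a partial map, with the continuous extension built on the infinite-index / aperiodic part; this compatibility is arranged by always carrying along, as the domain $Z$ of the next application, the closure of everything defined so far, so that each step is literally an instance of the ``extend $\hat\rho:Z\to Y$'' setup with a larger $Z$.

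\textbf{Main obstacle.} The hard part will be the bookkeeping that makes the induction actually close: ensuring that at each stage the partial embedding defined on the union of already-treated strata together with $Z$ is continuous, $\Gamma$-equivariant, injective, and—crucially—that extending it across the next stratum via the marker/tiling machinery does not disturb its values on the already-treated set nor destroy injectivity against it. This requires (i) choosing the marker pattern and tiling windows large enough, as functions of the injectivity windows of the previously constructed maps and of the sets $P$ output by \Cref{prop:good_partial_tilings}, so that occurrences of the new marker are disjoint from and locally distinguishable from everything used before; (ii) verifying that the ``free'' region where the new coding happens is exactly the complement of the periodic locus already handled, so no point gets two conflicting definitions; and (iii) tracking the $\epsilon$-losses in entropy through \Cref{lem:marker_pattern_RAR} across the finitely many strata so that the strict inequalities $h(\overline X_{\Gamma_0}) < h(\overline Y_{\Gamma_0})$ are never exhausted. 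Everything else—the existence of markers, tilings, marker patterns, and retractions—is supplied by the lemmas already proved; the genuine work is assembling them in the right order with the right quantifiers.
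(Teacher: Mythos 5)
Your engine is the paper's: marker patterns from \Cref{lem:marker_pattern_RAR}, pointed partial tilings from \Cref{prop:good_partial_tilings}, entropy coding on $(K,\epsilon)$-invariant tiles via \Cref{lem:top_entropy_inv_set}, and the retraction plus $\kappa$ to overlay the marker $w^{(0)}$ and make the coding decodable; your description of a single extension step is essentially the paper's construction. The gaps are in the inductive architecture. First, you induct over ``the (finitely many, up to the relevant equivalence) subgroups $\Gamma_0\le\Gamma$ that can occur as stabilizers of points of $X$ or $Y$, organized by index.'' That family is not finite in general (in the full shift every subgroup of $\Gamma$ occurs as a stabilizer), and the poset of subgroups of $\mathbb{Z}^2$ contains infinite descending chains of infinite-index subgroups, so an induction over stabilizers ordered by index or inclusion is not well-founded. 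The substantive idea you are missing is the reduction to a \emph{finite} family of problematic subgroups: \Cref{prop:locally_correctable_entropy_subgroups} (which uses the map extension property of $Y$) together with \Cref{prop:subgroup_entropies_upperbound} produce a finite set $F_0\Subset\Gamma\setminus\{0\}$ so that every subgroup avoiding $F_0$ automatically has a uniform entropy gap, and \Cref{prop:good_partial_tilings} supplies the finite set $P$ so that points whose stabilizer avoids $P$ are handled by the tiling/coding; only the finitely many subgroups generated by subsets of $F_0\cup P$ must be embedded beforehand and absorbed into $Z$. This is a key step of the proof, not bookkeeping, and it is also what makes your ``finitely many $\epsilon$-losses'' remark legitimate.

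Second, the strata that are not finite matching problems — those of infinite index — require the full proposition for the quotient group $\Gamma/\Gamma_0$ (after passing to $\overline{X}_{\Gamma_0},\overline{Y}_{\Gamma_0}$ via \Cref{lem:stab_of_abs_retract}), which is a statement about a \emph{different} group and hence not an earlier instance of your induction over subgroups of the fixed $\Gamma$. The paper closes this recursion by a double induction on the rank of $\Gamma$ and the size of its torsion subgroup: for each nontrivial $\Gamma_j$ in the finite problematic family, $\Gamma/\Gamma_j$ has strictly smaller rank or strictly smaller torsion, so the induction hypothesis applies (your finite-index ``matching'' case is then just the base case of finite quotient groups, and compatibility with the previously defined part is obtained exactly as you suggest, by enlarging $Z$ at each of the finitely many steps). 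Without the finite family and without this well-founded measure on the group, your induction does not terminate; with them, your plan coincides with the paper's proof.
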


\begin{proof}
    We will prove the statement by a double induction: The external induction on the rank of the group  $\Gamma$, and the internal induction over the  size of the torsion subgroup of $\Gamma$.

    Let $X \subseteq B^\Gamma$, $Y \subseteq A^\Gamma$, $Z\subseteq X$ and $\hat \rho:Z \to Y$  be as in the statement of the lemma. 
    
    By \Cref{prop:map_ext_iff_RAR} there exists a finite set $\mathcal{G} \Subset \Sub(\Gamma)$ such that $Y$ is an absolute retract for the class of $\mathcal{G}$-free subshifts. In particular, by \Cref{lem:witness_G_free} there exists a finite set $W_0 \Subset \Gamma$ that witnesses that $Y$ is $\mathcal{G}$-free.
By part $(ii)$ of \Cref{lem:map_ext_G_free} , there exists a finite set $F \Subset \Gamma$
so that for any $\mathcal{G}$-free subshift $\hat Y$ there exists a retraction map 
$\hat r:\hat Y \to Y$ such that if $y \in \hat Y$
and $y_F \in \cL_F(Y)$ then $\hat r(y)_0=y_0$. 

Since $E(X,Y,\hat \rho,\{0\})$ holds, either $h(X) < h(Y)$ or $X \cong_{\hat \rho} Y$. In the second case, there is nothing to prove. So we can assume that $h(X) < h(Y)$. 
Let $\eta  = \frac{1}{3}(h(Y)- h(X))$.
Choose $n \in \mathbb{N}$ sufficiently big so that $F \subseteq B_n$, and also so that  for any pattern $w \in \cL_{B_{5n}(Y)}$ and any $y \in Y$ there exists $\tilde y \in Y$ so that $\tilde y_{B_{5n}}=w$ and $\tilde y_{\Gamma \setminus B_{6n}}=y_{\Gamma \setminus B_{6n}}$.

 By \Cref{prop:locally_correctable_entropy_subgroups} there exists a finite set $F_1 \Subset \Gamma \setminus \{0\}$ such that for any $\Gamma_0 < \Gamma$ with $\Gamma_0 \cap F_1 \ne \emptyset$ we have that $h_{\Gamma_0}(Y) > h(Y) - \eta$. By \Cref{prop:subgroup_entropies_upperbound} there exists a finite set $F_2 \Subset \Gamma \setminus \{0\}$ such that for any $\Gamma_0 < \Gamma$ with $\Gamma_0 \cap F_2 \ne \emptyset$ we have that $h_{\Gamma_0}(X) < h(X) + \eta$. Let $F_0= F_1 \cup F_2$. It follows that $h_{\Gamma_0}(X) < h_{\Gamma_0}(Y) -\eta$ whenever $\Gamma_0 < \Gamma$ satisfies that $F \cap \Gamma_0 \ne \emptyset$. 
 
 Let $\mathcal{G}_0$ denote the family of subgroups non-trivial subgroups of $\Gamma$ generated by a subset of the elements of $F_0$.
Let $\hat Y= \hat \rho(Z) \cup \bigcup_{\Gamma_0 \in \mathcal{G}_0} Y_{\Gamma_0}$.

Applying  \Cref{lem:marker_pattern_RAR} with $X,Y, \hat Y$ $F_0$ as above and $\epsilon = \eta$,
we can find a finite symmetric set $F \subset \Gamma$, subshift $S \subset Y$, 
a retraction map $\tilde r:Y \to S$, a map $\kappa:S \times \{0,1\}^\Gamma \to Y$ and patterns
$w^{(0)},w^{(1)} \in \cL_F(Y)$ such that $w^{(0)} \not \in \cL_F(S)$ so that conditions $(1)-(5)$ in the statement of \Cref{lem:marker_pattern_RAR} hold.

Observe that the subshift $S$ also has the map extension property, since it is a retract of $Y$. 
Furthermore, $h(S) > h(X)$ and for any $\Gamma_0 \le \Gamma$ the condition $E(X,S,\hat \rho, \Gamma_0)$ holds.
Since  $h(X) < h(S)$, 
by \Cref{lem:top_entropy_inv_set} we can find  $\epsilon >0$ and $K \Subset \Gamma$ so that for any $(K,\epsilon)$-invariant set $T \Subset \Gamma$  we have 
\begin{equation}\label{eq:patterns_size_X_Y}
\log \left|\cL_{T + F+F}(X)\right| \le  (1-\epsilon) \log \left|\cL_{T \setminus \left( \partial_{(F+F)}T \cup (F+F)\right)}(S)\right|.
\end{equation}

Applying  \Cref{prop:good_partial_tilings} with $X$, $K \subset \Gamma$ and $\epsilon >0$  we obtain a suitable set $P \Subset \Gamma \setminus \{0\}$.

Let $\mathcal{G}_1 \Subset \Sub(\Gamma)$ denote the union of $\mathcal{G}_0$ with the family of subgroups non-trivial subgroups of $\Gamma$ generated by a subset of the elements of $P$. 

Order the elements of  $\mathcal{G}_1$ as $\mathcal{G}_1 = \left\{\Gamma_1,\ldots,\Gamma_m\right\}$. We will prove that for every $1\le j \le m$ there exists an embedding $\rho_j: (Z \cup \bigcup_{\ell=1}^j X_{\Gamma_\ell}) \to \tilde Y$ that extends $\tilde \rho$.
The construction proceeds in $m$ steps, where in step $j$ we extend $\rho_j$ to $\rho_{j+1}$ using the main induction hypothesis with $\Gamma$ replaced by $\Gamma/\Gamma_{j+1}$ (noting that $\Gamma/\Gamma_{j+1}$ either has lower rank than $\Gamma$ or smaller torsion).

So now we can conveniently replace  $Z$ with $Z \cup \bigcup_{j=1}^m X_{\Gamma_j}$ and replacing $\hat \rho$ with $\rho_m$, and thus assume that $X_{\Gamma_j} \subseteq Z$ for every $1\le j \le m$.

Let $W_1 \Subset \Gamma$
be a coding window for $\hat \rho$, and  let  $\hat \Phi:B^{W_1} \to A$ be a sliding block code for $\hat \rho$.  By possibly enlarging  $W_1 \Subset \Gamma$, we can also assume that $F\subseteq W_1$ (and in particular that $0 \in W_1$), and also that $W_1$ is symmetric. Since $Z$ is relatively of finite type on $X$ We can further enlarge $W_1$ so that if $x \in X$ then $x \in Z$ of and only if  and $\sigma_v(x)_W  \in \mathcal{F}$ for all $v \in \Gamma$. 
In particular, if $x \in X$ and $x_W \not \in \mathcal{F}$  
then $x \not \in Z$, and in particular $\stab(x) \cap P = \emptyset$.
Let $\mathcal{F} = \cL_{W_1}(Z)$.
Then indeed  if $x \in X$ and $x_W \not\in \mathcal{F}$ 
then $\stab(x) \cap P = \emptyset$.
Let $W_2$ be a symmetric  injectivity window for $\hat \rho$ (with respect to the sliding block $\hat \Phi$), with $0 \in W_2$.
 Let $\tilde W = W_1 +W_2$. Then $\hat \rho$ extends to an embedding of the subshift of finite type 
 $\tilde Z$
 given by 
 \[
 \tilde Z = \left\{ z \in B^\Gamma ~:~ \sigma_v(z)_{\tilde W} \in \cL_{\tilde W}(Z) \mbox{ for all } v \in \Gamma
 \right\}.
 \]
Since $Z \subseteq \tilde Z$, we can conveniently replace $Z$ by $\tilde Z$.

 Given a map $\alpha:X \to \{0,1\}^\Gamma$, a pointed partial tiling $\tau:\alpha(X) \to \PTiling(\Gamma)$, $x \in X$ and $v \in \Gamma$ such that $\alpha(x)_v=1$, let $T_v(x) \Subset \Gamma$ denote the tile in $\tau(\alpha(x))$ that contains $v$. and let
\[T^{-}_v(x) = T_v(x) \setminus \left((v+F+F) \cup \partial_{F}T_v(x) \right)\]
and
\[ T^{+}_v(x) = (T_v(x)+F) \cup \partial_{W}^{\mathcal{T}} T_v(x) .\]



Applying \Cref{prop:good_partial_tilings} with the previously obtained set $P \Subset \Gamma \setminus \{0\}$, $W= W_1$ and $\tilde W =  W_1 +W_2$ and $\epsilon_1>0$ sufficiently small, 
we obtain the existence of  a map $\alpha:X \to \{0,1\}$
    and a pointed $(K,\epsilon)$-invariant tiling $\tau: \alpha(X) \to \PTiling(\Gamma)$ 
     such that  for every $x \in X$:
\begin{enumerate}
    \item For every $v \in \Gamma$ such that $\alpha(x)_v=1$, letting $T^- = T^-_v(x)$ and $T^+=T^+_v(x)$ it holds that 
    $\left| \cL_{T^+}(X)\right| \le \left|\cL_{T^-}(Y) \right|$.
    \item If $v \in \Gamma$ is not contained in some tile $T \in \tau(\alpha(x))$ then $\sigma_v(x)_{W_1+W_2} \in \cL(Z)$.
\end{enumerate}

Given $x \in X$,  
let \[P_v^+(x) = x_{T_v^+(x)} \in \cL_{T_v^+(x)}(X).\]
For every $T,T^-,T^+ \Subset \Gamma$ that occur as $T_0(x),T_0^-(x),T_0^+(x)$ for some $x \in X$ 
with $\alpha(x)_0 =1$, choose an injective function 
\[\Phi_{T^-,T^+}:\cL_{T^+}(X) \to \cL_{T^-}(\tilde Y).\]
Now, choose a function
\[\Phi^+_{T,T^+}:\cL_{T^+}(X) \to \cL_{T}(\tilde Y)\]
so that $\Phi^+_{T,T^+}(w)_{T^-} = \Phi_{T^-,T^+}(w)$ for every $w \in \cL_{T^+}(X)$, and so that $\Phi^+_{T,T^+}(w)_{F}=w^{(1)}$. The existence of an injective function form $\cL_{T^+}(X)$ to $\cL_{T^-}(Y)$ for every pair of sets $T^-,T^+ \Subset \Gamma$ that occur as $T_0^-(x),T_-^+(x)$ for some $x \in X$  and $v \in \Gamma$ such that $\alpha(x)_v=1$ follows from the inequality 
$\left| \cL_{T^+}(X)\right| \le \left|\cL_{T^-}(Y) \right|$.

We now define a map $\tilde \rho:X \to A^\Gamma$.
For $x \in X$ let $\tilde \rho(x) \in A^\Gamma$ be the unique point $y \in A^\Gamma$ satisfying:
\begin{itemize}
    \item For every $v \in\Gamma$ such that $\alpha(x)_v=1$ ,  $y_{T_v(x)}:= \sigma_{-v}\left( \Phi^+_{T_v,T_v^+}(P_0^+(\sigma_{v}(x))) \right)$.
    \item For every 
    $v \in \Gamma \setminus \bigcup \tau(\alpha(x))$, 
    $y_v:= \tilde \Phi (\sigma_{v}(x)_{W})$.
\end{itemize}

 Let $\tilde Y= \tilde \rho(X)$. 
We will now check that $\tilde Y$ is $\mathcal{G}$-free. It suffices to show that for any $x \in X$ the group $\Gamma_x = \stab(\tilde \rho(x))$ does not contain any subgroup from $\mathcal{G}$. If  $x\not \in  Z$, then there exists $v \in \Gamma$ such that $\sigma_v(x)_{W_1} \not \in \mathcal{F}$ and so there exists $v' \in \Gamma$ such that $\alpha(x)_{v'}=1$. Then $\sigma_v(\tilde \rho(x))_{B_{5n}} = w^{(1)}$, and this witnesses that $\stab(\tilde \rho(x))$ does not contain a subgroup from $\mathcal{G}$. If $x \in Z$ then $\tilde\rho(x)= \hat \rho(x)$. Since  the map $\hat \rho$  is injective , it follows that in this case $\stab(\tilde  \rho(x)) = \stab(x)$. 
Because $X$ is $\mathcal{G}$-free, it follows that also in this case $\stab(\tilde \rho(x))$ does not contain any subgroup from $\mathcal{G}$. We conclude that indeed the subshift $\tilde Y$ is $\mathcal{G}$-free.

It follows that there exists a retraction map $\hat r:\tilde Y \to Y$ so that $\tilde r(y)_v= y_v$ whenever $\sigma_v(y)_{B_{n}} \in \cL_{B_n}(Y)$.
Let $\check \rho:X \to  S$ be defined by $\check \rho = \tilde r \circ \hat r \circ \hat \rho$, and let $\rho:X \to Y$ be defined by:
\[
\rho(x)=\kappa(\check \rho(x),\alpha(x)).
\]
To finish the proof it remains to show that the map $\rho$ is injective.
If $x \in X$, $v \in \Gamma$, $\check y = \check \rho (x)$ then $\sigma_v(\check \rho(y))_{F} = w^{(1)}$ whenever $\alpha(x)_v=1$.
Thus, if $y= \rho(x)$ we have that
$\sigma_v(y)_{F}=w^{(0)}$ if and only if $\alpha(x)_v=1$. 
Now suppose that $x^{(1)},x^{(2)} \in X$ are two distinct points. Let $y^{(1)}=\rho(x^{(1)})$ and $y^{(2)}=\rho(x^{(2)})$. If $\alpha(x^{(1)}) \ne \alpha(x^{(2)})$ then there exists $v \in \Gamma$ such that only one of the patterns $\sigma_v(y^{(1)})_{F},\sigma_v(y^{(2)})_{F}$ is equal to $w^{(0)}$, and in particular in this case $y^{(1)} \ne y^{(2)}$. So consider the case that $\alpha(x^{(1)})=\alpha(x^{(2)})$. 
In this case $\tau(\alpha(x^{(1)}))=\tau(\alpha(x^{(2)}))$. Let $\mathcal{T}= \tau(\alpha(x^{(1)}))$.
By the assumption that $x^{(1)} \ne x^{(2)}$, there exists $v \in V$ such that 
$x^{(1)}_v \ne x^{(2)}_v$. If there exists $T \in \mathcal{T}$ such that $v \in T^+$, then $x^{(1)}_{T^+} \ne x^{(2)}_{T^+}$, and by injectivity of $\Phi_{T,T^+}$, we have that $\hat \rho (x^{(1)})_{T^-} \ne \hat \rho (x^{(2)})_{T^-}$. Since $\hat \rho (x^{(1)})_T, \hat \rho (x^{(1)})_T \in \cL_T(\tilde Y)$, it follows that $\rho(x^{(1)})_{T^-}= \hat \rho(x^{(1)})_{T^-}$ and $\rho(x^{(2)})_{T^-} =\hat \rho(x^{(2)})_{T^-}$ so
$\rho (x^{(1)})_{T^-} \ne  \rho (x^{(2)})_{T^-}$.
If $v \in \Gamma \setminus \bigcup_{T \in \mathcal{T}}T$, 
then $x^{(1)}_{W}, x^{(2)}_{W} \in \cL_{W}(Z)$ and so $\hat \rho(x^{(i)})_u = \tilde \Phi(\sigma_u(x^{(i)})_{W_1})$ for every $u \in W_2$ and $i=1,2$. Since $W_2$ is an injectivity window for $\hat \rho$ ,  there exists $u \in W_2$ such that $\rho(x^{(1)})_{v+u} \ne \rho(x^{(2)})_{v+u}$. Thus $\rho:X \to Y$ is indeed an injective map.
\end{proof}

\section{ Reduction to the finitely generated abelian case}\label{sec:reduction_to_fg}
We have already established our main result for the case where $\Gamma$ is a finitely generated abelian group.
Various results about subshifts of finite type over countable groups can be reduced to the finitely generated case. Sometimes the reduction is not completely trivial. For example, in  \cite{MR4607628} the problem of classification of countable groups that admit an aperiodic subshift has been reduced (in a non-trivial manner)  to the finitely generated case.
In this last section we complete the proof of the embedding theorem for subshifts over general countable abelian groups, by reducing it to the case where the acting group $\Gamma$ is a finitely generated abelian group.

In the following we will say that a sentence $P(\Delta)$ ``holds for all sufficiently large
$\Delta \le \Gamma$ if there exists a finite set $F\Subset \Gamma$ 
such that $P(\Delta)$ holds for any subgroup  
$\Delta \le \Gamma$  such that $F \Subset \Delta$.

If $\rho:X \to Y$ is a map between $\Gamma$-subshifts,
and $W_0 \Subset \Gamma$ is a coding window for $\rho$, 
then  the map $\rho$ naturally extends to a map $\rho^{[\Delta]}:X^{[\Delta]} \to Y^{[\Delta]}$ 
for any $\Delta \le \Gamma$ such that $W_0 \Subset \Delta$. 
Furthermore, if $\rho:X\to Y$ is injective, and $W_1 \Subset \Gamma$ is an injectivity window for $\rho$, then $\rho^{[\Delta]}:X^{[\Delta]} \to Y^{[\Delta]}$ is injective for any $\Delta \le \Gamma$ that contains $W_0\cup W_1$.
It follows that whenever $Z \subseteq Y$ is a closed, 
$\Gamma$-invariant set and $\hat \rho:Z \to Y$ is an injective map then $X \hookrightarrow_{\hat \rho} Y$  implies that $X^{[\Delta]} \hookrightarrow_{\hat \rho} Y^{[\Delta]}$ for all sufficiently large $\Delta \le \Gamma$ and $X \cong_{\hat \rho} Y$ implies that $X^{[\Delta]} \cong_{\hat \rho} Y^{[\Delta]}$ for all sufficiently large $\Delta \le \Gamma$.

A word of caution: If $X,Y$ are $\Gamma$ subshifts such that  $X \cong Y$, it is not necessarily true that $X^{[\Delta]} \cong Y^{[\Delta]}$ for all finitely generated subgroups $\Delta \le \Gamma$.  Moreover, if $Y= Y^{[\Delta]}$  for some finitely generated subgroup $\Delta \le \Gamma$ and  $X \cong Y$, then it does not follow that $X = X^{[\Delta]}$. However,  by the argument above in this case it is still true that $X^{[\tilde \Delta]} \cong Y^{[\tilde \Delta]}$ and that  $X= X^{[\tilde \Delta]}$ for all sufficiently large $\tilde \Delta \le \Gamma$:

\begin{lemma}\label{lem:reduction_to_fg}
    Suppose $X,Y$ are $\Gamma$-subshifts, that $Y$ has the map extension property, and that
    $\hat \rho:Z \to Y$ is an injective map. 
    If $E(X,Y,\hat \rho,\Gamma_0)$ holds for every $\Gamma_0 \le\Gamma$ then for every sufficiently large $\Delta \le \Gamma$ we have  $Y=Y^{[\Delta]}$ and  $E(X^{[\Delta]},Y^{[\Delta]},\hat \rho,\Gamma_0)$ holds for every subgroup  $\Gamma_0 \le \Gamma$.
\end{lemma}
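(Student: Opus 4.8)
The plan is to handle the finitely many "types" of data appearing in the hypotheses one at a time and then take $\Delta$ large enough to serve all of them simultaneously. First I would dispose of the target: since $Y$ has the map extension property it is contractible (Proposition \ref{prop:map_ext_contractible}), and by Lemma \ref{lem:contractible_Y_Delta} there is a finitely generated subgroup $\Delta_0 \le \Gamma$ with $Y = Y^{[\Delta_0]}$; then $Y = Y^{[\Delta]}$ for every $\Delta \ge \Delta_0$, since $Y^{[\tilde\Delta]} \subseteq Y^{[\Delta_0]}$ whenever $\Delta_0 \le \tilde\Delta$ while $Y \subseteq Y^{[\tilde\Delta]}$ always holds. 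This takes care of the first assertion; from now on assume $Y = Y^{[\Delta]}$.

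The substantive point is that for each subgroup $\Gamma_0 \le \Gamma$ the condition $E(X,Y,\hat\rho,\Gamma_0)$ transfers to $E(X^{[\Delta]},Y^{[\Delta]},\hat\rho,\Gamma_0)$ once $\Delta$ is large, and that the "largeness" required is uniform in $\Gamma_0$. There are two cases in the definition of $E$. If $X_{\Gamma_0} \cong_{\hat\rho} Y_{\Gamma_0}$, i.e.\ $\hat\rho|_{Z_{\Gamma_0}}$ extends to a conjugacy $\rho_{\Gamma_0} \colon X_{\Gamma_0} \to Y_{\Gamma_0}$, then picking coding and injectivity windows for $\rho_{\Gamma_0}$ and its inverse (all finite subsets of $\Gamma$), the discussion preceding the lemma shows $\rho_{\Gamma_0}$ induces a conjugacy $(X_{\Gamma_0})^{[\Delta]} \cong (Y_{\Gamma_0})^{[\Delta]}$ for all sufficiently large $\Delta$; the key algebraic observation to record is that $(X^{[\Delta]})_{\Gamma_0} = (X_{\Gamma_0})^{[\Delta]}$ and likewise for $Y$ (this is immediate from the definitions of $X^{[\Delta]}$ and $X_{\Gamma_0}$, since the defining constraints of $X^{[\Delta]}$ are shift-invariant and passing to $\Gamma_0$-periodic points commutes with imposing local constraints), so $X^{[\Delta]}_{\Gamma_0} \cong_{\hat\rho} Y^{[\Delta]}_{\Gamma_0}$. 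In the other case, where $h(\overline X_{\Gamma_0}) < h(\overline Y_{\Gamma_0})$ and $\Ker(Y_{\Gamma_0}) \le \Ker(X_{\Gamma_0})$: the kernel condition only involves the group, not the projection $\Delta$, and $\Ker(Y^{[\Delta]}_{\Gamma_0}) \le \Ker(Y_{\Gamma_0})$ while $\Ker(X^{[\Delta]}_{\Gamma_0}) \ge \Ker(X_{\Gamma_0})$ (a larger subshift has a smaller kernel), so the inclusion of kernels is preserved; for the entropy strict inequality I would use that $h(\overline X^{[\Delta]}_{\Gamma_0})$ decreases to $h(\overline X_{\Gamma_0})$ as $\Delta$ grows (a finite-window relaxation of the constraints defining $X$, so the entropy of $X^{[\Delta]}$ decreases to $h(X)$, and the same for the $\Gamma_0$-periodic restriction using the formulas for $h(\overline X_{\Gamma_0})$ given after Lemma \ref{lem:top_entropy_inv_set}), so for large $\Delta$ the strict inequality persists.

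The main obstacle — and the reason the lemma is phrased with "for all sufficiently large $\Delta$" rather than naively — is uniformity over the infinitely many subgroups $\Gamma_0 \le \Gamma$: a priori each $\Gamma_0$ might demand a larger $\Delta$. I would resolve this exactly as in the proof of \Cref{prop:map_extension_implies_finitely_many_forbidden_periods} and \Cref{prop:map_ext_iff_RAR}: since $Y$ has the map extension property, there is a finite set $\mathcal G \Subset \Sub(\Gamma)$ such that $Y$ is $\mathcal G$-free and every $\mathcal G$-free subshift is $\rightsquigarrow Y$; moreover, by \Cref{prop:subgroup_entropies_upperbound} and \Cref{prop:locally_correctable_entropy_subgroups}, there is a finite set $F_0 \Subset \Gamma\setminus\{0\}$ such that every $\Gamma_0$ with $\Gamma_0 \cap F_0 \ne \emptyset$ is automatically in the second (entropy) case with uniform slack, with the entropy estimate above applying uniformly over such $\Gamma_0$ once $\Delta \supseteq F_0$ is large enough (again via the $(K,\epsilon)$-invariant-set entropy estimate, Lemma \ref{lem:top_entropy_inv_set}, applied to fundamental domains). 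For the remaining $\Gamma_0$ with $\Gamma_0 \cap F_0 = \emptyset$, these are generated by subsets of a fixed finite set, hence there are only finitely many of them up to the relevant data, so only finitely many conjugacy windows and entropy thresholds enter; taking $\Delta$ large enough to accommodate this finite collection, together with $\Delta_0$ and $F_0$, finishes the proof. I would then remark that intersecting the finitely many "sufficiently large" conditions yields a single finite $F \Subset \Gamma$ witnessing the conclusion.
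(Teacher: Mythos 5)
Your opening step (contractibility gives $Y=Y^{[\Delta_0]}$, hence $Y=Y^{[\Delta]}$ for all larger $\Delta$) agrees with the paper, but the two transfer arguments after that contain genuine errors. First, the ``key algebraic observation'' $(X^{[\Delta]})_{\Gamma_0}=(X_{\Gamma_0})^{[\Delta]}$ is false: a point of $(X_{\Gamma_0})^{[\Delta]}$ need not be $\Gamma_0$-periodic at all, and a $\Gamma_0$-periodic point of $X^{[\Delta]}$ has $\Delta$-windows that extend to points of $X$ but not necessarily to $\Gamma_0$-periodic points of $X$. For a concrete failure inside the scope of the lemma, take $\Gamma$ not finitely generated, $X=Y=\{0,1\}^\Gamma$, $\Delta$ finitely generated and $\Gamma_0=\langle\gamma\rangle$ with $\gamma\notin\Delta$: then $(X_{\Gamma_0})^{[\Delta]}=\{0,1\}^\Gamma$ while $(X^{[\Delta]})_{\Gamma_0}=X_{\Gamma_0}$. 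Since this identity is your only bridge from $(X_{\Gamma_0})^{[\Delta]}\cong(Y_{\Gamma_0})^{[\Delta]}$ to the statement actually required, $X^{[\Delta]}_{\Gamma_0}\cong_{\hat\rho}Y^{[\Delta]}_{\Gamma_0}$, the conjugacy case is not established. Second, the kernel step goes in the wrong direction: $X\subseteq X^{[\Delta]}$ gives $X_{\Gamma_0}\subseteq X^{[\Delta]}_{\Gamma_0}$, hence $\Ker(X^{[\Delta]}_{\Gamma_0})\le\Ker(X_{\Gamma_0})$ (your own parenthetical ``a larger subshift has a smaller kernel'' says exactly this), so the needed inclusion $\Ker(Y^{[\Delta]}_{\Gamma_0})\le\Ker(X^{[\Delta]}_{\Gamma_0})$ does not follow from $\Ker(Y_{\Gamma_0})\le\Ker(X_{\Gamma_0})$: enlarging $X$ to $X^{[\Delta]}$ can only destroy forced periods. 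The paper has to work for this point: it uses $Y=Y^{[\Delta_0]}$ to see that $\Ker(\overline{Y}_{\Gamma_0})$ is finitely generated, rewrites the kernel inclusion as a condition on patterns over a finite window $K$, and then requires $K\subseteq\Delta$.

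The uniformity argument is also not correct as proposed. \Cref{prop:locally_correctable_entropy_subgroups} and \Cref{prop:subgroup_entropies_upperbound} give uniform entropy slack for subgroups $\Gamma_0$ that \emph{avoid} a suitable finite set $F_0$ (i.e.\ are Chabauty-close to $\{0\}$), not those meeting it; and whichever side of the dichotomy you take, the complementary family is infinite: the subgroups with $\Gamma_0\cap F_0\ne\emptyset$ are not generated by subsets of $F_0$ (already in $\Z^2$ infinitely many subgroups contain a fixed $v\in F_0$), so the ``only finitely many of them'' step collapses. Moreover, the uniform-slack argument near $\{0\}$ presupposes $h(X)<h(Y)$, which need not hold (the trivial subgroup may be in the conjugacy case), and the kernel condition still has to be verified uniformly there. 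The paper's mechanism for uniformity is different and is the heart of its proof: for \emph{every} $\Gamma_0\in\Sub(\Gamma)$ it constructs a Chabauty-open neighborhood $\mathcal{U}_{\Gamma_0}$ together with a single finitely generated threshold such that $E(X^{[\Delta]},Y^{[\Delta]},\hat\rho,\Gamma_1)$ holds for all $\Gamma_1\in\mathcal{U}_{\Gamma_0}$ and all larger $\Delta$ (a finite $\Gamma_0$-separated window certifying the entropy gap, a finite window for the kernel condition, and the coding plus injectivity windows of $\rho_{\Gamma_0}$ in the conjugacy case), and then invokes compactness of $\Sub(\Gamma)$ to extract finitely many thresholds, generating $\Delta$ from them. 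You would need to replace your $F_0$-dichotomy by an argument of this kind, and redo the conjugacy and kernel transfers along the lines indicated above.
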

\begin{proof}
    Let $X,Y,Z$ and $\hat \rho:Z \to Y$ be as in the statement.
    Because $Y$ is has the map extension property, by \Cref{prop:map_ext_contractible} it follows that $Y$ contractible so by \Cref{lem:contractible_Y_Delta} $\Delta_0 < \Gamma$ such that $Y= Y^{[\Delta_0]}$, and thus $Y= Y^{[\Delta]}$ for  every sufficiently large $\Delta \le \Gamma$.
    
    We will prove that for any $\Gamma_0 \in \Sub(\Gamma)$
    there exists an open neighborhood $\mathcal{U}_{\Gamma_0} \subseteq \Sub(\Gamma)$ with $\Gamma_0 \in \mathcal{U}_{\Gamma_0}$
   such that for all sufficiently large $\Delta \le \Gamma$ the condition
    $E(X^{[\Delta]},Y^{[\Delta]},\hat \rho,\Gamma_1)$
    holds for any $\Gamma_1 \in \mathcal{U}_{\Gamma_0}$. By compactness of $\Sub(\Gamma)$, it will follow that there exist a finite number of  finitely-generated subgroups $\Delta_1,\ldots, \Delta_N \le \Gamma$ such that for any $\Gamma_0 \le \Gamma$ there exists $1\le j \le N$ so that $E(X^{[\Delta]},Y^{[\Delta]},\hat \rho,\Gamma_0)$ holds for any finitely generated subgroup $\Delta \le \Gamma$ satisfying $\Delta_j \le \Delta$. The proof will be completed by taking $\Delta_0$ to be the group generated by $\{ \Delta_1,\ldots, \Delta_N\}$.  

    Let $\Gamma_0 \le \Gamma$ be an arbitrary subgroup. 

    We first show that there exists an open neighborhood $\mathcal{W}_{\Gamma_0} \subseteq \Sub(\Gamma)$ of $\Gamma_0$ such that $\Ker(Y^{[\Delta]}_{\Gamma_0}) \le \Ker(X^{[\Delta]}_{\Gamma_0})$ for any $\Gamma_1 \in \mathcal{W}_{\Gamma_0}$ and any sufficiently large subgroup $\Delta \le \Gamma$.
    If $Y^{[\Gamma_0]}$ consists of a unique fixed point, then $X^{[\Gamma_0]}$ must also consist of at most a single fixed point. In this case the set of subgroups $\mathcal{W} = \left\{ \Gamma_1 \in \Sub(\Gamma)~:~ | \cL_{\{0\}}(X_{\Gamma_1}) |  = 1 \right\}$ is an open neighborhood of $\Gamma_0$ that satisfies the requirement.
  
    Otherwise, since $Y=Y^{[\Delta_0]}$, it follows that   the group $\Ker(\overline{Y}_{\Gamma_0})$ is contained in $\Delta_0/(\Gamma_0 \cap \Delta_0)$, hence $\Ker(\overline{Y}_{\Gamma_0})$ is finitely generated. 
    It follows that for every $\Gamma_0$ there exist finite  subsets $F,K \Subset \Gamma$ with $F\cup \{0\} \subset K$ such that the condition that $\Ker(Y_{\Gamma_0}) \le \Ker(X_{\Gamma_0})$ is equivalent to the condition that for any
    $w \in \cL_{K}(X_{\Gamma_0})$ and any $v \in F$ we have $w_0 =w_v$. Let
    \[
    \mathcal{W}_{\Gamma_0}
   = \left\{ \Gamma_1 \in \Sub(\Gamma)~:~ \cL_K(X_{\Gamma_1})=\cL_K(X_{\Gamma_0})\right\}.\]
    
    Then $\mathcal{W}_{\Gamma_0} \subseteq \Sub(\Gamma)$ is an open neighborhood of $\Gamma_0$,
    and $\Ker(Y^{[\Delta]}_{\Gamma_0}) \le \Ker(X^{[\Delta]}_{\Gamma_0})$ 
    for any $\Gamma_1 \in \mathcal{W}_{\Gamma_0}$ and any  finitely generated subgroup $\Delta$ such that $\Delta_0 \le \Delta$ and $K \subseteq \Delta$.

    If $h(\overline{X}_{\Gamma_0}) < h(\overline{Y}_{\Gamma_0})$ then there exists a finite $\Gamma_0$-separated set $K \Subset \Gamma$ such that $\frac{1}{|K|}\log \left| \cL_{K}(X_{\Gamma_0}) \right|< h(\overline{Y}_{\Gamma_0})$.

    Since $Y = Y^{[\Delta_0]}$ it follows that $h(\overline{Y}_{\Gamma_1})=h(\overline{Y}_{\Gamma_0})$ whenever $\Gamma_1 \cap \Delta_0 = \Gamma_0 \cap \Delta_0$.
Note that For any $\Gamma_1 \le \Gamma_0$ such that 
$[\Gamma:\Gamma_1]= +\infty$  we have that $\cL(X_{\Gamma_0}) \subseteq \cL_K(X_{\Gamma_1})$. 
By a compactness argument (using that $\cL_K(X_{\Gamma_0})$ is a finite set) 
we can find thus find a finitely generated subgroup $\Delta_1 \le \Gamma$ such that for any $\Gamma_1 \le \Gamma$ 
with $\Gamma_1 \cap \Delta_1 = \Gamma_0 \cap \Delta_0$ we have $\cL(X_{\Gamma_0}) \subseteq \cL_{K}(X_{\Gamma_1})$. 
Let $\mathcal{U}_{\Gamma_0}$ denote the collection of subgroups $\Gamma_1 \in \mathcal{W}_{\Gamma_0}$  so that $\Gamma_1 \cap \Delta_0 = \Gamma_0 \cap \Delta_0$ and $\Gamma_1 \cap \Delta_1 = \Gamma_0 \cap \Delta_0$.

Then $\mathcal{U}_{\Gamma_0} \subseteq \Sub(\Gamma)$ is an open neighborhood of $\Gamma_0$ and for any finitely generated subgroup $\Delta \le \Gamma$ that contains $\Delta_0,\Delta_1$ and $K-K$ we have  
 $E(X^{[\Delta]},Y^{[\Delta]},\hat \rho,\Gamma_1)$
    for any $\Gamma_1 \in \mathcal{U}_{\Gamma_0}$ and  any subgroup $\Delta <\Gamma$ such that $\Delta_0 \le \Delta$ that satisfies $(K-K) \le \Delta$.

    It remains to 
    consider the case that $h(X_{\Gamma_0})$ is not strictly smaller than $h(Y_{\Gamma_0})$. In this case, since $E(X,Y,\hat \rho,\Gamma_0)$ holds, we have that $X_{\Gamma_0} \cong_{\hat \rho} Y_{\Gamma_0}$. 
    In other words, 
 there exists an isomorphism $\rho_{\Gamma_0}:X_{\Gamma_0} \to Y_{\Gamma_0}$ that extends $\hat \rho$. Let $W_1 \Subset \Gamma$ be a coding window for $\rho_{\Gamma_0}$, and let $W_2 \Subset \Gamma$ be an injectivity window  for $\rho_{\Gamma_0}$. Let $W = W_1 +W_2$. It follows that for any subgroup $\Gamma_1$ such that $\Gamma_1 \cap (W -W) = \Gamma_0 \cap (W -W)$ and any finitely generated subgroup $\tilde \Delta \le \Gamma$ such that $(W-W) \subseteq \Delta$,
    the same sliding block code defines an embedding 
    $X^{[\tilde \Delta]}_{\Gamma_1} \hookrightarrow_{\hat \rho} Y^{[\tilde \Delta\}]}_{\Gamma_1}$. The set $\mathcal{U}_{\Gamma_0}$ of all such subgroups is an open neighborhood of $\Gamma_0$, and $E(X^{[\Delta]},Y^{[\Delta]},\hat \rho,\Gamma_1)$ holds for any $\Gamma_1 \in \mathcal{U}_{\Gamma_0}$ and any finite generated subgroup $\Delta \le \Gamma$ with $(W-W) \subseteq \Delta$.
   
\end{proof}

To complete the proof of \Cref{thm:relative_embedding_thm} in case that $\Gamma$ is a countable abelian group,
not necessarily finitely generated, we use \Cref{lem:reduction_to_fg} as follows: Let $\Gamma$ be a countable ablain group, let  $X,Y$ be $\Gamma$-subshifts. Assume that $Y$ has the map extension property, that $Z \subseteq X$ 
is a closed $\Gamma$-invariant set which is either empty or relatively of finite type, 
and let $\hat \rho:Z \to Y$ be an injective map.
Suppose that $E(X,Y,\hat \rho,\Gamma_0)$ holds for any $\Gamma_0 \le \Gamma$.
Then by \Cref{lem:reduction_to_fg} we have  $Y=Y^{[\Delta]}$ and  $E(X^{[\Delta]},Y^{[\Delta]},\hat \rho,\Gamma_0)$ holds for every subgroup  $\Gamma_0 \le \Gamma$ and every  sufficiently large finitely generated subgroup $\Delta \le \Gamma$.
Also observe that for any sufficiently large finitely generated  subgroup $\Delta \le \Gamma$ we have that $E(X^{[\Delta]},Y^{[\Delta]},\hat \rho,\Gamma_0)$
holds for every $\Gamma_0 \le \Gamma$ if and only if $E(X^{(\Delta)},Y^{(\Delta)},\hat \rho,\Gamma_0)$
holds for every $\Gamma_0 \le \Delta$. Thus, by \Cref{prop:emb_inductive},
$X^{(\Delta)} \hookrightarrow_{\hat \rho} Y^{(\Delta)}$ for any sufficiently large finitely generated subgroup subgroup
$\Delta \le \Gamma$. This implies that $X^{[\Delta]} \hookrightarrow_{\hat \rho} Y^{[\Delta]}$ 
for any sufficiently large finitely generated subgroup subgroup $\Delta \le \Gamma$. 
Since $X \subseteq X^{[\Delta]}$ 
for any $\Delta \le \Gamma$ and $Y=Y^{[\Delta]}$ for all sufficiently large $\Delta$, it follows that $X \hookrightarrow_{\hat \rho} Y$.


\section{Lower entropy factors and the map extension property}\label{sec:lower_entropy_factors}

Recall that the condition $X\rightsquigarrow Y$ defined in \Cref{def:right_squigarrow} is necessary for the existence  of a map from $X$ into $Y$. Boyle's lower entropy factor theorem says it is a 
 necessary and sufficient condition for the existence of a factor map between irreducible $\Z$-subshifts of finite type \cite{MR753922}:

\begin{theorem}[Boyle's lower entropy factor theorem \cite{MR753922}]
    Let $X,Y$ be irreducible $\Z$-subshifts of finite type with $h(X)>h(Y)$. Then there exists a factor map from $X$ to $Y$ if and only if $X \rightsquigarrow Y$.
\end{theorem}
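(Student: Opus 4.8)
The plan is to deduce Boyle's lower entropy factor theorem as a consequence of the relative embedding theorem (\Cref{thm:relative_embedding_thm}), or more precisely of the machinery developed to prove it, by combining an embedding step with a factor step. First I would observe that the ``only if'' direction is immediate: if $\pi:X\to Y$ is a factor map then for every $x\in X$ we have $\stab(x)\le\stab(\pi(x))$, so $X\rightsquigarrow Y$. For the ``if'' direction, assume $X,Y$ are irreducible $\Z$-SFTs with $h(X)>h(Y)$ and $X\rightsquigarrow Y$. The key structural input is \Cref{prop:mixing_Z_SFT_abs_retract}: an irreducible $\Z$-SFT need not be mixing, but it decomposes into a cyclic permutation of mixing components, and one can reduce to the mixing case. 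Alternatively, and more in the spirit of this paper, I would replace $Y$ by a mixing SFT $\tilde Y$ with $h(Y)<h(\tilde Y)<h(X)$ and with a factor map $\tilde Y\to Y$; but the cleanest route is the classical two-step argument of Boyle.

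The two-step argument is: (i) build a subshift $W$ of finite type which simultaneously embeds in $X$ and factors onto $Y$, with $h(Y)<h(W)<h(X)$; (ii) extend the embedding $W\hookrightarrow X$ to a factor map $X\to W$ using an extension lemma, then compose with $W\twoheadrightarrow Y$. For step (i), the natural candidate for $W$ is essentially $Y$ itself but ``thickened'' so that it carries the periodic-point data of $X$ in a way compatible with $X\rightsquigarrow Y$: concretely, take $W$ to be a mixing SFT with a factor map onto (the mixing core of) $Y$, chosen with $h(W)$ strictly between $h(Y)$ and $h(X)$, and verify using Krieger's embedding theorem (\Cref{thm:krieger}) — applied in the form of \Cref{cor:main_embedding_theorem} with $\Gamma=\Z$ — that $W\hookrightarrow X$. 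This requires checking the periodic-point counts: since $h(W)<h(X)$ and $W$ can be chosen so that for every $n$ the number of points of least period $n$ in $W$ is at most that in $X$ (possible because $X\rightsquigarrow Y$ forces every periodic orbit structure occurring in $Y$, hence in a suitable $W$, to occur in $X$), Krieger's theorem yields the embedding $W\hookrightarrow X$.

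For step (ii), the crucial tool is Boyle's Extension Lemma (\Cref{lem:boyle_extension_lemma}): viewing $W\subseteq X$ via the embedding from step (i), and viewing $W$ as a mixing SFT (after passing to mixing components and handling the cyclic structure of the irreducible $X$), the identity map $W\to W$ extends to a map $X\to W$ precisely when $X\rightsquigarrow W$. But $X\rightsquigarrow W$ holds because $W$ contains all the periodic structure of $Y$ and $X\rightsquigarrow Y$ — one must check that every stabilizer subgroup $\stab(x)=n\Z$ occurring in $X$ is realized by a point of $W$, which follows from $X\rightsquigarrow Y$ together with the construction of $W$ (mixing SFTs of positive entropy realize all sufficiently small periods, and one arranges $W$ to realize exactly the periods realized by $Y$). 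Composing the resulting factor map $X\twoheadrightarrow W$ with the original factor map $W\twoheadrightarrow Y$ gives the desired factor map $X\to Y$.

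The main obstacle I anticipate is the bookkeeping around irreducible-but-not-mixing $\Z$-SFTs: both Krieger's embedding theorem and Boyle's extension lemma are stated for \emph{mixing} SFTs, so one must reduce the irreducible case to the mixing case by passing to the period-$p$ mixing subsystems $X=X_0\cup\sigma X_0\cup\cdots\cup\sigma^{p-1}X_0$ (and similarly for $Y$), matching up periods $p_X \mid p_Y$ appropriately, and then reassembling the component maps into a single $\sigma$-equivariant factor map on all of $X$. This is a standard but slightly delicate argument; the entropy strict inequality $h(X)>h(Y)$ survives the reduction since entropy is computed on a single mixing component, and the condition $X\rightsquigarrow Y$ transfers to the components after matching the cyclic indices. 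The second, milder difficulty is constructing $W$ with the precise periodic-point inequalities needed for Krieger's theorem; this can be done by taking $W$ to be an explicit mixing SFT (e.g.\ a ``golden-mean-type'' extension of $Y$'s mixing core) whose period counts one controls directly, using $h(Y)<h(W)<h(X)$ to absorb the entropy constraint.
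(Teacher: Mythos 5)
First, a point of orientation: the paper does not prove this statement at all — Boyle's lower entropy factor theorem is quoted from \cite{MR753922} purely as context for \Cref{sec:lower_entropy_factors} — so there is no internal proof to compare against, and your proposal has to stand on its own. Its skeleton is indeed the classical route: the ``only if'' direction via $\stab(x)\le\stab(\pi(x))$ is correct, the reduction from irreducible to mixing SFTs is standard (if fiddly), and the two-step scheme ``Krieger-embed an auxiliary mixing SFT $W$ into $X$, then use \Cref{lem:boyle_extension_lemma} to extend the identity on $W$ to a map $X\to W$ and compose with $W\twoheadrightarrow Y$'' is a legitimate strategy.

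The genuine gap is the construction of $W$, which you assert rather than carry out, and the specific recipe you give is wrong. Your $W$ must simultaneously satisfy: (a) it admits a factor map onto $Y$; (b) $h(Y)\le h(W)<h(X)$; (c) for every $n$ the number of points of least period $n$ in $W$ is at most that in $X$ (needed for \Cref{thm:krieger}); and (d) $X\rightsquigarrow W$ (needed for \Cref{lem:boyle_extension_lemma}). Conditions (c) and (d) pull in opposite directions, and your resolution — ``arrange $W$ to realize exactly the periods realized by $Y$'' — misreads the direction of $\rightsquigarrow$: the hypothesis $X\rightsquigarrow Y$ says the periods of $X$ are witnessed in $Y$ up to division, not that the periods of $Y$ occur in $X$. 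Concretely, $Y$ may have a fixed point while $X$ has points of least period $2$ and no fixed point (this is consistent with $X\rightsquigarrow Y$ and $h(X)>h(Y)$); then any $W$ realizing period $1$ violates (c) and cannot be embedded in $X$, so your step (i) collapses. The correct requirement is that the small least periods of $W$ be chosen inside the set of least periods of $X$ (with dominated counts), and then (a) is no longer free: you cannot obtain the factor map $W\to Y$ by appealing to a lower entropy factor theorem (that is circular), the cheap device $W=Y\times Z$ with $Z$ a low-entropy mixing SFT kills precisely the small-period orbits needed for (d), and grafting isolated periodic orbits onto a mixing SFT destroys irreducibility and hence the hypotheses of both \Cref{thm:krieger} and \Cref{lem:boyle_extension_lemma}. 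Building a mixing SFT of intermediate entropy with prescribed small-period data together with an explicit onto map to $Y$ is exactly the technical heart of Boyle's argument, and it is missing from your sketch (also, minor: mixing SFTs of positive entropy realize all sufficiently \emph{large} periods, not ``small''). If you want a proof in the spirit of this paper rather than of \cite{MR753922}, note that for the mixing case one can instead combine \Cref{prop:mixing_Z_SFT_abs_retract} with \Cref{thm:lower_entropy_map_extension_property} (whose proof constructs the surjection directly by a marker-pattern argument and avoids the delicate period bookkeeping for an auxiliary $W$), and then perform your irreducible-to-mixing reduction on top of that.
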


In contrast, there exist  topologically mixing $\Z^2$-subshifts with arbitrarily high entropy which do not factor onto any nontrivial full shift \cite{MR2645044}. However, various multidimensional analogs of Boyle's lower entropy factor theorem have been obtained for certain classes of $\Z^2$-subshifts  \cite{MR2645044,MR386685,MR2321105}.


In order to provide the context of previously known results about factors of subshifts of finite type we now recall more properties of subshifts.
The block gluing property was introduced in \cite{MR2645044}:
\begin{definition}
    A $\Z^d$-subshift $X$ is \emph{block gluing} if there exists a finite set $K \Subset \Z^d$ such that for any pair of $K$-disjoint hyperrecatangles $F_1,F_2 \Subset \Gamma$ and any $x^{(1)},x^{(2)} \in X$ there exists $x \in X$ such that $x_{F_1} = x^{(1)}_{F_1}$ and $x_{F_2} = x^{(2)}_{F_2}$.
\end{definition}

The finite extension property was introduced in \cite{MR386685}:
\begin{definition}\label{def:finite_ext_property_BMP}
    A $\Gamma$-subshift $Y \subseteq B^\Gamma$ has the \emph{finite extension property} if there exists  finite sets $W,F \Subset \Gamma$ so that for any $K \subseteq \Gamma$, if $x \in B^\Gamma$ satisfies that $\sigma_v(y)_W \in \cL_W(Y)$ for every $v \in K+F$ then there exists $y \in Y$ such that $y\mid_K= x\mid_K$.
\end{definition}
The above definition of the finite extension property  slightly differs from than the original formulation in  \cite{MR386685}.

The current state-of-the-art regarding lower entropy factors of $\Z^d$-subshifts of finite type is the following theorem:
\begin{theorem}[Briceno-Mcgoff-Pavlov \cite{MR386685}]\label{thm:BMP_factor}
    Let $X$ be a block gluing $\Z^d$-subshift and let $Y$ be a $\Z^d$-subshift of finite type  with a fixed point and the finite extension property such that $h(X) >h(Y)$. Then there exists a factor map from $X$ to $Y$.
\end{theorem}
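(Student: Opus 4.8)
The plan is to run essentially the construction used to prove the embedding theorem (\Cref{prop:emb_inductive}), but with every \emph{injective} coding function replaced by a \emph{surjective} one, together with an extra device that forces the resulting map to hit every point of $Y$. I would first carry this out for the analogue in which $Y$ merely has the map extension property; the hypotheses in the statement — $Y$ a $\Z^d$-SFT with a fixed point and the finite extension property of \Cref{def:finite_ext_property_BMP} — supply exactly the two features the proof uses, namely a local \emph{correction} mechanism ($Y$ is recovered from its superset of locally admissible configurations by a window-bounded retraction, as in part (ii) of \Cref{lem:map_ext_G_free}) and, via the fixed point, the relation $X\rightsquigarrow Y$. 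The block gluing of $X$ will play the role that finite type / aperiodicity played in \Cref{prop:emb_inductive}: it lets us realize a prescribed admissible pattern inside a prescribed tile of a prescribed point of $X$.

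Concretely, I would fix $\eta=\tfrac13(h(X)-h(Y))>0$ and, using block gluing of $X$ together with \Cref{prop:good_partial_tilings} (whose proof invokes \Cref{lem:krieger_marker_lemma}), produce a continuous equivariant family of $(K,\epsilon)$-invariant partial tilings $\tau(\alpha(x))$ of $X$ with large, sparse tiles having small exterior boundary; outside the tiles $x$ is forced to look locally like a neighbourhood of the fixed point, so those coordinates can simply be sent to the fixed symbol $\star\in Y$. Because the tiles $T$ are $(K,\epsilon)$-invariant and $h(X)>h(Y)$, \Cref{lem:top_entropy_inv_set} yields $|\cL_{T^+}(X)|\ge|\cL_{T^-}(Y)|$ for the enlarged and shrunk regions $T^+\supseteq T\supseteq T^-$, so I may choose \emph{onto} functions $\cL_{T^+}(X)\to\cL_{T^-}(Y)$; moreover, by taking the tiles large I can arrange that, as $x$ ranges over $X$, every prescribed-size $Y$-admissible pattern occurs as the restriction to some $T^-$ of one of these images. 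Gluing the tile-codings with the $\star$-padding gives a continuous equivariant $\pi_0\colon X\to B^\Gamma$ whose image consists of configurations that are $Y$-admissible except in a sparse boundary zone; composing with the correction retraction $r$ furnished by the extension property (cf. \Cref{lem:map_ext_G_free}, \Cref{lem:marker_pattern_RAR}) produces $\pi:=r\circ\pi_0\colon X\to Y$, a genuine factor \emph{map}.

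The genuinely new point is surjectivity. Since $Y$ has the extension property and a fixed point it is strongly irreducible (as in \Cref{prop:abs_retract_prop}), hence topologically transitive, so fix $y^\ast\in Y$ with dense $\Gamma$-orbit; as $\pi(X)$ is a closed $\Gamma$-invariant subset of $Y$, it suffices to show that every finite subpattern of $y^\ast$ occurs in $\pi(X)$, which is exactly the ``every $Y$-pattern is realized on some $T^-$'' property above — \emph{provided} the correction $r$ only alters coordinates in the sparse boundary zone, leaving the encoded $Y$-patterns on the shrunk interiors untouched. Points of $X$ with nontrivial stabilizer $\Gamma_0$ are handled exactly as in \Cref{prop:emb_inductive}, by inducting over $\Gamma/\Gamma_0$ (descending in rank, then in torsion): for infinite-index $\Gamma_0$ one uses $h(\overline{X}_{[\Gamma_0]})\le h(X)$ (\Cref{prop:subgroup_entropies_upperbound}) to keep the entropy gap over $h(\overline{Y}_{[\Gamma_0]})$, for finite-index $\Gamma_0$ one uses the same bound to get at least as many $\Gamma/\Gamma_0$-periodic points in $X$ as in $Y$ (so that one can surject — here no exact count is needed, unlike for embeddings), and the fixed point of $Y$ settles the base cases. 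The main obstacle I expect is the bookkeeping that makes the three requirements simultaneously satisfiable: the tile scale must be large enough that (a) $|\cL_{T^+}(X)|\ge|\cL_{T^-}(Y)|$, (b) every $Y$-pattern of the chosen size sits strictly inside some $T^-$, and (c) the extension-property window used by $r$ is negligible compared with $T^-$ minus its boundary, so that correction is invisible to the encoding — and all of this must be arranged uniformly over the finitely many stabilizer subgroups appearing in the induction.
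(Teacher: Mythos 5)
The theorem you were asked to prove is not proved in this paper at all: it is quoted from Briceno--McGoff--Pavlov \cite{MR386685}, and the paper's own contribution in \Cref{sec:lower_entropy_factors} is the variant \Cref{thm:lower_entropy_map_extension_property}, in which the hypothesis ``SFT with a fixed point and the finite extension property'' is replaced by the map extension property. Your proposal is essentially a sketch of the proof of that variant, and its crucial step fails under the actual hypotheses of \Cref{thm:BMP_factor}. The device you invoke --- ``composing with the correction retraction $r$ furnished by the extension property (cf.\ \Cref{lem:map_ext_G_free}, \Cref{lem:marker_pattern_RAR})'' --- is exactly what the map extension property provides, i.e.\ the absolute-retract property for a class of $\mathcal{G}$-free subshifts (\Cref{prop:map_ext_iff_RAR} and part (ii) of \Cref{lem:map_ext_G_free}). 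The finite extension property of \Cref{def:finite_ext_property_BMP} is a purely combinatorial statement: a configuration that is locally admissible on $K+F$ agrees on $K$ with some point of $Y$. It does not furnish a continuous, equivariant retraction from a larger subshift onto $Y$ that leaves untouched all coordinates around which the configuration is locally admissible. These two properties are genuinely inequivalent: the paper says explicitly, citing the Briceno--Bustos example (a TSSM, hence FEP, subshift whose $\overline{X}_{\Gamma_0}$ is not strongly irreducible), that the finite extension property does not imply the map extension property, and for precisely this reason it concedes that \Cref{thm:lower_entropy_map_extension_property} ``does not completely recover'' \Cref{thm:BMP_factor}. So your argument, as written, proves a statement with a strictly stronger hypothesis on $Y$; the BMP theorem itself requires the different, FEP-specific construction of \cite{MR386685}.

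Two secondary points. Your surjectivity mechanism (every $Y$-pattern realized on some $T^-$, plus a dense orbit) still needs an argument that the $T^+$-patterns of $x$ compatible with $T$ actually being a tile of $\tau(\alpha(x))$ surject onto $\cL_{T^-}(Y)$; the paper's proof of \Cref{thm:lower_entropy_map_extension_property} avoids this by first building a map onto $Y$ on the auxiliary subshift $\tilde X$ of points in which a self-overlap-free word occurs along a coset of $(2n+2N)\Z^d$ (block gluing of $X$ realizes arbitrary block assignments there), and only then extending to all of $X$ by the map extension property, which preserves surjectivity automatically. Also, no induction over stabilizer subgroups and no comparison of periodic-point counts is needed for a factor map: the fixed point of $Y$ gives $X \rightsquigarrow Y$ outright, and that is the only stabilizer condition the extension step uses.
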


We now prove the following variant:
\begin{theorem}\label{thm:lower_entropy_map_extension_property}
    Let $X$ be a block gluing $\Z^d$-subshift and let $Y$ be a $\Z^d$-subshift with the map extension property  such that $h(X) >h(Y)$. Then there exists a factor map from $X$ to $Y$ if and only if $X \rightsquigarrow Y$.
\end{theorem}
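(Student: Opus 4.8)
We want to prove Boyle-type lower-entropy factor theorem for targets with the map extension property. The condition $X \rightsquigarrow Y$ is necessary since any factor map $\pi$ sends $x$ to a point with $\stab(\pi(x)) \ge \stab(x)$. For sufficiency, the strategy is to reduce to the Briceno–McGoff–Pavlov theorem (\Cref{thm:BMP_factor}), which requires the target to have a fixed point and the finite extension property. A subshift with the map extension property is an absolute retract for some class of $\mathcal{G}$-free subshifts (\Cref{prop:map_ext_iff_RAR}), is a strongly irreducible SFT (\Cref{prop:abs_retract_prop}), and has a fixed point (any absolute retract does). So the only gap is the finite extension property and the fact that $Y$ need not be $\mathcal{G}$-free for the specific $\mathcal{G}$ relative to $X$ — but $X \rightsquigarrow Y$ handles this: every point of $X$ has a stabilizer realized in $Y$, so $X$ is $\mathcal{G}$-free for the $\mathcal{G}$ witnessing the map extension property of $Y$.

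Let me spell out the steps. First, by \Cref{prop:map_ext_iff_RAR} fix a finite set $\mathcal{G} \Subset \Sub(\Gamma)$ with $\Gamma=\Z^d$ such that $Y$ is an absolute retract for the class of $\mathcal{G}$-free subshifts, and in particular $Y$ is $\mathcal{G}$-free. Since $X \rightsquigarrow Y$, for every $x \in X$ there is $y \in Y$ with $\stab(x) \le \stab(y)$; because $Y$ is $\mathcal{G}$-free, no $\Gamma_0 \in \mathcal{G}$ satisfies $\Gamma_0 \le \stab(y)$, hence none satisfies $\Gamma_0 \le \stab(x)$. Thus $X$ is $\mathcal{G}$-free. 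Second, by \Cref{lem:map_ext_G_free}(i), $X$ embeds in a $\mathcal{G}$-free $\Gamma$-SFT $\hat X$ with the map extension property — but this changes the entropy, so that is not quite what we want; instead the cleaner move is: $Y$ has the map extension property, hence is a strongly irreducible SFT by \Cref{prop:abs_retract_prop}, hence block gluing and entropy minimal, and it has a fixed point. Third — the crucial step — I claim $Y$ has the finite extension property in the sense of \Cref{def:finite_ext_property_BMP}. This should follow from the map extension property: given a local configuration on $K$ that is "locally admissible" (every window pattern in $K+F$ is in $\cL(Y)$), one wants to correct it to a genuine point of $Y$ agreeing on $K$. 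The subshift $\hat Y$ of "locally $Y$-admissible" points is a $\mathcal{G}$-free SFT containing $Y$; by the uniform retraction of \Cref{lem:map_ext_G_free}(ii) there is a retraction $r:\hat Y \to Y$ with $r(y)_0 = y_0$ whenever $y_F \in \cL_F(Y)$ for a fixed finite $F$. Applying $r$ to the locally admissible configuration (extended arbitrarily off $K$ to a point of $\hat Y$, using that $\hat Y$ is an SFT and $K$-patterns extend) yields a point of $Y$ agreeing on $K \setminus \partial_F K$. Enlarging $K$ slightly absorbs the boundary loss; this gives the finite extension property.

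With these three ingredients in place, $Y$ is a $\Z^d$-SFT with a fixed point and the finite extension property, $X$ is block gluing with $h(X) > h(Y)$, so \Cref{thm:BMP_factor} produces a factor map $\pi:X \to Y$. One subtlety: \Cref{thm:BMP_factor} as stated does not mention stabilizers, so if $X$ has periodic points we must check the resulting factor map is total — but $X \rightsquigarrow Y$ combined with the fixed point of $Y$ and the strong mixing of $Y$ should let us absorb periodic points of $X$, essentially because the period classes of $X$ map to the realized stabilizers in $Y$; alternatively, since block gluing $\Z^d$-subshifts with a point of nontrivial stabilizer still factor appropriately once $X \rightsquigarrow Y$ holds, one massages the BMP construction.

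\textbf{The main obstacle.} The hard part will be verifying the finite extension property for $Y$ from the map extension property, and in particular handling the periodic-point bookkeeping so that the BMP factor map is genuinely defined on all of $X$ (not just the aperiodic part). The finite extension property step requires care because the uniform retraction of \Cref{lem:map_ext_G_free}(ii) only guarantees $r(y)_0 = y_0$ on a window where $y$ already looks like $Y$, so one must check that a locally admissible configuration on $K$ does look like $Y$ on the interior of $K$; and extending it off $K$ into $\hat Y$ needs the SFT structure of $\hat Y$ plus the observation that $Y$ is $\mathcal{G}$-free so $\hat Y$ is too. The periodic-point issue is a genuine gap in directly quoting \Cref{thm:BMP_factor}; the fix is to invoke the hypothesis $X \rightsquigarrow Y$ to reduce, on each sublattice of finite index realizing a stabilizer, to the aperiodic quotient, much as \Cref{lem:stab_of_abs_retract} lets one pass to $\overline{Y}_{[\Gamma_0]}$, and then glue.
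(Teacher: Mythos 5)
There is a genuine gap, and it sits at the very first load-bearing claim of your reduction: you assert that $Y$ has a fixed point because ``any absolute retract does.'' But the map extension property only makes $Y$ an absolute retract \emph{for the class of $\mathcal{G}$-free subshifts} (\Cref{prop:map_ext_iff_RAR}), not an absolute retract outright; the fixed-point argument (embed into a subshift with a fixed point and retract) is unavailable because a subshift containing a fixed point is not $\mathcal{G}$-free when $\mathcal{G}\ne\emptyset$. Concretely, the proper $5$-colorings of $\Z^2$ have the map extension property (\Cref{prop:k_coloring_RAR}) and have no fixed point, and the paper explicitly flags that the new content of \Cref{thm:lower_entropy_map_extension_property} is precisely such targets. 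So quoting \Cref{thm:BMP_factor} is not possible in general: its fixed-point hypothesis fails, and your closing attempt to ``absorb periodic points'' via $X\rightsquigarrow Y$ is left as a hand-wave with no mechanism. (Your claim that the map extension property yields the finite extension property is true — this is the Poirier--Salo result mentioned in the paper — but it does not rescue the reduction.)

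The paper's proof takes a different and self-contained route that avoids both problems. Using $h(X)>h(Y)$ and block gluing, one finds a marker pattern $w$ and a set $W_{w,n}$ of $B_n$-patterns extending $w$ without self-overlaps, with $|W_{w,n}|>|\cL_{B_{n+N}}(Y)|$; one then defines, on the subsystem $\tilde X\subseteq X$ of points carrying the marker on a coset of $(2n+2N)\Z^d$, a block map $\tilde\rho$ whose image $\hat Y$ is $\mathcal{G}$-free and contains $Y$ (surjectivity of the pattern assignment plus block gluing of $X$). Retracting $\hat Y$ onto $Y$ gives a surjection $\tilde\pi:\tilde X\to Y$, and the crucial final step — which your proposal never uses — is to invoke the map extension property itself, together with the hypothesis $X\rightsquigarrow Y$, to extend the partial map $\tilde\pi$ to a map $\pi:X\to Y$ defined on all of $X$; surjectivity is inherited from $\tilde\pi$. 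This is exactly how periodic points of $X$ are handled without any case analysis, and it is the step your reduction to \Cref{thm:BMP_factor} cannot replace.
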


\begin{proof}
    As we have already explained, in general, if there exists a factor map from $X$ to $Y$, then  $X \rightsquigarrow Y$. 
     Let $X$ be a block gluing $\Z^d$-subshift  and let $Y \subseteq B^{\Z^d}$ be a $\Z^d$-subshift of finite type  with the map extension property  such that $h(X) >h(Y)$. 

     Now suppose that $X \rightsquigarrow Y$.
    Let $B_n= \{-n,\ldots,n\}^d \Subset \Z^d$. 
    Because $Y$ has the map extension property, there exists a finite set of subgroups $\mathcal{G} \Subset \Sub(\Z^d)$ such that $Y$ is an absolute retract for the class of $\mathcal{G}$-free subshifts. In particular, $Y$ is $\mathcal{G}$-free, and so there exists $M \in \mathbb{N}$ so that $B_M$ witnesses that $Y$ is $\mathcal{G}$-free.
    Because $X$ is block gluing, there exists $N \in \mathbb{N}$ so that for every $n \in \mathbb{N}$ and $f:(2n+2N)\Z^d \to \cL_{B_n}(X)$ there exists $x \in X$ such that 
    $\sigma_v(x)_{B_n} = f(v)$ for all $v \in (2n+2N)\Z^d$. 
    Given $w \in \cL_{B_{3N}}(X)$ and $n > 3N$, let $W_{w,n}$ denote the set of patterns $\tilde w \in \cL_{B_n}(X)$ such that $w_{B_{3N}}=w$ and so that the only overlap of $w$ with $\tilde w$ in $B_{n+N}$ occurs at $0$. Similarly to the proof \Cref{lem:marker_pattern_basic} one can show using  $h(X) > h(Y)$ and the fact that $X$ is block gluing, that for sufficiently large $n$ there exists $w \in \cL_{B_{3N}}$ such that $|W_{w,n}| > |\cL_{B_{n+N}}(Y)|$. Choose  $n \in \mathbb{N}$ and $w \in \cL_{B_{3N}}$ satisfying the above, with $n \ge 3M$.
     Let $\Phi:W_{w,n} \to \cL_{B_{n+N}}(Y)$ be a surjective function and 
    let
    \[
    \tilde X =\left\{ x\in X~:~ \exists v\in \Z^d \mbox{ s.t. }\forall u \in (2n+2N)\Z^d \sigma_u(x)_{B_n} \in W_{w,n} \right\}.
    \]
    It follows that for any $x \in \tilde X$, there exists a unique coset of $(2n+2N)\Z^d$ where the pattern $w$ occurs.
    Define a map $\tilde \rho:\tilde X \to B^{\Z^d}$ by declaring $\sigma_v(\tilde \rho(x))_{B_{n+N}} = \Phi(\sigma_v(x)_{B_n})$ whenever $\sigma_v(x)_{B_N}=w$. Since the occurrences of $w$ in any $x\in \tilde X$ coincide with a coset of $(2n+2N)\Z^d$, the map $\sigma_v$ is well-defined. Let $\hat Y= \tilde \rho(\tilde X)$. 
    Because $B_M$  witnesses that $Y$ is $\mathcal{G}$-free and $M < 3N$, it follows that $\hat Y$ is $\mathcal{G}$-free.
    We claim that $Y \subseteq \hat Y$. Indeed, given any $y \in Y$, choose $f:(2n+2N)\Z^d \to W_{w,n}$ so that
    $\Phi(f(v))= \sigma_v(y)_{B_{n+N}}$ for every $v \in (2n+2N)\Z^d$.  Now find $x \in X$ such that $\sigma_v(x)_{B_n}= f(v)$ for every $v \in (2n+2N)\Z^d$.
    Then $x \in \tilde X$ and $\hat \rho(x) =y$. Because $Y$ is an absolute retract for the class of $\mathcal{G}$-free subshifts there exists a retraction map  $r:\hat Y \to Y$. Let $\tilde \pi:\tilde X \to Y$ be defined by $\tilde \pi = r \circ \hat \rho$. Since $r$ is a retraction and $Y \subseteq \hat \rho(\tilde X)$, it follows that $\tilde \pi(\tilde X)=Y$.
    By the map extension property of $Y$, since  $X \rightsquigarrow Y$, the map $\tilde \pi$ extends to a map $\pi:X \to Y$. Since $\tilde \pi(\tilde X)=Y$ it follows that $\pi(X)=Y$, so $Y$ is a factor map.
\end{proof}

Poirier and  Salo showed that 
the map extension property  implies the finite extension property \cite{poirier2024contractible}. 
This fact is a strengthening of \Cref{prop:abs_retract_prop}, because the finite extension property implies strong irreducibility and finite type.
However, the finite extension property does not imply the map extension property, so \Cref{thm:lower_entropy_map_extension_property} does not completely recover \Cref{thm:BMP_factor}.
Furthermore, for $\Z^2$-subsfhits  the map extension property does not follow from a stronger property called \emph{topological strong spatial mixing}, introduced in \cite{MR3819997}:

\begin{definition}
A subshift $X \Subset A^\Gamma$ 
is  \emph{topologically strong spacial mixing}
\cite{MR3819997} 
(abbreviated by \emph{TSSM})  
if there exists a finite set $K \Subset \Gamma$ so that for any subsets $F,F_1,F_2 \Subset \Gamma$ 
such that $F_1,F_2 \Subset \Gamma$
are $K$-disjoint and any $x^{(1)},x^{(2)} \in X$ 
such that $x^{(1)}_F = x^{(2)}_F$ there exists $x \in X$ such that
$x_F= x^{(1)}_F = x^{(2)}_F$, 
$x_{F_1} = x^{(1)}_{F_1}$ and $x_{F_2} = x^{(2)}_{F_2}$.
\end{definition}

It was shown in \cite{MR386685} that topologically strong spacial mixing subshifts have the finite extension property.

In personal communication Raimundo Briceno and Alvaro Bustos have constructed a simple example of a topologically  strong spacial mixing $\Z^2$-subshift $X$ such that $\overline{X}_{\Gamma_0}$ 
is not strongly irreducible 
where $\Gamma_0 = \Z \times \{0\} < \Z^2$.
In particular, this example demonstrates that $X$
 the finite extension property does not imply the map extension property.
Here is a version of the construction with $\Z^2$ replaced by $\Gamma= \Z \times (\Z/2\Z)$: Let $\Gamma_0 = \{0\} \times (\Z /2\Z)< \Gamma$. Let $X \subseteq \{0,1\}^\Gamma$ be the subshift of finite type defined by the following rule: For every $v \in \Gamma$ if $x_v=x_{v+(0,1)}$ and $x_{v+(1,0)}=x_{v+(1,1)}$ then $x_{v}=x_{v+(1,0)}$.
Then it not difficult to check that $X$ is topologically  strong spacial mixing, but $X_{\Gamma_0}$ consists of two fixed points. 

In view of the above discussion, \Cref{thm:lower_entropy_map_extension_property} does not completely recover \Cref{thm:BMP_factor} of Briceno, Mcgoff and Pavlov. However, \Cref{thm:lower_entropy_map_extension_property} does address some new cases where the target subshift $Y$ does not have a fixed point. For instance, \Cref{thm:lower_entropy_map_extension_property} characterizes the (greater entropy) block-gluing subshifts that factor onto the $5$-colorings of $\Z^2$.




\bibliographystyle{amsplain}
\bibliography{refs}
\end{document}